\tikzstyle{every picture} = [>=latex]
\def\ca#1{{\cal#1}}
\def\levll#1#2{\lambda[#1](#2)}
\title{Twin-width of Planar Graphs is at most~8, and some Related Bounds}
\author{Petr Hlin{\v e}n\'y}{Masaryk University, Brno,
  Czech republic}{hlineny@fi.muni.cz}{https://orcid.org/0000-0003-2125-1514}{}
\author{Jan Jedelsk\'y}{Masaryk University, Brno,
  Czech republic}{484988@mail.muni.cz}{https://orcid.org/0000-0001-9585-2553}{}
\authorrunning{P.\ Hlin\v{e}n\'y and J.~Jedelsk\'y}
\keywords{twin-width, planar graph, $1$-planar graph, map graph}
\begin{document}
\maketitle

\begin{abstract}
Twin-width is a structural width parameter introduced by Bonnet, Kim, Thomass\'e and Watrigant [FOCS 2020],
and has interesting applications in the areas of logic on graphs and in parameterized algorithmics.
Very briefly, the essence of twin-width is in a gradual reduction (a contraction sequence) of the given graph down to a single vertex while maintaining
limited difference of neighbourhoods of the vertices, and it can be seen as widely generalizing several other traditional structural parameters.
While for many natural graph classes it is known that their twin-width is bounded,
published upper bounds on the twin-width in non-trivial cases are very often ``astronomically large''.

We focus on planar graphs, which are known to have bounded twin-width already since the introduction of it, 
but it took some time for the first explicit ``non-astronomical'' upper bounds to come.
Namely, in the order of preprint appearance, 
it was the bound of at most $183$ by Jacob and Pilipczuk [arXiv, January 2022], and $583$ by Bonnet, Kwon and Wood~[arXiv, February 2022].
Subsequent arXiv manuscripts in 2022 improved the bound down to $37$ (Bekos et al.), $11$ and $9$ (both by Hlin\v{e}n\'y).
We further elaborate on the approach used in the latter manuscripts, proving that the twin-width of every planar graph is at most~$8$, 
and construct a witnessing contraction sequence in linear time.
Note that the currently best lower-bound planar example is of twin-width $7$, by Kr\'al' and Lamaison [arXiv, September 2022].
We also prove small explicit upper bounds on the twin-width of bipartite planar and $1$-planar graphs ($6$ and $16$), and of map graphs ($38$).
The common denominator of all these results is the use of a novel specially crafted recursive decomposition of planar graphs,
which may be found useful also in other areas.
\end{abstract}

\section{Introduction}

Twin-width is a relatively new structural width measure of graphs and relational structures introduced in 2020 by Bonnet, Kim, Thomass\'e and Watrigant~\cite{DBLP:conf/focs/Bonnet0TW20}.
Informally, twin-width of a graph measures how diverse the neighbourhoods of the graph vertices are.
E.g., {\em cographs}\,---the graphs which can be built from singleton vertices by repeated operations of a disjoint union and taking the complement,
have the lowest possible value of twin-width,~$0$, which means that the graph can be brought down to a single vertex by successively identifying twin vertices.
(Two vertices $x$ and $y$ are called {\em twins} in a graph $G$ if they have the same neighbours in $V(G) \setminus \{x,y\}$.)
Hence the name, {\em twin-width}, for the parameter.

Importance of this new concept is clearly witnessed by numerous recent papers on the topic, such as the follow-up series
\cite{DBLP:conf/soda/BonnetGKTW21,DBLP:conf/icalp/BonnetG0TW21,DBLP:conf/stoc/BonnetGMSTT22,DBLP:conf/soda/BonnetKRT22,DBLP:conf/iwpec/BonnetC0K0T22,DBLP:journals/jacm/BonnetKTW22}
and more related research papers represented by, e.g.,
\cite{DBLP:journals/siamdm/AhnHKO22,DBLP:conf/iwpec/BalabanH21,DBLP:journals/corr/abs-2102-06880,DBLP:conf/icalp/BergeBD22,DBLP:conf/icalp/GajarskyPPT22,DBLP:conf/stacs/PilipczukSZ22}.

\vspace*{-2ex}\subparagraph{Twin-width definition. }
In general, the concept of twin-width can be considered over arbitrary binary relational structures of a finite signature,
but here we will define it and deal with it for finite {\em simple graphs}, i.e., graphs without loops and multiple edges.
A \emph{trigraph} is a simple graph $G$ in which some edges are marked as {\em red}, and with respect to the red edges only, 
we naturally speak about \emph{red neighbours} and \emph{red degree} in~$G$. 
However, when speaking about edges, neighbours and/or subgraphs without further specification, we count both ordinary and red edges together as one edge set denoted by~$E(G)$.
The edges of $G$ which are not red are sometimes called (and depicted) black for distinction.
For a pair of (possibly not adjacent) vertices $x_1,x_2\in V(G)$, we define a \emph{contraction} of the pair $x_1,x_2$ as the operation
creating a trigraph $G'$ which is the same as $G$ except that $x_1,x_2$ are replaced with a new vertex $x_0$ (said to {\em stem from $x_1,x_2$}) such that:
\begin{itemize}
\item 
the (full) neighbourhood of $x_0$ in $G'$ (i.e., including the red neighbours), denoted by $N_{G'}(x_0)$,
equals the union of the neighbourhoods $N_G(x_1)$ of $x_1$ and $N_G(x_2)$ of $x_2$ in $G$ except $x_1,x_2$ themselves, that is,
$N_{G'}(x_0)=(N_G(x_1)\cup N_G(x_2))\setminus\{x_1,x_2\}$, and
\item 
the red neighbours of $x_0$, denoted here by $N_{G'}^r(x_0)$, inherit all red neighbours of $x_1$ and of $x_2$ and add those in $N_G(x_1)\Delta N_G(x_2)$,
that is, $N_{G'}^r(x_0)=\big(N_{G}^r(x_1)\cup N_G^r(x_2)\cup(N_G(x_1)\Delta N_G(x_2))\big)\setminus\{x_1,x_2\}$, where $\Delta$ denotes the~symmetric set difference.
\end{itemize}
A \emph{contraction sequence} of a trigraph $G$ is a sequence of successive contractions turning~$G$  into a single vertex,
and its \emph{width} $d$ is the maximum red degree of any vertex in any trigraph of the sequence.
We also then say that it is a $d$-contraction sequence of~$G$.
The \emph{twin-width} of a trigraph $G$ is the minimum width over all possible contraction sequences of~$G$.
In other words, a graph has twin-width at most $d$, if and only if it admits a $d$-contraction sequence.

To define the twin-width of an ordinary (simple) graph $G$, we consider $G$ as a trigraph with no red edges.
In a broader sense, if $G$ possibly contains parallel edges, but no loops, we
may define the twin-width of $G$ as the twin-width of the simplification of~$G$.

\vspace*{-2ex}\subparagraph{Algorithmic aspects. }
Twin-width, as a structural width parameter, has naturally many algorithmic applications in the FPT area.
Among the most important ones we mention that the first order (FO) model checking problem --
that is, deciding whether a fixed first-order sentence holds in an input graph -- can be solved in linear FPT-time \cite{DBLP:journals/jacm/BonnetKTW22}.
This and other algorithmic applications assume that a contraction sequence of bounded width is given alongside with the input graph.
Deciding the exact value of twin-width (in particular, twin-width~$4$) is in general NP-hard \cite{DBLP:conf/icalp/BergeBD22},
but for many natural graph classes we know that they are of bounded twin-width.
However, published upper bounds on the twin-width in non-trivial cases are often non-explicit or ``astronomically large'',
and it is not usual that we could, alongside such a bound, compute a contraction sequence of provably
``reasonably small'' width efficiently and practically.
We pay attention to this particular aspect; and we will accompany our fine mathematical upper bounds on the twin-width
with rather simple linear-time algorithms for computing contraction sequences of the claimed widths.

\vspace*{-2ex}\subparagraph{Twin-width of planar graphs. }
The fact that the class of planar graphs is of bounded twin-width was mentioned already in the pioneering paper~\cite{DBLP:conf/focs/Bonnet0TW20}, 
but without giving any explicit upper bound on the twin-width.
The first explicit (numeric) upper bounds on the twin-width of planar graphs have been published only later;
chronologically on arXiv, the bound of $183$ by Jacob and Pilipczuk~\cite{DBLP:conf/wg/JacobP22},
of $583$ by Bonnet, Kwon and Wood~\cite{DBLP:journals/corr/abs-2202-11858} 
(this paper more generally bounds the twin-width of $k$-planar graphs by asymptotic $2^{\ca O(k)}$),
and of $37$ by Bekos, Da Lozzo, Hlin\v{e}n{\'{y}}, and Kaufmann~\cite{DBLP:conf/isaac/BekosLH022}
(this paper bounds the twin-width of so-called $h$-framed graphs by~$\ca O(h)$, 
and the bound of $37$ for planar graphs is explicitly stated only in the preprint version of it).

It is worth to mention that all three papers~\cite{DBLP:conf/wg/JacobP22,DBLP:journals/corr/abs-2202-11858,DBLP:conf/isaac/BekosLH022}, 
more or less explicitly, use the product structure machinery of planar graphs as in~\cite{DBLP:journals/jacm/DujmovicJMMUW20}.
We have then developed, in a similar style, an alternative decomposition-based approach, leading to a single-digit upper bound of~$9$ for all planar graphs in 
\cite{DBLP:journals/corr/abs-2205.05378}, followed by an upper bound of $6$ on the twin-width of {\em bipartite} planar graphs thereafter.
However, the exact approach of \cite{DBLP:journals/corr/abs-2205.05378} seemed to be stuck right at $9$, and new ideas were needed to obtain further improvements.

In this paper we present an improved and more systematic approach (see \Cref{sec:tools}) to the task of proving fine upper bound 
on the twin-width of planar and certain beyond-planar graph classes, which, in particular, further tightens 
the results of \cite{DBLP:journals/corr/abs-2205.05378} and also clarifies some cumbersome technical details of the former paper:

\begin{theorem}\label{thm:twwplanar}
The twin-width of any simple planar graph is at most~$8$, and a corresponding contraction sequence can be found in linear time.
\end{theorem}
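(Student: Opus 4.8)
The plan is to combine a breadth-first layering of the input planar graph with the recursive planar decomposition developed in \Cref{sec:tools}, and then to read off a contraction sequence that merges vertices ``within layers'' while following the decomposition tree from the leaves upward, keeping the red degree at most~$8$ throughout by a carefully maintained structural invariant.

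First I would reduce to a single connected plane graph: contraction sequences of the connected components can be concatenated (no red edge ever appears between distinct components, so the width is the maximum over the components), each component is again planar, and we may fix a plane embedding. I then compute a BFS tree rooted at an arbitrary vertex, obtaining layers $L_0,L_1,L_2,\dots$; the only non-tree edges of $G$ join two vertices of one layer or of two consecutive layers, and it is exactly this locality that will eventually bound the red degree. On top of this layering I invoke the recursive decomposition of \Cref{sec:tools}, which expresses $G$ as a tree $\mathcal{T}$ of \emph{regions}, each region $R$ being a portion of the plane bounded by a bounded number of geodesic (``vertical'') paths of the BFS tree together with pieces of the outer boundary, and such that the children of $R$ subdivide the interior of $R$ along further vertical paths. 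Along each layer the vertices of a single region inherit a linear left-to-right order from the embedding, and every vertex of $L_{i+1}$ has all its neighbours in $L_i$ consecutive in that order --- the usual ``fan'' property.

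Next I define the contraction sequence. Processing $\mathcal{T}$ from the leaves upward, within each region I contract, layer by layer from the deepest layer towards $L_0$, the interior vertices of each layer from left to right, absorbing them into one part, and then merge that part with the region's bounding vertical paths once the region is exhausted; the lateral boundary paths themselves are subpaths of the parent's vertical paths and are contracted later, in the parent. The invariant is that at every moment each part is either a single original vertex lying on a currently active vertical boundary path, or an \emph{interval} of consecutive already-processed vertices inside one layer of one region, and that the red neighbours of such a part are confined to: (i)~at most two neighbouring interval-parts in the same layer, (ii)~a bounded number of parts in the layer immediately above and immediately below within the same region, and (iii)~the $O(1)$ active vertical boundary paths of the region and of its parent. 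Summing these contributions --- and scheduling the moment at which the two lateral boundary paths of a region are absorbed so as to shave off the last unit --- is what yields the bound~$8$ rather than the $9$ that the earlier approach of \cite{DBLP:journals/corr/abs-2205.05378} was stuck at.

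I expect the main obstacle to be the bookkeeping at the \emph{interfaces} between a region and its parent and siblings: a child's lateral boundary path is a subpath of a parent's vertical path and is seen simultaneously by the child's interior, by the sibling on the other side of it, and by the layer above it, so one must order the ``absorb the boundary'' operations consistently across all of $\mathcal{T}$ and prove that no vertex of a vertical path ever simultaneously carries too many of these contributions; keeping the number of vertical boundary paths per region down to the minimum that still supports the recursion, and choosing the contraction order inside one region, are the delicate points on which the constant hinges. The remaining ingredients --- BFS locality, the fan property, linear-time computability of the layering, of the decomposition of \Cref{sec:tools}, and of the resulting $O(n)$ contractions, each touching only $O(1)$ red edges by the invariant --- are routine and give the claimed linear running time.
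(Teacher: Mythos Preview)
Your high-level plan matches the paper's: reduce to a plane graph, take a BFS layering, recursively decompose the plane into regions bounded by vertical (root-to-leaf) BFS paths, and contract within layers following the recursion. That much is right, and is also exactly what the earlier preprint \cite{DBLP:journals/corr/abs-2205.05378} did to get~$9$. The gap is in the step where you claim to ``shave off the last unit'': your explanation (``scheduling the moment at which the two lateral boundary paths of a region are absorbed'') is not the mechanism the paper uses, and I do not see how boundary-absorption timing alone gets you below~$9$.

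Two concrete ingredients are missing. First, the BFS tree must be \emph{left-aligned} (\Cref{def:leftaligned}), not ``a BFS tree rooted at an arbitrary vertex''. Left-alignment forces an asymmetry between the two wrapping paths of every region: by \Cref{lem:leftalign}, a vertex on the \emph{right} wrapping path cannot have any neighbour one level above it inside the region. This is used repeatedly in the accounting (e.g.\ \Cref{lem:corebasic}\eqref{it:left3right2} gives $3$ red edges on the left path but only $2$ on the right) and is what turns several ``$3{+}3$'' estimates into ``$3{+}2$''. Second, the improvement from $9$ to $8$ hinges on the notion of a \emph{vh-division} (\Cref{def:vhdivided}): the recursion carries along, in addition to the current region~$C$, up to two auxiliary already-reduced faces $B_1,B_2$ sitting above~$C$ with $T$-horizontal lids, and the inductive hypothesis (\Cref{lem:core}) is stated for the larger cycle~$D=C\Delta B_1\Delta B_2$. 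The paper says explicitly that ``this specific handling is the key ingredient which allows us to reach the bound of~$8$''. Your invariant (interval parts, neighbouring intervals, $O(1)$ vertical paths) does not encode either of these refinements, and as stated would reproduce the analysis that tops out at~$9$.

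A smaller omission: the paper first completes the input to a simple plane \emph{triangulation} and works there; you will need this (or an equivalent normalisation) for the face-by-face recursion to go through cleanly.
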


Regarding lower bounds, we note that, roughly at the same time as the first version of this work,
Kr\'al' and Lamaison \cite{DBLP:journals/corr/abs-2209-11537} have found a construction and a proof of a planar graph with twin-width~$7$.
Our very recent contribution, not yet fully written up, confirms $7$ to be the right maximum value of the twin-width over all simple planar graphs,
with a very refined, long and complex case analysis targeting the planar case differently.

In this paper, on the other hand, we provide a more general systematic approach providing fine upper bounds on the twin-width
in other graph classes related to planarity.
In this direction, we also bound the twin-width of bipartite planar graphs, for which the upper bound of $6$ stays the same 
as in the previous preprint \cite{DBLP:journals/corr/abs-2205.05378}, but the proof is significantly simpler now:
\begin{theorem}\label{thm:twwbiplanar}
The twin-width of any simple {\em bipartite} planar graph is at most~$6$, and a corresponding contraction sequence can be found in linear time.
\end{theorem}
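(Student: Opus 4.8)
The plan is to follow the same template as the proof of \Cref{thm:twwplanar}: from a planar embedding of the input bipartite graph $G$ (we may assume $G$ connected, since components are contracted one after another without ever creating a red edge between them) together with a BFS layering from an arbitrary root, we build the recursive decomposition of \Cref{sec:tools}, and then read off a contraction sequence from it. The point is that bipartiteness removes two units from the red-degree bookkeeping. Concretely, fix a BFS layering $V_0,V_1,V_2,\dots$ of $G$. Since $G$ is bipartite with the $2$-colouring given by the parity of the BFS distance, there are \emph{no edges inside any layer} $V_i$: every edge of $G$ runs between some $V_i$ and $V_{i+1}$. We feed this layering into the decomposition machinery, which orders the vertices of each layer consistently with the cyclic/linear order coming from the embedding and the BFS tree, and produces a contraction sequence that processes the layers essentially one at a time, contracting a ``finished'' part of the graph into a single vertex while keeping a bounded ``frontier'' of still-active vertices in the current layer and its two neighbouring layers.

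The heart of the argument is then the red-degree accounting, carried out as for \Cref{thm:twwplanar} but with the bipartite savings tracked explicitly. In the general planar case, a vertex $x$ of the trigraph sitting at level $i$ may acquire red neighbours (i) inside its own layer $V_i$ --- coming both from genuine intra-layer edges of a near-triangulation and from neighbourhood differences created by earlier contractions, (ii) up in $V_{i-1}$, and (iii) down in $V_{i+1}$ --- and the decomposition is tuned so that these three contributions sum to at most $8$. For bipartite $G$ the contribution of type (i) shrinks: there are no intra-layer edges at all, so a same-layer red neighbour can only be born from a neighbourhood difference, which the decomposition already bounds by a smaller constant; moreover the ``boundary'' interactions between consecutive recursion levels, which in the general case cost an extra unit on each of the upper and lower sides, collapse because each layer $V_i$ is monochromatic and hence the colour classes of $G$ interleave cleanly with the layering. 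Re-running the case distinction of the proof of \Cref{thm:twwplanar} with these observations, the maximum red degree over the whole sequence comes out as~$6$.

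I expect the main obstacle to be precisely this re-verification: one must confirm that the invariant ``contractions respect the layering (hence the bipartition)'' is genuinely maintained through the entire, potentially very long, sequence, and one must re-examine every configuration in which a long-processed contracted part meets the frontier, checking that bipartiteness \emph{removes} the offending red edge rather than merely relocating it. This care is essential because twin-width is \emph{not} monotone under taking subgraphs, so one may neither import the bound $8$ for planar triangulations nor pass to a bipartite quadrangulation supergraph for free; the improved bound has to come from a contraction sequence that actually exploits the $4$-face structure of the pieces produced by the decomposition. The remaining part is routine and is inherited verbatim from \Cref{thm:twwplanar}: the BFS layering, the embedding, the decomposition, and the resulting list of contractions are each computed by a constant number of passes, so the witnessing contraction sequence is produced in linear time.
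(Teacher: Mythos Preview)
Your proposal is a sketch rather than a proof, and it contains a factual slip that leads you away from the cleanest argument. Twin-width \emph{is} monotone under taking induced subgraphs (this is stated and used in the paper). Hence one \emph{can} pass to a plane quadrangulation supergraph $G\supseteq G_0$ with $G_0$ induced in $G$, prove the bound of~$6$ for $G$, and then restrict the contraction sequence to $V(G_0)$. This is precisely what the paper does. Your remark that ``one may \dots\ nor pass to a bipartite quadrangulation supergraph for free'' is therefore misplaced; what is true is only that passing to a quadrangulation does not by itself \emph{improve} the planar bound of~$8$ to~$6$ --- for that you need a dedicated argument on quadrangulations, which the paper supplies.

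More substantially, the paper does \emph{not} re-run the case analysis of \Cref{thm:twwplanar}. It formulates a separate inductive statement, \Cref{lem:bicore}, which is structurally much simpler than \Cref{lem:core}: there is no vh-division (\Cref{def:vhdivided}) at all, the induction is on a single $T$-wrapped facial cycle, and the sequence is level-preserving rather than merely min-level-respecting. The key bipartite fact is isolated as \Cref{lem:bipartlevels}: along any level-preserving sequence every edge joins consecutive levels, so a vertex never has a red neighbour in its own level. This is what makes the bookkeeping in \Cref{lem:bicore} come out as $4$ on the boundary and $6$ in the interior, and what makes \Cref{lem:bicoremore} give the global bound~$6$. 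Your plan of ``re-running the case distinction of the proof of \Cref{thm:twwplanar}'' would drag along the vh-division machinery, which is unnecessary here and would make the verification you anticipate as ``the main obstacle'' considerably harder than it needs to be. The honest content of a proof of \Cref{thm:twwbiplanar} is the statement and verification of \Cref{lem:bicore} (together with the easy \Cref{lem:bisecondstage}); your proposal does not supply either.
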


Both \Cref{thm:twwplanar} and \Cref{thm:twwbiplanar} have appeared with shortened proof sketches in the conference paper \cite{DBLP:conf/icalp/HlinenyJ23}.
In order to demonstrate flexible usability of our general approach to proving fine twin-width bounds, we use it
to approach two other traditional beyond-planar graph classes for which twin-width bounds have already been considered in the literature.

First, we consider $1$-planar graphs.
A graph $G$ is {\em$1$-planar} if $G$ has a drawing in the plane such that every edge is crossed by at most one other edge.
Second to consider, {\em map graphs} are defined as the (simple) intersection graphs of simply connected and internally disjoint regions of the plane.
We refrain from giving more details on this topological definition since we will deal with map graphs using an alternative characterization
via bipartite graph squares~\cite{10.1145/506147.506148} given in \Cref{sec:prooftwwmapplanar}.
For the formulation of our result, we say that a graph $H$ is a {\em square} of a graph $G$ if $V(H)=V(G)$ and 
$u,v\in V(G)$ are adjacent in $H$ if and only if $u\not=v$ are at distance at most two in~$G$.
We now state:

\begin{theorem}\label{thm:tww1planar}
The twin-width of any simple {$1$-planar} graph is at most~$16$.
\end{theorem}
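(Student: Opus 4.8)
The plan is to reduce the $1$-planar case to the planar case already handled by Theorem~\ref{thm:twwplanar} (or rather to its underlying recursive-decomposition machinery from \Cref{sec:tools}), by means of a standard \emph{planarization} of the given $1$-planar drawing. Concretely, fix a drawing of the $1$-planar graph $G$ in which every edge is crossed at most once, and let $X$ be the set of crossing points. Form the graph $G^{+}$ from $G$ by subdividing each crossed edge exactly once, placing the new ``dummy'' vertex at the crossing point; then $G^{+}$ is planar, and $|V(G^{+})| = |V(G)| + |X|$ with each dummy vertex having degree exactly $4$. The key structural fact is that each original vertex of $G$ is incident, in the planar drawing of $G^{+}$, to faces whose union still ``surrounds'' it, so the recursive decomposition of $G^{+}$ restricts nicely to~$G$.

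The main step is then to run the decomposition-based contraction strategy of \Cref{sec:tools} on the planar graph $G^{+}$, but to perform contractions only between original (non-dummy) vertices, carrying the dummy vertices along passively until they can be absorbed. The point of this is a \emph{red-degree accounting}: when we contract $u_1,u_2 \in V(G)$ according to the planar scheme, an incident dummy vertex $w$ (lying on an edge $uv$ of $G$ that got crossed) contributes, through $w$, to the red neighbourhood of the contracted vertex in place of the original endpoint $v$; since $w$ has degree $4$, each dummy vertex on the ``boundary'' of the part being contracted can inflate the red degree by a bounded additive amount compared with the planar case. Carefully, a dummy vertex $w$ carries at most $3$ relevant neighbours besides the two sides of its own subdivided edge, and one checks that replacing the planar bound of $8$ by the $1$-planar situation roughly doubles it: each of the (at most) $8$ red neighbours in the planar scheme can be ``mediated'' by at most one dummy vertex, and each dummy vertex itself may appear as a red neighbour, giving the claimed bound of $16$. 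Equivalently — and this is the cleanest way to phrase it — one observes that after planarization the original vertices $V(G)$ form a set in the planar graph $G^{+}$ such that the \emph{square}-like reachability ``$u,v \in V(G)$ joined in $G$'' is realised by paths of length at most $2$ through dummy vertices, so the $1$-planar contraction problem on $G$ embeds into a planar contraction problem on $G^{+}$ at the cost of a factor~$2$ in red degree; this mirrors exactly the square-of-bipartite-planar characterisation used for map graphs in \Cref{sec:prooftwwmapplanar}.

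The hardest part will be making the ``carry the dummy vertices along'' bookkeeping precise: in a contraction sequence of $G^{+}$ produced by the recursive decomposition, one must specify exactly when a dummy vertex $w$ on a crossed edge $uv$ is contracted into the (already merged) part containing $u$ or $v$, and verify that doing so never creates a red neighbour outside the current part's controlled boundary, nor raises the red degree above $16$ at any intermediate trigraph — including the transient trigraphs where $w$ is red-adjacent to a merged blob on one side and to ordinary vertices on the other. I expect this to require re-examining the invariants of the decomposition in \Cref{sec:tools} rather than treating it as a black box, exactly as is needed to get the sharp constant; the planarity arguments themselves are all inherited from the proof of \Cref{thm:twwplanar}, and no new topological input about $1$-planar graphs is needed beyond the one-edge-one-crossing planarization.
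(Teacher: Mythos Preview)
Your planarization-with-dummy-vertices approach is \emph{not} what the paper does, and the ``factor~2'' heuristic is where the argument breaks. The paper never introduces dummy crossing vertices. Instead it observes (\Cref{lem:4framingsk}) that every crossing pair of edges of a $1$-planar drawing can be encircled by an uncrossed $4$-cycle on the \emph{four original endpoints}; the union of these $4$-cycles and triangles forms a planar (multi)graph $S^+$ on $V(G)$, the \emph{$4$-framing skeleton}. The recursive decomposition is then run on $S^+$, with a BFS tree $T\subseteq S^+$, but with a \emph{double level assignment} (\Cref{def:doublelev}): two consecutive BFS layers of $S^+$ are merged into one level, because a crossed edge of $G$ may join vertices two $S^+$-layers apart. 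It is this coarsening---a vertex can now see up to $6$ vertices of a $T$-vertical path instead of~$3$ (\Cref{lem:neighonpath})---that produces the factor~$2$ and the bound~$16$, via an adapted core lemma (\Cref{lem:core1p}) proved along the lines of \Cref{lem:core}.

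Your scheme has a concrete gap: if you run the planar decomposition on $G^+$ but contract only original vertices, you are no longer following the planar contraction sequence, so the red-degree-$8$ invariant of \Cref{thm:twwplanar} simply does not apply to the intermediate trigraphs you produce. And if you try to translate red neighbours in $G^+$ back to $G$, note that $G$-adjacency corresponds to $G^+$-distance~$\le 2$, so you are really bounding something like non-black $2$-neighbours in a planar graph---exactly the map-graph setting of \Cref{sec:prooftwwmapplanar}, which yields $38$, not~$16$. The clean factor~$2$ you are after comes not from passing through dummy vertices but from pairing BFS layers of a planar skeleton on the \emph{same} vertex set as~$G$.
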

\begin{theorem}\label{thm:mapplanar}
The twin-width of the square of any bipartite planar graph is at most~$63$, and the twin-width of any map graph is at most~$38$.
\end{theorem}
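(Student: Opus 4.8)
The plan is to reduce both statements about squares of bipartite planar graphs and about map graphs to the general machinery developed in \Cref{sec:tools}, namely the novel recursive decomposition of planar graphs together with the contraction-sequence construction underlying \Cref{thm:twwplanar} and \Cref{thm:twwbiplanar}. For the map-graph part I would first invoke the characterization of \cite{10.1145/506147.506148}: a graph $M$ is a map graph if and only if $M$ is an induced subgraph of the \emph{half-square} of a planar bipartite graph $B$, i.e.\ $M$ lives on one side $A$ of the bipartition $V(B)=A\cup C$ and $u,v\in A$ are adjacent in $M$ exactly when they have a common neighbour in $C$. Since the half-square on $A$ is an induced subgraph of the full square $B^2$ restricted to $A$, and twin-width is monotone under taking induced subgraphs, it suffices to bound the twin-width of $B^2$ for bipartite planar $B$ — this is exactly the first clause of the theorem, and the map-graph bound of $38$ (rather than $63$) should come out of a slightly more careful accounting in the map-graph case, where only the $A$-side vertices survive and the $C$-side vertices can be contracted away first.

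The core of the argument is therefore the bound on $\mathrm{tww}(B^2)$ for a bipartite planar graph $B$. Here I would run the recursive decomposition of \Cref{sec:tools} on $B$ itself (not on $B^2$, which need not be planar), and then lift the resulting contraction sequence to $B^2$. The key point is that a contraction of two vertices $x_1,x_2$ in $B$ corresponds to contracting the same pair in $B^2$, and a red edge of the partial trigraph for $B$ between parts $P,Q$ forces at most the union of the second neighbourhoods to become red in $B^2$; so the red degree in the lifted sequence for $B^2$ is controlled by the red degree in the sequence for $B$ \emph{times} the maximum relevant ``distance-2 blow-up'' along the decomposition, which the decomposition is specifically engineered to keep bounded by a small constant (paths/fans of bounded length between consecutive boundary vertices). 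Concretely, one shows that each red edge in the $B$-sequence expands to a bounded bundle of red edges in the $B^2$-sequence, and then one sums the contributions: the boundary structure contributes a fixed amount, the at-most-two ``active'' decomposition pieces contribute bounded amounts, and an additive slack handles the interaction terms, giving the stated $63$. The map-graph refinement drops several of these contributions because the $C$-side is eliminated.

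I would organize the write-up as: (i) recall the bipartite-planar-square characterization of map graphs and the induced-subgraph monotonicity of twin-width, reducing \Cref{thm:mapplanar} to the square bound plus a map-specific tightening; (ii) recall the statement of the decomposition and the contraction scheme from \Cref{sec:tools} that were used for \Cref{thm:twwbiplanar}; (iii) prove the ``red-edge lifting lemma'': a $d$-contraction sequence of $B$ respecting the decomposition yields a $d'$-contraction sequence of $B^2$ with $d'$ bounded by an explicit function of $d$ and the decomposition parameters; (iv) plug in the concrete constants from the bipartite planar case and do the bookkeeping to land on $63$, then redo the bookkeeping in the map-graph case to land on $38$. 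Steps (i), (ii), (iv) are essentially routine given the earlier sections.

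The main obstacle I expect is step (iii), the lifting lemma — specifically, controlling how red edges \emph{propagate} through the squaring when parts are large. A red edge between parts $P$ and $Q$ in the $B$-trigraph means some but not all $B$-edges between $P$ and $Q$ are present; in $B^2$ we need to know which pairs $(u,v)$ with $u\in P$, $v\in$ (something at distance $\le 2$) genuinely become red, and naively this could be as large as $|P|$. The decomposition must be exploited to argue that, at the moment of each contraction, the only parts at ``effective distance two'' from the part being contracted are the $O(1)$ boundary parts and the $O(1)$ currently-active decomposition pieces, each of bounded internal diameter, so the blow-up is genuinely additive-and-bounded rather than multiplicative-in-$|P|$. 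Getting the constant down to $63$ (and then to $38$ for map graphs) will require being careful about double-counting red edges that arise both from a red edge of $B$ and from a black two-path of $B$, and about the transient trigraphs that occur \emph{between} the two halves of a decomposition-piece contraction.
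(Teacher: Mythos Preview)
Your high-level reduction is right and matches the paper: use the Chen--Grigni--Papadimitriou characterization, work with a planar bipartite $B$ (in fact a plane quadrangulation), construct a contraction sequence on $B$, and apply the \emph{same} sequence to $B^{(2)}$. The paper also proves exactly the lifting principle you are after (its \Cref{lem:redsquare}): a red edge $\{u,w\}$ in the $B^{(2)}$-trigraph forces a length-$\le 2$ path from $u$ to $w$ in the $B$-trigraph with no all-black such path.

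Where your plan diverges, and where it has a genuine gap, is in step~(iii). You propose to reuse the contraction sequence from \Cref{thm:twwbiplanar} (\Cref{lem:bicore}) and then bound the blow-up. But that sequence creates red edges incident to the boundary cycle $C$ (up to $4$ per boundary vertex, condition~\eqref{it:bound4}). Via the lifting principle, a red edge $\{x,y\}$ with $x\in V(C)$ and $y$ inside $C$ produces red $B^{(2)}$-edges from $y$ to neighbours of $x$ \emph{outside} $C$: face-containment is lost in the square, and the ``only two incident red faces per skeleton vertex'' argument collapses. Your proposal flags exactly this danger (``how red edges propagate through the squaring'') but does not resolve it; the decomposition from \Cref{sec:tools} alone does not prevent it.

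The paper's fix is to design a \emph{different} recursive sequence (\Cref{lem:mapcore}) whose invariant is that \emph{no} red edge of the $B$-trigraph is ever incident to $V(C)$. This is achieved by contracting, inside each face, only pairs that are twins with respect to $V(C)$; the price is that faces end up only $8$-reduced (at most $2^3$ distinct boundary-neighbourhoods) rather than $1$-reduced. With the boundary clean, \Cref{cor:redsquare} guarantees no red $B^{(2)}$-edge crosses $C$, restoring locality. The bound $63$ is then $25+38$: at most $25$ non-black neighbours and $38$ non-black $2$-neighbours of any interior vertex.

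Finally, your explanation of why map graphs get $38$ rather than $63$ is off. It is not that ``the $C$-side is contracted away first''; it is that the half-square lives on one bipartition class $A$, so every edge of $M$ corresponds to a distance-$2$ pair in $B$, and by \Cref{lem:bipartlevels} only non-black $2$-neighbours (never non-black $1$-neighbours) contribute red edges in $M$. The $25$ term simply drops out.
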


Regarding comparison of \Cref{thm:tww1planar} to previous knowledge,
the improvement is from $80$ of \cite{DBLP:conf/isaac/BekosLH022} down to $16$ now.
In the case of \Cref{thm:mapplanar}, the aforementioned paper of Bonnet, Kwon and Wood~\cite{DBLP:journals/corr/abs-2202-11858}, 
in addition to other results, bounds the twin-width of the squares of graphs of genus $\gamma$ by $\ca O(\gamma^4)$, 
and specifically of the squares of planar graphs by at most~$3614782$, which is now down to $38$ specifically for map graphs.%
\footnote{%
The improvement presented in \Cref{thm:mapplanar} for map graphs is thus huge, but one should also note that a detailed look at the very
simple FO interpretation of map graphs in planar graphs, together with \Cref{thm:twwplanar}, would very likely give another
upper bound in the order of hundreds or perhaps thousands.}

In both \Cref{thm:tww1planar} and \Cref{thm:mapplanar}, obtaining a corresponding contraction sequence
from the proofs would be straightforward in linear time.
However, we would need for that a representation of the input graph according to the definitions in the first place, but
the known recognition algorithm of map graphs by Thorup~\cite{DBLP:conf/focs/Thorup98} is not linear-time,
and the recognition problem of $1$-planar graphs is even NP-hard \cite{DBLP:journals/jgt/KorzhikM13}.
We hence omit the algorithmic aspects of the latter theorems in this paper.

\section{Notation and Tools}\label{sec:tools}

Our graphs are finite, have no loops, but may have parallel edges.
We assume usual set-~and graph-theory notation.
In addition to that, we shortly write $G-x$ or $G-X$ for the subgraph of $G$ obtained by removing the vertex $x$ or all vertices of $X\cap V(G)$
together with their edges, $\,G\setminus e$ for the subgraph of $G$ obtained by deleting the edge~$e$,
$\,G\cup\{x_1,x_2\}$ or $G\cup e$ for the graph obtained from $G$ by adding the edge $e=\{x_1,x_2\}$, where $x_1$, $x_2$ are assumed to be from~$V(G)$,
and $G\Delta H$ for the graph with the edge set $E(G)\Delta E(H)$ and the vertex set $V(G)\cup V(H)$ without isolated vertices.

We start with a few technical definitions and claims needed for the proofs.

\vspace*{-2ex}\subparagraph{BFS layering and contractions. }
Let $G$ be a {\em connected} graph and $r\in V(G)$ a fixed vertex.
The \emph{BFS layering of $G$ determined by~$r$} is the vertex partition $\ca L=(L_0=\{r\},L_1,L_2,\ldots)$ of $G$ such that 
$L_i$ contains all vertices of $G$ at distance exactly $i$ from $r$.
A path $P\subseteq G$ is {\em $r$-geodesic} if $P$ is a subpath of some shortest path from $r$ to any vertex of~$G$
(in particular, $P$ intersects every layer of $\ca L$ in at most one vertex).
Let $T$ be a {\em BFS tree} of $G$ rooted at the vertex $r$ as above
(that is, for every vertex $v\in V(G)$, the distance from $v$ to $r$ is the same in $G$ as in $T$).
A path $P\subseteq G$ is {\em $T$-vertical}, or shortly {\em vertical} with respect to implicit~$T$, if $P$ is a subpath of some root-to-leaf path of~$T$.
Notice that a $T$-vertical path is $r$-geodesic, but the converse may not be true.
Analogously, an edge $e\in E(G)$ is {\em$T$-horizontal}, or horizontal with respect to implicit~$\ca L$, if both ends of $G$ are in the same $\ca L$-layer.

Observe the following trivial claim which will be very useful in the proofs: 
\begin{claim}\label{cl:layeri3}
For every edge $\{v,w\}$ of $G$ with $v\in L_i$ and $w\in L_j$, we have $|i-j|\leq 1$, and so
a contraction of a pair of vertices from $L_i$ may create new red edges only to the remaining vertices of~$L_{i-1}\cup L_i\cup L_{i+1}$.
\end{claim}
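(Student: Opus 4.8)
The plan is to read off both parts directly from the definition of the BFS layering $\ca L$ and of a contraction, so no machinery beyond the triangle inequality is needed. For the first part, suppose $\{v,w\}\in E(G)$ with $v\in L_i$ and $w\in L_j$; by definition of $\ca L$ this means the distance from $r$ to $v$ in $G$ is $i$ and the distance from $r$ to $w$ is $j$. Appending the edge $\{v,w\}$ to a shortest $r$--$v$ path gives an $r$--$w$ walk of length $i+1$, so $j\le i+1$; by symmetry $i\le j+1$, hence $|i-j|\le 1$.

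For the second part, I would recall that contracting a pair $x_1,x_2\in L_i$ into a new vertex $x_0$ changes the trigraph only locally at $x_0$: all edges and all red markings not incident to $x_1$ or $x_2$ are retained unchanged, and the only red edges that can be \emph{new} are those between $x_0$ and a vertex of $\big(N_{G}^r(x_1)\cup N_G^r(x_2)\cup(N_G(x_1)\Delta N_G(x_2))\big)\setminus\{x_1,x_2\}$. In particular every newly created red edge is incident to $x_0$ and to some vertex of $N_G(x_1)\cup N_G(x_2)$. Since $x_1,x_2\in L_i$, the first part gives $N_G(x_1)\cup N_G(x_2)\subseteq L_{i-1}\cup L_i\cup L_{i+1}$, so every new red edge has an endpoint in $L_{i-1}\cup L_i\cup L_{i+1}$, which is exactly the assertion (and the same reasoning applies verbatim in any trigraph reached along a contraction sequence, once each merged vertex is assigned the layer index of the original vertices it stems from).

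Since the statement is genuinely routine, I do not expect a real obstacle. The only point worth stating carefully is that a contraction introduces new red edges \emph{only at the stem vertex} $x_0$ — all other adjacencies and all other red markings are inherited unchanged from $G$ — so that "new red edges" can indeed only run between $x_0$ and the union $N_G(x_1)\cup N_G(x_2)$ of the two former neighbourhoods, to which the layer bound of the first part then applies.
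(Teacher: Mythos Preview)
Your proof is correct. The paper treats this claim as a trivial observation and gives no proof at all, so your argument is simply a careful elaboration of what the paper leaves implicit; both parts are handled exactly as one would expect.
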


\vspace*{-2ex}\subparagraph{Plane graphs; Left-aligned BFS trees. }
We will deal with {\em plane graphs}, which are planar graphs with a given (combinatorial) embedding in the plane, and one marked {\em outer face}
(the remaining faces are then {\em bounded}).
A plane graph is a {\em plane triangulation} if every face of its embedding is a triangle.
Likewise, a plane graph is a {\em plane quadrangulation} if every face of its embedding is of length~$4$.
It is easy to turn an embedding of any simple planar graph into a simple plane triangulation by adding vertices and incident edges into each non-triangular face.
Furthermore, twin-width is non-increasing when taking induced subgraphs, and so it suffices to focus on plane triangulations in the proof of
\Cref{thm:twwplanar}, and to similarly deal with plane quadrangulations in the proof of \Cref{thm:twwbiplanar}.

For algorithmic purposes, we represent a plane graph $G$ in the standard combinatorial way---as a graph
(the vertices and their adjacencies) with the counter-clockwise cyclic orders of the incident edges of each vertex,
and we additionally mark the outer face of~$G$.

We note in advance that the coming definition of a special BFS tree may resemble so-called leftist trees in planar graphs~\cite{DBLP:conf/gd/BadentBBC09},
but there is no direct relation between the notions since leftist trees are (typically) not BFS trees.

Consider a connected plane graph $G$, and a BFS tree $T$ spanning $G$ and rooted in a vertex $r$ of the outer face of $G$,
and picture (for clarity) the embedding $G$ such that $r$ is the vertex of $G$ most at the top.
We give formal meaning to the words ``being left of'' independent of a particular picture.
\begin{definition}\label{def:leftuv}
Consider a BFS tree $T$ of a plane graph $G$ as above, and let $u,v\in V(G)$ be (adjacent) vertices such that $\{u,v\}\in E(G)$ but $\{u,v\}\not\in E(T)$.
We say that {\em$u$ is to the left of~$v$} (wrt.~$T$) if the following holds:
If $r'$ is the least common ancestor of $u$ and $v$ in $T$ and $P_{r',u}$ (resp., $P_{r',v}$) denote the vertical path from $r'$ to $u$ (resp., to~$v$),
then the cycle $P_{r',u}\cup P_{r',v}\cup\{u,v\}$ has the triple $(r',u,v)$ in this counter-clockwise cyclic order.
\end{definition}

We add two remarks.
Since $T$ is a BFS tree, $\{u,v\}\not\in E(T)$ means that neither of $u,v$ lies on the vertical path from $r$ to the other.
Furthermore, since $T$ is rooted on the outer face, there is no danger of confusion in \Cref{def:leftuv} even if $\{u,v\}$ is a multiple edge of~$G$.

\begin{definition}[Left-aligned]\label{def:leftaligned}
A BFS tree $T$ of a plane graph $G$ as above, with the BFS layering $\ca L=(L_0,L_1,\ldots)$, is called {\em left-aligned} 
if there is no edge $f=\{u,v\}$ of $G$ such that, for some index $i$, $u\in L_{i-1}$ and $v\in L_i$, and $u$ is to the left of~$v$.
\end{definition}

\begin{figure}[tb]
$$
\begin{tikzpicture}[scale=1]
\tikzstyle{every node}=[draw, shape=circle, minimum size=2.4pt,inner sep=0pt, fill=black]
\node[minimum size=5pt, fill=none, label=above:$r$] at (0,5) {};
\node at (0,5) (a) {};
\node at (-2,4) (b) {}; \node at (-1,4) (bb) {}; \node at (1,4) (bbb) {}; \node at (2,4) (bbbb) {};
\node at (-2.5,3) (c) {}; \node[label=left:$u~$] at (-0.5,3) (cc) {}; \node at (1,3) (ccc) {}; \node at (2.5,3) (cccc) {};
\node at (-2,2) (d) {}; \node at (0,2) (dd) {}; \node[label=right:$\,v$] at (2,2) (ddd) {};
\node at (-1.5,1) (e) {}; \node at (1.5,1) (ee) {};
\tikzstyle{every path}=[draw, very thin]
\draw (a)--(b)--(bb)--(a)--(bbb)--(bbbb)--(a) ;
\draw (c)--(b)--(cc)--(bb)--(ccc)--(bbb)--(cccc)--(bbbb) (cc)--(ccc) ;
\draw (c)--(d)--(dd)--(cc)--(ddd)--(ccc)--(cccc)--(ddd) ;
\draw (d)--(e)--(dd)--(ee)--(ddd) (ee)--(e) ;
\tikzstyle{every path}=[draw, thick]
\draw (b)--(a)--(bb) (bbb)--(a)--(bbbb) ;
\draw (c)--(b)--(cc) (ccc)--(bb) (cccc)--(bbb) ;
\draw (c)--(d)--(e) (cc)--(dd)--(ee) (ccc)--(ddd) ;
\end{tikzpicture}
\qquad\qquad
\begin{tikzpicture}[scale=0.8]
\tikzstyle{every node}=[draw, shape=circle, minimum size=2.4pt,inner sep=0pt, fill=black]
\node[minimum size=5pt, fill=none, label=above:$r$] at (0,5) {};
\node at (0,5) (a) {};
\node at (-1,3.1) (b) {}; \node at (1,3.1) (bb) {};
\node at (0,2) (c) {}; \node at (0,0.7) (cc) {};
\node at (-3,0) (d) {}; \node at (3,0) (dd) {};
\tikzstyle{every path}=[draw, very thin]
\draw (a) to[bend right] (d) (d)--(dd) (dd) to[bend right] (a) ;
\draw (a)--(b)--(d)--(cc)--(dd)--(bb)--(a) ;
\draw (b)--(bb)--(c)--(cc) to[bend left] (b) (b)--(c) (bb) to[bend left] (cc) ;
\tikzstyle{every path}=[draw, thick]
\draw (a) to[bend right] (d) (dd) to[bend right] (a) ;
\draw (a)--(b)--(c) (bb)--(a) (cc)--(d) ;
\end{tikzpicture}
$$
\caption{Left: a BFS tree (in thick lines) rooted at $r$ which is not left-aligned -- \Cref{def:leftaligned} is violated by the edge $\{u,v\}$ 
	(and only by this edge).
	Right: An example of a left-aligned BFS tree (in thick lines) in a plane triangulation rooted at~$r$.}
\label{fig:leftaligni}
\end{figure}
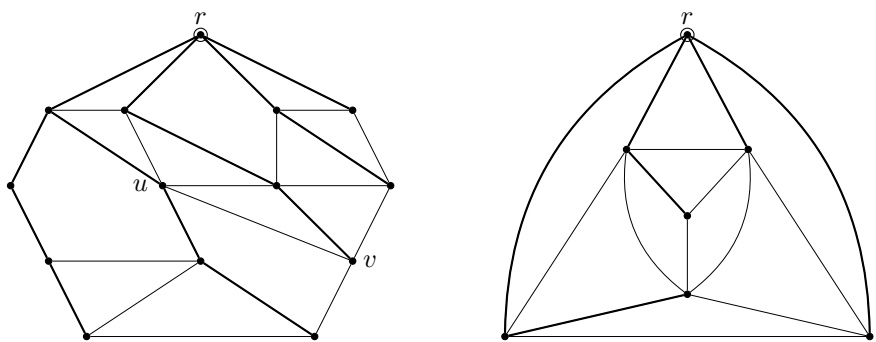

An informal meaning of \Cref{def:leftaligned} is that one cannot choose another BFS tree of $G$ which is ``more to the left'' of~$T$ in the 
geometric picture of $G$ and~$T$, such as by picking the edge $f$ instead of the parental edge of $v$ in~$T$.
See \Cref{fig:leftaligni}.
A left-aligned BFS tree of $G$ is not necessarily unique even with the fixed root~$r$ of~$T$, but this is not a problem for our application.

\begin{lemma}\label{clm:existslal}
Given a connected plane graph $G$ (not necessarily simple), and a vertex $r$ on the outer face, there exists a left-aligned BFS tree of $G$
and it can be found in linear time.
\end{lemma}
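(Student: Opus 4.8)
The plan is to prove \Cref{clm:existslal} by constructing the left-aligned BFS tree greedily, layer by layer, always choosing for each newly discovered vertex the ``leftmost possible'' parent among the already-placed vertices in the previous layer. Concretely, I would first compute an ordinary BFS layering $\ca L=(L_0,L_1,\ldots)$ from $r$ in linear time; this is independent of the tree choice. Then I would build $T$ inductively on the layer index: having fixed the subtree on $L_0\cup\cdots\cup L_{i-1}$ together with a natural left-to-right order on the vertices of $L_{i-1}$ induced by the plane embedding (a cyclic order cut open at $r$, or more precisely at the ``outer'' side coming from the outer face), I process the vertices of $L_{i-1}$ in this left-to-right order and, for each, scan its incident edges to $L_i$ in the embedding's rotation to claim as children exactly those vertices of $L_i$ not yet assigned a parent. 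This simultaneously produces the order on $L_i$ needed for the next step. The total work is a single pass over all edges, hence linear time.

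The key steps, in order, are: (1) compute $\ca L$; (2) formalise the left-to-right ordering of each layer $L_i$ — I would define it via the vertical paths from $r$ (equivalently, the DFS-like order in which vertices of $L_i$ are first reached when the parents in $L_{i-1}$ are swept left to right and each parent's children swept in rotation order), and check this is well-defined using planarity; (3) show the resulting tree $T$ is a BFS tree, which is immediate since every vertex of $L_i$ gets a parent in $L_{i-1}$; (4) verify the left-aligned property of \Cref{def:leftaligned}: suppose for contradiction there is an edge $f=\{u,v\}$ with $u\in L_{i-1}$, $v\in L_i$, $f\notin E(T)$, and $u$ to the left of $v$ per \Cref{def:leftuv}. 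Let $u'\in L_{i-1}$ be the parent of $v$ in $T$; then $v$ was claimed by $u'$, so at the time $v$ was assigned, either $u$ had already been processed (and did not claim $v$, impossible since $f$ is an edge and $v$ was then unclaimed) or $u$ comes after $u'$ in the left-to-right order of $L_{i-1}$. I would then argue that ``$u$ after $u'$ in $L_{i-1}$'' together with ``$v$ a child of $u'$'' forces $v$ to lie to the left of $u$ along $f$, contradicting the assumed orientation — this is a planarity argument comparing the cycle formed by the two vertical paths $P_{r,u},P_{r,u'}$ together with the tree edge $u'v$ and the non-tree edge $uv$.

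The main obstacle I expect is precisely step (4): turning the intuitive ``leftmost parent'' construction into a rigorous statement about the counter-clockwise cyclic order $(r',u,v)$ in \Cref{def:leftuv}. One has to be careful that the ``least common ancestor'' $r'$ of $u$ and $v$ in $T$ need not lie in $L_{i-1}$ or even close to it, so the comparison of ``left/right'' must be transported up through several layers; the cleanest way is to prove a monotonicity lemma stating that the left-to-right order on $L_{i-1}$ induced by the construction is exactly the order induced by the positions of the vertical paths in the embedding, and that this order is ``refined'' from layer to layer (the order on $L_i$ is consistent with the order of the parents on $L_{i-1}$, breaking ties by rotation order at the parent). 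With that monotonicity lemma in hand, the contradiction in (4) becomes a short planar-topology argument: if $u$ were to the left of $v$ while $v$ is a child of $u'$ with $u'$ left of (or equal to the ancestor-side of) $u$, the edge $f$ would have to cross either the path $P_{r',u}$ or the tree path from $u'$ down to $v$, contradicting that $G$ is plane.

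Alternatively — and this is the route I would actually take to keep the write-up short — I would prove existence non-constructively first by an exchange argument (start from any BFS tree; while \Cref{def:leftaligned} is violated by some edge $f=\{u,v\}$ with $u$ left of $v$, replace the parental edge of $v$ in $T$ by $f$, and show via a suitable potential function, e.g. the sum over all vertices of the ``leftness rank'' of the vertical path to that vertex, that this strictly decreases and hence terminates), and then observe separately that the greedy layer-by-layer construction above implements this and runs in linear time. The termination/potential argument is the delicate point there, but it sidesteps having to reason about arbitrary least common ancestors directly.
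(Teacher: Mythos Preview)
Your greedy ``leftmost parent'' construction is essentially the same as the paper's, and your correctness argument has the right shape. The paper, however, frames the construction as a DFS-style recursion rather than a layer-by-layer sweep: having reached a vertex $v\in L_i$, it adds all of $v$'s unclaimed neighbours in $L_{i+1}$ as children, orders them by the rotation at~$v$, and recurses on them depth-first in that order. The resulting tree is the same as yours, but the DFS formulation makes step~(4) almost immediate. If an edge $\{u_1,u_2\}$ with $u_1\in L_{i-1}$, $u_2\in L_i$ violates \Cref{def:leftaligned}, let $r'$ be the least common ancestor of $u_1,u_2$ in~$T$ and $v_1,v_2$ its children on the respective $T$-paths; ``$u_1$ to the left of $u_2$'' means exactly that $v_1$ precedes $v_2$ in the rotation at $r'$, hence $v_1$ is processed before $v_2$ in the DFS, hence (by the depth-first discipline) $u_1$ is processed before the $T$-parent of $u_2$, hence $u_1$ would already have claimed~$u_2$ --- contradiction.

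This DFS viewpoint buys you the global ``processed-before'' order for free, eliminating the need for your monotonicity lemma (transporting the layer order consistently up to an arbitrary least common ancestor) and for the exchange/potential-function fallback. Your approach is correct, but the paper's packaging is shorter.
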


\begin{proof}
In the proof, we want to extend the above relation of ``being left of'' to edges~of~$T$ with a common parent.
For a BFS tree T of $G$, consider edges $f_1,f_2\in E(T)$ incident to $v\in V(G)$, 
such that neither of $f_1,f_2$ is the parental edge of $v$ in~$T$ (and note that $T$ has no parallel edges).
We write $f_1\leq_lf_2$ if there exist adjacent vertices $u_1,u_2\in V(G)$ such that $u_1$ is to the left of~$u_2$,
the least common ancestor of $u_1$ and $u_2$ in $T$ is~$v$ and, for $i=1,2$, the edge $f_i$ lies on the vertical path from $u_i$ to~$v$.
Observe the following; if $f_0$ is the parental edge of $v$ in~$T$ (or, in case of $v=r$, $f_0$ is a ``dummy edge'' pointing straight up from~$r$),
then $f_1\leq_lf_2$ implies that the counter-clockwise cyclic order around $v$ is $(f_0,f_1,f_2)$.

We first run a basic linear-time BFS search from $r$ on the outer face to determine the BFS layering $\ca L$~of~$G$.
Then we start the construction of a left-aligned BFS tree $T\subseteq G$ from $T:=\{r\}$, and we recursively (now in a ``DFS manner'') proceed as follows:
\begin{itemize}
\item Having reached a vertex $v\in V(T)\subseteq V(G)$ such that $v\in L_i$, we denote by $X:=(N_G(v)\cap L_{i+1})\setminus V(T)$
all neighbours of $v$ in $L_{i+1}$ which are not in $T$ yet.
We add to $T$ the vertices $X$ and one edge from $v$ to each vertex of~$X$, arbitrarily in case of multiedges.
\item We order the vertices in $X$ using the cyclic order of the chosen edges from $v$ to $X$ (to have it compatible with $\leq_l$ at~$v$),
and in this increasing order we recursively (depth-first, to be precise) call this procedure for them.
\end{itemize}
The result $T$ is clearly a BFS tree of $G$. 

Assume, for a contradiction, that constructed $T$ is not left-aligned, and let $u_1\in L_{i-1}$ and $u_2\in L_i$ be a witness pair of it, 
where $\{u_1,u_2\}\in E(G)$ and $u_1$ is to the left of~$u_2$.
Let $v$ be the least common ancestor of $u_1$ and $u_2$ in $T$, and let $v_1$ and $v_2$ be the children of $v$ on the $T$-paths from $v$ to $u_1$ and $u_2$, respectively.
So, by the definition, $vv_1\leq_lvv_2$ at $v$, and hence when $v$ has been reached in the construction of $T$,
its child $v_1$ has been ordered for processing before the child $v_2$.
Consequently, possibly deeper in the recursion, $u_1$ has been processed before the parent of~$u_2$ and,
in particular, the procedure has added the edge $\{u_1,u_2\}$ into $T$, a contradiction to $u_1$ being to the left of~$u_2$.

This recursive computation is finished in linear time, since every vertex of $G$ is processed only in one branch of the recursion,
and one recursive call takes time linear in the number of incident edges (to~$v$).
\end{proof}

Note that we have not assumed $G$ to be simple or a triangulation in the previous definition and in \Cref{clm:existslal}, 
which will be useful for the cases of bipartite planar and $1$-planar graphs.

\subparagraph{Level assignment in contraction sequences. }
A \emph{partial contraction sequence} of $G$ is defined in the same way as a contraction sequence of $G$, except that it does not have to end with a single-vertex graph.
We are going to work with partial contraction sequences which, preferably, preserve the BFS layers of $\ca L$ of connected~$G$.
However, we do {\em not always} preserve the layers, and so we need a notion which is related to the layers of $\ca L$, 
but it can differ from these layers when needed -- informally, when this ``causes no harm at all''.

\begin{definition}\label{def:goodla}
Let $G$ be a trigraph. A function $\levll G. :V(G)\to\mathbb N$ is called a {\em level assignment}.
Moreover, $\levll G.$ is a {\em good level assignment} if for every $e=\{u,v\}\in E(G)$ (black or red), 
we have $\big|\levll Gu -\levll Gv\big|\leq1$ (cf.~\Cref{cl:layeri3}).
\end{definition}

We are going to deal with the following natural {\em minimum level assignment} of the trigraphs along a partial contraction sequence of a (tri)graph~$G$.
If $G'$ is a trigraph along the sequence of~$G$, and an arbitrary vertex $x\in V(G')$ stems from a set $X\subseteq V(G)$
by (possible) contractions, we denote by $k$ the minimum value such that $\levll{G}{y}=k$ for some~$y\in X$
and by $m$ the maximum value of $\levll{G'}{z}$ over all neighbours $z$ of $x$ in~$G'$.
Observe that $m\geq k-1$ if $\levll G.$ is good.
Then $\levll{G'}.$ is called a {\em minimum level assignment} (with respect to~$G$)
if, for every $x\in V(G')$ and corresponding $k$ and $m$, we have $m\leq\levll{G'}{x}\leq k$~or $k=\levll{G'}{x}=m-1$.

It is practical to observe that every minimum level assignment can be defined recursively along one contraction step from $G'$ to $G''$
(without an explicit reference to the ``original'' graph~$G$).
If $z$ of $G''$ results by the contraction of $x$ and $y$ of $G'$, then $\levll{G''}{z}:=\min(\levll{G'}{x},\levll{G'}{y})$,
and we can also decrease (formally, say, in a dummy step of the contraction sequence)
the level of $x$ by $1$ if all neighbours $t$ of $x$ satisfy $\levll{G'}{t}\leq\levll{G'}{x}-1$.

For {\em not} every partial contraction sequence, a minimum level assignment is possible and/or {good}, 
but we are now going to formulate a condition under which it will be good.
\begin{definition}[Level preserving and respecting]\label{def:lrespecting}
Consider a partial contraction sequence of a trigraph $G$ and a level assignment $\levll..$ of each trigraph along it.
\begin{itemize}
\item
The sequence is \emph{level-preserving} if every step contracts only pairs of the same level
and this level is inherited by the contracted vertex.
\item
The sequence is \emph{level-respecting} if every step contracts, in a trigraph $G'$ along the sequence, 
only a pair $x,y\in V(G')$ such that the following holds;
the levels of $x$ and $y$ are the same, i.e.~$\levll{G'}{x}=\levll{G'}{y}$,  or $\levll{G'}{x}=\levll{G'}{y}-1$
and all neighbours of $y$ (red or black) in $G'$ are on the level $\levll{G'}{x}$.%
\footnote{Unlike in a level-preserving sequence, for a level-respecting sequence we do not prescribe the level of the contracted vertex beforehand.
See the example of a minimum level assignment defined above.}
\item
The sequence is \emph{min-level-respecting} if it is level-respecting, and in every trigraph $G'$ along the sequence, 
the level level assignment $\levll{G'}.$ is a minimum level assignment with respect to~$G$.
\end{itemize}
\end{definition}

A level-preserving partial contraction sequence obviously preserves also the property of being a good level assignment, cf.~\Cref{cl:layeri3},
and can prove the same for min-level-respecting sequences as follows.

\begin{lemma}\label{lem:leveli3}
Consider a \emph{min-level-respecting} partial contraction sequence of a trigraph $G$ with a level assignment $\levll G.$.
If $\levll{G}.$ is good (\Cref{def:goodla}), then, for every trigraph $G'$ along the sequence, 
its level assignment $\levll{G'}.$ (minimum one w.r.t.~$G$ by \Cref{def:lrespecting}) is also good.
\end{lemma}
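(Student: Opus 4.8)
The plan is to prove the statement by induction on the position of the trigraph $G'$ in the given min-level-respecting partial contraction sequence, the base case $G'=G$ being exactly the hypothesis. For the inductive step I would assume that $\levll{G'}{.}$ is good and analyse one step of the sequence, which turns $G'$ into $G''$ by contracting a pair $x,y\in V(G')$ into a new vertex $z$. Here I would invoke the recursive description of a minimum level assignment noted above (in the paragraph preceding \Cref{def:lrespecting}): the assignment $\levll{G''}{.}$ is obtained from $\levll{G'}{.}$ by first setting $\levll{G''}{z}:=\min(\levll{G'}{x},\levll{G'}{y})$ and keeping all other levels unchanged, and then possibly performing finitely many ``dummy decrement'' operations, each lowering the level of a single vertex $a$ by one under the precondition that every neighbour $t$ of $a$ in the current trigraph has level at most one less than that of $a$. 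Hence it suffices to check that each of these operations preserves goodness in the sense of \Cref{def:goodla}.

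For the contraction itself, let $\mu$ denote the level assignment on $V(G'')$ immediately after the contraction but before any dummy decrement, so that $\mu(z)=\min(\levll{G'}{x},\levll{G'}{y})$ and $\mu$ agrees with $\levll{G'}{.}$ on every other vertex. Every edge of $G''$ not incident to $z$ is an edge of $G'$ with unchanged endpoint levels, so it still satisfies the goodness inequality. For an edge $\{z,w\}$ of $G''$, the vertex $w$ is a neighbour of $x$ or of $y$ in $G'$; by the level-respecting condition of \Cref{def:lrespecting} either $\levll{G'}{x}=\levll{G'}{y}=:\ell$, and then $\mu(z)=\ell$ with $|\ell-\levll{G'}{w}|\le 1$ following from goodness of $G'$ since $w$ is adjacent to $x$ or to $y$; or, up to swapping $x$ and $y$, $\levll{G'}{x}=\levll{G'}{y}-1=:\ell$ with all neighbours of $y$ on level $\ell$, and then $\mu(z)=\ell$, while $\levll{G'}{w}=\ell$ if $w\in N_{G'}(y)$ and $|\ell-\levll{G'}{w}|\le1$ by goodness of $G'$ if $w\in N_{G'}(x)$. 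In all cases $|\mu(z)-\mu(w)|\le1$, so $\mu$ is good.

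It remains to treat one dummy decrement: suppose $H$ is a trigraph reached during the step, equipped with a good level assignment $\mu$, and let $\mu'$ coincide with $\mu$ except that $\mu'(a)=\mu(a)-1$, where every neighbour $t$ of $a$ satisfies $\mu(t)\le\mu(a)-1$. Edges avoiding $a$ keep both endpoint levels and stay good; for an edge $\{a,t\}$, the precondition gives $\mu(t)\le\mu(a)-1$ while goodness of $\mu$ gives $\mu(t)\ge\mu(a)-1$, hence $\mu(t)=\mu(a)-1=\mu'(a)$ and $\mu'(a)=\mu'(t)$, so the edge remains good. Chaining the contraction with all the subsequent dummy decrements then shows that $\levll{G''}{.}$ is good, completing the induction. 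The step I expect to require the most care is precisely the observation that goodness already holds for the intermediate assignment $\mu$ right after the contraction: it is this that upgrades the dummy-decrement precondition $\mu(t)\le\mu(a)-1$ to the equality $\mu(t)=\mu(a)-1$, which is what keeps the lowered level within distance one of all its neighbours; without controlling the intermediate state, a decrement could in principle break the good-level property.
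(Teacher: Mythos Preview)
Your proof is correct and follows essentially the same approach as the paper's own proof: induction along the sequence, a case split on the two alternatives in \Cref{def:lrespecting} for the contraction step, and a separate check for the dummy level-decrement step. Your treatment of the decrement is in fact slightly more explicit than the paper's, as you spell out why the precondition together with goodness forces $\mu(t)=\mu(a)-1$, whereas the paper only records the inequality $\levll{G''}{t}\leq\levll{G''}{x}$.
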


\begin{proof}
We proceed by induction from $G$ along the sequence, the base case being trivial.
Assume that $\levll{G'}.$ is a good level assignment,
and $z\in V(G'')$ results from a contraction of a pair $x,y\in V(G')$ such that $\levll{G'}{x}\leq\levll{G'}{y}$,
and so $\levll{G''}{z}=\levll{G'}{x}$.
Therefore, there is no neighbour of $z$ on the levels lower than $\levll{G''}{z}-1$ by induction from~$G'$ and~$x$.
On the other hand, if $\levll{G'}{y}=\levll{G'}{x}$, no neighbour of $z$ in $G''$ is on level greater than $\levll{G''}{z}+1$ again by induction from~$G'$.
If $\levll{G'}{y}>\levll{G'}{x}$, then every neighbour of $y$ in $G'$ is by \Cref{def:lrespecting} on the level 
$\levll{G'}{x}$ and every neighbour of $x$ on the level at most $\levll{G'}{x}+1=\levll{G''}{z}+1$ by induction, 
and so the same holds for $z$ in~$G''$.
Lastly, if the level of $x$ of $G'$ is decreased by $1$ (as a dummy step of the sequence, see above) in $G''$, then the assumptions
of a minimum level assignment assert that every neighbour $t$ of $z$ satisfies $\levll{G''}{t}\leq\levll{G'}{x}-1=\levll{G''}{x}$,
again as desired.
\end{proof}

\subparagraph{Skeletal trigraphs. }
Another helpful concept of handling contraction sequences is the following one, which allows us to ``nicely partition''
a large graph into recursively manageable pieces, and take advantage of planarity on the global scale 
(even though the pieces themselves might not be planar, and usually will not be after we start contracting).
Rigorous handling of this intuitive concept requires several uneasy technical definitions which we present next,
and which we will informally illustrate in \Cref{fig:ilskeletal}.

Recall that in $2$-connected planar graphs, every face is bounded by a cycle.
For (tri)graphs $G$ and $S$ such that $V(S)\subseteq V(G)$, we call $X\subseteq G$ a {\em bridge} of $S$ if;
\begin{itemize}
\item $X$ is a connected component of $G\setminus V(S)$ together with all edges from $V(X)$ to $V(S)$, or
\item $X$ is an edge $f\in E(G)\setminus E(S)$ with both ends in $V(S)$ (a {\em trivial bridge}).
\end{itemize}
The $G$-neighbours of $X$ in $V(S)$, or the two ends of an edge $X=f$, are called the {\em attachments} of $X$ in~$S$
(the attachments do not formally belong to~$X$).
We will consider only connected trigraphs $G$, but the related definitions are sound even for disconnected~$G$.

In regard of the coming concepts, we emphasize the following formal fact.
If a partial contraction sequence of $G$ results in a trigraph $G'$, then the vertices of $V(G')\cap V(G)$ are exactly those not participating in any contraction 
of the sequence, while the vertices of $V(G')\setminus V(G)$ are those created by contractions from vertices of $V(G)\setminus V(G')$.

\begin{definition}[Skeletal trigraph and related terms]\label{def:skeletal}
Let $G$ be a trigraph and $S$ a $2$-connected planar graph such that $V(S)\subseteq V(G)$ and all edges of $G$ with both ends in $V(S)$ are black.
Fix a planar embedding of $S$, and call $S$ a {\em (plane) skeleton of~$G$}.
A {\em face assignment} of $G$ in $S$ is a function $\iota$ mapping every bridge $X$ of $S$ to a face $\phi$ of $S$ 
such that all attachments of $X$ in~$S$ belong to the boundary of~$\phi$.
Then, we call the triple $(G,S,\iota)$ a {\em skeletal trigraph}.

We will often denote a skeletal trigraph shortly as $(G,S)$ if $\iota$ is implicit from the context (such as in coming \Cref{def:naturalface}).

For a face $\phi$ of $S$, we denote by $U_\phi$ the union of all vertices of the non-trivial bridges assigned by $\iota$ to~$\phi$
(i.e., without the boundary vertices of~$\phi$), and we also transitively say that the vertices of $U_\phi$ are assigned to~$\phi$.
We furthermore define:
\begin{itemize}
\item
A~face $\phi$ of $S$ is {\em nonempty} if $\phi$ is assigned some nontrivial bridges in~$(G,S)$, i.e.,~$U_\phi\not=\emptyset$.
\item
A face $\phi$ of $S$ is {\em red} if some vertex of $U_\phi$ is incident to a red edge.
\item
A skeletal trigraph $(G,S)$ is {\em proper} if $S\subseteq G$ (which is not required in general).
\end{itemize}
\end{definition}

\begin{definition}[Skeleton-aware contractions]\label{def:skelaware}
Consider a skeletal trigraph $(G,S,\iota)$ as in \Cref{def:skeletal}, and a level assignment~$\levll G.$.
\begin{itemize}
\item
A contraction of a pair $x,y\in V(G)$ in $G$ is \emph{$S$-aware}, or \emph{skeleton-aware} referring to an implicit skeleton~$S$,
if $x,y\not\in V(S)$ belong to bridge(s) assigned to the same face $\phi$ of~$S$.
\item
A partial contraction sequence of $G$ ending in a trigraph $H$ is called \emph{$S$-aware} if $H\supseteq S$
and every its step is $S$-aware.
\item
Considering an $S$-aware contraction in $G$ resulting in a trigraph $G'$,
the bridge of $S$ containing the contracted vertex in $G'$ can be assigned to the same $S$-face as before the contraction by $\iota$ in~$G$,
and we call it the {\em face assignment inherited} from $(G,S,\iota)$.
We extend this definition inductively along $S$-aware partial contraction sequences.
\end{itemize}
\end{definition}

Here, we mostly deal with cases in which the whole graph $G$ is drawn (not necessarily planarly) within the uncrossed plane skeleton $S$, and then we define:
\begin{definition}[Natural face assignment]\label{def:naturalface}
Let $G$ and $S$ be as in \Cref{def:skeletal}, and assume that $G\cup S$ is drawn in the plane such that no edge of $S$ is crossed
(in particular, $S$ is planarly embedded within this drawing).
We call the mapping $\iota$, which assigns every bridge $X$ of $S$ to the face of $S$ in which $X$ is drawn, the {\em natural face assignment} of this drawing.
Then $(G,S,\iota)$, or shortly without ambiguity $(G,S)$, is the {\em natural skeletal trigraph} defined by~$S$~in~$G$.
\end{definition}

The concept of a skeletal trigraph can be smoothly interconnected with BFS trees, and this allows us to define
a specific kind of skeleton faces which will be most useful in the coming proofs.
We typically picture the root of the BFS tree as the topmost vertex of the considered skeleton~$S$.

\begin{definition}[Rooted skeletal trigraph]\label{def:rootskel}
Consider a skeletal trigraph $(G,S,\iota)$ with a fixed plane embedding of $S$, and a vertex $r\in V(S)$ on the outer face of~$S$.
Let $T$, $V(T)\supseteq V(S)$, be a tree with the root~$r$ such that $T\cap S$ is a BFS tree of $S$ rooted at~$r$
(notice that we do not insist on $V(T)\subseteq V(G)$ for technical reasons -- other contractions).
Then we call the tuple $(G,S,\iota,T)$ a \emph{rooted~skeletal~trigraph}, and we again shorten this to $(G,S,T)$ if $\iota$ is implicit.
\end{definition}

\begin{definition}[Wrapped faces, sink and lid]\label{def:wrapped}
Let $(G,S,\iota,T)$ be a {rooted skeletal trigraph}.
A cycle $C\subseteq S$ is \emph{$T$-wrapped} if $C=P_1\cup P_2\cup f$, where
$P_1\subseteq S$ and $P_2\subseteq S$ are edge-disjoint $T$-vertical paths sharing a common vertex~$u$, 
and $f\in E(S)$ connects the other ends of $P_1$ and $P_2$.
The vertex $u\in V(P_1)\cap V(P_2)$ is called the \emph{sink} and the edge $f$ the \emph{lid} of such~$C$.
If $P_1$, $P_2$, $f$ occur in this clockwise cyclic order on~$C$, then we call $P_1$ and $P_2$ 
the \emph{left} and \emph{right wrapping paths}, resp., of~$C$.%
\footnote{Recall that we picture (for clarity) the graph $G$ such that the root of~$T$ is the vertex of $G$ most at the top, and $T$ ``grows down''.}
We naturally extend the notions of $T$-wrapped, sink, lid, and wrapping paths to the faces of $S$ as to their bounding cycles.
\end{definition}

We remark that, for any $T$-wrapped face $\phi$ in \Cref{def:wrapped}, the lid edge $f$ of $\phi$ indeed is unique --- 
it is the unique edge of the boundary of $\phi$ which does not belong to~$T$, 
and the sink $u$ is the unique vertex of the boundary of $\phi$ closest to $r$ in~$S$.

See \Cref{fig:ilskeletal} for a brief visual explanation of the previous definitions.

\begin{figure}[t]
$$\quad
\begin{tikzpicture}[yscale=0.8, xscale=1.3]
\footnotesize
\tikzstyle{every node}=[draw, shape=circle, minimum size=3pt,inner sep=0pt, fill=black]
\node[label=right:\qquad root] at (0,9) (l0) {};
\node at (-1.3,8) (l1) {}; \node at (0,8) (q1) {}; \node at (1.4,8) (r1) {};
\node at (-2,7) (l2) {}; \node at (0.1,7) (q2) {}; \node at (2,7) (r2) {};
\node at (-2.5,6) (l3) {}; \node at (-1.0,6) (p3) {}; \node at (0.5,6) (q3) {}; \node at (2.6,6) (r3) {};
\node at (-2.8,5) (l4) {}; \node at (-0.5,5) (p4) {}; \node at (1.1,5) (q4) {}; \node at (2.9,5) (r4) {};
\node at (-3,4) (l5) {}; \node at (-0.3,4) (p5) {}; \node at (1.5,4) (q5) {}; \node at (3.1,4) (r5) {};
\node at (-3,3) (l6) {}; \node at (-0.2,3) (p6) {}; \node at (1.6,3) (q6) {}; \node at (3.1,3) (r6) {};
\node at (-2.9,2) (l7) {}; \node at (-0.1,2) (p7) {}; \node at (1.4,2) (q7) {}; \node at (3,2) (r7) {};
\node at (-2.7,1) (l8) {}; \node at (0,1) (p8) {}; \node at (2.8,1) (r8) {};
\node at (-2.4,0) (l9) {}; \node at (2.5,0) (r9) {};
\draw (q1)--(l1)--(q2)--(l2);
\draw (p5)--(q5)--(q6)--(p5)--(q4);
\draw (q6)--(p6)--(q7)--(p7) (p8)--(q7)--(r8)--(p8);
\draw (l9)--(r9)--(p8)--(l9) (l8)--(p8);
\draw (r4)--(q3)--(r2);
\begin{scope}[on background layer]
\draw[fill=white!85!black] (0,9)--(0,8)--(0.1,7)--(0.5,6)--(1.1,5)--(1.5,4)--(-0.3,4)--(-0.5,5)--(-1.0,6)--(-2,7)--(-1.3,8);
\draw[fill=white!85!black] (3.1,4)--(3.1,3)--(3,2)--(2.8,1)--(1.4,2)--(1.6,3);
\draw[fill=white!93!black] (0,9)--(0,8)--(0.1,7)--(0.5,6)--(1.1,5)--(1.5,4)--(1.6,3)--(3.1,4)--(2.9,5)--(2.6,6)--(2,7)--(1.4,8);
\draw[fill=white!93!black] (-0.1,2)--(-0.3,4)--(-0.5,5)--(-1.0,6)--(-2,7)--(-2.5,6)--(-2.8,5)--(-3,4)--(-3,3);
\draw[fill=white!85!black] (-3,3)--(-2.9,2)--(-2.7,1)--(0,1)--(-0.1,2);
\end{scope}
\tikzstyle{every path}=[very thick]
\draw (l0)--(l1)--(l2)--(l3)--(l4)--(l5)--(l6)--(l7)--(l8)--(l9);
\draw (l0)--(r1)--(r2)--(r3)--(r4)--(r5)--(r6)--(r7)--(r8)--(r9);
\draw (l2)--(p3)--(p4)--(p5)--(p6) (p7)--(p8);
\draw (l0)--(q1)--(q2)--(q3)--(q4)--(q5);
\draw (r5)--(q6)--(q7) (l6)--(p7);
\draw[dashed] (l9)--(r9)--(p8)--(l9) (l8)--(p8);
\draw[dashed] (p5)--(q5)--(q6)--(p5);
\draw[dashed] (q6)--(p6)--(q7)--(p7) (p8)--(q7)--(r8)--(p8);
\draw[dashed] (p5)--(q6) (p6)--(p7);
\tikzstyle{every path}=[]
\node at (0.7,8) (z1) {}; \node at (1,7) (z2) {}; \node at (1.5,6) (z3) {};
\node at (1.9,5) (z4) {}; \node at (2.3,4) (z5) {};
\node at (-0.2,6) (y1) {}; \node at (0.3,5) (y2) {}; \node at (0.7,4.3) (y3) {};
\node at (2.5,3) (t1) {}; \node at (2.4,2) (t2) {}; \node at (2.7,1.5) (t3) {};
\node at (-2,6) (x1) {}; \node at (-1.5,6) (x2) {};
\node at (-2.3,5) (x3) {}; \node at (-1.7,5) (x4) {}; \node at (-1.1,5) (x5) {};
\node at (-2.4,4) (x6) {}; \node at (-1.7,4) (x7) {}; \node at (-1.0,4) (x8) {};
\node at (-2.2,3) (x9) {}; \node at (-1.4,3) (x10) {};
\node at (-1.3,2) (x11) {}; \node at (-1.2,1.3) (x12) {};
\draw (l0)--(z1) (r5)--(t1) (x7)--(x4)--(x1)--(l2)--(x2) (p3)--(q2);
\draw (l3)--(x1) (l3)--(x4) (x1)--(p4) (l4) to[bend left] (x4);
\tikzstyle{every path}=[color=red]
\draw (z1)--(z2)--(z3)--(z4)--(z5)--(q6);
\draw (q1)--(z1)--(r1)--(z2)--(q1);
\draw (q2)--(r2) (q2)--(z1)--(r2)--(z3)--(q2);
\draw (q3)--(r3) (q3)--(z2)--(r3)--(z4)--(q3);
\draw (q4)--(r4) (q4)--(z3)--(r4)--(z5)--(q4);
\draw (q5)--(r5);
\draw (l2)--(y1)--(y2)--(y3)--(p5);
\draw (p3)--(q3) (q3)--(y2)--(p3)--(y1)--(q2);
\draw (p4)--(q4) (p4)--(y1)--(q4)--(y3)--(p4);
\draw (y3)--(q5)--(y2)--(p5);
\draw (t1)--(t2)--(t3)--(r8) (t2)--(r8);
\draw (q6)--(r6) (q6)--(t2)--(r6);
\draw (q7)--(r7) (q7)--(t1)--(r7)--(t3)--(q7);
\draw (x1)--(p3) (l4)--(p4) (l5)--(p5) (l6)--(p6) (l7)--(p7);
\draw (x2)--(x5)--(x8)--(x10) (x11)--(x12);
\draw (x1)--(x3)--(x6)--(x9);
\draw (l3)--(x3)--(l5) (x1)--(x5)--(p3);
\draw (x4)--(x2)--(p4) (l4)--(x6)--(x4)--(x8)--(p4);
\draw (x3)--(x7)--(x5)--(p5);
\draw (l5)--(x9)--(x7)--(x10)--(p5) (l5)--(x10)--(x6) (x8)--(x9)--(p5);
\draw (x6)--(l6)--(x11) (x7)--(p6) (x8)--(p6);
\draw (x9)--(p7)--(x10) (l7)--(x12)--(p7);
\draw[black] (l8)--(x11)--(p8)--(x12)--(l8);
\end{tikzpicture}
$$
\caption{A fragment of a rooted skeletal trigraph $(G,S,T)$, with the natural face assignment given by the depicted drawing.
	The BFS tree $T\subseteq S$ is drawn with thick edges and the root at the top, and remaining edges of $S$ are thick-dashed.
	A good level assignment associated with $T$ is indicated by vertical positions of vertices.
	Observe the two crucial properties of a skeletal trigraph -- no edge in the drawing crosses edges of the skeleton~$S$,
	and no edge with both ends in $V(S)$ is red.
	The five nonempty faces of $S$ in $(G,S)$ are shaded in gray, and all of them are $T$-wrapped and red.
	The outer cycle of this picture is $T$-wrapped as well.}
\label{fig:ilskeletal}
\end{figure}
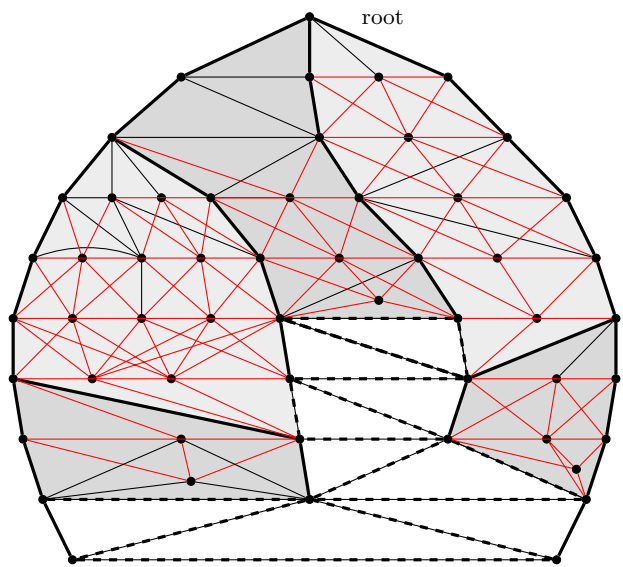

Let us now briefly illustrate the intended use of the concepts of rooted skeletal trigraphs and of wrapped faces.
If we are applying a contraction sequence which ensures that all inherited level assignments are good
(such as a level-preserving or a min-level-respecting one, by \Cref{lem:leveli3}),
then any vertex assigned to a red wrapped face $\phi$ of an intermediate rooted skeletal trigraph may have at most 
three red neighbours in each of the two wrapping paths of $\phi$ by \Cref{def:goodla}.
So, if we can moreover bound (or reduce) the number of vertices assigned to such $\phi$ in each level, we 
immediately get an upper bound on the maximum red degree ``inside~$\phi$''.
However, a problem remains with vertices of the skeleton~$S$ which may potentially be incident to arbitrarily many red faces of~$S$.
To solve this problem, we can observe (see the proof of~\Cref{lem:coremore}) that for $x\in V(S)$, 
all wrapped faces incident to $x$ except at most two have $x$ as their sink, and we will avoid red edges into the sinks altogether.
We will use the following two notion to formulate these ideas.

\begin{definition}[Reduced face]\label{def:reduced}
Let $(G,S,\iota)$ be a {skeletal trigraph} and $\phi$ a face of~$S$.
As in \Cref{def:skeletal}, let $U_\phi$ denote all vertices (except the boundary) assigned by $\iota$ to~$\phi$.
The face $\phi$ is {\em$k$-reduced} with respect to a level assignment $\levll G.$ if $\phi$ is assigned at most $k$ vertices of each level,
that is, $|\{x\in U_\phi:\levll Gx=i\}|\leq k$ for all integer~$i$.
\end{definition}
\begin{definition}[Sink-protecting contraction sequence]\label{def:sinkprot}
Assume a graph $G$ with the level assignment $\levll G.$ defined by the BFS layers of $T$,
and a rooted skeletal trigraph $(G',S,\iota,T)$ where $G'$ is obtained in a partial contraction sequence of $G$.
We say that the sequence is \emph{sink-protecting} if the following holds for every T-wrapped face~$\phi$ of~$S$;
if $x\in V(G')\setminus V(S)$ is assigned to $\phi$ in $(G',S)$ and $x$ is adjacent to the sink $u\in V(S)$ of~$\phi$,
then all vertices $y\in V(G)$ forming $x$ via contractions in the sequence from~$G$ satisfy $\levll Gy=\levll{G'}x$.
\end{definition}
The purpose of \Cref{def:sinkprot} is, exactly, to make the following lemma work:

\begin{lemma}\label{lem:sinknored}
Let $G$ be a plane graph with a good level assignment, and $T$ be a BFS tree of $G$ rooted on the outer face.
Let $S\subseteq G$ be $2$-connected and let a trigraph $H\supseteq S$ result from $G$ by an $S$-aware partial contraction sequence
which is level-preserving, or sink-protecting.
Then, for every $T$-wrapped face $\phi$ of $(H,S,T)$ in the inherited natural face assignment from~$G$,
the sink of $\phi$ has no red edge to vertices assigned to~$\phi$ in~$(H,S)$.
\end{lemma}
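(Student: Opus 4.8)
The plan is to fix a $T$-wrapped face $\phi$ of $(H,S,T)$ with sink $u\in V(S)$, and argue that no vertex $x$ assigned to $\phi$ in $(H,S)$ — that is, $x\in U_\phi$, so $x\in V(H)\setminus V(S)$ and $x$ lies on a non-trivial bridge assigned to $\phi$ — can be joined to $u$ by a red edge in $H$. First I would recall what the sink $u$ is: by the remark following \Cref{def:wrapped}, $u$ is the unique vertex of the boundary cycle $C$ of $\phi$ that is closest to $r$ in $S$, so if $u\in L_k$ for the BFS layering $\ca L$ determined by $r$, then every other boundary vertex of $\phi$ is in a strictly deeper layer, and the two wrapping paths $P_1,P_2$ of $\phi$ are $T$-vertical, hence $r$-geodesic, hence increasing in layer index as they descend from~$u$. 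The key consequence I want is that in the \emph{original} graph $G$, every vertex $w\in V(G)$ that lies strictly inside $\phi$ (i.e.\ is drawn in the face $\phi$, equivalently belongs to some non-trivial bridge assigned to $\phi$ by the natural assignment) satisfies $\levll Gw=\operatorname{dist}_G(r,w)\geq k+1$; indeed any path from $r$ to $w$ in $G$ must cross the boundary cycle $C$ of $\phi$, and since $T$-vertical pieces of $C$ are $r$-geodesic and the lid edge is $T$-horizontal, the minimum layer of a vertex on $C$ is exactly $k$, attained only at $u$, so $w$ is at least one layer deeper. This is the step I expect to be the main obstacle, because it requires combining the geodesic property of the wrapping paths with the fact that the lid is horizontal to conclude that $u$ genuinely separates the interior of $\phi$ from $r$ at layer level $k$; care is needed when $G$ is not $2$-connected and bridges may attach to $u$ itself.

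Next I would split into the two cases allowed by the hypothesis on the contraction sequence. In the \emph{sink-protecting} case: suppose for contradiction that $x\in U_\phi$ is red-adjacent (or even just adjacent) to $u$ in $H$. Since $x$ is adjacent to the sink $u$ of the $T$-wrapped face $\phi$ to which $x$ is assigned, \Cref{def:sinkprot} tells us that every vertex $y\in V(G)$ forming $x$ by contractions satisfies $\levll Gy=\levll Hx$. But each such $y$ lies strictly inside $\phi$ (the bridge containing $x$ is the contraction-image of bridges assigned to $\phi$ by the natural assignment from $G$, so its original vertices are drawn in $\phi$), hence $\levll Gy\geq k+1$ by the previous paragraph, giving $\levll Hx\geq k+1$. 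Now I would show $x$ cannot have become red-adjacent to $u$: a red edge $\{x,u\}$ in $H$ arises only through a contraction that placed $u$ into $N(y_1)\Delta N(y_2)$ for vertices $y_1,y_2$ being contracted, or by inheritance of a red edge incident to $u$ from an earlier trigraph; either way, tracing back, there is a vertex $y'\in V(G)$ among those forming $x$ with $u\in N_G(y')$. Since $u\in L_k$ and $y'$ is drawn strictly inside $\phi$ hence $y'\in L_{\geq k+1}$, \Cref{cl:layeri3} gives $\levll G{y'}=k+1$, so $y'$ is at distance $k+1$ and is in the image-set of $x$; that is consistent with $\levll Hx=k+1$, so I cannot yet get a contradiction from levels alone. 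The actual contradiction comes from the \emph{color}: all edges of $G$ with both ends in $V(S)$ are black and stay black under $S$-aware contractions (by \Cref{def:skeletal} and \Cref{def:skelaware}, $S\subseteq H$ and its edges remain black), so $u$'s black-to-$S$ edges do not help produce red; the only way $\{x,u\}$ can be \emph{red} is that at some contraction step a vertex $y_1$ (an ancestor of $x$) and a vertex $y_2$ disagreeing on adjacency to $u$ were contracted — but by $S$-awareness both $y_1,y_2$ were assigned to the same face, namely $\phi$, so $u\notin\{y_1,y_2\}$, and then $u\in N(y_1)\Delta N(y_2)$ forces $u$ to be adjacent in $G$ to exactly one of the two sets of original vertices. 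I would trace this to a single original edge $\{y',u\}$ with $y'$ strictly inside $\phi$; but by the sink-protecting property applied at the \emph{first} intermediate trigraph where some ancestor $x'$ of $x$ (assigned to $\phi$) became adjacent to $u$, \emph{all} original vertices forming $x'$ have level equal to $\levll{}{x'}$, which must then be $k+1$; the contradiction is that the other vertex $y_2$ in the contraction creating the red edge is also assigned to $\phi$, also has all its original vertices in layer $\geq k+1$, and \emph{also} — being on the same bridge side — is not adjacent to $u$ unless it too lies in layer exactly $k+1$ adjacent to $u$, so the symmetric difference at $u$ is in fact empty. Spelling this out: a cleaner route is to prove by induction along the sequence the invariant ``for every $T$-wrapped face $\psi$ and every vertex $z$ assigned to $\psi$ that is adjacent to $\operatorname{sink}(\psi)$, the edge $\{z,\operatorname{sink}(\psi)\}$ is black and all original vertices of $z$ are in the layer of $z$''; the contraction step preserves this because contracting two such $z_1,z_2$ (both assigned to $\psi$, both with black edges to $u=\operatorname{sink}(\psi)$ or neither adjacent to $u$) yields either no edge to $u$ or a black one, never a red one, and the sink-protecting hypothesis supplies the level equality for the resulting vertex.

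Finally, in the \emph{level-preserving} case the argument is shorter: a level-preserving $S$-aware sequence keeps every vertex's level equal to its original BFS layer, and contracts only pairs of equal level assigned to the same face. By induction along the sequence I would maintain the invariant that no vertex assigned to a $T$-wrapped face $\psi$ has a red edge to $\operatorname{sink}(\psi)$. The base case $G=H$ is clear since $G$ has no red edges. For the inductive step, contracting $z_1,z_2$ both assigned to $\psi$ and on the same level $\ell$: by \Cref{cl:layeri3} (the level assignment is good and preserved) and the fact that $\operatorname{sink}(\psi)\in L_k$ with every interior vertex of $\psi$ in layers $\geq k+1$, if the contracted vertex $z_0$ were red-adjacent to $u=\operatorname{sink}(\psi)$ then $\ell\in\{k,k+1,k+2\}$; but $z_1,z_2\notin V(S)$ are interior to $\psi$ so $\ell\geq k+1$, and a red edge $\{z_0,u\}$ requires $u\in N(z_1)\Delta N(z_2)$ (the prior trigraph had no red edge to $u$ at $z_1$ or $z_2$ by induction, and none of $z_1,z_2$ equals $u$ by $S$-awareness), say $u\in N(z_1)\setminus N(z_2)$. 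Then $\{z_1,u\}\in E$ with $z_1$ interior to $\psi$, so $z_1\in L_{k+1}$, i.e.\ $\ell=k+1$, and symmetrically if the red edge came from the other side. Now the contradiction: since the sequence is $S$-aware and level-preserving, and since $u$ is the sink of $\psi$, the only edges incident to $u$ and to interior vertices of $\psi$ go to layer-$(k+1)$ vertices, and along the whole sequence such an edge $\{z,u\}$ (with $z$ assigned to $\psi$) can only ever be produced as black from black — because any contraction producing it contracts two vertices on the same level $k+1$ assigned to $\psi$, at most one of which was adjacent to $u$, and the rule "inherit red of both plus symmetric difference" would color it red, contradicting the induction invariant; hence in fact \emph{at most one} layer-$(k+1)$ vertex assigned to $\psi$ is ever adjacent to $u$, so the symmetric-difference case cannot arise and $z_0$ has no red edge to $u$. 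This closes the induction, and since $H$ is one of the trigraphs of the sequence, the lemma follows; the $T$-wrapped faces of $(H,S,T)$ are faces of the fixed plane embedding of $S\subseteq H$, their sinks are determined by $S$ and $r$ alone, and the natural face assignment inherited from $G$ (by \Cref{def:skelaware}) is exactly the one the induction tracks.
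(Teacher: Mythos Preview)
Your proposal is overcomplicated and contains a genuine gap. The circular reasoning in your level-preserving case is the clearest symptom: you maintain the invariant ``no red edge from an interior vertex to the sink'', then in the inductive step you observe that contracting $z_1$ (adjacent to $u$) with $z_2$ (not adjacent to $u$) would create a red edge to $u$, and from this you conclude ``hence in fact at most one layer-$(k+1)$ vertex assigned to $\psi$ is ever adjacent to $u$''. That conclusion does not follow; you have only shown that \emph{if} such a contraction occurs, the invariant breaks. Since the sequence is given to you (you do not get to choose it), you cannot forbid that contraction by fiat, and the induction collapses. Your sink-protecting case is similarly muddled: you never establish why the symmetric difference at $u$ should be empty.

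The missing idea, which the paper uses to give a three-line direct argument with no induction along the sequence, is this: if $x\in U_\phi$ is adjacent to the sink $u$ in $H$, pick any $y\in X$ (the preimage of $x$ in $G$) with $\{u,y\}\in E(G)$; since $u$ is the sink, $\levll Gy=\levll Gu+1$, and then by either hypothesis (level-preserving or sink-protecting) \emph{every} $z\in X$ has $\levll Gz=\levll Gu+1$. Now the crucial BFS observation you never invoke: each such $z$, being at BFS-distance $\levll Gu+1$ from $r$, must have a neighbour in $G$ at level $\levll Gu$; but $z$ is drawn inside the boundary cycle $C$ of $\phi$, so that neighbour lies on or inside $C$, and the \emph{only} vertex of level $\levll Gu$ there is $u$ itself. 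Hence every $z\in X$ is adjacent to $u$ in $G$, the set $X$ is fully adjacent to $u$, and $\{x,u\}$ is black. This single observation replaces all of your symmetric-difference bookkeeping and your attempted inductive invariant.
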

\begin{proof}
Let $C\subseteq S$ be the boundary of $\phi$ and $u\in V(C)$ be the sink of $\phi$, 
and assume there is an edge $\{u,x\}$ for some $x$ assigned to~$\phi$.
Let $X\subseteq V(G)$ be the set of vertices which have been contracted along the sequence to make~$x$, 
and $y\in X$ be a representative such that~$\{u,y\}\in E(G)$.
Then $\levll Gy>\levll Gu$ since $u$ is the sink w.r.t.~$T\subseteq G$, but $\levll Gy\leq\levll Gu+1$ since $\{u,y\}$ is an edge.
Consequently, $\levll Gy=\levll Gu+1$, and hence $\levll Gz=\levll Gu+1$ for all $z\in X$ by the assumption on the sequence.

Since the sequence is $S$-aware, all vertices of $z\in X$ are embedded inside $C$ in~$G$, and since $\levll Gz=\levll Gu+1$,
each such $z$ must have a neighbour in $G$ on the level $\levll Gu$, which can only be the sink $u$ itself 
(as the unique such vertex of the least level on $C$).
In other words, $X$ is fully adjacent to $u$ in $G$, and the edge $\{u,x\}$ in $H$ is black.
\end{proof}

\medskip
An upper bound on the red degree from the previous informal sketch will be refined even further using the following 
crucial technical claim interconnecting the concepts of wrapped faces and of a left-aligned BFS tree.

\begin{lemma}\label{lem:leftalign}
Let $G$ be a plane graph and $T$ be a left-aligned BFS tree of $G$ rooted at the outer face of~$G$,
such that the BFS layers of $T$ define the good level assignment $\levll G.$ of~$G$.
Let $S\subseteq G$ be $2$-connected and
let a trigraph $H\supseteq S$ be obtained from $G$ in an $S$-aware min-level-respecting partial contraction sequence,
such that $(H,S,T)$ is a rooted skeletal trigraph with the inherited natural face assignment from~$G$ (\Cref{def:skelaware} and \Cref{def:naturalface}).
Let $\levll H.$ be a minimum level assignment with respect to~$G$.
If $C\subseteq S$ is a $T$-wrapped cycle and $x\in V(C)$ lies on the right wrapping path of~$C$,
then there is \emph{no} neighbour (red or black) $y$ of $x$ in $H$ such that $\levll Hy=\levll Hx-1$ and;
$y\in V(C)$ is not on the right wrapping path of~$C$, or $y\in V(S)$ is embedded inside~$C$ in~$S$ (if $C$ is not a facial cycle of~$S$), 
or $y\not\in V(S)$ is assigned to an $S$-face contained inside~$C$.
\end{lemma}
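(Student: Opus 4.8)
The plan is to argue by contradiction. Suppose $y\in V(H)$ is a neighbour of $x$ with $\levll Hy=k-1$, where $k:=\levll Hx$. Write $\overline\Delta$ for the closed disc bounded by the (simple, closed) curve $C$ on the side of $C$ not containing the root $r$; by the convention fixing left/right wrapping paths in \Cref{def:wrapped}, the right wrapping path $P_2$ lies on $\partial\overline\Delta$ with $\mathrm{int}(\overline\Delta)$, together with the left wrapping path $P_1$, on its ``left''. I first record that the set $Y\subseteq V(G)$ of vertices of $G$ contracted into $y$ satisfies $Y\subseteq\overline\Delta\setminus V(P_2)$. If $y\in V(S)$, then $y$ is never contracted, so $Y=\{y\}$, and by the two $V(S)$-cases of the statement $y$ lies on $P_1\setminus\{u\}$ or strictly inside $\overline\Delta$. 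If $y\notin V(S)$, then $S$-awareness confines all contractions producing $y$ to bridges assigned to a single $S$-face $\phi$, which by hypothesis is contained inside $C$, so $Y$ is drawn inside $\phi\subseteq\mathrm{int}(\overline\Delta)$ under the natural face assignment.

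Next I reduce to an \emph{original} vertex: there is $y^\star\in Y$ with $\{x,y^\star\}\in E(G)$ and $\levll G{y^\star}=\levll Gx-1$. Since $\{x,y\}\in E(H)$, some $y'\in Y$ has $\{x,y'\}\in E(G)$; and $x$ is never contracted, so its level stays $\ge k$ throughout the sequence (levels only decrease). Trace the subtree of the contraction forest rooted at $y$ down to its leaves $Y$, always descending into a child whose preimage still contains a vertex adjacent to $x$. At a contraction producing a vertex of level $k-1$, the recursive form of the min-level assignment makes $k-1$ the minimum of the two constituent levels, while the level-respecting condition (\Cref{def:lrespecting}) forbids merging a level-$k$ vertex that still has $x$ (of level $\ge k$) among its neighbours with a level-$(k-1)$ vertex; hence the descended-into child is always at level $k-1$, and the trace ends at a leaf $y^\star\in Y$ adjacent to $x$. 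A level decrease of $y^\star$ is blocked by $x$ for the same reason, so $\levll G{y^\star}=k-1\le\levll Gx-1$, with equality since $\{x,y^\star\}$ is an edge. Finally, if $x=u$ then $\levll Gx=\levll Gu$, whereas every vertex of $\overline\Delta\setminus V(P_2)$ (in particular $y^\star$) has level at least $\levll Gu+1$, because the sink $u$ is the unique least-level vertex of $C$ and $T$ enters both $\mathrm{int}(\overline\Delta)$ and $P_1\setminus\{u\}$ only strictly below $u$; this is absurd, so $x$ lies strictly below $u$ on $P_2$.

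It remains to reach a contradiction with left-alignedness from the edge $e=\{x,y^\star\}$. The edge $e$ is not in $T$: the $T$-parent of $y^\star$ has level $\levll Gx-2$, and the $T$-parent of $x$ is the vertex of $P_2$ immediately above $x$, which differs from $y^\star\notin V(P_2)$. Moreover $e$ is drawn inside $\overline\Delta$, since it cannot cross the uncrossed curve $C\subseteq S$ while having its endpoint $x$ on $C$ and its endpoint $y^\star$ in $\overline\Delta$. Let $r'$ be the least common ancestor of $x$ and $y^\star$ in $T$ and let $D=P_{r',x}\cup P_{r',y^\star}\cup e$ be the simple cycle of \Cref{def:leftuv}. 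The path $P_{r',x}$ reaches $x$ along $P_2$ — its last edge is the $T$-parental edge of $x$, lying on the $T$-vertical path $P_2$ — while $D$ leaves $x$ along $e$ into $\overline\Delta$, i.e.\ to the ``left'' of $P_2$; following this around against the curve $C$ shows that the counter-clockwise cyclic order on $D$ is $(r',y^\star,x)$, which by \Cref{def:leftuv} says precisely that $y^\star$ is to the left of $x$. Since $y^\star\in L_{k-1}$, $x\in L_k$ and $e\notin E(T)$, this violates \Cref{def:leftaligned}, giving the desired contradiction.

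The step I expect to be the main obstacle is the orientation argument just above: checking rigorously that $(r',y^\star,x)$ is the counter-clockwise order on $D$. This needs a case split according to whether $r'$ lies on the $P_2$-segment from $u$ to $x$ (so that $P_{r',x}\subseteq P_2$) or strictly above the sink $u$ (so that $P_{r',x}$ first descends from $r'$ to $u$ outside $\mathrm{int}(\overline\Delta)$ and only then follows $P_2$), combined with careful bookkeeping of how $D$ sits relative to $C$: that the relevant sub-path of $P_2$ lies on $\partial\overline\Delta$ with the interior on the side prescribed by \Cref{def:wrapped}, that $e$ and the $y^\star$-end of $P_{r',y^\star}$ lie in $\overline\Delta$, and that $C$ is uncrossed. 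The reduction step — tracing through the contraction forest and invoking the min-level-respecting property at each merge — is also somewhat delicate, but it is essentially bookkeeping.
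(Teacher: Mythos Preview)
Your proposal is correct and follows essentially the same approach as the paper's proof: both find an original vertex $y_0\in Y$ adjacent to $x$ with $\levll G{y_0}=\levll Gx-1$, lying inside (or on the left wrapping path of) $C$, and then derive a contradiction with left-alignedness via \Cref{def:leftaligned}. The only difference is presentational --- the paper picks an arbitrary $y_0\in Y$ adjacent to $x$ and shows by a first-violation argument (the earliest merge dropping its level to $k-1$ would contradict the level-respecting condition at the neighbour~$x$) that $\levll G{y_0}=\levll Gx-1$, whereas you trace backwards through the contraction forest to locate such a $y^\star$ directly; the paper also compresses your orientation discussion into a single sentence (``the $T$-vertical path from $y_0$ towards the root \ldots\ certifies that $y_0$ is to the left of~$x$'').
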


\begin{proof}
Assume, for a contradiction, that such $y\in V(H)$ exists.
Let $Y\subseteq V(G)$ be the set of vertices which have been contracted along the sequence to make~$y$ (possibly $Y=\{y\}$ for $y\in V(G)\cap V(H)$).
Since $\{x,y\}\in E(H)$ and $x\in V(G)$, there exists $y_0\in Y$ such that $\{x,y_0\}\in E(G)$,
and since the contraction sequence is $S$-aware, $y_0$ is embedded inside the cycle $C$ in~$G$.
The $T$-vertical path from $y_0$ towards the root in $G$ thus has to intersect $C$, and consequently it certifies that $y_0$ is 
to the left of~$x$ with respect to~$T$ (\Cref{def:leftuv}).

Moreover, due to the edge $\{x,y\}\in E(H)$, the definition of a minimum level assignment asserts that 
$\levll G{y_1}=\levll Hy=\levll Gx-1$ holds for some $y_1\in Y$.
If $\levll G{y_0}\geq\levll Gx$, then, at the first moment in our sequence that a vertex which stems from $y_0$ is contracted with
a vertex of level $\levll G{y_1}$ (e.g., with $y_1$ itself), \Cref{def:lrespecting} of a level-respecting sequence is violated by the neighbour~$x$.
Hence, $\levll G{y_0}=\levll Gx-1$ and the edge $\{x,y_0\}\in E(G)$ contradicts the assumption that $T$ is left-aligned in~$G$ (\Cref{def:leftaligned}).
\end{proof}

\section{Proof of \Cref{thm:twwplanar}; the Planar Case}
\label{sec:prooftwwplanar}

\subsection{Induction setup with a skeletal trigraph}
\label{sub:induplanar}

Our proof assumes a simple connected plane triangulation $G$  in a fixed embedding
(as every simple planar graph can be extended into this form by adding new vertices),
and proceeds by induction (in other words recursively) on suitably defined subregions of it.
This approach is formally captured with the following assumptions.

In this section, we will consider {only proper skeletal trigraphs}, and only skeleton-aware partial contraction sequences.
We will always assume skeletal trigraphs with the face assignment inherited from the natural face assignment of~$G$.
We will also work with a fixed {\em left-aligned BFS tree} $T$ of $G$, rooted on the outer face, and with a \emph{minimum level assignment}
in trigraphs contracted from $G$ as derived from the initial BFS layering of $G$ by~$T$, which is always {\em good} by~\Cref{lem:leveli3}.
In particular, if $S\subseteq G$ is $2$-connected and $H\supseteq S$ is a trigraph resulting by $S$-aware contractions from $G$ (avoiding $V(S)$);
\begin{itemize}
\item $(H,S)$ unambiguously denotes the skeletal trigraph with the face assignment inherited from the natural face assignment of~$G$, and
\item $\levll H.$ denotes the implicit minimum level assignment determined by the BFS layers of $G$ by~$T$ and the partial contraction sequence, 
and we simply speak about the \emph{levels in~$H$}.
\end{itemize}

In regard of \Cref{def:reduced}, we say that a face $\phi$ with the boundary cycle $C\subseteq S$ 
is \emph{maximally $k$-reduced} if $\phi$ is $k$-reduced and the following holds:
For $m:=\max_{x\in V(C)}\levll Gx$ the maximum level occurring on the boundary of $\phi$,
no vertex of $U_\phi$ -- the vertices assigned~to~$\phi$, is of level greater than~$m$, except that if $C$ is a triangle,
then level up to $m+1$ can occur~in~$U_\phi$.

Furthermore, for such a proper skeletal trigraph $(H,S)$ and a cycle $C\subseteq S$, 
the {\em subgraph of $H$ bounded by $C$}, denoted shortly by $H_C$, 
is the subgraph of $H$ formed by the union of the facial cycles of all $S$-faces contained in the bounded region of the embedding of $C$
and of all bridges of $S$ assigned to these $S$-faces.
Notice that this definition is largely independent of implicit $S$, except that $S\supseteq C$.
For an illustration, if $C\subseteq G$, then $G_C$ is formed precisely by the vertices and edges of $C$ and the vertices and edges of $G$ 
embedded inside~$C$ (regardless of a particular choice of~$S\supseteq C$).

One more special term we need to introduce just for the coming proof is that of a vh-divided (a shortcut from a ``vertical-horizontal division'') wrapped face:
\begin{definition}[Vh-division]\label{def:vhdivided}
Let $(H,S,T)$ be a rooted proper skeletal trigraph and $D\subseteq S$ a $T$-wrapped cycle with the wrapping paths left $P_1\subseteq D$ and right $P_2\subseteq D$.
Assume a cycle $C\subseteq S\cap H_D$ and two entities $B_1$ and $B_2$ such that, for $i=1,2$, either $B_i=\emptyset$, or 
$B_i\subseteq S\cap H_D$ is a $T$-wrapped facial cycle of $S$ which is not a triangle, and the lid edge of $B_i$ is $T$-horizontal.
We say that $D$ admits a {\em vh-division into $(C,B_1,B_2)$} if the following is moreover true:
\begin{enumerate}[(a)]
\item The symmetric difference of the three cycles $C\Delta B_1\Delta B_2$ equals~$D$.
(Equivalently, the topological disks bounded by $C$,~$B_1$,~$B_2$ are internally disjoint and together cover the disk of~$D$.
In particular, we have $C=D$ if $B_1=B_2=\emptyset$.)
\item If $B_1\not=\emptyset$, then the sink of $B_1$ equals the sink of $D$ and the left and right wrapping paths of $B_1$ are 
subpaths of the respective wrapping paths $P_1$ and $P_2$ of~$D$.
\item If $B_2=\emptyset$, then $C$ is an $S$-face.
\item\label{it:vhdivC2}
If $B_2\not=\emptyset$, then $B_2$ is edge-disjoint from $B_1$ and the left wrapping path of $B_2$ is a subpath of~$P_1$.
Moreover, there is a $T$-vertical path $P_0\subseteq S\cap H_D$ such that
\begin{itemize}
\item the right wrapping path of $B_2$ is a subpath of~$P_0$, or $B_1=\emptyset$ and the right wrapping path of $B_2$ is a subpath of $P_0\cup P_2$, and
\item the sub-skeleton $S\cap H_D$ consists precisely of the union $D\cup C\cup P_0$ and two additional edges $\{v_0,v_1\}$, $\{v_0,v_2\}$,
where $v_0$ is the end of $P_0$ farther from the root and $\{v_1,v_2\}$ is the lid edge of~$D$, and the $S$-face $(v_1,v_2,v_0)$ is empty.
\end{itemize}
\end{enumerate}
\end{definition}

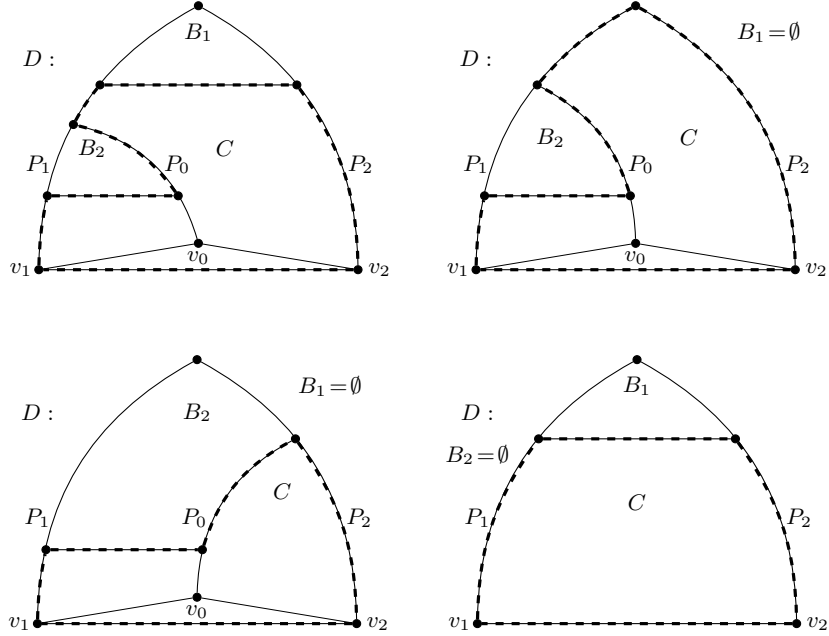
\begin{figure}[tb]
$$
\begin{tikzpicture}[scale=0.7]
\footnotesize
\tikzstyle{every node}=[draw, shape=circle, minimum size=3pt,inner sep=0pt, fill=black]
\node[label=left:$v_1$] at (0,0) (x) {};
\node[label=right:$\,v_2$] at (6,0) (y) {};
\node at (3,5) (z) {};
\draw (x)--(y) to[bend right] (z) (z) to[bend right] (x);
\node at (1.15,3.5) (t) {}; \node at (4.85,3.5) (tt) {};
\draw (t)--(tt);
\node at (0.65,2.75) (u) {}; \node[label=below:$\!v_0\!$] at (3,0.5) (p) {};
\draw (u) to[bend left] (p); \draw[very thin] (x)--(p)--(y);
\node at (0.16,1.4) (w) {}; \node at (2.62,1.4) (ww) {};
\draw (w)--(ww);
\draw[very thick,dashed] (x) to[bend left=5] (w) (w) -- (ww) to[bend right=20] (u)
	(u) to[bend left=5] (t) (t) -- (tt) to[bend left=17] (y) (y)--(x);
\node[draw=none,fill=none] at (0,4) {$D:$};
\node[draw=none,fill=none] at (0,2) {$P_1$};
\node[draw=none,fill=none] at (6.05,2) {$P_2$};
\node[draw=none,fill=none] at (2.6,2) {$P_0$};
\node[draw=none,fill=none] at (3,4.5) {$B_1$};
\node[draw=none,fill=none] at (1,2.3) {$B_2$};
\node[draw=none,fill=none] at (3.5,2.3) {$C$};
\end{tikzpicture}
\qquad
\begin{tikzpicture}[scale=0.7]
\footnotesize
\tikzstyle{every node}=[draw, shape=circle, minimum size=3pt,inner sep=0pt, fill=black]
\node[label=left:$v_1$] at (0,0) (x) {};
\node[label=right:$\,v_2$] at (6,0) (y) {};
\node at (3,5) (z) {};
\draw (x)--(y) to[bend right] (z) (z) to[bend right] (x);
\node at (1.15,3.5) (u) {}; \node[label=below:$\!v_0\!$] at (3,0.5) (p) {};
\draw (u) to[bend left] (p); \draw[very thin] (x)--(p)--(y);
\node at (0.16,1.4) (w) {}; \node at (2.9,1.4) (ww) {};
\draw (w)--(ww);
\draw[very thick,dashed] (x) to[bend left=5] (w) (w) -- (ww) to[bend right=22] (u)
	(u) to[bend left=10] (z) (z) to[bend left=30] (y) (y)--(x);
\node[draw=none,fill=none] at (0,4) {$D:$};
\node[draw=none,fill=none] at (0,2) {$P_1$};
\node[draw=none,fill=none] at (6.05,2) {$P_2$};
\node[draw=none,fill=none] at (3.1,2) {$P_0$};
\node[draw=none,fill=none] at (5.5,4.5) {$B_1\!=\!\emptyset$};
\node[draw=none,fill=none] at (1.4,2.5) {$B_2$};
\node[draw=none,fill=none] at (4,2.5) {$C$};
\end{tikzpicture}
$$\medskip$$
\begin{tikzpicture}[scale=0.7]
\footnotesize
\tikzstyle{every node}=[draw, shape=circle, minimum size=3pt,inner sep=0pt, fill=black]
\node[label=left:$v_1$] at (0,0) (x) {};
\node[label=right:$\,v_2$] at (6,0) (y) {};
\node at (3,5) (z) {};
\draw (x)--(y) to[bend right] (z) (z) to[bend right] (x);
\node at (4.85,3.5) (u) {}; \node[label=below:$\!v_0\!$] at (3,0.5) (p) {};
\draw (u) to[bend right] (p); \draw[very thin] (x)--(p)--(y);
\node at (0.16,1.4) (w) {}; \node at (3.1,1.4) (ww) {};
\draw (w)--(ww);
\draw[very thick,dashed] (x) to[bend left=5] (w) (w) -- (ww) to[bend left=22] (u)
	(u) to[bend left=17] (y) (y)--(x);
\node[draw=none,fill=none] at (0,4) {$D:$};
\node[draw=none,fill=none] at (0,2) {$P_1$};
\node[draw=none,fill=none] at (6.05,2) {$P_2$};
\node[draw=none,fill=none] at (2.95,2) {$P_0$};
\node[draw=none,fill=none] at (5.5,4.5) {$B_1\!=\!\emptyset$};
\node[draw=none,fill=none] at (3,4) {$B_2$};
\node[draw=none,fill=none] at (4.6,2.5) {$C$};
\end{tikzpicture}
\qquad
\begin{tikzpicture}[scale=0.7]
\footnotesize
\tikzstyle{every node}=[draw, shape=circle, minimum size=3pt,inner sep=0pt, fill=black]
\node[label=left:$v_1$] at (0,0) (x) {};
\node[label=right:$\,v_2$] at (6,0) (y) {};
\node at (3,5) (z) {};
\draw (x)--(y) to[bend right] (z) (z) to[bend right] (x);
\node at (1.15,3.5) (t) {}; \node at (4.85,3.5) (tt) {};
\draw (t)--(tt);
\draw[very thick,dashed] (x) to[bend left=17] (t) (t) -- (tt) to[bend left=17] (y) (y)--(x);
\node[draw=none,fill=none] at (0,4) {$D:$};
\node[draw=none,fill=none] at (0,2) {$P_1$};
\node[draw=none,fill=none] at (6.05,2) {$P_2$};
\node[draw=none,fill=none] at (3,4.5) {$B_1$};
\node[draw=none,fill=none] at (0,3.2) {$B_2\!=\!\emptyset$};
\node[draw=none,fill=none] at (3,2.3) {$C$};
\end{tikzpicture}
$$
\caption{A brief schematic illustration of the possible shapes of a cycle $D$ in a rooted skeletal trigraph $(H,S,T)$,
	that admits vh-division into $(C,B_1,B_2)$ as in \Cref{def:vhdivided}.
	$T$ is rooted above the picture, $C$ is thickly dashed, and only the relevant skeleton edges are shown.}
\label{fig:vhdivided}
\end{figure}

See \Cref{fig:vhdivided}.
Informally, the purpose of \Cref{def:vhdivided} is to differently handle specific $T$-horizontal edges in the contraction
sequence constructed in the coming proof of \Cref{lem:core} (and this specific handling is the key ingredient which allows us to reach the bound of~$8$).

\begin{lemma}\label{lem:core}
Let $G$ be a simple plane triangulation and $T$ a left-aligned BFS tree of $G$ rooted at a vertex $r\in V(G)$ of the outer triangular face.
Let $S\subseteq G$, $r\in V(S)$, and let a trigraph $H\supseteq S$ be obtained from $G$ in a min-level-respecting $S$-aware and sink-protecting partial contraction sequence,
such that $(H,S,T)$ is a rooted proper skeletal trigraph (see in order \Cref{def:lrespecting}, \Cref{def:skelaware}, \Cref{def:sinkprot}).
Let $D\subseteq S$ be a $T$-wrapped cycle that admits a vh-division into $(C,B_1,B_2)$ (\Cref{def:vhdivided}) in $(H,S,T)$.

Assume that $H_C=G_C$ (meaning that the subgraph $H_C$ of $H$ bounded by $C$ has not been touched by any contraction since~$G$), 
and let $U:=V(H_C)\setminus V(C)$ denote the vertices of $H$ bounded by~$C$, and $S^*=S-U$ be the sub-skeleton of $S$ having $C$ a facial cycle.
Assume that the $S$-face(s) bounded by $B_i$, $i\in\{1,2\}$ if $B_i\not=\emptyset$, are maximally $1$-reduced.  

Then there exists a min-level-respecting partial contraction sequence of $H$ which contracts only pairs of vertices that 
are in or stem from $U$ (and so is $S^*$-aware), the sequence is sink-protecting, and
results in a trigraph $H^*$ such that the face $\phi$ bounded by $C$ is maximally $1$-reduced in $(H^*,S^*)$.
The sequence can be constructed in linear time $\ca O(|U|)$.
Moreover, the following conditions are satisfied for every trigraph $H'$ along this sequence from $H$ to~$H^*$:
\begin{enumerate}[(i)]
\item\label{it:all8}
Every vertex of $U':=V(H'_C)\setminus V(C)$ has red degree at most $8$ in whole~$H'$.
\item\label{it:bound56}
Every vertex of $C$ has red degree at most $5$ in~$H'_C$, except that 
if $x\in V(C)$ is the vertex of the lid edge of $B_2\not=\emptyset$ to the right (that is, not to the left by \Cref{def:leftuv}), 
then the red degree of $x$ may be up to~$6$ in~$H'_C$.
\item\label{it:right3}
If $x\in V(C)$ lies on the right wrapping path of~$D$, then $x$ has red degree at most~$3$~in~$H'_C$.
\item\label{it:topp2}
If $B_1\not=\emptyset$ and $x\in V(C\cap B_1)$ is any vertex of the lid edge of $B_1$, 
or $B_2\not=\emptyset$ and $x\in V(C\cap B_2)$ is the vertex of the lid edge of $B_2$ to the left,
then $x$ has red degree at most~$2$~in~$H'_C$.
\end{enumerate}
\end{lemma}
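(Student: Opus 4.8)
The plan is to prove \Cref{lem:core} by induction on $|U|$, processing the vertices assigned inside $C$ level by level, from the highest level downward toward the sink. First I would set up the induction: since $H_C = G_C$ is an untouched piece of the plane triangulation $G$, all faces of $G$ inside $C$ are triangles, and the left-aligned BFS tree $T$ restricted to $H_C$ gives us a vertical path structure. The key structural observation is that, because $C$ is $T$-wrapped (inherited from $D$ via the vh-division, using \Cref{def:vhdivided}(a)--(c)), the vertices of $U$ lying at the maximum level $m$ (or $m+1$ when $C$ is a triangle, per the ``maximally $1$-reduced'' target) can all be contracted into a single vertex, or absorbed into boundary/lower-level vertices, without ever creating red degree exceeding the claimed bounds. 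I would iterate: take the current highest nonempty level $i$ among vertices assigned to $C$; by planarity and \Cref{cl:layeri3}, these vertices have neighbours only in levels $i-1, i, i+1$; after the previous rounds level $i+1$ is empty inside $C$, so their neighbours live in $C$ (on the wrapping paths / the two sub-bridges $B_1, B_2$), on level $i-1$, or among themselves; contract them down to one vertex of level $i$ (using the min-level-respecting rule of \Cref{def:lrespecting}, and the fact that a vertex all of whose neighbours drop to level $i-1$ can itself be pushed to level $i-1$ via the dummy step), and repeat.

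The heart of the argument is the red-degree bookkeeping, i.e.\ verifying \eqref{it:all8}--\eqref{it:topp2} at every intermediate trigraph $H'$. For a vertex $z$ of $U'$ of level $j$: its red neighbours on levels $j-1$ and $j$ come from (a) at most a bounded number of vertices still inside $C$ at the same or adjacent level — here the ``$1$-reduced'' invariant we maintain means at most one per level of the remaining interior, and \Cref{lem:leftalign} kills the ``leftward'' level-$(j-1)$ neighbours on the right wrapping path; (b) the two wrapping paths of $C$, contributing at most three each by goodness of the level assignment (\Cref{def:goodla}) exactly as in the informal sketch preceding \Cref{def:reduced}; (c) the at most two sub-faces $B_1, B_2$, which by hypothesis are maximally $1$-reduced, so they contribute only $O(1)$ red neighbours, and by \Cref{def:vhdivided}(b),(d) their wrapping paths are aligned with those of $D$ so there is no double counting; and (d) the sink, which contributes nothing red by \Cref{lem:sinknored} since the sequence is sink-protecting. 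Adding these up with care — and using that a vertex can see $B_1$ on one side and $B_2$ on the other but not more — yields the bound $8$. For a boundary vertex $x \in V(C)$, the count is smaller: it only sees interior vertices on one side (the $C$-side), giving red degree at most $5$ in $H'_C$; the exceptions in \eqref{it:bound56}--\eqref{it:topp2} are precisely the vertices adjacent to the horizontal lid edges of $B_1$, $B_2$, where the accounting must allow one or two extra red edges inherited from inside $B_i$, and the right-wrapping-path vertices of $D$, where \Cref{lem:leftalign} again forbids leftward level-$(j-1)$ neighbours and so the count drops to $3$ (resp.\ $2$ at the lids). The main obstacle will be organizing this case analysis so that every intermediate trigraph, not just $H^*$, satisfies all four bounds simultaneously — in particular getting the interaction between the ``currently active'' level being contracted and the already-reduced sub-faces $B_1, B_2$ to never push any single vertex over its threshold; this is where the specific shape constraints of \Cref{def:vhdivided}(\ref{it:vhdivC2}) (the extra vertical path $P_0$, the empty triangular face $(v_1,v_2,v_0)$) must be used to rule out the otherwise-problematic configurations.

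I would handle the base cases and the recursive descent together: if $C$ itself is already an $S$-face (the $B_2 = \emptyset$ case of \Cref{def:vhdivided}(c)), then $U$ consists of the bridges assigned to the single face $\phi$ and we reduce it directly to $1$-reduced by the level-by-level contraction above; if $B_1$ and/or $B_2$ are nonempty we are really combining the already-reduced contents of $B_1$, $B_2$ with the newly-reduced contents of $C$ into the single target face $\phi$ bounded by $C$ — the vh-division hypotheses guarantee the disks tile the disk of $D$, so after all contractions the only vertices assigned to $\phi$ are those we explicitly controlled. Linearity of the construction, $\mathcal O(|U|)$, is immediate since each vertex of $U$ is touched in exactly one contraction-round and each round costs time linear in the incident edges, exactly as in the proof of \Cref{clm:existslal}. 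Finally I would check that the produced sequence is itself min-level-respecting, $S^*$-aware, and sink-protecting — the first two by construction, and sink-protecting because every vertex we create at level $j$ stems only from level-$j$ and level-$(j{+}1)$ vertices, and any such vertex that ends up adjacent to the sink of $C$ must (by the sink being the unique lowest-level boundary vertex) have been built purely from level-$(\levll G{\text{sink}}+1)$ vertices, matching \Cref{def:sinkprot}.
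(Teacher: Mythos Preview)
Your plan has a genuine gap at the very heart of the argument: the step ``contract them down to one vertex of level $i$'' does not come with any bound on the red degree of the intermediate blob. At the moment you start processing level $i$, the interior of $C$ at level $i-1$ is still untouched (you process top-down), so it can contain arbitrarily many vertices. When you pairwise contract the level-$i$ vertices, the partial blob acquires red edges to every level-$(i-1)$ vertex that was a neighbour of some but not all of the merged vertices --- and nothing you have said bounds that number. Your bookkeeping paragraph invokes a ``$1$-reduced invariant'' for the remaining interior, but that invariant only holds for levels \emph{above} $i$, not below; the claim that ``at most one per level'' remains is exactly what you are trying to prove, so the argument is circular on the downward side.

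The paper's proof avoids this by a divide-and-conquer step you have omitted. It locates the triangular face on the lid of $D$, takes the $T$-vertical path $P_0$ from its apex up to the boundary, and uses $P_0$ (together with a carefully chosen horizontal edge $f_1$) to split the region into two smaller $T$-wrapped regions $C_1$ and $C_2$. \Cref{lem:core} is then applied \emph{recursively} to each half, yielding partial sequences $\pi_1,\pi_2$ that make $C_1$ and $C_2$ each maximally $1$-reduced. Only after this recursion does the top-down level-by-level merge ($\pi_3$, followed by \Cref{lem:secondstage}) take place, and now it is safe precisely because each level of $U$ contains at most three vertices --- one from the reduced $C_1$-interior, one from $P_0$, one from the reduced $C_2$-interior. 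All of the delicate case analysis in the paper (the choice of $f_1$, the vh-division machinery, the special handling when contracting near the sink) exists to make this three-column merge respect the bounds $8/5/3/2$; without the recursive halving there is no such column structure and the bounds simply do not hold. You will need to introduce this vertical-path splitting and the two recursive calls before any level-by-level contraction can work.
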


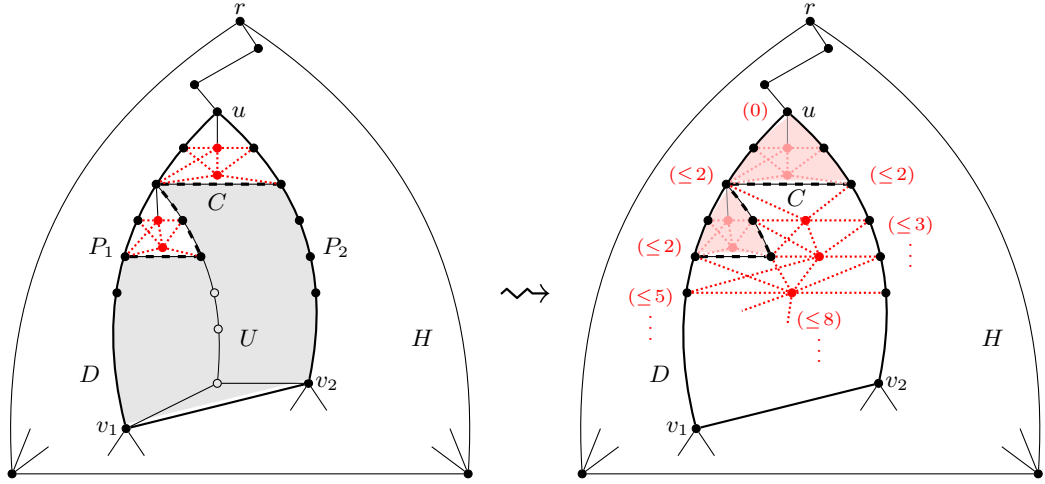
\begin{figure}[tb]
$$
\begin{tikzpicture}[scale=1.2]
\small
\tikzstyle{every node}=[draw, shape=circle, minimum size=3pt,inner sep=0pt, fill=black]
\node at (0.5,0) (x) {};
\node at (5.5,0) (y) {};
\node[label=above:$r$] at (3,5) (z) {};
\draw (x)--(y) to[bend right] (z) (z) to[bend right] (x);
\draw (x)-- +(0.4,0.3); \draw (x)-- +(0.2,0.5);
\draw (y)-- +(-0.4,0.3); \draw (y)-- +(-0.2,0.5);
\node[label=right:$v_2$] at (3.75,1) (v2) {};
\node[label=right:$~{u}$] at (2.75,4) (u1) {};
\node[label=left:$v_1$] at (1.75,0.5) (v1) {};
\draw[draw=none, fill=gray!20] (v2) to[bend right=20] (3.45,3.2)
	--(2.08,3.2) to[bend left=8] (2.57,2.4)--(1.74,2.4) to[bend right=13] (v1) (v1) -- (v2);
\draw[thick] (v2) to[bend right] (u1) (u1) to[bend right] (v1) (v1) -- (v2);
\draw (z)--(3.2,4.7) node{}--(2.5,4.3) node{}--(u1);
\draw (v1)-- +(-0.2,-0.3); \draw (v1)-- +(0.2,-0.3);
\draw (v2)-- +(-0.2,-0.3); \draw (v2)-- +(0.2,-0.3);
\node at (2.38,3.6) (l1) {}; \node at (3.15,3.6) (r1) {};
\node at (2.08,3.2) (l2) {}; \node at (3.45,3.2) (r2) {};
\node at (1.88,2.8) (l3) {}; \node at (3.65,2.8) (r3) {};
\node at (1.74,2.4) (l4) {}; \node at (3.77,2.4) (r4) {};
\node at (1.65,2) (l5) {}; \node at (3.83,2) (r5) {};
\node[fill=none] at (2.75,1) (m0) {}; 
\node at (2.37,2.8) (m3) {}; \node at (2.57,2.4) (m4) {}; 
\draw (l2)--(r2) (l4)--(m4) (v1)--(m0)--(v2) ;
\draw[very thin] (m0) to[bend right=22] (l2);
\node[fill=gray!20] at (2.72,2.0) (m5) {}; \node[fill=gray!20] at (2.76,1.6) (m6) {}; 
\draw[very thick,dashed] (l4)--(m4) to[bend right=8] (l2) (l2)--(r2) ;
\node[red] at (2.75,3.6) (a) {}; \node[red] at (2.75,3.3) (aa) {};
\draw[red, thick,densely dotted] (l1)--(a)--(r1)--(aa)--(l1) (a)--(aa)--(l2)--(a) (aa)--(r2);
\node[red] at (2.1,2.8) (b) {}; \node[red] at (2.15,2.5) (bb) {};
\draw[red, thick,densely dotted] (l3)--(b)--(m3)--(bb)--(l3) (b)--(bb)--(l4)--(b) (bb)--(m4);
\draw (u1)--(a) (l2)--(b);
\node[draw=none,fill=none] at (3.1,1.5) {$U$};
\node[draw=none,fill=none] at (1.35,1.1) {$D$};
\node[draw=none,fill=none] at (2.75,3) {$C$};
\node[draw=none,fill=none] at (1.48,2.5) {$P_1$};
\node[draw=none,fill=none] at (4.05,2.5) {$P_2$};
\node[draw=none,fill=none] at (5,1.5) {$H$};
\end{tikzpicture}
\raise15ex\hbox{\huge\boldmath~$\leadsto$~}
\begin{tikzpicture}[scale=1.2]
\small
\tikzstyle{every node}=[draw, shape=circle, minimum size=3pt,inner sep=0pt, fill=black]
\node at (0.5,0) (x) {};
\node at (5.5,0) (y) {};
\node[label=above:$r$] at (3,5) (z) {};
\draw (x)--(y) to[bend right] (z) (z) to[bend right] (x);
\draw (x)-- +(0.4,0.3); \draw (x)-- +(0.2,0.5);
\draw (y)-- +(-0.4,0.3); \draw (y)-- +(-0.2,0.5);
\node[label=right:$v_2$] at (3.75,1) (v2) {};
\node[label=right:$~{u}$] at (2.75,4) (u1) {};
\node[label=left:$v_1$] at (1.75,0.5) (v1) {};
\draw[draw=none, fill=red!13] (l2)--(r2) to[bend right=9] (u1) to[bend right=9] (l2);
\draw[draw=none, fill=red!13] (l4)--(m4) to[bend right=9] (l2) to[bend right=9] (l4);
\draw[thick] (v2) to[bend right] (u1) (u1) to[bend right] (v1) (v1) -- (v2);
\draw (z)--(3.2,4.7) node{}--(2.5,4.3) node{}--(u1);
\draw (v1)-- +(-0.2,-0.3); \draw (v1)-- +(0.2,-0.3);
\draw (v2)-- +(-0.2,-0.3); \draw (v2)-- +(0.2,-0.3);
\node at (2.38,3.6) (l1) {}; \node at (3.15,3.6) (r1) {};
\node at (2.08,3.2) (l2) {}; \node at (3.45,3.2) (r2) {};
\node at (1.88,2.8) (l3) {}; \node at (3.65,2.8) (r3) {};
\node at (1.74,2.4) (l4) {}; \node at (3.77,2.4) (r4) {};
\node at (1.65,2) (l5) {}; \node at (3.83,2) (r5) {};
\node at (2.37,2.8) (m3) {}; \node at (2.57,2.4) (m4) {}; 
\draw (l2)--(r2) (l4)--(m4) ;
\draw[very thin] (m4) to[bend right=9] (l2);
\draw[very thick,dashed] (l4)--(m4) to[bend right=8] (l2) (l2)--(r2) ;
\node[red!40] at (2.75,3.6) (a) {}; \node[red!40] at (2.75,3.3) (aa) {};
\draw[red!40, thick,densely dotted] (l1)--(a)--(r1)--(aa)--(l1) (a)--(aa)--(l2)--(a) (aa)--(r2);
\node[red!40] at (2.1,2.8) (b) {}; \node[red!40] at (2.15,2.5) (bb) {};
\draw[red!40, thick,densely dotted] (l3)--(b)--(m3)--(bb)--(l3) (b)--(bb)--(l4)--(b) (bb)--(m4);
\draw[gray] (u1)--(a) (l2)--(b);
\node[red] at (2.8,2.0) (a5) {}; \node[red] at (3.1,2.4) (a4) {}; \node[red] at (2.95,2.8) (a3) {};
\draw[red, thick,densely dotted] (m3)--(a3)--(r3)--(a4)--(m3) (l2)--(a3)--(m4)--(a4)--(a3)--(r2);
\draw[red, thick,densely dotted] (r4)--(a4)--(a5)--(l4) (m4)--(a5)--(r5) (l5)--(a5)--(r4) (l5)--(a4);
\draw[red, thick,densely dotted] (a5)-- +(-0.55,-0.2);
\draw[red, thick,densely dotted] (a5)-- +(-0.05,-0.35);
\node[draw=none,fill=none,red] at (3.1,1.7) {\scriptsize$(\leq\!8)$};
\node[draw=none,fill=none,red] at (3.1,1.45) {\scriptsize$\vdots$};
\node[draw=none,fill=none,red] at (1.25,1.95) {\scriptsize$(\leq\!5)$};
\node[draw=none,fill=none,red] at (1.25,1.7) {\scriptsize$\vdots$};
\node[draw=none,fill=none,red] at (4.1,2.75) {\scriptsize$(\leq\!3)$};
\node[draw=none,fill=none,red] at (4.1,2.5) {\scriptsize$\vdots$};
\node[draw=none,fill=none,red] at (3.9,3.3) {\scriptsize$(\leq\!2)$};
\node[draw=none,fill=none,red] at (1.7,3.3) {\scriptsize$(\leq\!2)$};
\node[draw=none,fill=none,red] at (1.35,2.5) {\scriptsize$(\leq\!2)$};
\node[draw=none,fill=none,red] at (2.4,4) {\scriptsize$(0)$};
\node[draw=none,fill=none] at (1.35,1.1) {$D$};
\node[draw=none,fill=none] at (2.85,3.05) {$C$};
\node[draw=none,fill=none] at (5,1.5) {$H$};
\end{tikzpicture}
$$
\caption{(left) A possible setup of \Cref{lem:core} for a trigraph $H$, where the cycle $D$ with its left and right
	wrapping paths $P_1$ and $P_2$ has a vh-division into $(C,B_1,B_2)$ such that~$B_1\not=\emptyset\not=B_2$.
	The interior of $C$ is shown in gray shade.
	(right) The outcome of the claimed partial contraction sequence of $H$ which contracts only vertices of $U$
	inside the shaded region bounded by the cycle~$C$, and which maintains bounded red degrees in the region and on its boundary $C$.
	Note that the bounds indicated in the picture refer to the subgraphs $H'_C$ only.
	Not all red vertices and dotted red edges may exist, and some of the red edges may actually be black.}
\label{fig:corelem}
\end{figure}

We informally illustrate the assumptions and intended outcome of \Cref{lem:bicore} in \Cref{fig:corelem}.
Before proceeding further with the proof, we make some related statements.
First, we show that the assumptions and conclusions stated in \Cref{lem:core} imply that the maximum red degree in $H'$ is at most~$8$
in \Cref{lem:coremore}, and second, that \Cref{lem:core} implies validity of the main \Cref{thm:twwplanar}.

\begin{lemma}\label{lem:corebasic}
Assume the context of \Cref{lem:core}. 
If $C_0\subseteq S$ is a $T$-wrapped cycle bounding a face of $S$ which is $1$-reduced in $(H,S)$,
and $P_1\subseteq C_0$ and $P_2\subseteq C_0$ are the left and right wrapping paths of $C_0$, then the following hold:
\begin{enumerate}[a)]
\item\label{it:sinknored}
The sink of $C_0$ has \emph{no} red edge to vertices of $U_0:=V(H_{C_0})\setminus V(C_0)$.
\item\label{it:left3right2}
Every vertex of $P_1$ has at most $3$ red edges, and every vertex of $P_2$ has at most $2$ red edges, to vertices of $U_0$.
\item\label{it:redin7}
Every vertex of $U_0$ has red degree at most $7$ in~$H$.
\end{enumerate}
\end{lemma}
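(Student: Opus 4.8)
The plan is to read a) directly off \Cref{lem:sinknored} and to prove b) and c) by short level-counting arguments that combine three facts available in the context of \Cref{lem:core}: the level assignment $\levll H.$ is good, so every edge of $H$ joins two levels differing by at most one (\Cref{lem:leveli3}); the face $\phi$ of $S$ bounded by $C_0$ is $1$-reduced, so $U_0=V(H_{C_0})\setminus V(C_0)$ is exactly the set of vertices assigned to $\phi$ and it carries at most one vertex on each level; and the partial contraction sequence is $S$-aware, so $V(S)$ is untouched and no neighbour of a vertex of $U_0$ leaves $V(C_0)\cup U_0$. For a) itself, the sequence from $G$ to $H$ is moreover min-level-respecting (hence level-respecting) and sink-protecting, $S$ is $2$-connected, and $\phi$ is a $T$-wrapped face of $(H,S,T)$ in the inherited natural face assignment whose assigned vertex set is $U_0$; so \Cref{lem:sinknored}, used in its ``sink-protecting'' case, states precisely that the sink of $\phi$ has no red edge to $U_0$.

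For b), let $x\in V(C_0)$. By goodness the red neighbours of $x$ lying in $U_0$ occur only on the three levels $\levll Hx-1,\levll Hx,\levll Hx+1$, and since $\phi$ is $1$-reduced there is at most one vertex of $U_0$ on each of them; hence $x$ has at most $3$ red edges into $U_0$, which is the bound for $P_1$. If in addition $x\in V(P_2)$, i.e.\ $x$ lies on the right wrapping path of $C_0$, then \Cref{lem:leftalign} (applied with the $T$-wrapped cycle $C:=C_0$) forbids any neighbour $y$ of $x$ with $\levll Hy=\levll Hx-1$ that satisfies $y\notin V(S)$ and is assigned to an $S$-face inside $C_0$; every $y\in U_0$ is such a vertex, so the level $\levll Hx-1$ contributes nothing and $x$ has at most $2$ red edges into $U_0$.

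For c), fix $y\in U_0$ on level $i:=\levll Hy$. By $S$-awareness all neighbours of $y$ in $H$ lie in $U_0\cup V(C_0)=U_0\cup V(P_1)\cup V(P_2)$, where $V(P_1)\cap V(P_2)=\{u\}$ is the sink of $C_0$, so I bound the red neighbours of $y$ in each of these three parts. In $U_0$: they lie on levels $i-1,i,i+1$ with at most one per level, but the unique $U_0$-vertex of level $i$ is $y$ itself, leaving at most $2$. On $P_1$, a $T$-vertical path that meets each level in at most one vertex: red neighbours of $y$ are again confined to levels $i-1,i,i+1$, so at most $3$. On $P_2$: the same estimate gives at most $3$ a priori, but the vertex $w$ of $P_2$ on level $i+1$, were it a neighbour of $y$, would contradict \Cref{lem:leftalign} (with $C:=C_0$, $x:=w$ on the right wrapping path, and $y\in U_0$ assigned to the $S$-face $\phi$ inside $C_0$, using $\levll Hy=\levll Hw-1$), so at most $2$. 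Finally the shared sink $u$ is, by a), not a red neighbour of $y$ and so is not double-counted. Altogether the red degree of $y$ in $H$ is at most $2+3+2=7$.

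The routine part is the per-level bookkeeping; the step that needs care is the count in c), where the naive estimate is $8$ and the reduction to $7$ rests entirely on invoking \Cref{lem:leftalign} to delete the one potential red neighbour of $y$ on the right wrapping path at level $i+1$ (and on a) to keep the shared sink uncounted). In the full write-up I would also spell out the two structural facts used above --- that $S$-awareness confines every neighbour of a $U_0$-vertex to $V(C_0)\cup U_0$, and that a $T$-vertical subpath of $S$ meets each level of $\levll H.$ in at most one vertex --- both being immediate from the definitions of $S$-aware contraction and of $T$-vertical path together with the good level assignment.
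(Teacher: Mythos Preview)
Your proposal is correct and follows essentially the same approach as the paper: part a) is \Cref{lem:sinknored}, and parts b) and c) are the same level-counting arguments using goodness of $\levll H.$, the $1$-reduced property, and \Cref{lem:leftalign} to drop the $P_2$-vertex at level $\levll Hy+1$. Your explicit handling of the sink in c) (to avoid double-counting between $P_1$ and $P_2$) is a small extra detail the paper leaves implicit, but the argument and the final count $2+3+2=7$ match the paper's $2+2+3=7$.
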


\begin{proof}\ref{it:sinknored})
This is proved in \Cref{lem:sinknored}.

\ref{it:left3right2})
Any vertex $x\in V(C_0)$, by \Cref{lem:leveli3}, has neighbours only in the same level as $x$ and in the two (previous and next) consecutive levels,
which gives at most $3$ possible red neighbours since the face of $C_0$ is $1$-reduced.
Moreover, if $x\in V(P_2)$, then there is no neighbour of $x$ in level $\levll Hx-1$ due to \Cref{lem:leftalign},
which gives the upper bound of~$2$.

\ref{it:redin7})
Similarly, any vertex $y\in U_0$ can have neighbours only in the same three levels as before, and there are 
three vertices in each level of $H_{C_0}$ -- one in $P_1$, one in $P_2$, and one in $U_0$ since $C_0$ is $1$-reduced.
There are no neighbours of $y$ outside of $H_{C_0}$ since $(H,S)$ is a skeletal trigraph.
And again due to \Cref{lem:leftalign}, there is no neighbour of $y$ in $P_2$ on the level $\levll Hy+1$.
Hence the resulting upper bound is~$2+2+3=7$.
\end{proof}

\begin{lemma}\label{lem:coremore}
Assume the assumptions and conclusions of \Cref{lem:core}. 
If all red faces in~$(H,S^*)$ except the one bounded by $C$ are $1$-reduced and $T$-wrapped, 
then the maximum red degree of each of the considered trigraphs $H'$ is at most~$8$.
\end{lemma}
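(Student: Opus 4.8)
The plan is to bound, uniformly over all trigraphs $H'$ on the sequence from $H$ to $H^*$, the red degree of an arbitrary vertex $v\in V(H')$, splitting into four cases according to the position of $v$ with respect to the sub-skeleton $S^*$: (a)~$v\in U':=V(H'_C)\setminus V(C)$ lies in the interior of $\phi$; (b)~$v$ is assigned to a face $\psi\ne\phi$ of $S^*$; (c)~$v\in V(S^*)\setminus V(C)$; (d)~$v\in V(C)$. Case~(a) is exactly \Cref{lem:core}\eqref{it:all8}. For case~(b), the contraction sequence producing $H'$ from $H$ is $S^*$-aware and touches only vertices of $U$, hence it never changes the neighbourhood of $v$ nor creates a red edge leaving $\phi$; so the red degree of $v$ in $H'$ equals that in $H$, which is $0$ if $\psi$ is not red, and otherwise at most $7$ by \Cref{lem:corebasic}\eqref{it:redin7} applied to $\psi$ --- a face of $S$ lying outside $C$ that is $T$-wrapped and $1$-reduced by hypothesis.

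For cases~(c) and~(d) I would set up a common mechanism. Since $(H',S^*)$ is a skeletal trigraph, all edges of $H'$ with both ends in $V(S^*)$ are black, so every red edge at $v\in V(S^*)$ enters the interior of a face of $S^*$ incident to $v$. A $T$-wrapped face incident to $v$ of which $v$ is the sink contributes no red edge at $v$ by \Cref{lem:corebasic}\eqref{it:sinknored} (valid for faces other than $\phi$ since the sequence $G\to H'$ is sink-protecting; not needed for $\phi$ itself), and a non-red face contributes none either. The crucial combinatorial observation is that \emph{at most two} $T$-wrapped faces incident to $v$ fail to have $v$ as their sink: if $v$ is not the sink of such a face $\psi$, then $v$ lies on a wrapping path of $\psi$, which is $T$-vertical, so the $T$-parent edge $e_0$ of $v$ is a boundary edge of $\psi$ at $v$; as $e_0$ borders only two faces of $S^*$ the claim follows (and $v=r$ has no such $\psi$). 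Writing $\psi_L,\psi_R$ for those two faces sharing $e_0$, the vertex $v$ lies on the \emph{right} wrapping path of one of them and on the \emph{left} wrapping path of the other, so by \Cref{lem:corebasic}\eqref{it:left3right2} these contribute at most $2$ and $3$ red edges at $v$, respectively --- unless one of them equals $\phi$ (where instead \Cref{lem:core}\eqref{it:bound56}, \eqref{it:right3} or \eqref{it:topp2} applies), or is non-red (contributing $0$).

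Putting the pieces together: in case~(c), neither of $\psi_L,\psi_R$ is $\phi$ and $v$ is not incident to $\phi$ at all, so the red degree of $v$ is at most $2+3=5$. In case~(d), if the $T$-parent edge $e_0$ of $v$ does not lie on $C$, then (reading off \Cref{def:vhdivided}) $v$ is either the sink of a $T$-wrapped $C$, or a lid endpoint of $B_1$, or the left lid endpoint of $B_2$; in all of these $\phi$ contributes at most $3$ to $v$ by \Cref{lem:core}\eqref{it:right3}, resp.\ at most $2$ by \eqref{it:topp2}, while $\psi_L\ne\phi\ne\psi_R$ add at most $2+3$, for a total of at most $8$. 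If instead $e_0\in E(C)$, then $\phi$ is one of $\psi_L,\psi_R$ and only one further face $\psi'\ne\phi$ is present, contributing at most $3$, while $\phi$ contributes at most $5$ by \Cref{lem:core}\eqref{it:bound56} (or less in the cases of \eqref{it:right3}, \eqref{it:topp2}); again at most $8$.

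The step I expect to be the main obstacle is the exceptional clause of \Cref{lem:core}\eqref{it:bound56}: the right endpoint $v^*$ of the lid edge of $B_2$ may send up to $6$ red edges into $U'$, which seems to leave only $2$ for everything else. This is handled using the precise structure of \Cref{def:vhdivided}\eqref{it:vhdivC2}: $v^*$ is the lower end of the right wrapping path of $B_2$, contained in the $T$-vertical path $P_0$ (or $P_0\cup P_2$), so the $T$-parent edge $e_0$ of $v^*$ lies on that right wrapping path. Hence one of the two faces at $e_0$ is $B_2$ with $v^*$ on its \emph{right} wrapping path --- contributing at most $2$ by \Cref{lem:corebasic}\eqref{it:left3right2} --- and the other is $\phi$ itself (or, near $v_0$, the empty $S$-face $(v_1,v_2,v_0)$, contributing $0$); so $v^*$ has at most $6+2=8$ red edges in total. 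What remains after this is a routine check that each special boundary vertex named in \Cref{def:vhdivided} is covered by one of these sub-cases, and that the incident faces not yet considered are all either non-red or have $v$ as their sink, whence every $H'$ along the sequence has maximum red degree at most $8$.
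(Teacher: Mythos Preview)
Your proof is essentially correct and follows the same approach as the paper: split by position relative to $S^*$, use \eqref{it:all8} for $U'$, \Cref{lem:corebasic}\eqref{it:redin7} for vertices assigned to other faces, and the parent-edge argument to reduce $V(S^*)$ to at most two contributing faces plus possibly~$\phi$. Your handling of the exceptional right lid vertex $v^*$ of $B_2$ via \Cref{lem:corebasic}\eqref{it:left3right2} (bound $2$ on the right wrapping path) is exactly the paper's argument.

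There is one small imprecision in your case~(d.1). Your claim that a vertex $v\in V(C)$ with $e_0\notin E(C)$ must be ``the sink of a $T$-wrapped $C$'' or a lid endpoint is not quite right: $C$ is in general \emph{not} $T$-wrapped, and in the sub-case of \Cref{def:vhdivided}\eqref{it:vhdivC2} where $B_1=\emptyset$ and the right wrapping path of $B_2$ uses part of $P_2$, the vertex $u_1\in V(P_0)\cap V(P_2)$ lies on $C$ with $e_0\notin E(C)$ yet is neither a sink nor a lid endpoint. The correct uniform statement is that every such $v$ either lies on the right wrapping path $P_2$ of $D$ (so \Cref{lem:core}\eqref{it:right3} gives $\le 3$ from $\phi$) or is one of the lid endpoints covered by \eqref{it:topp2}; your bound $3+2+3=8$ then goes through unchanged.
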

\begin{proof}
Let $x\in V(H')\setminus V(S^*)$.
If $x\in U'=V(H'_C)\setminus V(C)$, then the claim is given in \Cref{lem:core}\eqref{it:all8}.
Otherwise, $x$ is assigned to an $S^*$-face $\sigma$ in $(H',S^*)$, and $\sigma$ is the same face in $(H,S^*)$.
So, $\sigma$ is $1$-reduced in $(H,S^*)$ and the claim follows from \Cref{lem:corebasic}\eqref{it:redin7}.

Let $y\in V(S^*)\setminus V(H'_D)$.
Since $(H',S^*)$ is a skeletal trigraph, red neighbours of $y$ can come only from incident $S^*$-faces.
All red $S^*$-faces incident to $y$ are $T$-wrapped.
Let $f\in E(T)$ be the edge incident to $y$ on the path towards the root of~$T$. Then $f\in E(S^*)$.
For any $S^*$-face $\sigma$ incident to $y$ but not incident to $f$, the sink of $\sigma$ must be $y$, 
and so this face contributes no red edge to $y$ by \Cref{lem:corebasic}\eqref{it:sinknored}.
Consequently, at most two $S^*$-faces incident to $y$ contribute up to $3$ red edges each by \Cref{lem:corebasic}\eqref{it:left3right2}
(this can be improved to $3$ and $2$), giving that the red degree of $y$ is at most $3+3=6$.

We are left with the case of $z\in C\cup B_1\cup B_2\subseteq V(S^*)$.
If $z\not\in V(C)$, then the same argument as with an upper bound on red degree $6$ for $y$ applies here with $z$, 
since the possible faces of $B_1$ and/or $B_2$ are also $1$-reduced.
If $z\in V(C)$ is not incident to the lid edges of possible cycles $B_1\not=\emptyset$ or $B_2\not=\emptyset$,
then an analogous argument gives an upper bound of the red degree of $z$ at most $3+5=8$, thanks to the bound of \Cref{lem:core}\eqref{it:bound56}.

However, if $z\in V(C)$ is a vertex of one of the lid edges of possible $B_1\not=\emptyset$ or $B_2\not=\emptyset$,
the situation gets different in the sense that up to three incident $S^*$-faces may contribute red edges to~$z$
-- that of $C$, that of the respective $B_1$ or $B_2$, and another $S^*$-face adjacent to $H'_D$ from outside.
In the subcases captured by \Cref{lem:core}\eqref{it:topp2}, we still get an upper bound of $2+3+3=8$ on the red degree of~$z$.
Finally, in the subcase of $z$ being the vertex of the lid edge of $B_2\not=\emptyset$ to the right, as in \Cref{lem:core}\eqref{it:bound56},
we get that $z$ may receive up to $6$ red edges from the face of $C$, and then only up to $2$ red edges from the $1$-reduced
face of $B_2$ by \Cref{lem:corebasic}\eqref{it:left3right2}, again summing to at most~$8$.
\end{proof}

We also show how \Cref{lem:core} implies the main result of this section:

\begin{proof}[Proof of \Cref{thm:twwplanar}]
We start with a given simple planar graph $G_0$, and extend any plane embedding of $G_0$ into a simple plane triangulation $G$
such that $G_0$ is an induced subgraph of~$G$.
In more detail, we add a new vertex into each face of $G_0$ bounded by a cycle and connect it to this cycle, but we suitably add more
such new vertices into a face whose boundary walk repeats some vertices.
Then we choose a root $r$ on the outer face of $G$ and, for some left-aligned BFS tree $T$ of $G$ rooted in $r$ which exists by \Cref{clm:existslal},
the graph $H=G$ and the outer triangle $C=S$ of~$G$, and $B_1=B_2=\emptyset$, we apply \Cref{lem:core}.

This way we get a partial contraction sequence from $G$ to a trigraph $H^*$ of maximum red degree $8$ along the sequence, by \Cref{lem:coremore}.
Observe that $H^*\setminus V(C)$ consists of at most $2$ vertices since $C$ is maximally $1$-reduced now,
and hence in the final phase, we may pairwise contract the remaining vertices in an arbitrary order.
The restriction of this whole contraction sequence of $G$ to only $V(G_0)$ then certifies that the twin-width of $G_0$ is at most~$8$.

Regarding overall runtime of the underlying routine, the construction of~$G$ from $G_0$ can be easily accomplished in linear time using 
the linear-time planarity algorithm \cite{DBLP:journals/jacm/HopcroftT74}.
The tree $T$ is computed in linear time using \Cref{clm:existslal}.
Finally, \Cref{lem:core} constructs the contraction sequence also in linear time $\ca O(|V(G)|=\ca O(|V(G_0)|$.
\end{proof}

\subsection{Finishing the proof}\label{sub:finishing8}

The informal high-level idea of the proof of \Cref{lem:core} is as follows.
We are going to ``vertically divide'' the face $\phi$ of $C$ into two parts and independently contract each part by a recursive invocation of \Cref{lem:core}.
Subsequently, we merge the two already contracted parts and, together with the vertical dividing path between them,
we hence obtain an intermediate trigraph in which the interior of $\phi$ contains at most $3$ vertices of each level ($3$-reduced).
The core argument then shows how to contract these interior vertices, level by level, so that we get at most two vertices per level in the first stage,
and finally at most one vertex per level in the second stage (i.e., the face $\phi$ turns $1$-reduced as desired).

We start with proving the easier second stage of this sketch.
\begin{lemma}\label{lem:secondstage}
Respecting the notation and assumptions of \Cref{lem:core}, 
we assume that a min-level-respecting and sink-protecting partial contraction sequence of the trigraph $H$ contracting only within $U$
ends in a trigraph $H^\circ$, such that the face $\phi$ bounded by $C\subseteq S^*$ is $2$-reduced in $(H^\circ,S^*)$.
Moreover, we assume that every step of the sequence up to $H^\circ$ satisfies the conditions \eqref{it:all8}--\eqref{it:topp2} of \Cref{lem:core},
and every vertex $x\in V(H^\circ_C)\setminus V(C)$ has at most three red edges in $H^\circ_C$ to vertices of the previous level $\levll{H^\circ}x-1$.
Then the sequence can be prolonged to reach a trigraph $H^*$ such that the conclusion of \Cref{lem:core} is satisfied.
\end{lemma}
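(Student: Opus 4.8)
The plan is to prolong the given sequence by contracting the interior vertices of $\phi$ one BFS level at a time, sweeping from the deepest interior level upward towards the sink. Write $u$ for the sink of $\phi$ and set $\ell:=\levll Gu$; by the assumptions the vertices of $V(H^\circ_C)\setminus V(C)$ occupy only levels $\ell+1,\ell+2,\dots$, at most two of them per level, and --- taking $H^\circ$ maximally $2$-reduced, as the first stage delivers --- none above the admissible top level. Iterating $i$ downwards from the largest interior level to $\ell+1$, whenever the current trigraph carries two vertices $x_1,x_2$ of (the image of)~$U$ on level~$i$ I contract them into a single vertex~$z$, and do nothing on a level that already carries at most one such vertex. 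Each step contracts an equal-level pair and keeps that level, so the prolongation is min-level-respecting (no dummy level decrease is ever needed); it only touches vertices in or stemming from~$U$, so it is $S^*$-aware and contracts within~$U$; and it is sink-protecting, because the only vertices of~$U$ ever adjacent to~$u$ lie on level $\ell+1$ (an interior vertex on level $\ell+1$ needs a $G$-neighbour on level~$\ell$, which on $C\cup\phi$ can only be~$u$) and these have all their constituents on level $\ell+1$ by the sink-protecting hypothesis for the sequence reaching~$H^\circ$; moreover, by \Cref{lem:sinknored}, every edge from such a vertex to~$u$ stays black, so no contraction ever creates a red edge at the sink. The resulting $H^*$ has $\phi$ $1$-reduced, and maximally so since contractions create no vertex on a fresh level; the prolongation is plainly built in time $\ca O(|U|)$.

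What remains to be proved is that conditions \eqref{it:all8}--\eqref{it:topp2} of \Cref{lem:core} survive each step (they hold at $H^\circ$ and all earlier trigraphs by hypothesis). My main tool is the elementary dichotomy: contracting $x_1,x_2$ into~$z$ raises the red degree of any fixed vertex $q\notin\{x_1,x_2\}$ --- its total red degree, or the one measured in $H'_C$ --- by at most~$1$, and does so only when $q$ is adjacent to exactly one of $x_1,x_2$ and that edge is \emph{black}; in every other configuration the red degree of~$q$ does not increase. The leverage is that in this ``bad'' case $q$ has only the one (black) neighbour among $\{x_1,x_2\}$, hence red count $0$ on the contracted level before the step, which is exactly the unit of slack consumed afterwards.

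The sweep must run bottom-up, and this is precisely where the one-sidedness of \Cref{lem:leftalign} is used: that lemma forbids edges \emph{downward} from vertices of the right wrapping path of~$D$ into the interior of~$D$, while no analogous restriction holds on the left. Thus, when level~$i$ has just been contracted, a vertex $w\in U$ on level $i-1$ sees, on level~$i$, only the new vertex~$z$ and (by \Cref{lem:leftalign} and the shape of the vh-division) at most one vertex of~$C$, so together with \Cref{cl:layeri3}, the ``$\le 3$ red edges to the previous level'' hypothesis for~$w$, and the $2$-reducedness of level $i-1$, the red degree of~$w$ stays $\le 2+3+3=8$; likewise the freshly created~$z$ on level~$i$ has at most four neighbours on level $i-1$, two on level~$i$, and --- level $i+1$ being already $1$-reduced and $z$ missing, again by \Cref{lem:leftalign}, the right-wrapping vertex of~$D$ there --- at most two on level $i+1$, for a total of at most~$8$. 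This gives~\eqref{it:all8}. For \eqref{it:bound56}--\eqref{it:topp2} I would feed the dichotomy into the rigid shape of the vh-division from \Cref{def:vhdivided}, which pins down the short window of levels on which vertices of~$U$ can be adjacent to a prescribed vertex of~$C$ --- e.g.\ a vertex on the lid edge of~$B_1$, or the left lid vertex of~$B_2$, is met by vertices of~$U$ only one level below it, so its red degree there never exceeds~$2$ --- and thereby keep the boundary red degrees within $5$, $6$, $3$, and~$2$.

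The main obstacle is exactly this last case analysis over the boundary of~$C$: the bounds to maintain are tight (down to~$2$ on the lids of~$B_1$ and~$B_2$), and one must simultaneously control the one-sided \Cref{lem:leftalign} (useful only on the right wrapping path of~$D$), the level windows forced by \Cref{def:vhdivided}, and the ``$+1$ only from a single black edge'' accounting. The sweep direction is essential, not cosmetic: a top-down sweep would let a vertex of~$U$ momentarily reach red degree~$9$, breaking~\eqref{it:all8}. So the non-routine content is checking that the bottom-up sweep, fed through these three ingredients, never breaches any of the four bounds.
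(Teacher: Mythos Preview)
Your first cycle—contracting the at-most-two interior vertices on each level from the deepest down to one above the sink—matches the paper's first stage exactly, and your degree bookkeeping for \eqref{it:all8} is along the right lines (the paper argues the same $3+3+2$ and $1+2+2$ counts you sketch).

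The genuine gap is in reaching \emph{maximally} $1$-reduced. The lemma assumes only that $\phi$ is $2$-reduced in $H^\circ$; your parenthetical ``taking $H^\circ$ maximally $2$-reduced, as the first stage delivers'' inserts a hypothesis that is neither in the statement nor proved anywhere. Without it, after your sweep $\phi$ is $1$-reduced but may still carry interior vertices on levels strictly above the maximum level on $V(C)$ (or above that maximum~$+1$ when $C$ is a triangle), so the conclusion of \Cref{lem:core} fails. The paper handles this with a \emph{second} cycle: after the first cycle produces a $1$-reduced trigraph $H^\#$, it sweeps $j$ downward from the deepest interior level to just above the admissible top, and at each step either contracts the sole interior vertex on level~$j$ with the one on level~$j-1$ (a genuinely cross-level contraction, allowed by \Cref{def:lrespecting} precisely because by then all neighbours of the level-$j$ vertex lie on level~$j-1$), or, if no such mate exists, decreases its level by~$1$ as a dummy step. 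This is exactly where the prolongation becomes min-level-\emph{respecting} rather than level-preserving—so your assertion that ``no dummy level decrease is ever needed'' is wrong in general—and it is the reason the min-level-respecting / minimum-level-assignment machinery of \Cref{def:lrespecting} exists at all. The paper also chooses the stopping level of this second cycle (the maximum level on $V(C)$, or $k+2$ if $C$ is a triangle) specifically to keep the sequence sink-protecting, another detail your outline does not address.
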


\begin{proof}
Let $U^\circ:=V(H^\circ_C)\setminus V(C)$, and let $k$ be the minimum level $\levll{H^\circ}x$ for~$x\in V(C)$
and $m$ be the maximum level $\levll{H^\circ}x$ for~$x\in U^\circ$.
Firstly, for $i=m,m-1,\ldots,k+1$ in this order; if there are two distinct vertices $x\not=x'\in U^\circ$ such that 
$i=\levll{H^\circ}x=\levll{H^\circ}{x'}$, then we contract $x$ with $x'$.
Now $\phi$ is $1$-reduced in the resulting skeletal trigraph $(H^\#,S^*)$.

Let $U^\#:=V(H^\#_C)\setminus V(C)$.
Let, furthermore, $\ell$ be the maximum level $\levll{H^\circ}x$ for~$x\in V(C)$ if $C$ is not a triangle, and $\ell=k+2$ if $C$ is a triangle.
Secondly, for $j=m,m-1,\ldots,\ell+1$ such that there is a vertex $x\in U^\#$ with $j=\levll{H^\#}x$, we perform one of the following.
If there exists $y\in U^\#$ with $\levll{H^\#}y=j-1$, then we contract $x$ with $y$ since all neighbours of $x$ are currently 
on the level $j-1$ and the contraction is allowed by \Cref{def:lrespecting}.
Otherwise, we decrease the level of $x$ to $j-1$, again since all neighbours of $x$ are on the level $j-1$
(notice that there can be neighbour(s) of $x$ in $V(C)$ if $j-1=\ell$) which is correct with respect to a minimum level assignment.
For the resulting trigraph $H^*$, $\phi$ is maximally $1$-reduced in $(H^*,S^*)$,
and the sequence is still sink-protecting thanks to our special choice of~$\ell$.

\smallskip
What remains to be proved is that each of the conditions \eqref{it:all8}--\eqref{it:topp2} of \Cref{lem:core} 
is satisfied in any trigraph $H'$ along the constructed sequence from $H^\circ$~to~$H^*$.
Consider \eqref{it:all8} and a vertex $x\in V(H'_C)\setminus V(C)$.
All neighbours of $x$ now belong to $H'_C$ since the sequence is $S^*$-aware.
If $H'$ is from the first cycle of the previously described sequence from $H^\circ$ to $H^*$, and the iteration is $i\geq\levll{H'}x+2$,
then the claim follows from the assumptions of \Cref{lem:secondstage} (informally, nothing has changed so far in the neighbourhood of~$x$).
With $i\leq\levll{H'}x+1$, in $H'_C$ there are at most three vertices of level $\levll{H'}x+1$,
at most three besides $x$ of level $\levll{H'}x$, and at most three red neighbours of $x$ of level $\levll{H'}x-1$ by the assumption
(or trivially after the iteration~$i=\levll{H'}x-1$).
Moreover, by \Cref{lem:leftalign}, the vertex of $H'_C$ of level $\levll{H'}x+1$ belonging to the right wrapping path of~$C$
is not adjacent to~$x$, and so the total red degree of $x$ is at most~$8$.
For the second cycle of the described sequence, the conclusion of \eqref{it:all8} is already trivial.

Consider \eqref{it:bound56} and a vertex $x\in V(C)$.
Again, if $H'$ is from an iteration $i\geq\levll{H'}x+2$ of the first cycle, then the claim follows from the assumptions.
Otherwise, there is only one vertex of level $\levll{H'}x+1$ left in $V(H'_C)\setminus V(C)$, and twice at most
two vertices on the levels $\levll{H'}x$ and $\levll{H'}x-1$.
Since there are no red edges between vertices of $C$, the upper bound of at most $1+2+2=5$ red neighbours of $x$ in $H'_C$ follows.
The same is trivial in the second cycle of the described sequence.
The condition \eqref{it:right3} follows in the same way, thanks to \Cref{lem:leftalign} which excludes red edges to the level $\levll{H'}x-1$.

Consider \eqref{it:topp2}.
If $x\in V(C)$ is a vertex of the possible lid edge of $B_1\not=\emptyset$, then we can see from the embedding and the BFS tree $T$ of $G$ that
all vertices of $H^\circ_C$ except those of the lid edge are of level (at least, and hence equal to) $\levll{H^\circ}x+1$.
Consequently, $x$ can have at most two neighbours in $V(H^\circ_C)\setminus V(C)$ since $\phi$ is $2$-reduced.
If $x\in V(C)$ is the left vertex of the possible lid edge of $B_2\not=\emptyset$, then we refer to the fine details of \Cref{def:vhdivided}\eqref{it:vhdivC2}.
Using the notation of \Cref{def:vhdivided}, consider the $T$-wrapped cycle $C_0$ formed by $P_1\cup P_0\cup\{v_1,v_0\}$;
then all neighbours of $x$ in the graph $H_C$ must belong to $H_{C_0}$ by planarity and the natural face assignment of~$G$.
So, in particular, all neighbours of $x$ in $V(H_C)\setminus V(C)$ in $H$ are of level $\levll{H}x+1=\levll{H^\circ}x+1$ and the same
holds also in $H^\circ$ by the assumed partial contraction sequence of~$H$.
Again, $x$ can have at most two neighbours in $V(H^\circ_C)\setminus V(C)$ since $\phi$ is $2$-reduced.
This upper bound on the neighbours of $x$ trivially remains true along our sequence from $H^\circ$ to $H^*$, and so \eqref{it:topp2} is proved for~$H'$.
\end{proof}

Then we get to the core proof of \Cref{lem:core} which will conclude our main result.

\begin{proof}[Proof of \Cref{lem:core}]
We proceed by induction on $2|U|+|V(D)|$.
If $U=\emptyset$, we are immediately done with the empty partial contraction sequence. So, we assume $U\not=\emptyset$.

Considering $B_2=\emptyset$, we have $V(C)\subseteq V(D)$ (\Cref{fig:vhdivided}, bottom-right), and $C=D$ holds iff $B_1=\emptyset$.
We denote by $f_0=\{v_1,v_2\}\in E(D)$ the lid edge of $D$.
If there is no edge from $v_1$ to $U$, then for the other neighbour $w$ of $v_1$ on $C$, $(v_1,v_2,w)$ forms a triangular face in $H_C=G_C$.
If $w\not\in V(D)$, then $\{v_1,w\}$ would be the lid edge of $B_1\not=\emptyset$, and so $U=\emptyset$ which was handled above.
Hence we have $w\in V(D)$ and we denote by $D'=(D-v_1)\cup\{w,v_2\}$ the shortened $T$-wrapped cycle 
which admits a vh-division into $(C',B_1,\emptyset)$ where analogously $C'=(C-v_1)\cup\{w,v_2\}$ and $U$ stays the same.
We finish by inductively applying \Cref{lem:core} to~$D'$ in $(H,S\cup\{w,v_2\},T)$.
The symmetric argument applies to~$v_2$.

If neither of the previous is true, and we still have $B_2=\emptyset$,
there exists $v_0\in U$ forming the (unique) bounded triangular face $(v_0,v_1,v_2)$ adjacent to $f_0$ in $H_C=G_C$.
Let $P_0$ denote the $T$-vertical path from $v_0$ to the first vertex $u_1\in V(D)$.
Then $u_1\in V(C)$, due to planarity and the possible lid edge of $B_1\not=\emptyset$.
If $u_1=v_1$ (the case of $u_1=v_2$ is handled analogously), then we let $D''=(D\setminus f_0)\cup\{v_1,v_0\}\cup\{v_0,v_2\}$
with the lid edge $\{v_0,v_2\}$ and $C''=(C\setminus f_0)\cup\{v_1,v_0\}\cup\{v_0,v_2\}$, and appropriately $S''=S\cup\{v_1,v_0\}\cup\{v_0,v_2\}$.
Then $D''$ admits a vh-division into $(C'',B_1,\emptyset)$, and we inductively apply \Cref{lem:core} to~$D''$ in~$(H,S'',T)$.
The recursively obtained partial contraction sequence of $H$ makes the face of $C''$ $1$-reduced, and so $C$ is now $2$-reduced in~$S$
(consider the extra vertex~$v_0$ inside $C$).
We easily finish the sequence by \Cref{lem:secondstage}.

While still having $B_2=\emptyset$, we are left with the possibility that $P_0$ is disjoint from $v_1$ and $v_2$, and then
we form a skeleton $S_0$ from $S$ by adding the path $P_0$ and the edges $\{v_0,v_1\}$, $\{v_0,v_2\}$.
Notice that we are now (with $P_0$ and $v_0$) in a situation very similar to \Cref{def:vhdivided}\eqref{it:vhdivC2}.
On the other hand, if $B_2\not=\emptyset$, we keep $S_0=S$ since $P_0$ is already defined by \Cref{def:vhdivided}\eqref{it:vhdivC2},
and we also define $u_1\in V(D)\cap V(P_0)$ as the first vertex in which $P_0$ intersects~$D$.

\vspace*{-2ex}%
\subparagraph{\textcolor{lipicsGray}{Recursive partial contraction sequence~$\pi_1$.}}
We now enter the core section of our proof, with a path $P_0$ defined above; informally, $P_0$ looks like one of the first three cases of \Cref{fig:vhdivided}
(see also left of \Cref{fig:corelem}).
Let $P_1\subseteq D$ and $P_2\subseteq D$ denote the left and right wrapping paths of~$D$, and $u\in V(P_1)\cap V(P_2)$ be the sink of~$D$.
We first check, in the subgraph $H_D$, the situation ``to the left of the path~$P_0$.''

Let $f_1=\{x_1,x_2\}\in E(H_C)$ be a $T$-horizontal edge such that $x_1\in V(P_1)\cap V(C)$ and $x_2\in V(P_0)\setminus\{u_1\}$,
$\,f_1$ is disjoint from $V(B_2)$ if $B_2\not=\emptyset$, $\,x_2$ is not a neighbour of $u_1$ if $u_1\in V(P_1)$,
and that the distance from $u_1$ to $x_2$ (on $P_0$) is least possible within the stated conditions.
If such $f_1$ does not exist, we choose $f_1:=\{v_1,v_0\}$.%
\footnote{Note that $f_1=\{v_1,v_0\}$ can be chosen as $\{x_1,x_2\}$ also in the previous step if $\{v_1,v_0\}$ is $T$-horizontal.}
Furthermore, let $D_1$ by the unique $T$-wrapped cycle in $P_1\cup P_2\cup P_0\cup f_1$ with the lid~$f_1$.
Then $D_1$ is not a triangle unless possibly when $f_1=\{v_1,v_0\}$, and we have one of the following three exclusive cases:
\begin{itemize}
\item If $B_2\not=\emptyset$, then we choose $C_1=D_1\Delta B_2$ and~$B_1'=B_2$.
\item If $B_2=\emptyset$, and $u_1\in V(P_1)$ or $B_1=\emptyset$, then $D_1$ is a facial cycle of $S_0$ and we choose $C_1=D_1$ and $B_1'=\emptyset$.
\item If $B_2=\emptyset\not=B_1$ and $u_1\in V(P_2)\setminus V(P_1)$, then let $C_1=D_1\Delta B_1$ and~$B_1'=B_1$.
\end{itemize}
It is easy to check that, in any case, $D_1\subseteq H_D$ admits a vh-division into $(C_1,B_1',\emptyset)$ where $C_1\subseteq H_C$, 
and we may inductively apply \Cref{lem:core} to $D_1$ in $(H,S_0,T)$.

Hence, we inductively obtain an $S_0$-aware (where $S_0\supseteq S^*$) partial contraction sequence $\pi_1$ of $H$, 
which results in a trigraph $H^1$ such that the face of $C_1$ is maximally $1$-reduced in the skeletal trigraph $(H^1,S_0,T)$.
It is routine to verify that $\pi_1$ at every step $H'$ satisfies the conditions \eqref{it:all8}--\eqref{it:topp2} of \Cref{lem:core}
also with respect to the skeleton $S^*$ and the cycle~$C$:
Since $H'_{C_1}\subseteq H'_C$, validity of \eqref{it:all8} is trivial, and
the same can be said for \eqref{it:bound56} since the exception with degree $6$ does not apply for~$C_1$.
For \eqref{it:right3} it is enough to observe that the right wrapping path of $D_1$ is a subset of that of~$D$.
Regarding \eqref{it:topp2}, the vertices considered there with respect to $C$ are either not affected by $\pi_1$ (receive no more red edges there),
or they are considered in \eqref{it:topp2} with respect to $C_1$ as well, and so the claimed bound holds.

\vspace*{-2ex}%
\subparagraph{\textcolor{lipicsGray}{Recursive partial contraction sequence~$\pi_2$.}}
For the next inductive invocation of \Cref{lem:core}, in order to handle what happens ``to the right of $P_0$'',
we need a slight technical modification of the skeleton.
Let $S_1=S_0$ if $B_1'=\emptyset$, and $S_1$ be obtained from $S_0$ by removing the lid edge of $B_1'$ otherwise.
Then $D_1$ is a face of $S_1$, and since the face of $B_1'\not=\emptyset$ is maximally $1$-reduced in $(H^1,S_0,T)$ and not a triangle,
and all vertices assigned to the face of $C_1$ are of higher level than those of the horizontal lid edge of $B_1'$, 
we get that the face of $D_1$ is maximally $1$-reduced in $(H^1,S_1,T)$, too.

We further set $B_1'':=\emptyset$ if $u_1\in V(P_2)$ or $B_1=\emptyset$, or $B''_1:=B_1\not=\emptyset$ otherwise.
Now we have one of the following two possibilities in $(H^1,S_1,T)$:
\begin{itemize}
\item If $f_1\not=\{v_1,v_0\}$, we let $D_2:=D$, $C_2:=C\Delta C_1$ and $B_2'':=D_1$.
\item If $f_1=\{v_1,v_0\}$, we let $f_2=\{v_0,v_2\}$ and $D_2$ be the unique $T$-wrapped cycle in $P_1\cup P_2\cup P_0\cup f_2$ with the lid~$f_2$.
Then $C_2:=D_2$ if $B_1''=\emptyset$, and $C_2:=D_2\Delta B_1''$ otherwise.
\end{itemize}
In either case, $D_2\subseteq H^1_D$ clearly admits a vh-division into $(C_2,B_1'',B_2'')$ where $C_2\subseteq H^1_C$, 
and we may inductively apply \Cref{lem:core} to $D_2$ in $(H^1,S_1,T)$.

We inductively obtain a partial contraction sequence $\pi_2$ of $H^1$, which results in a trigraph~$H^2$.
This sequence is $S_1$-aware, and also $S_0$-aware since $\pi_2$ has not touched the subgraph $H^1_{D_1}$ 
to which the possible lid edge of $B_1'$ (removed from $S_0$) belongs to.
The face of $C_2$ is maximally $1$-reduced in $(H^2,S_1,T)$.
The next task is to verify that $\pi_2$ at every step $H'$ satisfies the conditions \eqref{it:all8}--\eqref{it:topp2} of \Cref{lem:core}
with respect to the skeleton $S^*$ and~$C$.

Consider \eqref{it:all8} and a vertex $x\in U'=V(H'_C)\setminus V(C)$.
If $x\not\in V(H'_{C_2})$, then, by the face assignment in $(H',S_1)$, contractions within $\pi_2$ have no effect on~$x$ and
\eqref{it:all8} holds from $(H^1,S_1,T)$ before~$\pi_2$.
If $x\in U'\cap V(C_2)$, then \eqref{it:all8} holds true by \Cref{lem:coremore}.
The remaining possibility is that of $x\in V(H'_{C_2})\setminus V(C_2)$, 
in which \eqref{it:all8} holds true from inductive invocation of the same condition with respect to~$C_2$.

Consider \eqref{it:bound56} and a vertex $x\in V(C)$.   
If $x\not\in V(C_2)$, then, again, \eqref{it:bound56} holds from $(H^1,S_1,T)$ before~$\pi_2$.
Otherwise, we have $x\in V(C)\cap V(C_2)$ and the only red edges between $x$ and $U'$ are from $U'':=V(H'_{C_2})\setminus V(C_2)$
--- then \eqref{it:bound56} follows from inductive invocation of the same with respect to~$C_2$, or we have one of the following cases:
\begin{itemize}
\item If $x$ is the sink of $C_1=D_1$, then there are no red edges from $x$ to $U'\setminus U''$ by \Cref{lem:corebasic}\eqref{it:sinknored}.
\item If $x$ is the right vertex of the lid edge of $B_2\not=\emptyset$, we are in the exceptional case of red degree up to $6$ in \eqref{it:bound56},
and this enables us to count also the possible one red edge -- since the face of $C_1$ is already $1$-reduced, between $x$ and $U'\setminus U''$.
\item If $x$ is the left vertex of the lid $f_1\not=\{v_1,v_0\}$ of $D_1$, then $x$ has at most two red edges to $U''$
by inductive invocation of the condition \eqref{it:topp2} with respect to $C_2$, and at most three red edges to 
$U'\setminus U''$ by \Cref{lem:corebasic}\eqref{it:left3right2}. Together at most~$5$.
\end{itemize}
\vspace*{-1ex}

Consider now \eqref{it:right3} and a vertex $x\in V(C)\cap V(P_2)$.
Then $x\in V(C_1)\setminus V(C_2)$ or $x\in V(C_2)\setminus V(C_1)$, 
and \eqref{it:right3} follows by inductive invocation of the same with respect to $C_1$ or $C_2$, respectively.
Or, $x\in V(C_1)\cap V(C_2)$ is the sink of $C_2$ and one can again use the bound with respect to $C_1$ together with \Cref{lem:corebasic}\eqref{it:sinknored}.
Finally, regarding \eqref{it:topp2}, the vertices considered there with respect to $C$ are either not affected by $\pi_2$,
or they are considered in \eqref{it:topp2} with respect to $C_2$ as well.

\vspace*{-2ex}%
\subparagraph{\textcolor{lipicsGray}{Merging by partial contraction sequence~$\pi_3$.}}
The final task of the proof is to construct a partial contraction sequence $\pi_3$ of $H^2$ which ``merges'' the faces of $C_1$ and $C_2$
into one face of $C$ which will be $2$-reduced, and to which we can subsequently apply \Cref{lem:secondstage}.
We first observe that the set $U_0:=V(H^2_C)\setminus V(C)$ is $3$-reduced in $H^2$;
there is at most one vertex of each level from $U_1:=V(H^2_{C_1})\setminus V(C_1)$,
at most one of each level from $U_2:=V(H^2_{C_2})\setminus V(C_2)$, and at most one of each level from $V(P_0)\cap U_0$.
Let $k$ be the minimum and $m$ the maximum value of $\levll{H^2}x$ over $x\in V(P_0)\cap U_0$.

The sequence $\pi_3$ is as follows.
For $i=m,m-1,\ldots,k+1$ in this order; if there are vertices $y\in V(P_0)$ and $z\in U_2$ such that $i=\levll{H^2}y=\levll{H^2}z$,
then we contract $y$ with~$z$.
For $i=k$ we do the same, unless we have $y\in V(P_0)$ such that $k=\levll{H^2}y=\levll{H^2}{u_1}+1$ and
$y$ is a neighbour of $u_1\in V(P_1)\cap V(P_0)$ in $H^2$ (cf.~the choice of $f_1$ in $H_C$ above).
In the latter case, we contract $y$ with the vertex $t\in U_1$ such that $k=\levll{H^2}y=\levll{H^2}t$, 
or do nothing if such $t$ does not exist.
This finishes $\pi_3$, and in the resulting trigraph $H^3$ the face of $C$ is $2$-reduced.
What remains is to verify that a trigraph $H'$ at every step $i$ of $\pi_3$ satisfies the conditions \eqref{it:all8}--\eqref{it:topp2} 
of \Cref{lem:core} with respect to the skeleton $S^*$ and~$C$.

We start with proving \eqref{it:right3} for $x\in V(C)\cap V(P_2)$.
The claim holds even with the bound of $2$ in $H^2$ by \Cref{lem:corebasic}\eqref{it:left3right2},
and with help of \Cref{lem:leftalign} we see that contractions of $\pi_3$ cannot increase this maximum until, possibly,
$i=k$ and we are contracting $y$ with $t\in U_1$ there.
Then the red degree of (only) $x\in V(C)\cap V(P_2)$ where $\levll{H^3}x\in\{k,k-1\}$ may raise to~$3$ by the contracted vertex,
but that is the end of~$\pi_3$.
Hence \eqref{it:right3} is true along~$\pi_3$.

For the rest, we first show (*) that when $\pi_3$ on level $i$ contracts $y\in V(P_0)$ with $z\in U_2$ into $y'$, then $y'$
is not adjacent to any vertex $v\in U_1\cup V(P_1)$ such that $\levll{H'}v=i-1$.
This follows from two facts; that $z$ is not adjacent to $v$ because of the face assignment in $(H^2,S_1)$,
and $y$ is not adjacent to $v$ by \Cref{lem:leftalign} applied in $(H^1,S_0)$.
From this we easily see that $y'$ in $H'$ has at most three neighbours on the level $i-1$; one in each of the sets $V(P_0)$, $U_2$ and $V(P_2)$.
Note that this holds also in the special case that $i=k$ and $B_2\not=\emptyset$, when one of the possible neighbours of $y'$
is a vertex of the lid edge of $B_2$ lying on~$P_0$.

Consider now the condition \eqref{it:bound56} which holds in $H^2$, too, and a vertex $x\in V(C)\setminus V(P_2)$.
The condition trivially remains true until the step $i=\levll{H^2}x+1$ (contracting in the neighbourhood of~$x$),
from which onward we use the finding (*) of the previous paragraph --- that $x$ cannot be adjacent to the newly contracted vertex
on level $\levll{H^2}x+1$, which leaves an upper bound of $1+2+2=5$ on the red degree of $x$ in $H'_C$ by \Cref{lem:leveli3}.

Regarding validity of the condition \eqref{it:topp2} for a vertex $x\in V(C\cap B_1)$ or $x\in V(C\cap B_2)$ as specified there,
we may simply repeat the arguments presented in the proof of \Cref{lem:secondstage} which hold here in the first place.

\smallskip
The main task is to prove \eqref{it:all8} for every vertex $x\in V(H'_C)\setminus V(C)$.
The condition is true in $H^2$, and so it trivially remains true for steps $i>\levll{H^2}x+1$ (as above).
For the remainder of $\pi_3$, we first consider $x\in U_1$ and~$i\geq k+1$.
By the face assignment in~$(H^2,S_1)$, such $x$ cannot be adjacent to $V(P_2)$ or $U_2$ in $H^2$, and so neither in~$H'$ by the definition of~$\pi_3$.
Hence the red degree of $x$ in $H'$ is upper-bounded (level-by-level) by $3+2+3=8$. 
The same is true if $i=k$ and $x$ is not contracted by~$\pi_3$.
If $x\in U_1$ is contracted in step $i=k$ of $\pi_3$, then, by the definition of $\pi_3$, the at most three possible red neighbours
of the vertex that stems from $x$ on the level $k-1$ are from $V(C)\cup U_2$.

Secondly, we consider \eqref{it:all8} with $x\in \big(V(P_0)\setminus V(C)\big)\cup U_2$, which is relevant only for the step $i=\levll{H'}x+1$.
Then there are only four potential neighbours of $x$ on the level~$i$, and at most two red neighbours of $x$ on each of the levels $i$ and~$i-1$
by the face assignment in $(H^2,S_1)$ (which has not been affected yet by~$\pi_3$). Together at most $4+2+2=8$.

Third, we have $x\in V(H^3)\setminus V(H^2)$, i.e., a vertex contracted by $\pi_3$, and $i\leq\levll{H'}x$.
For simplicity, this case in our proof includes also the case that $x\in V(P_0)\setminus V(C)$ has no mate in $U_2$ 
on the same level $\levll{H'}x$, and so $\pi_3$ skips a contraction in this step.
Assume further that $x$ stems by a contraction of $y\in V(P_0)$ with $z\in U_2$.
\begin{itemize}
\item If there is $y_1\in V(P_1)$ such that $\levll{H^2}y=\levll{H^2}{y_1}$ and $\{y_1,y\}\in E(H^2)$, then $\{y_1,y\}$ has
been chosen as the lid $f_1$ of $D_1$ in the starting section of the proof, and so~$\levll{H'}x=m$.
Since $C_1$ was maximally $1$-reduced, there is no vertex of $U_1$ on the level~$m+1$.
Then $x$ has at most two red neighbours on the level $m+1$ by \Cref{lem:leftalign}, at most three on the level $m$,
and at most three on the level $m-1$ again using \Cref{lem:leftalign} applied to $(H^1,S_0)$.
Together at most $2+3+3=8$.
\item Otherwise, there is no $T$-horizontal edge from $x$ to $V(P_1)$.
Then, similarly as in the previous point, $x$ has at most three red neighbours on the level $m+1$ by \Cref{lem:leftalign}, 
at most two on the level $m$ since the horizontal one to $V(P_1)$ is nonexistent,
and at most three on the level $m-1$ using \Cref{lem:leftalign} applied to $(H^1,S_0)$.
Together at most~$8$.
\end{itemize}
Finally, if $x$ stems by a contraction of $y\in V(P_0)$ with $t\in U_1$, then, by the definition of $\pi_3$,
we have $\levll{H'}x=k$ and $x$ can have only three potential neighbours on the level $k-1$ in $V(C)\cup U_2$.
One of them is $u_1\in V(P_1)\cap V(P_0)$, and the edge $\{x,u_1\}$ is black in $H'$ since $u_1$ is in this case the sink
of $C_1$ and the edge $\{t,u_1\}\in E(H^2)$ was also black.
Then $x$ has at most two red neighbours on the level $k-1$, and at most three in each level $k$ and $k+1$ as shown previously,
together at most~$2+3+3=8$.

\smallskip
Altogether, the concatenated sequence $\pi_1.\pi_2.\pi_3$ (applied in this order) is $S^*$-aware, sink protecting by its definition,
and results in the trigraph $H^3$ in which the face of $C$ is $2$-reduced.
We can finish the whole proof by applying \Cref{lem:secondstage} to $H^\circ=H^3$ since all assumptions of the lemma have been verified above.

On the algorithmic side, we easily identify the $T$-vertical path $P_0$ (if not already given with a vh-division of~$D$) 
and the horizontal edge $f_1$ in linear time $\ca O(|V(P_0)|)\leq\ca O(|U|)$.
We recursively compute the sequence $\pi_1$ in time which is linear in $p_1:=|V(H_{C_1})\setminus V(C_1)|$, by \Cref{lem:core},
and similarly the sequence $\pi_2$ in time linear in $p_2:=|V(H_{C_2})\setminus V(C_2)|$.
Since the recursive sequences $\pi_1$ and $\pi_2$ leave the faces of $C_1$ and of $C_2$ maximally $1$-reduced,
the total number of vertices handled by the merging sequence $\pi_3$ is linear in $|V(P_0)|$
(though, recall that all possible vertices bounded by one of $C_1$ or $C_2$ that are of levels less than~$k$ are skipped by~$\pi_3$),
and so is the number of steps required to construct~$\pi_3$.
For the same reason, the final processing in \Cref{lem:secondstage} also takes time linear in $|V(P_0)|$.
Finally, we have $|V(P_0)|+p_1+p_2\leq|U|$.
\end{proof}

\section{Proof of \Cref{thm:twwbiplanar}; the Bipartite Planar Case}
\label{sec:biplanar}
\label{sec:prooftwwbiplanar}

On a high level, the proof will proceed similarly as in previous \Cref{sec:prooftwwplanar}.
However, the current bipartite case carries two major differences from the proof of the general planar case:
\begin{itemize}
\item Since our graph is bipartite, we will work with a plane quadrangulation (instead of a triangulation).
This does not bring any significant new challenges to the proof.
\item Since, again, our graph is bipartite, there are no edges within the same BFS layer.
Hence, if we follow a level-preserving partial contraction sequence (\Cref{def:lrespecting}), we will
never create a red edge within the same level.
This is the crucial saving which allows us to derive a better upper bound of~$6$ on the red degree along the constructed sequence.
\end{itemize}

We start with formalizing the latter observation.
\begin{lemma}\label{lem:bipartlevels}
Consider a bipartite graph $G$ and a level assignment $\levll G.$ derived from a BFS layering of~$G$.
If $G'$ is obtained from $G$ in a level-preserving partial contraction sequence of~$G$,
then $G'$ is bipartite, too, and for every edge $\{x,y\}\in E(G')$ (red or black), $\big|\levll{G'}x-\levll{G'}y\big|=1$.
\end{lemma}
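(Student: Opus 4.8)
The plan is to prove both conclusions simultaneously by induction along the partial contraction sequence, the base case being $G'=G$, where bipartiteness is assumed and the distance-one property of $\levll G.$ follows from the fact that a bipartite BFS layering has no edges inside a layer (every edge joins consecutive layers, so $|i-j|=1$ rather than merely $\leq 1$ as in \Cref{cl:layeri3}). For the inductive step, suppose the claim holds for a trigraph $G'$ along the sequence and let $G''$ arise from $G'$ by contracting a pair $x_1,x_2$ with $\levll{G'}{x_1}=\levll{G'}{x_2}=:\ell$ (the levels are equal because the sequence is level-preserving), and let $x_0$ be the new vertex, with $\levll{G''}{x_0}=\ell$ inherited.

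First I would verify bipartiteness of $G''$. Since $G'$ is bipartite, take the vertex $2$-colouring of $G'$ given by the parity of $\levll{G'}{\cdot}$ (which is a proper colouring precisely because every edge changes the level by exactly one). As $x_1$ and $x_2$ share the level $\ell$, they have the same colour, so identifying them to $x_0$ and keeping the parity colouring still gives a proper $2$-colouring of the underlying simple graph of $G''$: any edge of $G''$ incident to $x_0$ comes from an edge of $G'$ incident to $x_1$ or $x_2$, hence joins $x_0$ to a vertex of the opposite parity, and edges not incident to $x_0$ are unchanged. Hence $G''$ is bipartite.

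Next I would check the level condition for every edge $\{x,y\}\in E(G'')$. If neither end is $x_0$, the edge is inherited unchanged from $G'$ and the inductive hypothesis applies directly. If $x=x_0$, then by the contraction definition $N_{G''}(x_0)=(N_{G'}(x_1)\cup N_{G'}(x_2))\setminus\{x_1,x_2\}$, so $y$ was a neighbour in $G'$ of $x_1$ or of $x_2$; in either case the inductive hypothesis gives $\big|\levll{G'}{y}-\ell\big|=1$, and since $\levll{G''}{x_0}=\ell=\levll{G'}{x_i}$ and $\levll{G''}{y}=\levll{G'}{y}$ (non-contracted vertices keep their level), we get $\big|\levll{G''}{x_0}-\levll{G''}{y}\big|=1$ as required. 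This covers all edges of $G''$, red or black, since the red edges of $G''$ form a subset of $E(G'')$ and the computation above makes no distinction between colours.

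The argument is essentially routine; the only point needing a little care is that a level-preserving step really does contract two vertices of equal level and assigns that same level to the result (this is exactly \Cref{def:lrespecting}), so that no new level values are introduced and the parity colouring is preserved verbatim — there is no analogue here of the ``$k=\levll{G'}{x}=m-1$'' slack allowed in a general min-level-respecting sequence. I do not foresee a genuine obstacle; if anything, the subtle part is just remembering that in the bipartite setting the good level assignment satisfies the strict equality $|i-j|=1$ on edges, which is what propagates and which is the crucial strengthening over \Cref{cl:layeri3} that the rest of \Cref{sec:biplanar} will exploit.
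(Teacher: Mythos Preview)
Your proof is correct and follows essentially the same approach as the paper. The paper's version is slightly more compressed: instead of inducting step by step along the sequence, it traces an arbitrary edge $\{x,y\}\in E(G')$ directly back to an edge $\{s,t\}\in E(G)$ with $s$ contracted into $x$ and $t$ into $y$, observes that level-preservation gives $\levll{G'}{x}=\levll{G}{s}$ and $\levll{G'}{y}=\levll{G}{t}$, and concludes from the bipartite BFS layering of $G$ that $|\levll{G}{s}-\levll{G}{t}|=1$; bipartiteness of $G'$ is then implicit via the parity colouring you spelled out.
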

\begin{proof}
If $\{x,y\}\in E(G')$, then, by the definition, there are sets $X,Y\subseteq E(G)$ (possibly just $\{x\}$ or $\{y\}$)
such that $x$ stems by (possible) contractions from $X$ and $y$ from~$Y$,
and that $\{s,t\}\in E(G)$ for some $s\in X$ and $t\in Y$.
Then $\levll Gs=\levll{G'}x$ and $\levll Gt=\levll{G'}y$, and since $\levll G.$ is a BFS layering of bipartite $G$,
we have $|\levll Gs-\levll Gt|=1$.
\end{proof}

Aside of these technical details, the coming formulation and a proof of \Cref{lem:bicore} is much simpler than that of \Cref{lem:core} 
since we do not have to deal with complicated \Cref{def:vhdivided} in the induction.

\begin{lemma}\label{lem:bicore}
Let $G$ be a simple plane quadrangulation and $T$ a left-aligned BFS tree of $G$ rooted at a vertex $r\in V(G)$ of the outer quadrangular face.
Let $S\subseteq G$, $r\in V(S)$, and let a trigraph $H\supseteq S$ be obtained from $G$ in a level-preserving $S$-aware partial contraction sequence,
such that $(H,S,T)$ is a rooted proper skeletal trigraph (see in order \Cref{def:lrespecting}, \Cref{def:skelaware}, \Cref{def:sinkprot}).
Let $C\subseteq S$ be a $T$-wrapped facial cycle of~$S$.

Assume that $H_C=G_C$ (the subgraph $H_C$ of $H$ bounded by $C$ has not been touched by any contraction since~$G$), 
and let $U:=V(H_C)\setminus V(C)$ be the vertices of $H$ bounded by~$C$.

Then there exists a level-preserving partial contraction sequence of $H$ which contracts only pairs of vertices that 
are in or stem from $U$ (and so is $S$-aware), and
ends in a trigraph $H^*$ such that the face $\phi$ bounded by $C$ is $1$-reduced in $(H^*,S)$.
The sequence can be constructed in linear time $\ca O(|U|)$.
Moreover, the following conditions are satisfied for every trigraph $H'$ along this sequence from $H$ to~$H^*$:
\begin{enumerate}[(i)]
\item\label{it:all6}
Every vertex of $U':=V(H'_C)\setminus V(C)$ has red degree at most $6$ in whole~$H'$.
\item\label{it:bound4}
Every vertex of $C$ has red degree at most $4$ in~$H'_C$.
\item\label{it:right2}
If $x\in V(C)$ lies on the right wrapping path of~$C$, then $x$ has red degree at most~$2$~in~$H'_C$.
\end{enumerate}
\end{lemma}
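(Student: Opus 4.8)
The plan is to follow the overall strategy of the proof of \Cref{lem:core}, simplified by the two structural advantages of the bipartite setting already noted: we work inside a plane \emph{quadrangulation}, and we only ever perform \emph{level-preserving} contractions, so that by \Cref{lem:bipartlevels} every edge of every intermediate trigraph --- red or black --- joins two \emph{distinct} consecutive levels. This ``no same-level edge'' property is precisely what shaves $2$ off each red-degree estimate of \Cref{lem:core}, replacing its bounds $8,5,3$ by the present $6,4,2$. Because we are no longer chasing a single-digit optimum, the intricate \Cref{def:vhdivided} is not needed here: the recursion will split a wrapped facial cycle along a single $T$-vertical path only, so the two pieces are again facial cycles of an (enlarged) $2$-connected skeleton, and there are no ``horizontal lids'' to track. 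As in \Cref{lem:core} we argue by induction on $2|U|+|V(C)|$; if $U=\emptyset$ the empty contraction sequence works, so assume $U\neq\emptyset$.

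\textbf{Degenerate reductions.}
Write $C=P_1\cup P_2\cup f$ with left and right wrapping paths $P_1,P_2$, sink $u$, and lid $f=\{v_1,v_2\}$, where $v_1,v_2$ lie on consecutive levels since $f\in E(G)$ and $G$ is bipartite; let $F$ be the quadrilateral face of $G$ incident to $f$ and drawn inside $C$. If $v_1$ (symmetrically $v_2$) has no neighbour in $U$, then peeling $F$ away from $f$ yields a shorter $T$-wrapped cycle $C'$, a facial cycle of a skeleton $S'\supseteq S$, in which either $|U|$ has strictly dropped or $|U|$ is unchanged and $|V(C)|$ has strictly dropped; so we recurse. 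If, instead, the $T$-vertical path leaving $F$ towards the root immediately stays on the boundary of $C$, then after a single recursive call the interior of $C$ is $2$-reduced, and a short direct argument --- the bipartite, and in fact simpler, analogue of \Cref{lem:secondstage}, since we need not reach a ``maximally'' $1$-reduced state --- turns it $1$-reduced.

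\textbf{Dividing path and recursion.}
Otherwise there is an interior vertex $v_0\in U$ on $F$; let $P_0$ be the $T$-vertical path from $v_0$ up to its first vertex $u_1\in V(C)$, and let $S_0\supseteq S$ arise by adding $P_0$ together with the one or two edges of $F$ that join it to $\{v_1,v_2\}$. In $S_0$ the cycle $C$ splits into two $T$-wrapped facial cycles $C_1$ and $C_2$ (left and right of $P_0$), whose interiors $U_1,U_2$ together with $V(P_0)\cap U$ exhaust $V(H_C)\setminus V(C)$ and overlap only in $V(P_0)$. We invoke the induction hypothesis on $C_1$, getting a level-preserving $S_0$-aware sequence $\pi_1$ that makes the face of $C_1$ be $1$-reduced in $(H^1,S_0)$, then on $C_2$ in $(H^1,S_0,T)$, getting $\pi_2$ making the face of $C_2$ be $1$-reduced in $(H^2,S_0)$; both calls are legitimate since $|U_1|<|U|$ and $|U_2|<|U|$. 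Now the interior of $C$ is $3$-reduced in $(H^2,S_0)$ (one vertex per level from each of $U_1$, $U_2$ and $V(P_0)\cap U$), and a merging sequence $\pi_3$ contracts, from the top level downwards, the $P_0$-vertex with the $U_2$-vertex of the same level --- with the same small exception near $u_1$ as in \Cref{lem:core} --- ending in $H^3$ where the face of $C$ is $2$-reduced; the short finishing step above then makes it $1$-reduced. The whole construction is linear since $|V(P_0)|+|U_1|+|U_2|\le|U|$.

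\textbf{Expected main obstacle.}
The substance of the proof, exactly as in \Cref{lem:core}, is to check that the conditions \eqref{it:all6}--\eqref{it:right2} survive in \emph{every} intermediate trigraph $H'$, which by \Cref{lem:bipartlevels} amounts to bounding the red neighbours of a vertex only on the two levels adjacent to it. For a vertex of $U'$ the crude bound is $3+3=6$, from the at most three vertices of $H'_C$ on each neighbouring level; for a vertex of $C$ one additionally uses that $C\subseteq G$ carries no red edge, plus \Cref{lem:leftalign} on the right wrapping path, to reach $4$ and $2$. The delicate spots are the vertices created by $\pi_3$, which straddle $U_1$ and $U_2$: there one argues, as in \Cref{lem:core}, that \Cref{lem:leftalign} applied to $C_1$ kills a downward red edge into $U_1$ while the face assignment forbids $U_1$ from reaching $U_2\cup P_2$ at all, so that each neighbouring level still carries at most three vertices --- and with nothing contributed from the common level, the total stays at $6$. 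Threading all of these subcases together (including the exception near $u_1$ and the transitions between $\pi_1$, $\pi_2$, $\pi_3$ and the finishing step) is the bulk of the work, but every individual estimate is a routine weakening of its counterpart in the proof of \Cref{lem:core}.
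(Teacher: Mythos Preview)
Your high-level strategy --- recurse along a $T$-vertical path through the interior, merge the two $1$-reduced sides into a $2$-reduced face, then finish to $1$-reduced --- is exactly what the paper does. The gap is in how you set up the recursion at the $4$-face $F=(v_1,v_2,v_3,v_4)$ incident to the lid.

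Your ``degenerate reduction'' is imported from the triangulation proof of \Cref{lem:core} and does not survive the move to quadrangulations. There the face at the lid is a triangle $(v_1,v_2,w)$, so ``$v_1$ has no neighbour in $U$'' forces $w\in V(C)$ and $C\Delta(v_1,v_2,w)$ is trivially $T$-wrapped. Here $F$ has \emph{two} non-lid vertices $v_3,v_4$; even if $v_4\in V(C)$, one may have $v_3\in U$, and $C\Delta F$ is not $T$-wrapped unless $\{v_4,v_3\}$ happens to lie in $T$. Likewise, your splitting step ``add $P_0$ together with the one or two edges of $F$ that join $v_0$ to $\{v_1,v_2\}$'' cannot produce two $T$-wrapped facial cycles: $v_3$ is adjacent in $F$ only to $v_2$ and $v_4$, and $v_4$ only to $v_1$ and $v_3$, so picking a single $v_0\in\{v_3,v_4\}$ and one such edge leaves the other side without a lid. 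The paper fixes this by always adding the entire $4$-cycle $A=F$ to the skeleton and then doing a seven-way case analysis (their Figure~6) on the levels and positions of $v_3,v_4$: in four cases $C\Delta A$ is itself $T$-wrapped (single recursion plus \Cref{lem:bisecondstage}); in the remaining three, a vertical path $P_3$ from $v_3$ or $v_4$ yields three facial cycles $A,C_1,C_2$ of $S\cup A\cup P_3$, with the lids of $C_1,C_2$ being edges of $A$.

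A minor point in the other direction: you carry over ``the same small exception near $u_1$ as in \Cref{lem:core}'' into $\pi_3$, but the paper's bipartite merging sequence has no such exception. That exception in \Cref{lem:core} is there to handle a horizontal edge from $V(P_0)$ to $V(P_1)$; in the bipartite setting there are no horizontal edges at all, so the simpler $\pi_3$ suffices.
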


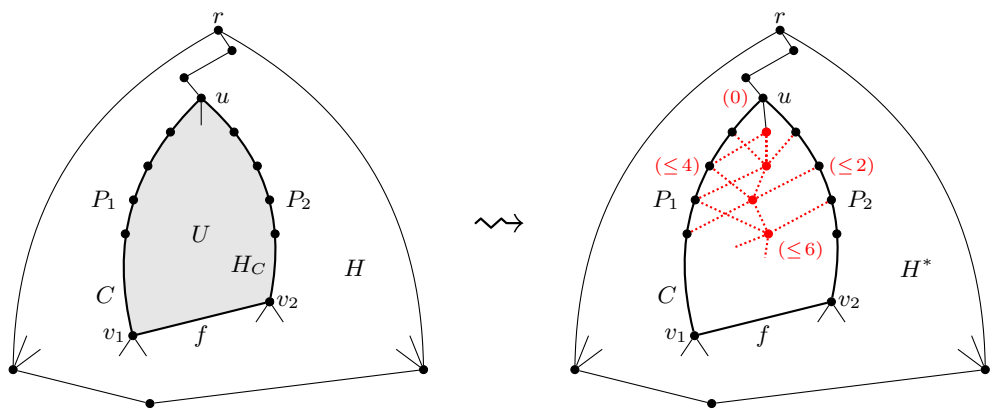
\begin{figure}[tb]
$$
\begin{tikzpicture}[scale=0.9]
\small
\tikzstyle{every node}=[draw, shape=circle, minimum size=3pt,inner sep=0pt, fill=black]
\node at (0,0) (x) {};
\node at (2,-0.5) (xx) {};
\node at (6,0) (y) {};
\node[label=above:$r$] at (3,5) (z) {};
\draw (x)--(xx)--(y) to[bend right] (z) (z) to[bend right] (x);
\draw (x)-- +(0.4,0.3); \draw (x)-- +(0.2,0.5);
\draw (y)-- +(-0.4,0.3); \draw (y)-- +(-0.2,0.5);
\node[label=right:$v_2$] at (3.75,1) (v2) {};
\draw[thick, fill=gray!20] (v2) to[bend right] (2.75,4) to[bend right] (1.75,0.5) -- (v2);
\node[label=right:$~{u}$] at (2.75,4) (u1) {};
\node[label=left:$v_1$] at (1.75,0.5) (v1) {};
\draw (z)--(3.2,4.7) node{}--(2.5,4.3) node{}--(u1);
\draw (v1)-- +(-0.2,-0.3); \draw (v1)-- +(0.2,-0.3);
\draw (v2)-- +(-0.2,-0.3); \draw (v2)-- +(0.2,-0.3);
\draw (u1)-- +(0,-0.4);
\node at (2.3,3.5){}; \node at (3.23,3.5){};
\node at (1.97,3){}; \node at (3.57,3){};
\node[label=left:$P_1~$] at (1.76,2.5){}; \node[label=right:$~P_2$] at (3.75,2.5){};
\node at (1.64,2){}; \node at (3.83,2){};
\node[label=above:$\!\!H_C\quad~$] at (v2) {};
\node[draw=none,fill=none] at (2.75,2) {$U$};
\node[draw=none,fill=none] at (1.35,1.1) {$C$};
\node[draw=none,fill=none] at (2.75,0.5) {$f$};
\node[draw=none,fill=none] at (5,1.5) {$H$};
\end{tikzpicture}
\quad\raise15ex\hbox{\huge\boldmath~$\leadsto$~}\quad
\begin{tikzpicture}[scale=0.9]
\small
\tikzstyle{every node}=[draw, shape=circle, minimum size=3pt,inner sep=0pt, fill=black]
\node at (0,0) (x) {};
\node at (2,-0.5) (xx) {};
\node at (6,0) (y) {};
\node[label=above:$r$] at (3,5) (z) {};
\draw (x)--(xx)--(y) to[bend right] (z) (z) to[bend right] (x);
\draw (x)-- +(0.4,0.3); \draw (x)-- +(0.2,0.5);
\draw (y)-- +(-0.4,0.3); \draw (y)-- +(-0.2,0.5);
\node[label=left:$v_1$] at (1.75,0.5) (v1) {};
\node[label=right:$v_2$] at (3.75,1) (v2) {};
\node[label=right:$~{u}$] at (2.75,4) (u1) {};
\draw[thick] (v2) to[bend right] (u1) (u1) to[bend right] (v1) (v1) -- (v2);
\draw (z)--(3.2,4.7) node{}--(2.5,4.3) node{}--(u1);
\draw (v1)-- +(-0.2,-0.3); \draw (v1)-- +(0.2,-0.3);
\draw (v2)-- +(-0.2,-0.3); \draw (v2)-- +(0.2,-0.3);
\node at (2.3,3.5) (pa){}; \node at (3.23,3.5) (qa){};
\node at (1.97,3) (pb){}; \node at (3.57,3) (qb){};
\node[label=left:$P_1~$] at (1.76,2.5) (pc){}; \node[label=right:$~P_2$] at (3.75,2.5) (qc){};
\node at (1.64,2) (pd){}; \node at (3.83,2) (qd){};
\node[draw=none,fill=none] at (5,1.5) {$H^*$};
\node[draw=none,fill=none] at (1.35,1.1) {$C$};
\node[draw=none,fill=none] at (2.75,0.5) {$f$};
\node[draw=none,fill=none,red] at (3.3,1.75) {\scriptsize$(\leq\!6)$};
\node[draw=none,fill=none,red] at (1.5,3) {\scriptsize$(\leq\!4)$};
\node[draw=none,fill=none,red] at (4.05,3) {\scriptsize$(\leq\!2)$};
\node[draw=none,fill=none,red] at (2.35,4) {\scriptsize$(0)$};
\node[red] at (2.8,3.5) (ra) {}; \node[red] at (2.8,3.5) (rb) {};
\node[red] at (2.8,3) (rc) {}; \node[red] at (2.6,2.5) (rd) {}; \node[red] at (2.83,2) (re) {};
\draw (u1)--(ra);
\draw[red, thick,densely dotted] (re)--(rd)--(rc)--(rb) (rc)--(ra);
\draw[red, thick,densely dotted] (pc)--(rc)--(qa) (pd)--(rd)--(qb) (ra)--(pb) (rc)--(pa) (rd)--(pb) (qc)--(re)--(pc);
\draw[red, thick,densely dotted] (re)-- +(-0.5,-0.2);
\draw[red, thick,densely dotted] (re)-- +(-0.05,-0.35);
\end{tikzpicture}
$$
\caption{(left) The setup of \Cref{lem:bicore} for a trigraph $H$, where $P_1$ and $P_2$ are the left and right wrapping paths 
	of the $T$-wrapped cycle~$C$, and the shaded subgraph $H_C$ has not yet participated in any contraction.
	(right) The outcome of the claimed partial contraction sequence of $H$ which contracts only vertices of $U$ inside the shaded region
	from the left, and which maintains bounded red degrees in the region and on its boundary $C$. The sink $u$ has no red edge there.}
\label{fig:corelembi}
\end{figure}

An illustration of \Cref{lem:bicore} can be seen in \Cref{fig:corelembi}.
We again, before diving into a proof of the lemma, show the sought implications of it.

\begin{lemma}\label{lem:bicoremore}
Assume the assumptions and conclusions of \Cref{lem:bicore}. 
If all red faces in~$(H,S)$ except the one bounded by $C$ are $1$-reduced and $T$-wrapped, 
then the maximum red degree of each of the considered trigraphs $H'$ is at most~$6$.
\end{lemma}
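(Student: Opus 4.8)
The plan is to prove this exactly as the planar \Cref{lem:coremore}, using a four-way case split on a vertex $x\in V(H')$, while systematically exploiting that the bipartite setting makes every red-degree count drop by the ``own-level'' term: by \Cref{lem:bipartlevels} our level-preserving sequence never joins two vertices of the same level, so in any trigraph $H'$ along the sequence of \Cref{lem:bicore} every edge still joins two consecutive levels. The first step I would carry out is to record the bipartite analogue of \Cref{lem:corebasic}: for any $T$-wrapped facial cycle $C_0$ of $S$ that is $1$-reduced in $(H',S)$, with left and right wrapping paths $P_1,P_2$ and $U_0:=V(H'_{C_0})\setminus V(C_0)$, one has \emph{(a)} the sink of $C_0$ has no red edge to $U_0$, which is exactly \Cref{lem:sinknored} applied to $(H',S,T)$ since the whole sequence from $G$ is level-preserving; \emph{(b)} every vertex of $P_1$ has at most $2$ red edges to $U_0$ and every vertex of $P_2$ at most $1$, because such a vertex $x$ has neighbours only on levels $\levll{H'}x\pm1$ with at most one vertex of $U_0$ per level, and for $x\in V(P_2)$ the level $\levll{H'}x-1$ is excluded by \Cref{lem:leftalign} (a level-preserving sequence being $S$-aware and, as one checks, min-level-respecting); \emph{(c)} every vertex of $U_0$ has red degree at most $5$ in $H'$, namely at most $3$ neighbours on level $\levll{H'}x-1$ and at most $2$ on level $\levll{H'}x+1$ (the $P_2$-vertex there excluded again by \Cref{lem:leftalign}), with no neighbours outside $H'_{C_0}$.

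With these bounds in hand I would fix $H'$ along the sequence of \Cref{lem:bicore}, write $\phi$ for the face bounded by $C$, and split on $x\in V(H')$. If $x\in U'=V(H'_C)\setminus V(C)$, then \Cref{lem:bicore}\eqref{it:all6} already gives red degree at most $6$. If $x\notin V(S)$ is assigned to an $S$-face $\sigma\ne\phi$, then $\sigma$ is the same face in $(H,S)$ and, if red, is $1$-reduced and $T$-wrapped by hypothesis, so \emph{(c)} gives red degree at most $5$. If $x\in V(S)\setminus V(C)$, then $x$ is not incident to $\phi$, and its red neighbours all lie inside $S$-faces incident to $x$, each of which (if red) is $1$-reduced and $T$-wrapped; letting $f\in E(S)$ be the parental edge of $x$ in $T$, every $S$-face incident to $x$ but not containing $f$ has $x$ as its sink and thus contributes no red edge by \emph{(a)}, so only the at most two faces sharing $f$ contribute, at most $2$ red edges each by \emph{(b)}, and the red degree of $x$ is at most $4$.

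The delicate case is $x\in V(C)$, where up to two further $S$-faces besides $\phi$ can be incident to $x$ and the $\phi$-contribution from \Cref{lem:bicore}\eqref{it:bound4}--\eqref{it:right2} must be combined with the contribution of the neighbouring faces, again via the ``at most two non-sink faces per vertex'' observation of \Cref{lem:coremore}. If $x$ is the sink of $C$, then $\phi$ contributes nothing by \emph{(a)}, the parental edge $f$ of $x$ leaves $C$, the two faces across $f$ are both $\ne\phi$ and contribute at most $2$ each by \emph{(b)}, so the red degree is at most $4$. If $x$ is not the sink of $C$, then $f$ lies on $C$, hence on the boundary of $\phi$; one face across $f$ is $\phi$ itself and the other, $\psi\ne\phi$, contributes at most $2$ red edges by \emph{(b)} (note $x$ is not the sink of $\psi$), while every remaining $S$-face incident to $x$ has $x$ as sink and contributes nothing. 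Since the red edges of $H'_C$ at $x$ are precisely the red edges of $x$ inside $\phi$ (all edges of $C$ being black), \Cref{lem:bicore}\eqref{it:bound4} bounds the $\phi$-contribution by $4$, so the red degree of $x$ is at most $4+2=6$ when $x$ is on the left wrapping path of $C$, and at most $2+2=4$ when $x$ is on the right wrapping path by \Cref{lem:bicore}\eqref{it:right2}. In every case the red degree of $x$ in $H'$ is at most $6$, which proves the lemma.

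I do not expect a conceptual obstacle: the only point requiring care is the bookkeeping in the case $x\in V(C)$, namely checking that the two faces sharing the parental edge of $x$ are correctly identified ($\phi$ being one of them exactly when $x$ is not the sink of $C$) and that all other incident $S$-faces have $x$ as their sink. This is the same observation already used in \Cref{lem:coremore}, so it transfers with only the obvious adjustments, and the improved bounds \emph{(b)}, \emph{(c)} replacing those of \Cref{lem:corebasic} are what make the final sum land at $6$ rather than $8$.
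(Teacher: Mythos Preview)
Your proof is correct and follows essentially the same approach as the paper's: the same case split over $x\in U'$, $x\notin V(S)$ assigned to another face, and $x\in V(S)$, combined with the ``at most two non-sink faces per skeleton vertex'' observation from \Cref{lem:coremore}. The paper is simply more terse---it does not separately state a bipartite analogue of \Cref{lem:corebasic} but counts inline (e.g.\ using $3+3=6$ for a vertex inside a $1$-reduced wrapped face, where you sharpen to~$5$ via \Cref{lem:leftalign}), and it treats all of $V(S)$ uniformly rather than splitting off $V(C)$. One tiny imprecision: when $x$ is the sink of $C$ you invoke your point~(a), which you stated only for $1$-reduced faces, but the underlying \Cref{lem:sinknored} you cite does not need that hypothesis, so the conclusion stands.
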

\begin{proof}
Let $x\in V(H')\setminus V(S)$.
If $x\in U'=V(H'_C)\setminus V(C)$, then the claim is given in \Cref{lem:bicore}\eqref{it:all6}.
Otherwise, $x$ is assigned to an $S$-face $\sigma$ in $(H',S)$, and $\sigma$ is the same face in $(H,S)$.
So, $\sigma$ is $T$-wrapped and $1$-reduced in $(H,S)$ and in $(H',S)$ by the assumptions, and $x$ can have at most $3+3=6$ neighbours
in the levels $\levll Hx\pm1$, cf.~\Cref{lem:bipartlevels}.

Let $y\in V(S)$. As in the proof of \Cref{lem:coremore}, there are at most two $S$-faces into which $y$ can have red edges.
At most one of the two faces is bounded by $C$, and then $y$ has red degree at most $4$ in $H'_C$ by \Cref{lem:bicore}\eqref{it:bound4}.
The other incident face(s) of $y$ is $1$-reduced by the assumptions, and so $y$ can have at most $2$ red neighbours in it
in the levels $\levll Hx\pm1$.
Together at most~$4+2=6$.
\end{proof}

\begin{proof}[Proof of \Cref{thm:twwbiplanar}]
We start with a given simple planar graph $G_0$, and extend any plane embedding of $G_0$ into a simple plane quadrangulation $G$
such that $G_0$ is an induced subgraph of~$G$.
Then we choose a root $r$ on the outer face of $G$ and, for some left-aligned BFS tree $T$ of $G$ rooted in $r$ which exists by \Cref{clm:existslal},
the graph $H=G$ and the outer face $C=S$ of~$G$, we apply \Cref{lem:bicore}.

This way we get a level-preserving partial contraction sequence from $G$ to a trigraph $H^*$ of maximum red degree $6$ along the sequence, by \Cref{lem:bicoremore}.
Observe that one face of $C$ is empty and the other one is $1$-reduced.
We can hence finish by successively contracting the vertices of $V(H^*)\setminus V(C)$ from the farthest ones,
and finally contracting the last one of them with $V(C)$ in any order.
The red degree in this final phase never exceeds $2+|V(C)|=6$.

The restriction of this whole contraction sequence of $G$ to only $V(G_0)$ then certifies that the twin-width of $G_0$ is at most~$6$.
The algorithmic part then easily follows in the same way as in the proof of \Cref{thm:twwplanar}.
\end{proof}

The high-level idea of the proof of \Cref{lem:bicore} is the same as presented in \Cref{sub:finishing8};
to ``vertically divide'' the face $\phi$ of $C$ into two parts and independently contract each part by a recursive invocation of \Cref{lem:bicore},
and then to ``merge'' these partial solutions together in two stages (reaching $2$-reduced and then $1$-reduced face of~$C$).

We again start with proving the easier second stage of our proof outline.
\begin{lemma}\label{lem:bisecondstage}
Respecting the notation and assumptions of \Cref{lem:bicore}, 
we assume that a level-respecting partial contraction sequence of the trigraph $H$ contracting only within $U$
ends in a trigraph $H^\circ$, such that the face $\phi$ bounded by $C\subseteq S$ is $2$-reduced in $(H^\circ,S)$.
Moreover, we assume that every step of the sequence up to $H^\circ$ satisfies the conditions \eqref{it:all6}--\eqref{it:right2} 
of \Cref{lem:bicore} and the following additional condition:
every vertex $x\in V(H^\circ_C)\setminus V(C)$ has at most three red neighbours in $H^\circ_C$ in the previous level $\levll{H^\circ}x-1$,
unless all vertices of $C$ are of level at most $\levll{H^\circ}x$.
Then the sequence can be prolonged to reach a trigraph $H^*$ such that the conclusion of \Cref{lem:bicore} is satisfied.
\end{lemma}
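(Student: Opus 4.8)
The plan is to prolong the given sequence by a single ``top‑down'' sweep which, on each level, contracts the (at most two, by $2$‑reducedness) vertices currently assigned to $\phi$ into one; this will turn $\phi$ into a $1$‑reduced face, and the remaining work is to check that the three red‑degree conditions \eqref{it:all6}--\eqref{it:right2} of \Cref{lem:bicore} survive every step of the sweep. Concretely, let $U^\circ:=V(H^\circ_C)\setminus V(C)$, let $k$ be the smallest level on $V(C)$ (attained exactly at the sink $u$ of the $T$‑wrapped cycle $C$), and let $m$ be the largest level on $U^\circ$. First I would observe that $U^\circ$ occupies only the levels $k+1,\dots,m$: a vertex $v$ strictly inside $C$ is joined to the root by a $T$‑vertical path of strictly decreasing levels which must leave the disk bounded by $C$ through some vertex $w\in V(C)$, and since $w$ is then a proper ancestor of $v$ we get $\levll{H^\circ}{v}>\levll{H^\circ}{w}\ge k$. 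The prolongation $\pi$ runs $i=m,m-1,\dots,k+1$ in this order and, whenever the current trigraph has two distinct vertices of level $i$ assigned to $\phi$, contracts them; call the output $H^*$. Each such step is level‑preserving and, as both contracted vertices are assigned to $\phi$, it is $S$‑aware (the merged vertex inherits the assignment to $\phi$); by the level‑range observation $\phi$ is $1$‑reduced in $(H^*,S)$, and the whole computation is linear in $|U|$.

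For the red‑degree conditions I would argue, as in the planar \Cref{lem:secondstage}, by a case distinction on the current index $i$ of $\pi$ relative to the level of the vertex under scrutiny, using four facts: \Cref{lem:bipartlevels} (every vertex has red neighbours only on the two adjacent levels, none on its own level), \Cref{lem:leftalign} (a vertex on the right wrapping path $P_2$ of $C$ has no neighbour assigned to $\phi$ on the level just below it; equivalently, a vertex assigned to $\phi$ has no neighbour on $P_2$ on the level just above it), $2$‑reducedness (each level carries at most $2$ vertices of $\phi$, hence at most $1$ once $\pi$ has processed that level), and the extra hypothesis of the lemma. Conditions \eqref{it:bound4} and \eqref{it:right2} are the easy ones: a vertex $x\in V(C)$ has no red edge to another skeleton vertex and none on its own level, so its red neighbours are at most the $\le 2$ vertices of $\phi$ on each adjacent level, i.e.\ at most $4$; if moreover $x\in V(P_2)$, \Cref{lem:leftalign} deletes the lower adjacent level, leaving at most $2$; and since $\pi$ only ever merges two vertices of $\phi$ lying on a common level, it never raises these counts.

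The substantive part is \eqref{it:all6}. For a vertex $x$ assigned to $\phi$ at level $j$ (possibly one produced by $\pi$), \Cref{lem:leftalign} caps its red neighbours on level $j+1$ by the single $P_1$‑vertex together with the $\le 2$ vertices of $\phi$ there, which drops to $\le 1+1=2$ once level $j+1$ has been processed; on level $j-1$ it has at most $3$ red neighbours while that level is untouched by $\pi$ -- by the extra hypothesis, or, in its exceptional case, because then level $j+1$ carries no boundary vertex and the level‑$(j+1)$ count is already $\le 2$ -- and at most $1+2$ once level $j-1$ has been processed; finally, when $x$ is itself the vertex created by a contraction at level $j$, its red neighbours on level $j-1$ form a union over the two merged vertices but are still bounded by the $\le 2$ vertices of $\phi$ plus the $\le 2$ boundary vertices present on level $j-1$, so at most $4$, which together with the $\le 2$ on level $j+1$ gives at most $6$. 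Running this through the ranges $i>j+1$, $i=j+1$, $i=j$ and $i<j$ yields the bound $6$ in every intermediate trigraph, so \eqref{it:all6} holds, and with all three conditions verified and $\phi$ turned $1$‑reduced, $H^*$ meets the conclusion of \Cref{lem:bicore}. The one delicate point -- the part I expect to need the most care -- is precisely this interaction in \eqref{it:all6}: a fresh merge at level $j$ perturbs both adjacent levels at once, and the totals stay at $6$ only because \Cref{lem:leftalign} frees one slot on the level above while the extra hypothesis (or the geometry of its exceptional case) holds the level below to at most four.
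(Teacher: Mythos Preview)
Your proposal is correct and follows essentially the same approach as the paper: the same top-down level-by-level merge $i=m,\dots,k+1$, and the same verification of \eqref{it:all6}--\eqref{it:right2} via \Cref{lem:bipartlevels}, $2$-reducedness, \Cref{lem:leftalign}, and the extra hypothesis (including its exceptional case). The paper uses a coarser two-case split ($i>\levll{H'}{x}$ versus $i\le\levll{H'}{x}$) where you use four ranges, but the counting is the same.
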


\begin{proof}
Let $U^\circ:=V(H^\circ_C)\setminus V(C)$, and let $k$ be the minimum level $\levll{H^\circ}x$ for~$x\in V(C)$
and $m$ be the maximum level $\levll{H^\circ}x$ for~$x\in U^\circ$.
For $i=m,m-1,\ldots,k+1$ in this order; if there are two distinct vertices $x\not=x'\in U^\circ$ such that 
$i=\levll{H^\circ}x=\levll{H^\circ}{x'}$, then we contract $x$ with $x'$.
Now $\phi$ is $1$-reduced in the resulting skeletal trigraph $(H^*,S)$.

It only remains to prove that each of the conditions \eqref{it:all6}--\eqref{it:right2} of \Cref{lem:bicore}
is satisfied in any trigraph $H'$ in step~$i$ along the constructed sequence from $H^\circ$~to~$H^*$.
Consider \eqref{it:all6} and a vertex $x\in V(H'_C)\setminus V(C)$.
If $i>\levll{H'}x$, then an assumption of \Cref{lem:bisecondstage} says that $x$ has at most $3$ red neighbours in the previous level $\levll{H'}x-1$,
no one in the level $\levll{H'}x$ by \Cref{lem:bipartlevels}, and at most $3$ in the next level $\levll{H'}x+1$ thanks to
$\phi$ being $2$-reduced and $T$-wrapped in $(H^\circ,S)$ and \Cref{lem:leftalign}.
Or, we are in the case that no vertex of $C$ is of level greater than $\levll{H'}x$, and then $x$ has at most $4$ red neighbours 
in the previous level $\levll{H'}x-1$ since $\phi$ is $2$-reduced and $T$-wrapped, and at most $2$ in $U^\circ$ in the next level $\levll{H'}x+1$.
If $i\leq\levll{H'}x$, then again, $x$ has at most $4$ red neighbours in the previous level $\levll{H'}x-1$,
and at most $2$ in the next level $\levll{H'}x+1$ since possible two vertices of $U^\circ$ in that level have already been contracted.
Together at most~$6$ in each case.

Condition \eqref{it:bound4} is now trivial for any vertex $x\in V(C)$ since the only red neighbours of $x$ in $H'_C$
must be from $V(H'_C)\setminus V(C)$, and by \Cref{lem:bipartlevels} and $\phi$ being $2$-reduced, we get an upper bound of $2+2=4$
regardless of the step~$i$ of the sequence.
Condition \eqref{it:right2} follows in the same easy way as \eqref{it:bound4}, using additionally \Cref{lem:leftalign} to argue that there
are no such neighbours of $x$ in the level $\levll{H'}x-1$.
\end{proof}

Now we get to the core proof of \Cref{lem:bicore} which will conclude the main result here.

\begin{proof}[Proof of \Cref{lem:bicore}]
We proceed by induction on $2|U|+|V(C)|$.
If $U=\emptyset$, we are immediately done with the empty partial contraction sequence. So, we assume $U\not=\emptyset$.

Let $P_1$ and $P_2$ be the left and right $T$-wrapping paths of the cycle~$C$,\, $u$ be the sink of $C$, and $f=\{v_1,v_2\}$ be the lid edge such that $v_1\in V(P_1)$.
Since $T$ is left-aligned, $\levll H{v_1}=\levll H{v_2}+1$.
Let $A=(v_1,v_2,v_3,v_4)$ be the facial $4$-cycle of $H_C$ adjacent to the edge~$f$ and bounded by~$C$.
Then $\levll H{v_4}=\levll H{v_1}\pm1$, and we first explore the case of $\levll H{v_4}=\levll H{v_1}-1=\levll H{v_2}$,
in which we may have $v_4\in V(P_1)$ or~$v_4\in U$.
Consequently, $\levll H{v_3}=\levll H{v_2}\pm1$, and $\levll H{v_3}=\levll H{v_2}-1$ can happen only if $v_3\in V(P_3)$ since $T$ is left-aligned,
while $v_3\not\in V(P_3)$ when $\levll H{v_3}=\levll H{v_2}+1$.
This straightforwardly leads to four possible subcases which are schematically depicted in \Cref{fig:divisionbi6}\,a),b),c),e),
and the subcase f) in the picture is a variant of c).

On the other hand, for the case of $\levll H{v_4}=\levll H{v_1}+1=\levll H{v_2}+2$ we immediately get that
$\levll H{v_3}=\levll H{v_1}=\levll H{v_2}+1$ and $v_3,v_4\in U$.
This single case will be further divided into two variants now depicted in \Cref{fig:divisionbi6}\,c),f).

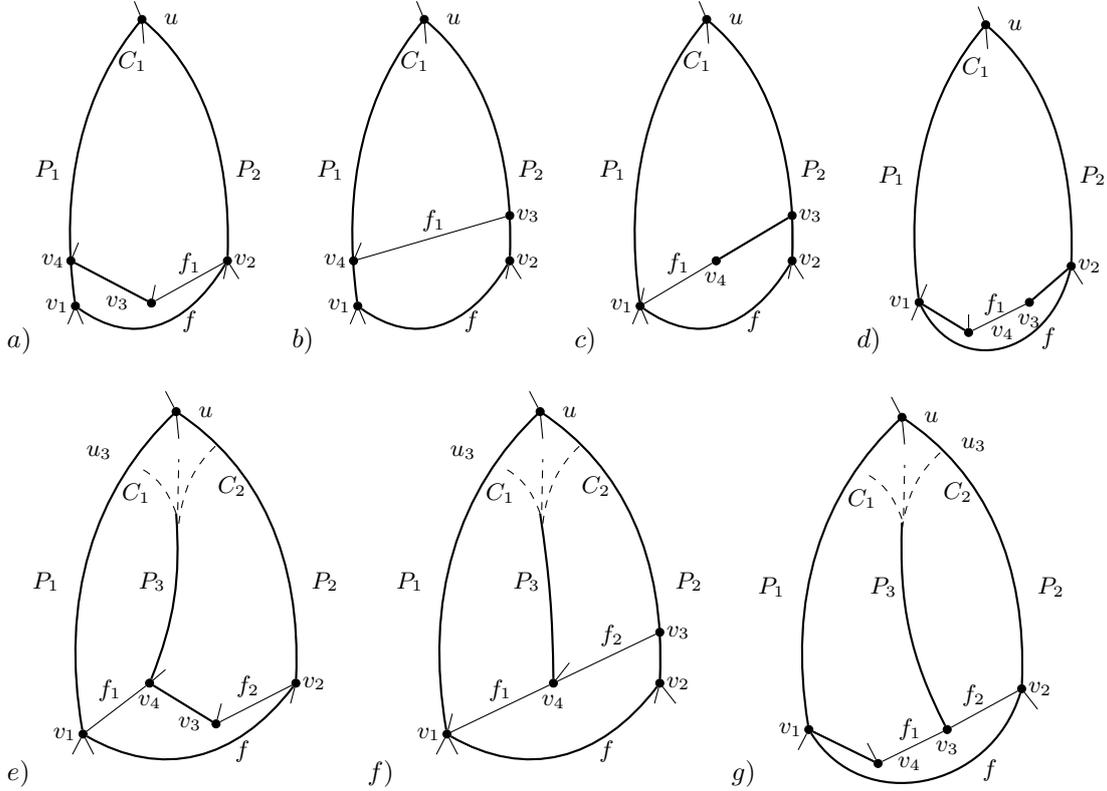
\begin{figure}[tb]
$$ a)
\begin{tikzpicture}[xscale=0.5,yscale=0.8]
\small
\tikzstyle{every node}=[draw, shape=circle, minimum size=3pt,inner sep=0pt, fill=black]
\node[label=left:$v_1$] at (1,0.25) (v1) {};
\node[label=right:$v_2$] at (5,1) (v2) {};
\node[label=right:$~~{u}$] at (2.75,5) (u1) {};
\node[label=left:$v_3~~$] at (3,0.3) (v3) {};
\node[label=left:$~v_4$] at (0.88,1) (v4) {};
\draw[thick] (v2) to[bend right=33] (2.75,5) to[bend right=33] (v1) (v1) to[bend right=35] (v2);
\draw (v2)--(v3)--(v4); \draw[thick] (v4)--(v3);
\draw (u1)-- +(-0.2,0.3) (u1)-- +(0.05,-0.4);
\draw (v1)-- +(-0.2,-0.3); \draw (v1)-- +(0.2,-0.3);
\draw (v2)-- +(-0.1,-0.3); \draw (v2)-- +(0.3,-0.3);
\draw (v3)-- +(0.1,+0.3); \draw (v4)-- +(0.2,+0.3);
\node[draw=none,fill=none, label=left:$P_1$] at (0.85,2.5){};
\node[draw=none,fill=none, label=right:$P_2$] at (5,2.5){};
\node[draw=none,fill=none] at (2.5,4.3) {$C_1$};
\node[draw=none,fill=none] at (4,0) {$f$};
\node[draw=none,fill=none] at (4,0.95) {$f_1$};
\end{tikzpicture}
\quad b)
\begin{tikzpicture}[xscale=0.5,yscale=0.8]
\small
\tikzstyle{every node}=[draw, shape=circle, minimum size=3pt,inner sep=0pt, fill=black]
\node[label=left:$v_1$] at (1,0.25) (v1) {};
\node[label=right:$v_2$] at (5,1) (v2) {};
\node[label=right:$~~{u}$] at (2.75,5) (u1) {};
\node[label=right:$v_3$] at (5,1.75) (v3) {};
\node[label=left:$~v_4$] at (0.89,1) (v4) {};
\draw[thick] (v2) to[bend right=33] (2.75,5) to[bend right=33] (v1) (v1) to[bend right=35] (v2);
\draw (v3)--(v4);
\draw (u1)-- +(-0.2,0.3) (u1)-- +(0.05,-0.4);
\draw (v1)-- +(-0.2,-0.3); \draw (v1)-- +(0.2,-0.3);
\draw (v2)-- +(-0.1,-0.3); \draw (v2)-- +(0.3,-0.3);
\draw (v4)-- +(0.2,+0.3);
\node[draw=none,fill=none, label=left:$P_1$] at (0.85,2.5){};
\node[draw=none,fill=none, label=right:$P_2$] at (5,2.5){};
\node[draw=none,fill=none] at (2.5,4.3) {$C_1$};
\node[draw=none,fill=none] at (4,0) {$f$};
\node[draw=none,fill=none] at (3,1.66) {$f_1$};
\end{tikzpicture}
\quad c)
\begin{tikzpicture}[xscale=0.5,yscale=0.8]
\small
\tikzstyle{every node}=[draw, shape=circle, minimum size=3pt,inner sep=0pt, fill=black]
\node[label=left:$v_1$] at (1,0.25) (v1) {};
\node[label=right:$v_2$] at (5,1) (v2) {};
\node[label=right:$~~{u}$] at (2.75,5) (u1) {};
\node[label=right:$v_3$] at (5,1.75) (v3) {};
\node[label=below:$v_4$] at (3,1) (v4) {};
\draw[thick] (v2) to[bend right=33] (2.75,5) to[bend right=33] (v1) (v1) to[bend right=35] (v2);
\draw (v2)--(v3)--(v4)--(v1); \draw[thick] (v4)--(v3);
\draw (u1)-- +(-0.2,0.3) (u1)-- +(0.05,-0.4);
\draw (v1)-- +(-0.2,-0.3); \draw (v1)-- +(0.2,-0.3);
\draw (v2)-- +(-0.1,-0.3); \draw (v2)-- +(0.3,-0.3);
\draw (v1)-- +(0.1,+0.3);
\node[draw=none,fill=none, label=left:$P_1$] at (0.85,2.5){};
\node[draw=none,fill=none, label=right:$P_2$] at (5,2.5){};
\node[draw=none,fill=none] at (2.5,4.3) {$C_1$};
\node[draw=none,fill=none] at (4,0) {$f$};
\node[draw=none,fill=none] at (2,0.95) {$f_1$};
\end{tikzpicture}
\quad d)
\begin{tikzpicture}[xscale=0.5,yscale=0.8]
\small
\path [use as bounding box] (0,-0.35) rectangle (6,5.5);
\tikzstyle{every node}=[draw, shape=circle, minimum size=3pt,inner sep=0pt, fill=black]
\node[label=left:$v_1$] at (1,0.4) (v1) {};
\node[label=right:$v_2$] at (5,1) (v2) {};
\node[label=right:$~~{u}$] at (2.75,5) (u1) {};
\node[label=below:$v_3$] at (3.9,0.4) (v3) {};
\node[label=right:$~~v_4$] at (2.3,-0.1) (v4) {};
\draw[thick] (v2) to[bend right=33] (2.75,5) to[bend right=33] (v1) (v1) to[bend right=60] (v2);
\draw (v2)--(v3)--(v4)--(v1); \draw[thick] (v4)--(v1);
\draw[thick] (v3)--(v2);
\draw (u1)-- +(-0.2,0.3) (u1)-- +(0.05,-0.4);
\draw (v1)-- +(-0.2,-0.3); \draw (v2)-- +(0.3,-0.3);
\draw (v4)-- +(-0,+0.3); \draw (v1)-- +(0.2,+0.3);
\node[draw=none,fill=none, label=left:$P_1$] at (0.85,2.5){};
\node[draw=none,fill=none, label=right:$P_2$] at (5,2.5){};
\node[draw=none,fill=none] at (2.5,4.3) {$C_1$};
\node[draw=none,fill=none] at (4.4,-0.2) {$f$};
\node[draw=none,fill=none] at (3,0.37) {$f_1$};
\end{tikzpicture}
$$ $$ e)
\begin{tikzpicture}[xscale=0.7,yscale=0.9]
\small
\tikzstyle{every node}=[draw, shape=circle, minimum size=3pt,inner sep=0pt, fill=black]
\node[label=left:$v_1$] at (1,0.25) (v1) {};
\node[label=right:$v_2$] at (5,1) (v2) {};
\node[label=right:$~~{u}$] at (2.75,5) (u1) {};
\node[label=left:$v_3~$] at (3.5,0.4) (v3) {};
\node[label=below:$v_4$] at (2.25,1) (v4) {};
\draw[thick] (v2) to[bend right=33] (2.75,5) to[bend right=33] (v1) (v1) to[bend right=35] (v2);
\draw (v2)--(v3)--(v4)--(v1); \draw[thick] (v4)--(v3);
\draw[thick] (v4) to[bend right=16] (2.75,3.5);
\draw[dashed] (2.75,3.5) to[bend right=22] (2,4.2);
\draw[dashed] (2.8,3.35) to[bend left=22] (3.5,4.5);
\draw[dashed] (2.78,3.55) -- (2.8,4.3);
\draw (u1)-- +(-0.2,0.3) (u1)-- +(0.05,-0.4);
\draw (v1)-- +(-0.2,-0.3); \draw (v1)-- +(0.2,-0.3);
\draw (v2)-- +(-0.1,-0.3); \draw (v4)-- +(0.3,+0.2);
\draw (v3)-- +(0.1,+0.3);
\node[draw=none,fill=none, label=left:$P_1~$] at (0.85,2.5){};
\node[draw=none,fill=none, label=right:$~P_2$] at (5,2.5){};
\node[draw=none,fill=none] at (2.3,2.5) {$P_3$};
\node[draw=none,fill=none] at (2,3.8) {$C_1$};
\node[draw=none,fill=none] at (3.8,3.9) {$C_2$};
\node[draw=none,fill=none] at (4,0) {$f$};
\node[draw=none,fill=none] at (1.3,4.4) {$u_3$};
\node[draw=none,fill=none] at (1.5,0.9) {$f_1$};
\node[draw=none,fill=none] at (4.1,0.95) {$f_2$};
\end{tikzpicture}
\quad f)
\begin{tikzpicture}[xscale=0.7,yscale=0.9]
\small
\tikzstyle{every node}=[draw, shape=circle, minimum size=3pt,inner sep=0pt, fill=black]
\node[label=left:$v_1$] at (1,0.25) (v1) {};
\node[label=right:$v_2$] at (5,1) (v2) {};
\node[label=right:$~~{u}$] at (2.75,5) (u1) {};
\node[label=right:$v_3$] at (5,1.75) (v3) {};
\node[label=below:$v_4$] at (3,1) (v4) {};
\draw[thick] (v2) to[bend right=33] (2.75,5) to[bend right=33] (v1) (v1) to[bend right=35] (v2);
\draw (v2)--(v3)--(v4)--(v1);
\draw[thick] (v4) to[bend right=5] (2.75,3.5);
\draw[dashed] (2.75,3.5) to[bend right=22] (2,4.2);
\draw[dashed] (2.8,3.35) to[bend left=22] (3.5,4.5);
\draw[dashed] (2.78,3.55) -- (2.8,4.3);
\draw (u1)-- +(-0.2,0.3) (u1)-- +(0.05,-0.4);
\draw (v1)-- +(-0.2,-0.3); \draw (v1)-- +(0.2,-0.3);
\draw (v2)-- +(-0.1,-0.3); \draw (v2)-- +(0.3,-0.3);
\draw (v1)-- +(0.1,+0.3); \draw (v4)-- +(0.3,+0.3);
\node[draw=none,fill=none, label=left:$P_1~$] at (0.85,2.5){};
\node[draw=none,fill=none, label=right:$~P_2$] at (5,2.5){};
\node[draw=none,fill=none] at (2.5,2.5) {$P_3$};
\node[draw=none,fill=none] at (2,3.8) {$C_1$};
\node[draw=none,fill=none] at (3.8,3.9) {$C_2$};
\node[draw=none,fill=none] at (4,0) {$f$};
\node[draw=none,fill=none] at (1.3,4.4) {$u_3$};
\node[draw=none,fill=none] at (2,0.9) {$f_1$};
\node[draw=none,fill=none] at (4.1,1.7) {$f_2$};
\end{tikzpicture}
\quad g)
\begin{tikzpicture}[xscale=0.7,yscale=0.9]
\small
\path [use as bounding box] (0,-0.35) rectangle (6,5.5);
\tikzstyle{every node}=[draw, shape=circle, minimum size=3pt,inner sep=0pt, fill=black]
\node[label=left:$v_1$] at (1,0.4) (v1) {};
\node[label=right:$v_2$] at (5,1) (v2) {};
\node[label=right:$~~{u}$] at (2.75,5) (u1) {};
\node[label=below:$v_3$] at (3.6,0.4) (v3) {};
\node[label=right:$~\>v_4$] at (2.3,-0.1) (v4) {};
\draw[thick] (v2) to[bend right=33] (2.75,5) to[bend right=33] (v1) (v1) to[bend right=60] (v2);
\draw (v2)--(v3)--(v4)--(v1); \draw[thick] (v4)--(v1);
\draw[thick] (v3) to[bend left=18] (2.75,3.45);
\draw[dashed] (2.75,3.5) to[bend right=22] (2,4.2);
\draw[dashed] (2.75,3.35) to[bend left=22] (3.5,4.5);
\draw[dashed] (2.78,3.55) -- (2.8,4.3);
\draw (u1)-- +(-0.2,0.3) (u1)-- +(0.05,-0.4);
\draw (v1)-- +(-0.2,-0.3); \draw (v2)-- +(0.3,-0.3);
\draw (v4)-- +(-0.2,+0.3);
\node[draw=none,fill=none, label=left:$P_1~$] at (0.85,2.5){};
\node[draw=none,fill=none, label=right:$~P_2$] at (5,2.5){};
\node[draw=none,fill=none] at (2.4,2.5) {$P_3$};
\node[draw=none,fill=none] at (2,3.8) {$C_1$};
\node[draw=none,fill=none] at (3.8,3.9) {$C_2$};
\node[draw=none,fill=none] at (4.4,-0.2) {$f$};
\node[draw=none,fill=none] at (4.1,4.6) {$u_3$};
\node[draw=none,fill=none] at (2.9,0.4) {$f_1$};
\node[draw=none,fill=none] at (4.1,0.9) {$f_2$};
\end{tikzpicture}
\qquad
$$
\caption{Seven schematic cases of decomposing the drawing of bipartite $H_C$ into subregions,
	as discussed at the beginning of the proof of \Cref{lem:bicore}.
	Vertical positions of the vertices of the $4$-cycle $A=(v_1,v_2,v_3,v_4)$ outline their levels in~$\levll H.$.
	The $T$-vertical path $P_3$, existing in the second row of the picture, is emphasized with thick lines,
	and $P_3$ starts in $v_4$ or $v_3$ and ends in a vertex $u_3\in V(C)$ which can lie on either of $P_1$ or $P_2$, or~$u_3=u$.
	Each of the depicted subcases defines a $T$-wrapped cycle $C_1$ with the lid $f_1$, and the subcases in the second row
	also bring another $T$-wrapped cycle $C_2$ with the lid~$f_2$.}
\label{fig:divisionbi6}
\end{figure}

Secondly, we investigate the maximal $T$-vertical paths starting from the vertices of $\{v_3,v_4\}\cap U$.
In the subcases of \Cref{fig:divisionbi6}\,a),e), the $T$-vertical path of $v_3$ must continue through $v_4$ since $T$ is left-aligned,
and then reach $C$ either directly in $v_4\in V(P_1)$, or in another vertex $u_3\in V(C)$ if $v_4\in U$.
So, let $P_3$ denote the $T$-vertical path from $v_3$ to $u_3$ (which is internally disjoint from $C$) in the latter subcase.
Likewise, let $P_3$ denote the $T$-vertical path from $v_4$ to $u_3\in V(C)$ in the subcase of \Cref{fig:divisionbi6}\,f), that is when $v_3\not\in V(P_3)$.

In the subcases of \Cref{fig:divisionbi6}\,d),g), again, the $T$-vertical path of $v_4$ must continue through $v_1$ since $T$ is left-aligned.
On the other hand, the $T$-vertical path of $v_3$ may contain $v_2$ or not, and in the latter subcase, which is \Cref{fig:divisionbi6}\,g),
we again denote by $P_3$ the $T$-vertical path from $v_3$ to $u_3\in V(C)$ which avoids $v_2$ and is internally disjoint from~$C$.

Altogether, in the four subcases which miss the path $P_3$, i.e., as depicted in \Cref{fig:divisionbi6}\,a),b),c),d),
we are getting a $T$-wrapped cycle $C_1=C\Delta A$ with the lid edge~$f_1$.
Then we inductively apply \Cref{lem:bicore} to $C_1$ in the rooted skeletal trigraph $(H,S\cup A,T)$.
The recursively obtained partial contraction sequence of $H$ makes the face of $C_1$ $1$-reduced, and so the face of $C$ 
is $2$-reduced in $S$ (consider the possible extra vertices $v_3,v_4$ inside~$C$). 
We easily finish the sequence by using \Cref{lem:bisecondstage}.

\vspace*{-2ex}%
\subparagraph{\textcolor{lipicsGray}{Recursive partial contraction sequences~$\pi_1$ and~$\pi_2$.}}
The three subcases with existing $T$-vertical path $P_3$, as in \Cref{fig:divisionbi6}\,e),f),g), are more complex to deal with.
We proceed analogously to the proof of \Cref{lem:core}, albeit in a simpler setting.
As discussed above (and depicted in \Cref{fig:divisionbi6}\,e),f),g)), we have got a $T$-vertical path $P_3$ such that $P_3$ starts on the facial
cycle $A$ and ends in a vertex $u_3$ such that $\{u_3\}\in V(C)\cap V(P_3)$.
Let $S_1=S\cup A\cup P_3$.
Then the sub-skeleton $H_C\cap S_1$ has exactly three bounded facial cycles $A$ and $C_1,C_2$ where; 
$C_1$ is $T$-wrapped with the left wrapping path contained in $P_1$, the right one in $P_3\cup P_2$ and the lid $f_1\in E(A)$,
and $C_2$ is $T$-wrapped with the right wrapping path contained in $P_2$, the left one in $P_3\cup P_1$ and the lid $f_2\in E(A)$.
The cycles $C_1$ and $C_2$ have their sinks as $u$ and $u_3$ in some order, and it may be that $u=u_3$.

We inductively apply \Cref{lem:bicore} to $C_1$ in the skeletal trigraph $(H,S_1,T)$, obtaining a level-preserving
partial contraction sequence $\pi_1$ that ends in a trigraph $H^1$.
Then we analogously apply \Cref{lem:bicore} to $C_2$ in $(H^1,S_1,T)$, obtaining a level-preserving partial contraction sequence $\pi_2$ that ends in~$H^2$.
We claim that the concatenation $\pi_1.\pi_2$ satisfies all conditions of \Cref{lem:bicore} now with respect to the original skeleton~$S$.
Regarding condition~\eqref{it:all6}, this follows from \Cref{lem:bicoremore} applied for the inductive invocations of \Cref{lem:bicore} to $C_1$ and~$C_2$.
As for conditions \eqref{it:bound4} and \eqref{it:right2}, we immediately get their validity for all vertices of $C$ from the inductive
invocations of \Cref{lem:bicore}, except for the sink vertices $u_3$ and $u$ for which we additionally use \Cref{lem:sinknored}.

\vspace*{-2ex}%
\subparagraph{\textcolor{lipicsGray}{Merging by partial contraction sequence~$\pi_3$.}}
The final task of the proof is to construct a partial contraction sequence $\pi_3$ of $H^2$ which ``merges'' the faces of $C_1$ and $C_2$
into one face of $C$ which will be $2$-reduced, and to which we can subsequently apply \Cref{lem:bisecondstage}.
We first observe that the set $U_0:=V(H^2_C)\setminus V(C)$ is $3$-reduced in $H^2$;
there is at most one vertex of each level from $U_1:=V(H^2_{C_1})\setminus V(C_1)$,
at most one of each level from $U_2:=V(H^2_{C_2})\setminus V(C_2)$, and at most one of each level from $U_3:=V(P_3\cup A)\setminus V(C)$.
See \Cref{fig:divisionbi6}, and note that $U_3$ may contain one vertex not on $P_3$, namely $v_4$ in the subcase~g).
Let $k$ be the minimum and $m$ the maximum value of $\levll{H^2}x$ over $x\in U_3$, and $p$ be the maximum of $\levll{H^2}y$~over~$y\in U_2$.

The sequence $\pi_3$ is simply as follows.
For $i=p,p-1,\ldots,k+1$ in this order; if there are distinct vertices $y\in U_1\cup U_3$ and $z\in U_2$ 
such that $i=\levll{H^2}y=\levll{H^2}z$, and $y\in U_3$ if $i\leq m$, then we contract $y$ with~$z$.
Clearly, the sequence is level-preserving, $S$-aware, and in the resulting trigraph $H^3$ the face of $C$ is $2$-reduced.
What remains is to verify that a trigraph $H'$ at every step $i$ of $\pi_3$ satisfies the conditions \eqref{it:all6}--\eqref{it:right2} of \Cref{lem:bicore}.

We start with \eqref{it:bound4} for $x\in V(C)$.
If $i>\levll{H'}x+1$, then validity is inherited from $H^2$.
For the rest, recall that red neighbours of $x$ can be only in levels $\levll{H'}x\pm1$ by \Cref{lem:bipartlevels}.
If $i-1\leq\levll{H'}x\leq i$, then there are at most two candidates for a red edge from $x$ to the next level $\levll{H'}x+1$
(which has already been considered for a contraction), 
but at most one to the previous level $\levll{H'}x-1$ due to $1$-reduced $C_1$ or $C_2$ in~$H^2$.
The latter holds true even if $x=u_3$ thanks to \Cref{lem:sinknored}.
If $i\leq\levll{H'}x-1$, then we simply have at most two candidates for a red edge from $x$ to each of the levels $\levll{H'}x\pm1$.

Regarding \eqref{it:right2} for $x\in V(C)$, we repeat same arguments as for \eqref{it:bound4} with the addition of using 
\Cref{lem:leftalign} to argue that there are no red edges from $x$ to the previous level~$\levll{H'}x-1$.

Condition \eqref{it:all6} for $x\in U'=V(H'_C)\setminus V(C)$ is verified as follows.
First, all red edges of $x$ belong to $H'_C$ due to the skeletal trigraph $(H',S)$.
Second, we call the vertex $x$ {\em special} if $\levll{H'}x=\levll{H^2}{v_4}=\levll{H^2}{v_1}+1$ (the unique case of \Cref{fig:divisionbi6}\,g)).
We claim:
\begin{itemize}
\item If $\levll{H'}x\leq m$ and $x$ is {\em not} special, then $x$ has at most $3$ red neighbours in the previous level $\levll{H'}x-1$.
\end{itemize}
This claims holds true if $i>\levll{H'}x$ from the skeletal trigraph $(H^2,S_1)$, 
and the same applies if $i=\levll{H'}x\leq m$ and $x\in U_1$ (then $x$ is not contracted by $\pi_3$).
If $i<\levll{H'}x\leq m$ and $x\in U_1$, then again, at most $3$ red edges from $x$ into the level $\levll{H'}x-1$ are possible since
$x$ is not adjacent to $V(P_2)$ in~$(H^2,S_1)$, and so neither in~$H'$.
Otherwise, we have that $i\leq\levll{H'}x\leq m$ and $x\not\in U_1$ is not special, which means (by the definition of $\pi_3$), 
that $x$ is the result of a contraction of $y\in V(P_3)$ and $z\in U_2$.
Let $s\in V(P_1)$ and $t\in U_1$ (if existing) be such that $\levll{H'}s=\levll{H'}t=\levll{H'}x-1$.
Then neither of $s,t$ were adjacent to $z$ in the skeletal trigraph $(H^2,S_1)$, 
nor adjacent to $y$ by \Cref{lem:leftalign} applied to the $T$-wrapped cycle $C_1$ of $(H^2,S_1,T)$.
Therefore, $x$ is not adjacent to either of $s,t$, and $x$ can have at most $3$ red neighbours in the level $\levll{H'}x-1$.

From this claim, we already easily derive \eqref{it:all6} for~$x$.
If $\levll{H'}x>m$, or $\levll{H'}x=m$ and $x$ is special, then (cf.~\Cref{fig:divisionbi6})
no vertex of $C$ is of level $\levll{H'}x+1$ and we have at most one red neighbour of $x$ in $V(H'_C)\setminus V(C)$ in the level $\levll{H'}x+1$.
Furthermore, $x$ can have at most $5$ neighbours in the previous level $\levll{H'}x-1$ since $C$ is $3$-reduced in $H^2$.
This gives at most~$6$ together.
Otherwise, $x$ has at most $3$ red neighbours in the level $\levll{H'}x-1$ by the previous claim.
If $i>\levll{H'}x+1$, then $x$ has at most $3$ red neighbours in the level $\levll{H'}x+1$ from the skeletal trigraph $(H^2,S_1)$.
If $i\leq\levll{H'}x+1$, then $x$ cannot have a neighbour on $P_2$ in the level $\levll{H'}x+1$ by \Cref{lem:leftalign},
and so there are at most $3$ neighbours in the level $\levll{H'}x+1$ left.
Again, $x$ has at most $6$ red edges altogether.

\smallskip
Altogether, the concatenated sequence $\pi_1.\pi_2.\pi_3$ (applied in this order) is $S$-aware, level-preserving by its definition,
and results in the trigraph $H^3$ in which the face of $C$ is $2$-reduced.
We can finish the whole proof by applying \Cref{lem:bisecondstage} to $H^\circ=H^3$ since all assumptions of the lemma have been verified above.
On the algorithmic side, we easily finish as in the proof of \Cref{lem:core}.
\end{proof}

\section{Proof of \Cref{thm:tww1planar}; the 1-Planar Case}
\label{sec:prooftww1planar}

We again start with an overview of the setup for this coming proof.
There are two important conceptual differences.
First, we are going to employ skeletal trigraphs $(G,S)$ which are not necessarily proper ($S$ may contain other edges than those of $G$),
and $S$ may not be simple.
Briefly explaining this issue, we observe that if a $1$-planar drawing of $G$ contains a crossing pair of edges, then these two edges have 
no other crossing and we may ``closely encircle'' them with an uncrossed $4$-cycle on the four end vertices.
This cycle will be used as a part of $S$ even if its edges are not in~$G$ or are parallel to edges of $G$ drawn elsewhere.
Second, we will use a special good level assignment which will be, roughly saying, composed of pairs of consecutive BFS layers of $S$
(in other words, a BFS tree of $S$ does not constitute a good level assignment for~$G$ itself).
This is necessary in this setup since, when encircling a pair of crossing edges of $G$, one of the crossed edges may span across two BFS layers of~$S$.

To intuitively distinguish this new view of a skeleton of $G$, we shall use a notation $S^+$ instead of plain~$S$ here.
The necessary definitions and claims follow.

\begin{definition}\label{def:4framingsk}
A drawing of a graph is good if every pair of crossing edges do so once and have no common end vertex.
A skeletal graph $(G,S^+)$ is {\em $4$-framing} if there is a good drawing of $G\cup S^+$ in the plane such that the subdrawing of $S^+$
is an uncrossed $2$-connected plane graph with only faces of size~$3$ and~$4$, and for every $4$-face $\phi$ of $S^+$ the diagonally opposite
pairs of vertices on the boundary of $\phi$ form a pair of crossing edges drawn inside~$\phi$.
(Observe that all edges of $E(G)\setminus E(S^+)$ then have to be of the latter kind.)
\end{definition}

\begin{lemma}[folklore, see e.g.~\cite{DBLP:conf/isaac/BekosLH022}]\label{lem:4framingsk}
If $G$ is a simple $1$-planar graph, then there exists a $4$-framing skeletal graph $(G,S^+)$ such that $V(S^+)=V(G)$
and no edge of $E(G)\setminus E(S^+)$ is parallel to an edge of~$E(S^+)$.
\end{lemma}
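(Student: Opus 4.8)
The plan is to start from a good $1$-planar drawing of $G$ (one exists: any $1$-planar drawing can be turned into a good one by the standard local exchanges that undo a crossing between two edges sharing an end, or a double crossing, which cannot occur here anyway since every edge is crossed at most once), and to build $S^+$ by keeping the uncrossed edges, enclosing every crossing by a tiny $4$-cycle, and then filling up the remaining faces. Write $E_{\mathrm{pl}}$ for the set of uncrossed edges in this drawing and $E_{\mathrm{cr}}$ for the crossed ones, the latter coming partitioned into crossing pairs. For a crossing of $\{a,b\}$ and $\{c,d\}$ at a point $p$ whose rotation is $(a,c,b,d)$, I would draw, inside a sufficiently small neighbourhood of the ``X'' formed by these two edges, a $4$-cycle on $a,c,b,d$ whose side $\{a,c\}$ hugs the $a$-arm of $\{a,b\}$ and the $c$-arm of $\{c,d\}$, and analogously for the other three sides; I call such a $4$-cycle a \emph{frame}. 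A routine thin-neighbourhood argument shows that a frame crosses nothing, that the two crossing arcs lie in its interior (so $a,b$ and $c,d$ are its diagonally opposite corner pairs), and that the frames of distinct crossings can be chosen with pairwise disjoint interiors. This produces an uncrossed plane (multi)graph $S_1\supseteq E_{\mathrm{pl}}$ whose faces consist of the disjoint ``frame $4$-faces'', each a genuine $4$-cycle hosting exactly one crossing, together with other, possibly large, faces.

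Next I would augment $S_1$ to $S^+$. Add any isolated vertices of $G$, and then add edges drawn only inside non-frame faces so as to first make the graph connected and then triangulate every non-frame face, leaving the frame $4$-faces untouched. Since a connected plane graph on at least three vertices all of whose face boundaries are cycles is $2$-connected, choosing these new edges so as never to create a face boundary that repeats a vertex makes $S^+$ a $2$-connected plane graph with $V(S^+)=V(G)$ and all faces of size $3$ or $4$, the $4$-faces being exactly the frames. Together with the crossed edges of $G$ drawn as the frame diagonals, this is precisely a good drawing witnessing that $(G,S^+)$ is $4$-framing in the sense of \Cref{def:4framingsk}. Parallel edges may appear (a frame side or triangulating chord equal to an already present edge), which is harmless because $S^+$ need not be simple.

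The delicate point --- and the main obstacle --- is the requirement that no edge of $E(G)\setminus E(S^+)$ be parallel to an edge of $S^+$. As $E_{\mathrm{pl}}\subseteq E(S^+)$, this concerns only crossed edges of $G$, so the only danger is that some frame side or triangulating chord $\{u,v\}$ coincides with a crossed edge $\{u,v\}$ whose drawn copy sits inside a frame. I would remove every such conflict by a cleanup that only ever \emph{decreases} the number of crossings, hence terminates: whenever a crossed edge $\{u,v\}$ is forced to coincide with a skeleton edge --- because $u,v$ are adjacent corners of some crossing, or because $\{u,v\}$ is exactly the chord needed to triangulate a non-frame face --- reroute $\{u,v\}$ so that it \emph{is} drawn as that skeleton edge (hugging the frame's arms, resp.\ as an arc inside the face, which is empty of $G$-edges); this makes $\{u,v\}$ uncrossed, hence an element of $E_{\mathrm{pl}}\subseteq E(S^+)$, and its former partner becomes uncrossed too, and if that partner $\{r,s\}$ was sharing a frame $4$-face with $\{u,v\}$ then that $4$-face now carries $\{r,s\}$ as its unique interior arc joining two opposite corners, so one splits it into two triangles by promoting $\{r,s\}$ into $S^+$. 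Performing this cleanup before the frame construction (for the ``adjacent corners'' case) and during the triangulation (for the ``chord'' case), while each time re-checking that all surviving $4$-faces are still proper frames, is the only non-routine part; the rest is standard plane-graph manipulation, along the lines already used in \cite{DBLP:conf/isaac/BekosLH022}.
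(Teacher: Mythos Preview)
The paper does not supply its own proof of this lemma; it is stated as folklore with a pointer to~\cite{DBLP:conf/isaac/BekosLH022}. Your construction is the standard one and is essentially correct: take a good $1$-planar drawing, enclose each crossing in a small uncrossed $4$-cycle on the four endpoints, and triangulate the remaining (non-frame) faces to obtain a $2$-connected plane skeleton with only $3$- and $4$-faces.

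Your handling of the non-parallelism condition is the right idea and the termination argument (each reroute strictly removes a crossing) is sound. Two small points worth tightening: first, when you reroute a crossed edge $\{u,v\}$ along a frame side of a \emph{different} crossing, you should verify that this new arc stays inside the thin neighbourhood of that second crossing and hence cannot pick up a new crossing elsewhere---this follows because in a $1$-planar drawing the arms of $\{a,e\}$ and $\{b,f\}$ near their crossing point are themselves uncrossed. Second, after such a reroute the former frame of $\{u,v\}$ against $\{c,d\}$ is left with a single uncrossed diagonal; you correctly promote $\{c,d\}$ into $S^+$, but you should also note that this may in turn require checking whether $\{c,d\}$ was itself parallel to some other crossed edge---it cannot be, since $G$ is simple and $\{c,d\}$ has just moved from $E(G)\setminus E(S^+)$ into $E(S^+)$, so the set $E(G)\setminus E(S^+)$ only shrinks. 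With these clarifications the argument is complete.
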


\begin{definition}\label{def:doublelev}
Assume a $4$-framing skeletal graph $(G,S^+)$ such that $V(S^+)=V(G)$, with a drawing as in \Cref{def:4framingsk},
and the BFS layering $\ca L=(L_0=\{r\},L_1,L_2,\ldots)$ of $S^+$ determined by a BFS tree $T\subseteq S^+$ rooted at~$r$ on the outer face.
The {\em double level assignment} of $G$ determined by $T$ is defined as follows;
$\levll Gr:=0$, and $\levll Gx:=i$ if $x\in L_{2i-1}\cup L_{2i}$ for all $i>0$ with the following exception.
If $y\in L_{2j}$ and there exists a $T$-wrapped cycle $C\subseteq S^+$ with the sink $u$, such that $u\in L_{2j-1}$
and $y$ is drawn inside~$C$, then $\levll Gy:=j+1$.

We will moreover say, for $\levll Gx=i$, that $x$ is in the {\em upper half} of level $i$ if $x\in L_{2i}$, and
that $x$ is in the {\em lower half} of level $i$ otherwise.
(The latter terms will not be used for vertices created by level-preserving contractions in~$G$.)
\end{definition}

One may use the example pictures in \Cref{fig:corelem1p} as an illustration of the following.
\begin{lemma}\label{lem:doublegood}\label{lem:neighonpath}
\begin{enumerate}[a)]
\item If $y\in L_{2j}$ is assigned level $\levll Gy=j+1$ in \Cref{def:doublelev}, then $y$ has no neighbour in levels at most~$j-1$.
\item The double level assignment $\levll G.$ of \Cref{def:doublelev} is good both for $G$ and for~$G\cup S^+$.
\item Suppose that $P\subseteq S^+$ is a $T$-vertical path, and that $H$, $V(H)\supseteq V(P)$, 
is a trigraph obtained from $G$ by a level-preserving partial contraction sequence.
If $z\in V(H)\setminus V(P)$, then $z$ has at most $6$ neighbours in $V(P)$ in~$H$, 
and if this number is exactly $6$, then one of the neighbours of $z$ in $V(P)$ is in the upper half of level $\levll Hz+1$.
\end{enumerate}
\end{lemma}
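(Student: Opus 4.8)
The plan is to prove the three parts in order, each using the previous one. For part~a), fix a $T$-wrapped cycle $C\subseteq S^+$ with sink $u\in L_{2j-1}$ witnessing $\levll Gy=j+1$, so that $y$ is drawn strictly inside the disk bounded by~$C$. Two geometric facts do the work: every vertex of $C$ lies on a wrapping path descending from~$u$, hence lies in $L_k$ for some $k\geq 2j-1$; and every vertex drawn strictly inside $C$ lies in $L_k$ for some $k\geq 2j$, because its $T$-path to the root first meets $C$ at a proper ancestor of it, which is itself in $L_{k'}$ for some $k'\geq 2j-1$. Now let $\{y,w\}$ be an edge of $G\cup S^+$ (this also covers edges of~$G$). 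If it is an edge of $S^+$, it cannot cross the uncrossed cycle~$C$, so $w$ lies inside $C$ or on~$C$. If it is a crossing edge of~$G$, it is drawn inside a single face of~$S^+$, and that face lies inside $C$ because its boundary contains the interior point~$y$; so $w$ again lies inside $C$ or on~$C$. Either way $w$ is in $L_k$ for some $k\geq 2j-1$, hence $\levll Gw\geq j$, which is the assertion.

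For part~b), I would check $\big|\levll Gx-\levll Gy\big|\leq1$ for every edge $\{x,y\}$, working in $G\cup S^+$ (the case of $G$ alone is identical). By \Cref{def:4framingsk}, edges of $S^+$ join consecutive BFS-layers of $S^+$, and every other edge of $G$ is a diagonal of a $4$-face whose boundary is a $4$-cycle of $S^+$, so its ends lie in layers differing by at most~$2$. If neither end is exceptional, the bound holds because $\lceil m/2\rceil$ changes by at most~$1$ when $m$ changes by at most~$2$. If, say, $x\in L_{2j}$ is exceptional with $\levll Gx=j+1$, then by part~a) the other end $y$ has $\levll Gy\geq j$, while the layer bound places $y$ in $L_k$ for some $k\leq 2j+2$, hence $\levll Gy\leq j+2$; in all cases $\big|\levll Gx-\levll Gy\big|\leq1$.

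For part~c), note first that level-preserving contractions preserve goodness of the level assignment and that $\levll G.$ is good by~b); hence $\levll H.$ is good, so every neighbour of $z$ in $V(P)$ has level in $\{\lambda-1,\lambda,\lambda+1\}$, where $\lambda:=\levll Hz$. Write $a_i$ for the unique vertex of $P$ in layer $L_i$; I may assume $P$ contains $a_{2\lambda-3},\dots,a_{2\lambda+2}$, since a shorter $P$ only removes candidate neighbours. Among these, only the even-layer vertices $a_{2\lambda-2},a_{2\lambda},a_{2\lambda+2}$ can be exceptional --- each then moved one level up --- and I let $\beta_-,\beta_0,\beta_+\in\{0,1\}$ be the respective indicators. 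The key auxiliary claim is that \emph{$z$ has no neighbour of $P$ in a layer $L_k$ with $k\leq 2\lambda-4$}: such a neighbour would be $G$-adjacent to some vertex $w$ of the set $Z\subseteq V(G)$ from which $z$ stems, with $\levll Gw=\lambda$; the layer bound then forces $w\in L_{2\lambda-2}$, hence $w$ is exceptional and lies strictly inside a $T$-wrapped cycle with sink in $L_{2\lambda-3}$, so (exactly as in part~a)) the neighbour itself lies inside or on that cycle, i.e.\ in $L_k$ for some $k\geq 2\lambda-3$, a contradiction. Consequently, the neighbours of $z$ in $V(P)$ of level $\lambda-1$ lie in $\{a_{2\lambda-3}\}$, together with $a_{2\lambda-2}$ if $\beta_-=0$; those of level $\lambda$ lie in $\{a_{2\lambda-1}\}$, together with $a_{2\lambda-2}$ if $\beta_-=1$ and $a_{2\lambda}$ if $\beta_0=0$; those of level $\lambda+1$ lie in $\{a_{2\lambda+1}\}$, together with $a_{2\lambda}$ if $\beta_0=1$ and $a_{2\lambda+2}$ if $\beta_+=0$. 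Hence the numbers of neighbours in these three levels are at most $2-\beta_-$, $2+\beta_--\beta_0$ and $2+\beta_0-\beta_+$, with sum at most $6-\beta_+\leq6$. If the sum equals~$6$, then $\beta_+=0$ and the level-$(\lambda+1)$ count attains its maximum $2+\beta_0$, which forces $a_{2\lambda+2}$ to be a neighbour of~$z$; since $\beta_+=0$ gives $\levll H{a_{2\lambda+2}}=\lambda+1$ and $a_{2\lambda+2}\in L_{2(\lambda+1)}$, this vertex is in the upper half of level~$\lambda+1$, as claimed.

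The step I expect to be the main obstacle is part~c): the naive count ``two $P$-vertices per level, three levels, hence at most~$6$'' is false once the exceptional rule is in play, since a level can meet $P$ in three vertices; recovering the sharp constant~$6$ and proving the refinement both hinge on the auxiliary ``no neighbour in a layer $L_k$ with $k\leq 2\lambda-4$'' claim (whose proof recycles the crossing/nesting analysis of part~a)) together with a careful case split over which of $a_{2\lambda-2},a_{2\lambda},a_{2\lambda+2}$ are exceptional.
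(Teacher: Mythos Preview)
Your proof is correct. Parts a) and b) are essentially the same as the paper's, with you spelling out more details (e.g., the two subcases for edges of $G\cup S^+$ in part a), and the explicit treatment of an exceptional endpoint in part b)).

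For part c), your argument and the paper's reach the same conclusion by different bookkeeping. The paper argues directly in terms of BFS layers of~$S^+$: every vertex $w$ that contracts into $z$ has level $\levll Gw=\lambda$, hence lies in $L_{2\lambda-2}\cup L_{2\lambda-1}\cup L_{2\lambda}$; by the distance-$\leq2$ layer bound and (for the exceptional case $w\in L_{2\lambda-2}$) part a), all $G$-neighbours of such $w$ lie in the six layers $L_{2\lambda-3},\ldots,L_{2\lambda+2}$; since $P$ is $T$-vertical it meets each layer in at most one vertex. That immediately gives at most six neighbours on $P$, and equality forces the one in $L_{2\lambda+2}$ to be present, which (being a neighbour of $z$, hence of level $\lambda+1$ by goodness) is then in the upper half of level $\lambda+1$. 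You instead organize by the double levels, introduce the indicators $\beta_-,\beta_0,\beta_+$ for exceptional $P$-vertices, and obtain the telescoping sum $6-\beta_+$. This is correct but more elaborate: the case split on $\beta$'s and the separate auxiliary claim are doing together what the paper's single ``six layers'' observation does in one line. The paper's route is shorter; yours has the minor advantage of making the role of the exceptional rule fully explicit and showing that the refinement (the upper-half condition) is equivalent to $\beta_+=0$.
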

\begin{proof}
a) There is no neighbour of $y$ in $G\cup S^+$ in BFS layers $L_i$ for $i<2j-1$ due to the $T$-wrapped
cycle~$C\subseteq S^+$ which separates $y$ from the root of $T$ and has its sink $u$ in the layer~$L_{2j-1}$.
Hence, no one of level at most~$j-1$.

b) Every edge of $E(S^+)$ is good in $\levll G.$ by the definition, and
by \Cref{def:4framingsk}, every edge of $E(G)\setminus E(S^+)$ has its ends at distance at most $2$ in~$S^+$, which is good again.
The only potential problem is with the exceptional vertices $y\in L_{2j}$ of \Cref{def:doublelev} which get $\levll Gy=j+1$,
and that has been resolved in a).

c) Let $\levll Hz=k$.
It follows from \Cref{def:doublelev} and the argument in paragraph a) that no neighbours of the vertices of $G$ 
contracted into $z$ belong to $L_i$ for $i<2k-3$ or~$i>2k+2$.
Since $P$ is $T$-vertical, it intersects each of the remaining six layers $L_{2k-3},L_{2k-2},\ldots,L_{2k+2}$ of~$V(G)$.
This bounds the sought number of neighbours of $z$ by at most $6$, and the second claim also follows.
\end{proof}

In the rest, we will assume a fixed drawing of $G\cup S^+$ from \Cref{def:4framingsk} and always use, in subsequent skeletal trigraphs
$(H,S)$ which will come from skeleton aware contractions, the face assignment inherited from the natural one of this drawing
(cf.~\Cref{def:skelaware} and~\Cref{def:naturalface}).
Respecting this, we will continue to use the notation $H_C$ for the subgraph of $H$ bounded by a cycle $C\subseteq S^+$, however,
understanding that not all edges of $C$ must belong to~$H_C$ now.
Additionally, when $H_C=G_C$ (meaning that the vertices of $H$ bounded by $C$ have not participated in any contraction since~$G$),
we shall denote by $H^+_C$ the subgraph of $H\cup S^+$ bounded by the cycle $C$, which in the case of $H_C=G_C$ equals the subgraph of $G\cup S^+$ bounded by~$C$.

We will consider only level-preserving contractions, with an exception of the very end of our constructed sequence.
There is one important difference to be noted; unlike in the previous sections, here not every edge of $S$ is an edge of $G$ or $H$,
and this may influence occurrence of red edges in $H$, in particular at the sinks of wrapped~faces.

Another notable difference from the previous proofs is that a $T$-wrapped cycle $C$ in a rooted skeletal trigraph $(H,S,T)$
may now consist only of {\em two parallel edges} --- the left one (at the sink) belonging to~$T$ and the other parallel one forming the lid of~$C$,
cf.~\Cref{lem:core1p}.

Before proceeding to the main proof, we introduce one more specialised notion (for induction).
If $(H,S,T)$ is a rooted skeletal trigraph, and $f\in E(H)\setminus E(S)$ in uncrossed (we have a valid face assignment for $S\cup\{f\}$), 
then we say that a cycle $C\subseteq S\cup\{f\}$ is {\em$T$-wrapped with added lid~$f$} 
if $C$ is $T$-wrapped in $S\cup\{f\}$ and $f$ is its lid there (neglecting that $T$ may not be a BFS tree of $S\cup\{f\}$).
Naturally, such $C$ is called {facial} if it is a facial cycle of $S\cup\{f\}$ 
-- we are going to use this term only when the (plane) drawing of $f$ is given.

The following claim is nearly a direct consequence of \Cref{lem:doublegood}, 
but we have to state it separately due to the exceptional case of $y$ in \Cref{def:doublelev}.

Furthermore, we will use the following adjusted version of \Cref{lem:leftalign}:

\begin{lemma}\label{lem:leftaligndouble}
Let $(G,S^+)$ be a $4$-framing skeletal graph with the double level assignment by a BFS tree $T\subseteq S^+$ as in \Cref{def:doublelev}.
Assume that $T$ is left-aligned and rooted on the outer face of~$S^+$, and that a cycle $C\subseteq S^+\cup G$ is $T$-wrapped, possibly with added lid.
Suppose that $H$, $V(H)\supseteq V(C)$, is a trigraph obtained from $G$ by a level-preserving $C$-aware partial contraction sequence,
and that $x\in V(C)$ and $y\in V(H_C)$ are such that $\levll Hy=\levll Hx-1$ and $x$ lies on the right wrapping path of~$C$.
Then there is no edge (black or red) in $H$ between $x$ and $y$ if at least one of the following conditions is satisfied;
$x$ is in the upper half of level $i$, or $y\in V(C)$ lies on the left wrapping path of~$C$ in the lower half of level~$i-1$.
\end{lemma}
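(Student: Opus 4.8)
The plan is to mimic the proof of \Cref{lem:leftalign}, using \Cref{lem:doublegood}a),b) to handle the exceptional vertices of the double level assignment. Assume, for a contradiction, that such an edge $\{x,y\}\in E(H)$ exists, with $\levll Hy=\levll Hx-1$, where $x$ is on the right wrapping path $P_2$ of $C$, and one of the two stated conditions holds. Let $Y\subseteq V(G)$ be the set of vertices of $G$ contracted along the sequence into $y$ (with $Y=\{y\}$ if $y\in V(G)\cap V(H)$). Since the contraction sequence is level-preserving, every $y'\in Y$ has $\levll G{y'}=\levll Hy=\levll Hx-1$; and since $\{x,y\}\in E(H)$ with $x\in V(G)\cap V(C)$, there is a representative $y_0\in Y$ with $\{x,y_0\}\in E(G)$. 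Because the sequence is $C$-aware, $y_0$ is drawn inside the cycle $C$. The $T$-vertical path from $y_0$ towards the root of $T$ in $S^+$ must then cross $C$ (it is wrapped by two $T$-vertical paths and a lid), and since it cannot meet the right wrapping path $P_2$ above $x$ (that would contradict $x\in V(C)$ being on $P_2$ together with $T$-verticality), it certifies that $y_0$ is to the left of $x$ with respect to~$T$ (\Cref{def:leftuv}).

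Now I argue that the edge $\{x,y_0\}\in E(G)$ connects two consecutive BFS layers of $S^+$ with $y_0$ in the lower one, which is precisely the configuration forbidden by left-alignedness (\Cref{def:leftaligned}), yielding the contradiction. The key point is to pin down the BFS layers. Write $\levll Hx=i$. By \Cref{lem:doublegood}b) the double level assignment is good for $G$, and $\{x,y_0\}\in E(G)$ gives $|\levll G{y_0}-\levll Gx|\le 1$; combined with $\levll G{y_0}=i-1$ and $x\in V(C)$ we get $\levll Gx\in\{i-1,i\}$, but $x$ lies on $C\subseteq S^+$ so $\levll Gx=\levll Hx=i$. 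In the first case, $x$ is in the upper half of level $i$, i.e.\ $x\in L_{2i}$. Since $\levll G{y_0}=i-1$, $y_0$ lies in $L_{2i-3}\cup L_{2i-2}$, or $y_0$ is an exceptional vertex of \Cref{def:doublelev} lying in $L_{2(i-1)}=L_{2i-2}$; in all these cases $\{x,y_0\}$ joins $L_{2i}$ to a layer $L_j$ with $j\le 2i-2$, contradicting goodness of $\levll G.$ on $G\cup S^+$ \emph{unless} $j=2i-1$, which is impossible here. Hence this case cannot occur at all once $x\in L_{2i}$, except via $y_0$ being exceptional in $L_{2i-2}$ adjacent to $x\in L_{2i}$ — but then \Cref{lem:doublegood}a) applied to the exceptional vertex $y_0$ forbids any neighbour of $y_0$ in layers $\le 2(i-1)-2=2i-4$ and forces all of $y_0$'s neighbours into $L_{2i-3}\cup L_{2i-2}\cup L_{2i-1}$, again excluding $x\in L_{2i}$. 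In the second case, $y\in V(C)$ lies on the left wrapping path $P_1$ of $C$, in the lower half of level $i-1$, so $y_0=y\in L_{2i-3}$; since $\{x,y_0\}\in E(G)\subseteq E(G\cup S^+)$ is good and $y_0\in L_{2i-3}$, $x$ lies in $L_{2i-4}\cup L_{2i-3}\cup L_{2i-2}$, and combined with $\levll Gx=i$ we get $x\in L_{2i-2}$, i.e.\ $x$ is in the lower half of level $i$ and $\{x,y_0\}$ joins $L_{2i-2}$ to $L_{2i-3}$ with the lower-layer end $y_0$ to the left of the upper-layer end~$x$. This is exactly a violation of the left-aligned property (\Cref{def:leftaligned}, with $i-1\leftrightarrow 2i-3$, $i\leftrightarrow 2i-2$), the desired contradiction; and the first case has been shown to be vacuous.

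The main obstacle is the bookkeeping around the exceptional vertices in \Cref{def:doublelev}: one must rule out that $y_0$ (or $x$) is one of those vertices pushed up a level, since for such a vertex the naive layer inequalities ($x\in L_{2\levll Gx -1}\cup L_{2\levll Gx}$) fail. The resolution is precisely \Cref{lem:doublegood}a), which constrains the neighbourhood of an exceptional vertex tightly enough (no neighbour in levels $\le j-1$, equivalently in $S^+$-layers below $L_{2j-1}$) that the problematic adjacencies are impossible; and one also uses that $x\in V(C)\subseteq V(S^+)$ is \emph{not} created by any contraction, so \Cref{def:doublelev} applies to it verbatim. A secondary subtlety is the case $C$ consisting of two parallel edges (noted in the text before the statement): there $P_2$ is a single edge and ``$x$ on the right wrapping path'' means $x$ is the sink or its neighbour on the lid; but the argument above only used $T$-verticality of the wrapping paths and $x\in V(C)\cap V(S^+)$, so it goes through unchanged. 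Once these two points are settled, the remaining steps are the routine layer arithmetic sketched above.
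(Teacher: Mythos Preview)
Your proof has a genuine gap: you repeatedly invoke ``goodness of $\levll G.$'' to constrain the \emph{BFS layers} of $x$ and $y_0$, but goodness (\Cref{def:goodla}) is a statement about \emph{levels}, not about the BFS layers $L_k$ of $S^+$. The double level assignment collapses two consecutive BFS layers into one level, so an edge of $G$ can join $L_{2i}$ to $L_{2i-2}$ (level difference~$1$) without contradicting goodness at all. Concretely, in your Case~1 with $x\in L_{2i}$ and $y_0\in L_{2i-2}$, the edge $\{x,y_0\}$ is perfectly consistent with goodness; such an edge exists precisely when it is a crossing diagonal of a $4$-face of~$S^+$ (\Cref{def:4framingsk}). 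So Case~1 is not vacuous. Similarly, in Case~2 your deduction ``$x\in L_{2i-4}\cup L_{2i-3}\cup L_{2i-2}$'' is unfounded; the actual possibility is $x\in L_{2i-1}$ (lower half of level~$i$), again with $\{x,y_0\}$ a diagonal.

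The second, related, problem is that left-alignedness (\Cref{def:leftaligned}) is defined for the BFS tree $T$ of the plane graph $S^+$: the forbidden edge must be an edge of $S^+$ between consecutive BFS layers. The edge $\{x,y_0\}\in E(G)$ you produce is, in the cases that actually arise, a diagonal in $E(G)\setminus E(S^+)$ joining layers two apart, so it cannot directly witness a violation. The paper's proof supplies exactly the missing step: once one knows the layer gap is at least~$2$, the edge $\{x,y_0\}$ must be a diagonal of some $4$-cycle $D\subseteq S^+$, and then an edge of $E(D)\setminus E(T)$ incident to~$x$ (which is a genuine $S^+$-edge from $L_k$ to $L_{k-1}$) furnishes the contradiction with left-alignedness. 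Your argument never touches the $4$-framing structure, and without it the proof does not close.
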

\begin{proof}
Let $x\in L_k$ within the BFS layering of~$S^+$.
First, one can easily check that under the stated conditions on $x$ and $y$, in either case, we get that all vertices
of $G$ that have got contracted into $y$ belong to $L_j$ for some $j\leq k-2$ since our sequence is level-preserving.
If $xy\in E(H)$, then there has to be an edge $xy'\in E(G)$ where $y'\in V(G)$ is among the vertices contracted into~$y$,
and $y'$ is assigned inside $C$ in the skeletal graph $(G,C)$.
Consequently, the distance from $x$ to $y'$ in $S^+$ is $2$ and $xy'\in E(G)\setminus E(S^+)$ joins diagonally opposite vertices
of some $4$-cycle $D\subseteq S^+$ by \Cref{def:4framingsk}.
An edge of $E(D)\setminus E(T)$ incident to $x$ then contradicts the property of $T$ being left-aligned in~$S^+$ (\Cref{def:leftaligned}).
\end{proof}

A $T$-wrapped facial cycle $C\subseteq S$, or $C\subseteq S\cup\{f\}$ in the case of added lid~$f$, in a skeletal trigraph $(H,S,T)$ is {\em almost $1$-reduced}
if it is $1$-reduced except that there can be two vertices in $V(H_C)\setminus V(C)$ of the level $\levll Hu+1$ where $u$ is the sink of~$C$.
Notice also that \Cref{def:doublelev} ensures no vertices of $V(H_C)\setminus V(C)$ are on the level $\levll Hu$.

The main technical lemma follows now.

\begin{lemma}\label{lem:core1p}
Let $(G,S^+)$, $V(S^+)=V(G)$, be a $4$-framing skeletal graph with the double level assignment determined by 
the BFS tree $T\subseteq S^+$ as in \Cref{def:doublelev}.
Assume that $T$ is left-aligned and rooted in $r$ on the outer face of~$S^+$.
Let $S\subseteq S^+$, and let a trigraph $H$ such that $V(H)\supseteq V(S)$ be obtained from $G$ in a level-preserving 
$S$-aware partial contraction sequence, such that $(H,S,T)$ is a rooted skeletal trigraph.
Let $C\subseteq S$ be a $T$-wrapped, or $T$-wrapped with added lid $f\in E(G)\setminus E(S)$, facial cycle of~$S$ in $(H,S,T)$.

Assume that $H_C=G_C$ (and so $H^+_C=(G\cup S^+)_C$ is as in the starting skeletal graph),
and let $U:=V(H_C)\setminus V(C)$ be the vertices of $H$ bounded by~$C$.

Then there exists a level-preserving partial contraction sequence of $H$ which contracts only pairs of vertices that 
are in or stem from $U$ (and so is $S$-aware), and ends in a trigraph $H^*$ such that the face $\phi$ bounded by $C$ is almost $1$-reduced in $(H^*,S)$.
Moreover, the following conditions are satisfied for every trigraph $H'$ along this sequence from $H$ to~$H^*$:
\begin{enumerate}[(i)]
\item\label{it:all16}
Every vertex of $U':=V(H'_C)\setminus V(C)$ has red degree at most $16$ in whole~$H'$.
\item\label{it:added15}
If $t\in U$ is such that $t$ together with the lid $f$ of $C$ form a facial triangle in $H^+_C$, and $t'\in U'$ equals or stems from $t$ in $H'$,
then $t'$ has red degree at most $15$ in~$H'$.
\item\label{it:bound6}
Every vertex of $C$ has red degree at most $6$ in~$H'_C$ and the sink of $C$ has no red edge.
\end{enumerate}
\end{lemma}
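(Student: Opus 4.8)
The plan is to mirror the recursive strategy of \Cref{lem:core} and \Cref{lem:bicore}, but carried out inside the $4$-framing skeleton $S^+$ and with the double level assignment of \Cref{def:doublelev}, whose extra ``thickness'' (two BFS layers of $S^+$ per level) is precisely what inflates the single-digit planar red-degree bounds to $16$. I would proceed by induction on $2|U|+|V(C)|$, which decreases in every recursive call exactly as in the planar proofs; if $U=\emptyset$ we are done with the empty sequence, so assume $U\neq\emptyset$. Let $f=\{v_1,v_2\}$ be the lid of $C$ (added or not), with $v_1$ on the left and $v_2$ on the right wrapping path $P_1$, $P_2$, and let $u$ be the sink. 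The first step is to inspect the sub-skeleton of $S^+$ sitting directly on top of $f$ inside $C$: because $H_C=G_C$ and $H^+_C=(G\cup S^+)_C$, the edge $f$ (or, in the added-lid case, a short piece of $S^+$ together with $f$) bounds a face $A$ of $S^+\cup\{f\}$ which is a triangle or a $4$-face, the $4$-face carrying a crossing pair of $G$-edges on its diagonals by \Cref{def:4framingsk}. The degenerate case in which $C$ is just two parallel edges is handled directly. Using left-alignment of $T$ in $S^+$ and \Cref{lem:doublegood}(a), a short case analysis --- the direct analogue of the cases of \Cref{fig:divisionbi6}, now enlarged by the $4$-face and added-lid variants --- shows that either $A$ can be absorbed into $C$, producing a shorter $T$-wrapped cycle $C_1$ with $H_{C_1}=G_{C_1}$ to which induction applies (and then the face of $C$ becomes $2$-reduced and we finish via a second-stage lemma), or there is a $T$-vertical path $P_3\subseteq S^+$ through the interior of $C$ splitting $\phi$ into two $T$-wrapped faces $C_1$, $C_2$ whose wrapping paths lie in $P_1\cup P_3$ and $P_3\cup P_2$ respectively.

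In the split case I set $S_1=S\cup A\cup P_3$ so that $C_1,C_2$ become facial cycles of $S_1$, apply the lemma recursively to $C_1$ inside $(H,S_1,T)$ to get a level-preserving $S_1$-aware sequence $\pi_1$ ending in $H^1$, then recursively to $C_2$ inside $(H^1,S_1,T)$ to get $\pi_2$ ending in $H^2$; both leave their faces almost $1$-reduced, so in $(H^2,S)$ the face of $C$ has at most one interior vertex per level from each of $U_1:=V(H^2_{C_1})\setminus V(C_1)$, $U_2:=V(H^2_{C_2})\setminus V(C_2)$ and $V(P_3\cup A)\setminus V(C)$, i.e.\ it is $3$-reduced (with a bounded surplus in the top levels coming from the ``almost''). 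The merging sequence $\pi_3$ then contracts, level by level from the top down, a vertex of $P_3$ (or of $A\setminus C$) with the matching vertex of $U_2$, exactly as in \Cref{sub:finishing8}, leaving the face of $C$ at most $3$-reduced; a final application of the $1$-planar analogue of \Cref{lem:bisecondstage} --- which I would state and prove separately, reducing a $3$-reduced $T$-wrapped face (up to the top-level surplus) to almost $1$-reduced while maintaining \eqref{it:all16}--\eqref{it:bound6} --- completes the partial contraction sequence. Linear-time construction is immediate since $|V(P_3)|+|U_1|+|U_2|\leq|U|$, just as for \Cref{lem:core}.

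The red-degree bookkeeping along every intermediate trigraph $H'$ is the heart of the matter, and the three key tools are \Cref{lem:doublegood}(c) (a vertex off a $T$-vertical path has at most $6$ neighbours on it, hence at most $6$ on each of $P_1$, $P_2$), \Cref{lem:leftaligndouble} (killing the edge from a right-wrapping-path vertex of level $i$ to a vertex of level $i-1$ in the appropriate sub-cases), and \Cref{lem:sinknored} (the sink has no red edge). For $x\in V(C)$: since $(H',S)$ is skeletal, all red neighbours of $x$ in $H'_C$ lie in $V(H'_C)\setminus V(C)$, which by \Cref{def:doublelev} sit in the two levels $\levll{H'}x\pm1$ (and, crucially, none on level $\levll{H'}x$); an almost $1$-reduced face then yields at most $6$ of them, giving \eqref{it:bound6}, while the sink receives none by \Cref{lem:sinknored}. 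For $y\in U'$: all red neighbours lie in $H'_C$; \Cref{lem:doublegood}(c) bounds those on $P_1$ and on $P_2$ by $6$ each, the remaining ones lie on the three levels $\levll{H'}y-1,\levll{H'}y,\levll{H'}y+1$ and are few once the face is almost $1$-reduced, and \Cref{lem:leftaligndouble} prunes one of them, keeping the total at most $16$ (condition \eqref{it:all16}); a vertex $t'$ bordering an added lid $f$ loses one further potential neighbour there, yielding \eqref{it:added15}. As in the planar proofs, these counts must be re-verified separately along $\pi_1$, along $\pi_2$, at each step of $\pi_3$ (where one shows, via \Cref{lem:leftaligndouble} and the face assignment in $(H^2,S_1)$, that a newly contracted vertex acquires no neighbour in $V(P_1)\cup U_1$ on the level just below it), and in the second stage.

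The main obstacle I expect is precisely this step-by-step verification across the $4$-face and added-lid variants. The double level assignment has the exceptional vertices $y$ with $\levll Gy=j+1$ of \Cref{def:doublelev}, which is exactly why the target is ``almost'' rather than genuinely $1$-reduced; one must check that this top-level surplus is faithfully propagated by the recursion, by the merge $\pi_3$, and by the second-stage lemma, never pushing any red degree past $16$ (or past $15$ at an added-lid vertex, or past $6$ on $C$, or above $0$ at a sink). Getting the bookkeeping near a $4$-face whose diagonal $G$-edges cross --- where $H_C$ and $H^+_C$ genuinely differ and some edges of $C$ itself may be absent from $H_C$ --- to align cleanly with the statements of \Cref{lem:doublegood}(c) and \Cref{lem:leftaligndouble} is the delicate part; the remainder is a routine, if lengthy, adaptation of \Cref{sub:finishing8}.
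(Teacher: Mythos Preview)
Your overall recursive scheme matches the paper's, but there is a genuine gap in the $4$-face case that your case analysis does not cover, and it is precisely the reason condition \eqref{it:added15} exists. You describe \eqref{it:added15} as a terminal counting observation (``a vertex $t'$ bordering an added lid $f$ loses one further potential neighbour''). In the paper it is primarily an \emph{inductive hypothesis}: when the face $A=(v_1,v_2,v_3,v_4)$ adjacent to the lid is a $4$-face of $S^+$ with crossing diagonals $\{v_1,v_3\},\{v_2,v_4\}\in E(G)\setminus E(S^+)$, neither diagonal can serve as a lid for a subcycle because each is crossed by the other. The paper's remedy is to \emph{delete} the diagonal $\{v_2,v_4\}$, forming $G_1:=G\setminus\{v_2,v_4\}$, and recurse on $C_1$ with the now-uncrossed \emph{added} lid $f_1=\{v_1,v_3\}$; the vertex $v_4$ then forms a facial triangle with $f_1$, so the inductive \eqref{it:added15} bounds the red degree of the vertex $v_4'$ stemming from $v_4$ by~$15$, and restoring the edge $\{v_2,v_4\}$ afterwards adds at most one red edge, giving~$16$. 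Without this delete--recurse--restore step your splitting into $C_1,C_2$ along a $T$-vertical path $P_3\subseteq S^+$ is not well-defined in the crossed-diagonal case, and you never invoke \eqref{it:added15} inductively.

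Two further points would also break the bookkeeping. Your appeal to \Cref{lem:sinknored} for the sink clause of \eqref{it:bound6} fails: that lemma is for plane graphs, and its core argument (every interior vertex of level $\levll{G}{u}+1$ is adjacent to $u$) is false under the double level assignment, where level $\ell+1$ spans up to two BFS layers of $S^+$ (plus the exceptional vertices of \Cref{def:doublelev}). The paper instead protects the sink \emph{explicitly} by splitting the merge into three stages $\pi_3,\pi_4,\pi_5$ and, in $\pi_4$ and at the bottom level of $\pi_5$, contracting only pairs with identical adjacency to~$u$. Finally, your claim for $x\in V(C)$ that red neighbours in $U'$ ``sit in the two levels $\levll{H'}{x}\pm1$ (and, crucially, none on level $\levll{H'}{x}$)'' is wrong: the double level assignment is good but not bipartite-like, so neighbours occupy three consecutive levels including $\levll{H'}{x}$ itself (see \Cref{lem:core1pmore}(a), which counts three levels to reach the bound of~$6$).
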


\begin{figure}[tb]
$$
\begin{tikzpicture}[scale=1]
\small
\tikzstyle{every node}=[draw, shape=circle, minimum size=3pt,inner sep=0pt, fill=black]
\node at (0,0) (x) {};
\node at (2,-0.5) (xx) {};
\node at (6,0) (y) {};
\node[label=above:$r$] at (3,5) (z) {};
\draw (x)--(xx)--(y) to[bend right] (z) (z) to[bend right] (x);
\draw (x)-- +(0.4,0.3); \draw (x)-- +(0.2,0.5);
\draw (y)-- +(-0.4,0.3); \draw (y)-- +(-0.2,0.5);
\draw[dotted] (0,4.7)--(6,4.7) (0,3.8)--(6,3.8) (0,2.8)--(6,2.8) (0,1.8)--(6,1.8);
\node[label=left:$v_1$] at (1.75,0.5) (v1) {};
\node[label=right:$v_2$] at (3.75,1) (v2) {};
\node[label=right:$~{u}$] at (2.75,4) (u1) {};
\draw[thick] (v2) to[bend right] (u1) (u1) to[bend right] (v1) (v1) -- (v2);
\draw (z)--(2.5,4.4) node{}--(u1);
\draw (v1)-- +(-0.2,-0.3); \draw (v1)-- +(0.2,-0.3);
\draw (v2)-- +(-0.2,-0.3); \draw (v2)-- +(0.2,-0.3);
\node at (2.3,3.5) (pa){}; \node at (3.23,3.5) (qa){};
\node at (1.97,3) (pb){}; \node at (3.57,3) (qb){};
\node[label=left:$P_1~$] at (1.76,2.5) (pc){}; \node[label=right:$~P_2$] at (3.77,2.5) (qc){};
\node at (1.64,2) (pd){}; \node at (3.85,2) (qd){};
\node at (1.60,1.5) (pe){}; \node at (3.83,1.5) (qe){};
\node[draw=none,fill=none] at (5,1.5) {$H^*$};
\node[draw=none,fill=none] at (1.35,1.1) {$C$};
\node[draw=none,fill=none] at (2.75,0.5) {$f$};
\node[draw=none,fill=none,red] at (2.6,1) {\scriptsize$(\leq\!16)$};
\node[draw=none,fill=none,red] at (1.5,3) {\scriptsize$(\leq\!6)$};
\node[draw=none,fill=none,red] at (4.05,3) {\scriptsize$(\leq\!6)$};
\node[red] at (2.6,3.25) (ra) {}; \node[red] at (3.0,3.25) (rb) {};
\node[red] at (2.8,2.25) (rc) {}; \node[red] at (2.7,1.4) (rd) {};
\draw (u1)--(ra);
\draw[red, thick,densely dotted] (rd)--(rc)--(rb)--(ra)--(rc);
\draw[red, thick,densely dotted] (pa)--(ra)--(qa)--(rb)--(pa) (pb)--(ra)--(qb)--(rb)--(pb) (pc)--(ra)--(qc)--(rb)--(pc) (ra)--(pd)--(rb);
\draw[red, thick,densely dotted] (pa)--(rc)--(qa) (pb)--(rc)--(qb) (pc)--(rc)--(qc) (pd)--(rc)--(qd) (pe)--(rc)--(qe) (rc)-- +(-1.05,-1);
\draw[red, thick,densely dotted] (pc)--(rd)--(qc) (pd)--(rd)--(qd) (pe)--(rd)--(qe) (rd)-- +(-0.6,-0.1) (rd)-- +(0.6,-0.1) (rd)-- +(-0,-0.2);
\end{tikzpicture}
\qquad\qquad
\begin{tikzpicture}[scale=1]
\small
\tikzstyle{every node}=[draw, shape=circle, minimum size=3pt,inner sep=0pt, fill=black]
\node at (0,0) (x) {};
\node at (2,-0.5) (xx) {};
\node at (6,0) (y) {};
\node[label=above:$r$] at (3,5) (z) {};
\draw (x)--(xx)--(y) to[bend right] (z) (z) to[bend right] (x);
\draw (x)-- +(0.4,0.3); \draw (x)-- +(0.2,0.5);
\draw (y)-- +(-0.4,0.3); \draw (y)-- +(-0.2,0.5);
\draw[dotted] (0,4.85)--(6,4.85) (0,4.2)--(6,4.2) (0,3.3)--(6,3.3) (0,2.3)--(6,2.3) (0,1.3)--(6,1.3);
\node[label=left:$v_1$] at (1.75,0.5) (v1) {};
\node[label=right:$v_2$] at (3.75,1) (v2) {};
\node[label=right:$~{u}$] at (2.75,4) (u1) {};
\draw[thick] (v2) to[bend right] (u1) (u1) to[bend right] (v1) (v1) -- (v2);
\draw (z)--(3.2,4.7) node{}--(2.5,4.3) node{}--(u1);
\draw (v1)-- +(-0.2,-0.3); \draw (v1)-- +(0.2,-0.3);
\draw (v2)-- +(-0.2,-0.3); \draw (v2)-- +(0.2,-0.3);
\node at (2.3,3.5) (pa){}; \node at (3.23,3.5) (qa){};
\node at (1.97,3) (pb){}; \node at (3.57,3) (qb){};
\node[label=left:$P_1~$] at (1.76,2.5) (pc){}; \node[label=right:$~P_2$] at (3.77,2.5) (qc){};
\node at (1.64,2) (pd){}; \node at (3.85,2) (qd){};
\node at (1.6,1.5) (pe){}; \node at (3.83,1.5) (qe){};
\node[draw=none,fill=none] at (5,1.5) {$H^*$};
\node[draw=none,fill=none] at (1.35,1.1) {$C$};
\node[draw=none,fill=none] at (2.75,0.5) {$f$};
\node[draw=none,fill=none,red] at (2.6,1) {\scriptsize$(\leq\!16)$};
\node[draw=none,fill=none,red] at (1.5,3) {\scriptsize$(\leq\!6)$};
\node[draw=none,fill=none,red] at (4.05,3) {\scriptsize$(\leq\!6)$};
\node[red] at (2.6,2.8) (ra) {}; \node[red] at (3.0,2.8) (rb) {}; \node[red] at (2.8,1.8) (rc) {};
\draw (u1)--(ra);
\draw[red, thick,densely dotted] (rd)--(rc)--(rb)--(ra)--(rc);
\draw[red, thick,densely dotted] (pa)--(ra)--(qa)--(rb)--(pa) (pb)--(ra)--(qb)--(rb)--(pb) (pc)--(ra)--(qc)--(rb)--(pc);
\draw[red, thick,densely dotted] (ra)--(pd)--(rb)--(qd)--(ra) (ra)--(pe)--(rb);
\draw[red, thick,densely dotted] (pb)--(rc)--(qb) (pc)--(rc)--(qc) (pd)--(rc)--(qd) (pe)--(rc)--(qe);
\draw[red, thick,densely dotted] (rc)-- +(-1,-0.55) (rc)-- +(-0.8,-0.55) (rc)-- +(0.85,-0.55);
\end{tikzpicture}
$$
\caption{The schematic outcome $H^*$ of \Cref{lem:core1p} for a trigraph $H$ and the wrapped cycle $C=P_1\cup P_2\cup\{f\}$.
	The horizontal dotted lines sketch the applied double level assignment, as derived from a BFS tree rooted at~$r$.
	Left: the case of the sink $u$ having an even distance from the root~$r$ in~$S$. 
	Right: $u$ having an odd distance from~$r$ (cf.~\Cref{def:doublelev}).
	The sink $u$ has no red edge there.}
\label{fig:corelem1p}
\end{figure}
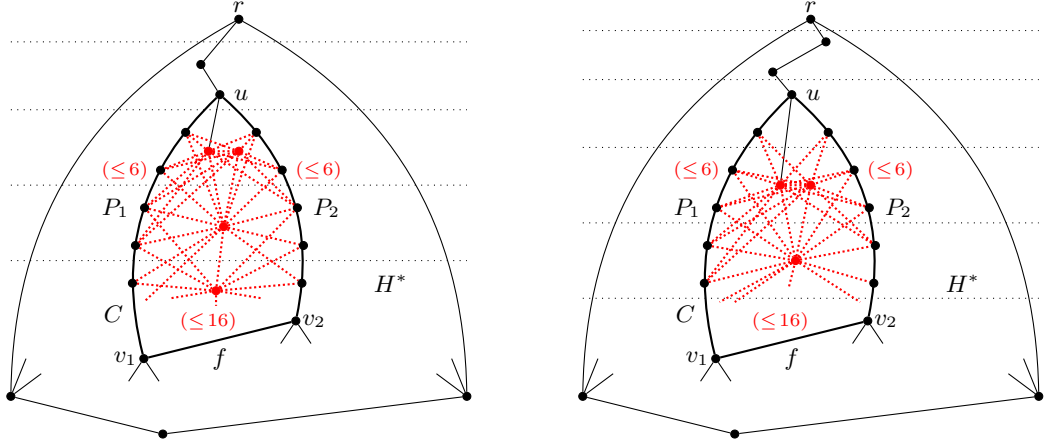

An illustration of \Cref{lem:core1p} can be seen in \Cref{fig:corelem1p}.
We again, before diving into a proof of the lemma, show the sought implications of it.

\begin{lemma}\label{lem:core1pmore}
Let a rooted skeletal trigraph $(H,S,T)$ and $C\subseteq S$ be as in \Cref{lem:core1p}. 
\begin{enumerate}[a)]
\item Suppose $D\subseteq S$ is a $T$-wrapped cycle which is almost $1$-reduced.
Then every vertex of $V(D)$ has at most $4$ edges into $U_0:=V(H_D)\setminus V(D)$ (hence, red degree at most $4$ in $H_D$),
and every vertex of $U_0$ has at most $14$ edges (red or black) in whole $H$.
Consequently,~if~$D$ can be made almost $1$-reduced after deleting at most two vertices of $U_0$, then the said bounds
become $6$ and $16$, respectively, which fulfills the respective conditions in \Cref{lem:core1p}.
\item If the conditions of \Cref{lem:core1p} are fulfilled, and all red faces of $(H,S)$ except that of $C$ are almost $1$-reduced,
$T$-wrapped, and their sinks have no red edges inside the face,
then the maximum red degree of each of the considered trigraphs $H'$ in \Cref{lem:core1p} is at most~$16$.
\end{enumerate}
\end{lemma}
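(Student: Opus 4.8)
The plan is to establish (a) by a hands-on level count and then deduce (b) exactly as \Cref{lem:coremore} and \Cref{lem:bicoremore} were deduced from their core lemmas, by splitting on where the vertex whose red degree we bound sits.

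For (a), fix the sink $u$ of $D$ and let $P_1,P_2\subseteq D$ be the left and right wrapping (hence $T$-vertical) paths. Recall that $\levll G.$ is good for $G\cup S^+$ by \Cref{lem:doublegood}b), so every edge of $H$ joins levels differing by at most one, and that by \Cref{def:doublelev} no vertex of $U_0$ lies on level $\levll Hu$, at most two lie on level $\levll Hu+1$, and at most one on every other level. First I would bound a vertex $x\in V(D)$: its $H_D$-neighbours in $U_0$ lie on the three levels $\levll Hx-1,\levll Hx,\levll Hx+1$, and a short case check over $\levll Hx\in\{\levll Hu,\levll Hu+1,\levll Hu+2,\ldots\}$ gives at most $4$ of them; since all edges with both ends in $V(S)$ are black (\Cref{def:skeletal}), $4$ is also a bound on the red degree of $x$ inside $H_D$. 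Next I would bound a vertex $y\in U_0$ of level $k$: all its $H$-edges stay inside $H_D$, hence reach only $V(P_1)\cup V(P_2)\cup U_0$. By \Cref{lem:doublegood}c), $y$ has at most $6$ neighbours on $P_1$ and at most $6$ on $P_2$; but if $y$ had $6$ on the right path $P_2$, then one of them would lie in the upper half of level $k+1$, which \Cref{lem:leftaligndouble} forbids, so $y$ has at most $5$ neighbours on $P_2$. Together with at most $3$ neighbours inside $U_0$ (by the level count above, using that level $\levll Hu$ is empty in $U_0$), this yields at most $6+5+3=14$ edges at $y$. Finally, in the ``almost $1$-reduced after deleting at most two vertices of $U_0$'' situation, first deleting those $\le 2$ vertices reduces to the previous case, and then re-inserting them adds at most one edge per deleted vertex to each counted vertex, giving $4+2=6$ and $14+2=16$; the $\le 2$ deleted vertices themselves still have red degree at most $16$ by a direct level count allowing one extra vertex on up to two of the three relevant levels. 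This matches \Cref{lem:core1p}\eqref{it:bound6} and \eqref{it:all16}.

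For (b), fix a trigraph $H'$ of the sequence of \Cref{lem:core1p} and $x\in V(H')$. If $x\in U'=V(H'_C)\setminus V(C)$, its red degree is at most $16$ by \Cref{lem:core1p}\eqref{it:all16}. If $x$ is assigned to an $S$-face $\sigma$ other than the face of $C$, then $H_\sigma$ is untouched by the sequence, $\sigma$ is almost $1$-reduced and $T$-wrapped by hypothesis, and $x$ lies in its interior, so part (a) bounds the red degree of $x$ by $14$. If $x\in V(S)$, let $f_x\in E(T)$ be the parental edge of $x$ in $T$ (a dummy upward edge if $x=r$). Any $T$-wrapped $S$-face incident to $x$ but not to $f_x$ has $x$ as its sink -- a non-sink boundary vertex of a $T$-wrapped face lies on one of its vertical wrapping paths, and the boundary edge there pointing toward the sink is the vertex's $T$-parent edge, namely $f_x$ -- and such a face contributes no red edge to $x$, either by the hypothesis of (b) (sinks of the other red faces have no red edge inside) or by \Cref{lem:core1p}\eqref{it:bound6} (the sink of $C$ has no red edge). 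Hence only the at most two $S$-faces containing $f_x$ contribute red edges to $x$: the face of $C$ contributes at most $6$ by \Cref{lem:core1p}\eqref{it:bound6}, and any other red face contributes at most $4$ by part (a) applied to the unchanged face. Thus the red degree of $x$ is at most $6+4=10\le 16$.

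The main obstacle I anticipate is the exact bound $14$ in part (a): the naive estimate from \Cref{lem:doublegood}c) alone is only $6+6+3=15$, so one must combine its ``exactly six forces a neighbour in the upper half of the next level'' clause with \Cref{lem:leftaligndouble} to shave one neighbour off the right wrapping path; and throughout the count one must treat with care the exceptional level-raised vertices of \Cref{def:doublelev}, whose set of neighbouring BFS layers is shifted near the sink.
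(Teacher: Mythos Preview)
Your proposal is correct and follows essentially the same approach as the paper: in part~(a) you decompose the degree of $y\in U_0$ as $6$ on $P_1$ via \Cref{lem:doublegood}c), $5$ on $P_2$ by combining \Cref{lem:doublegood}c) with \Cref{lem:leftaligndouble}, and $3$ inside $U_0$ by the level count, exactly as the paper does; in part~(b) you split on $x\in U'$, $x$ in another face, and $x\in V(S)$ with the parental-edge sink argument, again matching the paper. Your bound of $6+4=10$ for $x\in V(S)$ is actually tighter than the paper's $12$ (the paper simply uses the max $6$ for both faces), but both are comfortably below~$16$.
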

\begin{proof}a)
If $x\in V(D)$, then $x$ can have at most $1$ red neighbour in each one of three levels of $1$-reduced $U_0$,
plus a fourth neighbour in the lowest one if almost $1$-reduced,
Altogether at most~$4$, and plus two more red neighbours in those possibly deleted from~$U_0$.
If $y\in U_0$, then there are up to $6$ neighbours of $y$ on the left wrapping path of $D$ by \Cref{lem:neighonpath} and
up to $5$ such on the right wrapping path of $D$ by the same reason and by \Cref{lem:leftaligndouble}.
Up to $2$ red neighbours may lie in the previous level and $1$ in the next level, summing to~$14$.
Then we may add up to $2$ neighbours among those possibly deleted from~$U_0$.

\smallskip b)
Let $x\in V(H')\setminus V(S)$.
If $x\in U'=V(H'_C)\setminus V(C)$, then the claim is given in \Cref{lem:core1p}\eqref{it:all16}.
Otherwise, $x$ is assigned to an almost $1$-reduced face and this case has been addressed in a).
Let $y\in V(S)$. 
As in the proof of \Cref{lem:coremore}, there are at most two red $S$-faces of which $y$ is not the sink, and so $y$
can have red edges only into those.
By a) and \Cref{lem:core1p}\eqref{it:bound6}, $y$ then has red degree at most $12$.
\end{proof}

\begin{proof}[Proof of \Cref{thm:tww1planar}]
We start with a construction of a $4$-framing skeletal graph $(G,S^+)$ such that $V(S^+)=V(G)$, by \Cref{lem:4framingsk}.
Then we choose a root $r$ on the outer face of $S^+$ and, for some left-aligned BFS tree $T$ of $S^+$ rooted in $r$,
the graph $H=G$ and the (indeed $T$-wrapped) outer face $C=S$ of~$S^+$, we apply \Cref{lem:core1p}.

This way we get a level-preserving partial contraction sequence from $G$ to a trigraph $H^*$ of maximum red degree $16$ 
along the sequence, by \Cref{lem:core1pmore}.
We can hence finish by successively contracting the vertices of $V(H^*)\setminus V(C)$ from the farthest ones,
and finally contracting the last one of them with $V(C)$ in any order, fulfilling the degree-$16$ condition with a large margin.
\end{proof}

\subsection{Finishing the proof}\label{sub:finishing16}

\begin{proof}[Proof of \Cref{lem:core1p}]
The coming proof is, again, conceptually similar to the proof of \Cref{lem:core}.
There is a ``decomposition'' phase, with inductive invocations of \Cref{lem:core1p} on ``smaller'' instances of the bounding cycle,
followed by a ``merging'' phase which suitably combines the inductively obtained partial contraction sequences
and extends them further to fulfill the conditions of \Cref{lem:core1p} for the face $\phi$ of~$C$.
Since the decomposition phase is quite similar to the previous proof(s), basically only adding technicalities related
to possible parallel edges in $S^+$ and crossed edge pairs in $E(G)\setminus E(S^+)$, we describe it only briefly here,
and then focus more on the core merging phase.

If $U=\emptyset$, we are immediately done with the empty partial contraction sequence. 
So, we assume $U\not=\emptyset$ and, in particular, $C$ is not a facial cycle of~$H_C\cup S$.

Let $f=\{v_1,v_2\}$ be the lid of $C$ such that $v_1$ belongs to the left wrapping path of~$C$
(and recall that $C$ may be a $2$-cycle, and it may possibly happen $f\not\in E(S)$ -- an added lid).
Consider the bounded face $\sigma$ of $H^+_C=(G\cup S^+)_C$ incident to $f$.
If $\sigma$ is incident to an edge crossing (of~$G$), then, by \Cref{def:4framingsk}, there is an uncrossed $4$-cycle $A\subseteq S^+$
containing~$f$, such that the interior of $A$ contains $\sigma$ and a crossing of the two diagonals of~$A$.
We leave this case for the next paragraph.
Otherwise, $\sigma$ is bounded by a triangle $A\subseteq S^+\cup f$ with the vertices $A=(v_1,v_2,v_3)$.
Let $f_1=\{v_1,v_3\}$ and $f_2=\{v_2,v_3\}$ denote the edges belonging to~$A$ (since there can be parallel edges now).
Let $P_3\subseteq T$ be the vertical path starting in $v_3$ and ending on~$C$.
If either of the edges $f_1$ or $f_2$ belongs to $E(C\cup P_3)$, then we get a $T$-wrapped cycle $C_0:=C\Delta A$,
to which we inductively apply \Cref{lem:core1p}.
This fulfills conditions \eqref{it:all16} and \eqref{it:bound6} also for the cycle $C$, possibly after a final
contraction of $v_3\not\in V(C)$ which is safe by \Cref{lem:core1pmore}\,a) even with red degree at most~$15$ after the contraction of~$v_3$.
Condition \eqref{it:added15} is relevant only when $v_3\not\in V(C)$, and then the previous applies.
The (more complex) case of $f_1,f_2\not\in E(C\cup P_3)$ will be handled below.

Now we return to the case of a $4$-cycle $A\subseteq S^+$, $A=(v_1,v_2,v_3,v_4)$ in this counter-clockwise cyclic order, 
enclosing crossed diagonals $\{v_1,v_3\}$ and $\{v_2,v_4\}$ of~$G$.
Consider the $T$-vertical paths $P_3$ and $P_4$ starting in $v_3$ and $v_4$ and ending on~$C$.
If two of the edges $\{v_2,v_3\}$, $\{v_3,v_4\}$ or $\{v_4,v_1\}$ belong to $E(C\cup P_3\cup P_4)$,
then the third of them is the lid of a $T$-wrapped cycle $C_0:=C\Delta A$, to which we can inductively apply \Cref{lem:core1p}.
As previously, this recursive invocation fulfills conditions \eqref{it:all16} and \eqref{it:bound6} also for the cycle $C$, 
possibly after final contractions of $v_3\not\in V(C)$ and/or $v_4\not\in V(C)$ which is safe by \Cref{lem:core1pmore}\,a).
Condition \eqref{it:added15} is void in this case.
Otherwise, we continue below.

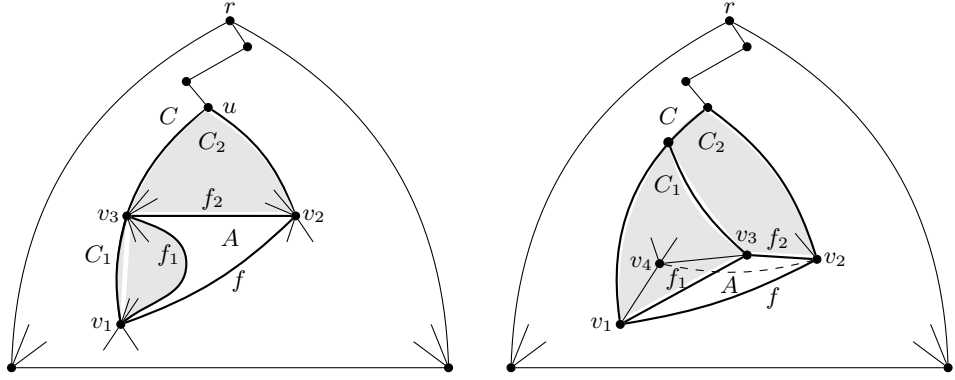
\begin{figure}[tb]
$$
\begin{tikzpicture}[scale=1.15]
\small
\tikzstyle{every node}=[draw, shape=circle, minimum size=3pt,inner sep=0pt, fill=black]
\node at (0.5,0) (x) {};
\node at (5.5,0) (y) {};
\node[label=above:$r$] at (3,4) (z) {};
\draw (x)--(y) to[bend right] (z) (z) to[bend right] (x);
\draw (x)-- +(0.4,0.3); \draw (x)-- +(0.2,0.5);
\draw (y)-- +(-0.4,0.3); \draw (y)-- +(-0.2,0.5);
\node[label=right:$v_2$] at (3.75,1.75) (v2) {};
\node[label=right:$~{u}$] at (2.75,3) (u1) {};
\node[label=left:$v_1$] at (1.75,0.5) (v1) {};
\node[label=left:$v_3$] at (1.82,1.75) (v3) {};
\draw[thick] (v2) to[bend right=18] (u1) (u1) to[bend right] (v1) (v1) to[bend right=12] (v2);
\draw[thick, fill=gray!20] (v1) to[out=40, in=270] (2.5,1.2) to[out=90, in=340] (v3) to[bend right=12] (v1) ;
\draw[draw=none, fill=gray!20] (v2)--(v3) to[bend left=12] (u1) to[bend left=18] (v2);
\draw (z)--(3.2,3.7) node{}--(2.5,3.3) node{}--(u1);
\draw (v1)-- +(-0.2,-0.3); \draw (v1)-- +(0.2,-0.3);
\draw (v1)-- +(0.1,0.3); \draw (v1)-- +(0.2,0.3);
\draw (v2)-- +(-0.1,-0.3); \draw (v2)-- +(0.2,-0.3);
\draw (v2)-- +(-0.25,0.3); \draw (v2)-- +(-0.35,0.15);
\draw (v3)-- +(0.1,-0.3); \draw (v3)-- +(0.25,-0.3);
\draw (v3)-- +(0.35,0.2); \draw (v3)-- +(0.25,0.3);
\draw[thick] (v2)--(v3) ;
\node[draw=none,fill=none] at (2.3,2.9) {$C$};
\node[draw=none,fill=none] at (2.8,2.6) {$C_2$};
\node[draw=none,fill=none] at (1.5,1.3) {$C_1$};
\node[draw=none,fill=none] at (3.1,1) {$f$};
\node[draw=none,fill=none] at (2.3,1.3) {$f_1$};
\node[draw=none,fill=none] at (2.8,1.95) {$f_2$};
\node[draw=none,fill=none] at (3,1.5) {$A$};
\end{tikzpicture}
\qquad
\begin{tikzpicture}[scale=1.15]
\small
\tikzstyle{every node}=[draw, shape=circle, minimum size=3pt,inner sep=0pt, fill=black]
\node at (0.5,0) (x) {};
\node at (5.5,0) (y) {};
\node[label=above:$r$] at (3,4) (z) {};
\draw (x)--(y) to[bend right] (z) (z) to[bend right] (x);
\draw (x)-- +(0.4,0.3); \draw (x)-- +(0.2,0.5);
\draw (y)-- +(-0.4,0.3); \draw (y)-- +(-0.2,0.5);
\node[label=right:$v_2$] at (4,1.25) (v2) {};
\node at (2.75,3) (u1) {};
\node[label=left:$v_1$] at (1.75,0.5) (v1) {};
\node[label=above:$v_3$] at (3.2,1.3) (v3) {};
\draw[draw=none, fill=gray!20] (v2)--(v3) to[bend left=7] (2.3,2.6) to[bend left=12] (u1) to[bend left=18] (v2);
\draw[draw=none, fill=gray!20] (v1)--(v3) to[bend left=21] (2.3,2.6) to[bend right=19] (v1);
\node[label=above:$v_3$] at (3.2,1.3) (v3) {};
\node[label=left:$v_4$] at (2.2,1.2) (v4) {};
\draw[thick] (v2) to[bend right=18] (u1) (u1) to[bend right] (v1) (v1) to[bend right=9] (v2);
\draw (z)--(3.2,3.7) node{}--(2.5,3.3) node{}--(u1);
\draw (v2)-- +(-0.25,0.3);
\draw (v4)-- +(-0.1,0.3); \draw (v4)-- +(0.2,0.3);
\draw[thick] (v2)--(v3)--(v1) ;
\draw (v1)--(v4)--(v3) ;  \draw[dashed] (v4) to[bend right=13] (v2);
\draw[thick] (v3) to[bend left=14] (2.3,2.6) node {} ;
\node[draw=none,fill=none] at (2.3,2.9) {$C$};
\node[draw=none,fill=none] at (2.8,2.6) {$C_2$};
\node[draw=none,fill=none] at (2.3,2.1) {$C_1$};
\node[draw=none,fill=none] at (3,0.95) {$A$};
\node[draw=none,fill=none] at (3.5,0.8) {$f$};
\node[draw=none,fill=none] at (2.4,1.02) {$f_1$};
\node[draw=none,fill=none] at (3.55,1.48) {$f_2$};
\end{tikzpicture}
$$
\caption{Odd cases of the proof of \Cref{lem:core1p} -- ones which could not exist in the proofs 
	presented in \Cref{sec:prooftwwplanar} and \Cref{sec:prooftwwbiplanar},
	and their recursive division into the cycles $C_1$ and $C_2$ (which are shaded in gray).
	Left: the case of triangular $A$, making a $T$-wrapped $2$-cycle $C_1$ with the lid $f_1$ and the sink $v_3$
	 (note that in this case we must have $v_3\not=u$).
	Right: the case of a $4$-cycle $A$ with crossed diagonals; here we obtain the recursive cycles $C_1$ and $C_2$
	by introducing the added lid $f_1=\{v_1,v_3\}$ and ignoring the other diagonal $\{v_2,v_4\}$ in the recursion.
	Observe importance of the condition \eqref{it:added15} of \Cref{lem:core1p} for this construction.}
\label{fig:oddcases1p}
\end{figure}

\vspace*{-2ex}%
\subparagraph{\textcolor{lipicsGray}{Recursive partial contraction sequences~$\pi_1$ and~$\pi_2$.}}
We continue with handling the more complex cases left behind in the previous two paragraphs.

First, we return to the case of a triangular face $\sigma$ bounded by $A=(v_1,v_2,v_3)$ where $E(A)=\{f,f_1,f_2\}$.
If $f_1,f_2\not\in E(C\cup A)$, then, similarly as in the previous proofs of \Cref{lem:core} and \Cref{lem:bicore},
we have $T$-wrapped cycles $C_1$ and $C_2$ in $C\cup A\cup P_3$ such that $C_1\Delta C_2\Delta A=C$.
Note, however, that one of the included possibilities is that $v_3\in V(C)$ forms a nonempty $T$-wrapped $2$-cycle $C_1$ with $v_1$
-- this may happen when $\levll H{v_3}\geq\levll H{v_2}$
(while the symmetric situation with $v_3$ and $v_2$ forming such $2$-cycle is excluded by $T$ being left-aligned).
For clarity, this odd-looking subcase is outlined in \Cref{fig:oddcases1p}.

We inductively invoke \Cref{lem:core1p} on each of $C_1$ and $C_2$ (in any order) with the skeleton $S_1:=S\cup C_1\cup C_2$.
This way we get partial contraction sequences $\pi_1$ and $\pi_2$, respectively.
We can easily see that their concatenation, say $\pi_1.\pi_2$, satisfies the conditions of \Cref{lem:core1p} at every step $H'$ by the induction,
e.g., the vertex of $V(P_3)\cap V(C)$ still has red degree at most $6+0$ (or $0+0$) by the condition on the sink in \eqref{it:bound6} for $C_1,C_2$,
and the vertices of $V(P_3)\cap U'$ have red degree at most $6+6=12$.
In particular, the latter bound of $12<15$ also applies to the vertex $v_3$ considered by \eqref{it:added15}.
We are thus prepared for the merging phase in this case.

In the case of a $4$-cycle $A=(v_1,v_2,v_3,v_4)$ with crossed diagonals, we argue as follows.
If at most one of the edges $\{v_2,v_3\}$, $\{v_3,v_4\}$ or $\{v_4,v_1\}$ belongs to $E(C\cup P_3\cup P_4)$,
then at least one of the paths $P_3$ or $P_4$ starts with an edge not from $E(C\cup A)$.
We assume it~is~$P_3$ starting in $v_3\in U$; while we cannot strictly say that the situation is symmetric with $P_4$ (due to $T$ being left-aligned),
the other case of $P_4$ can indeed be handled in the symmetric way as here.
This subcase is outlined in \Cref{fig:oddcases1p}.
In the subgraph $C\cup P_3\cup\{v_2,v_3\}$, we define two $T$-wrapped cycles -- $C_2$ with the lid $f_2=\{v_2,v_3\}$
(and $P_3$ contained in the left wrapping path of $C_2$) and $C_1$ with the added lid $f_1=\{v_1,v_3\}$ (so $P_3$ contained in the right wrapping path of $C_2$).
We also define $G_1:=G\setminus\{v_2,v_4\}$ (hence $f_1$ is uncrossed in~$G_1$), $H^1:=H\setminus\{v_2,v_4\}$,
and $S_1:=S\cup C_1\cup C_2$.
Then $G_1$ fulfills \Cref{def:4framingsk}, too,
and $H^1$ results from $G_1$ along the same partial contraction sequence which leads to $H$ from~$G$ --
this is since $\{v_2,v_4\}$ is from $H^+_C=(G\cup S^+)_C$ untouched by previous contractions.
Then we may inductively invoke \Cref{lem:core1p} on $C_2$ and $C_1$ (in this order) within the starting graph $G_1$ (not~$G$) and the skeletal trigraph $(H^1,S_1,T)$.

We have inductively obtained partial contraction sequences $\pi_2$ and $\pi_1$, respectively, and now we verify
the conditions of \Cref{lem:core1p} at every step $H'$ of their concatenation $\pi_2.\pi_1$.
Note, however, that now we apply $\pi_2.\pi_1$ back to the trigraph $H$, and so we have to be careful with the vertex $v_2\in V(C)$
and the vertex $v_4'\in U'$ which stems by contractions from $v_4\in U$.

Validity of the condition \eqref{it:bound6} is directly inherited at every vertex of $C$ (including the sink) 
from the inductive invocation of \Cref{lem:core1p}, except for $v_2$.
Regarding $v_2$ (which does not participate in a contraction), we have red degree at most $6$ during $\pi_2$
and at most $4$ right after finishing $\pi_2$ by \Cref{lem:core1pmore}\,a).
The sequence $\pi_1$ then does not influence the red degree of $v_2$ beyond possibly adding one red edge 
to the vertex $v_4'$ that stems from $v_4$ along $\pi_1$, and $4+1<6$.
Condition \eqref{it:all16} holds for every vertex of $U'\cap V(P_3)$ since $6+6<16$, and for every vertex of $U'\setminus V(P_3)$
by the inductive invocations of \Cref{lem:core1p}\,\eqref{it:all16}, 
except that for the vertex $v_4'$ that stems from $v_4$ along $\pi_1$ we have got a stricter upper bound of $15$ by
inductive \Cref{lem:core1p}\,\eqref{it:added15} with $t'=v_4'$, and so the red degree of $v_4'$ in $H'$ is at most $15+1=16$
even when counting a red edge $\{v_4',v_2\}$ in~$H'$.
Condition \eqref{it:added15} is void in this case.
We are thus again ready for the merging phase after $\pi_2.\pi_1$.

\vspace*{-2ex}%
\subparagraph{\textcolor{lipicsGray}{Merging by partial contraction sequence~$\pi_3$.}}
Let $H^2$ be the trigraph resulting from $H$ in the concatenated partial contraction sequence $\pi_2.\pi_1$,
Denote by $u$ the sink of $C$ and by $u_i$, $i=1,2$, the sink of $C_i$, and let $\ell=\levll{H^2}u$ and $\ell_i=\levll{H^2}{u_i}$ be their levels.  
Note that always $u=u_1$ or $u=u_2$ (and both may be true).

We are going to construct a level-preserving partial contraction sequence of $H^2$ which altogether ``merges'' the inductive outcomes.
In the first stage, we define an empty or $1$-step sequence $\pi_3$ as follows.
Assume $u_1\not=u$ (the case of $u_2\not=u$ is handled symmetrically, and this step is simply skipped if $u_1=u_2=u$).
If $V(H^2_{C_1})\setminus V(C_1)$ contains more than one vertex in the level $\ell_1+1$ (two vertices are allowed there by being almost $1$-reduced),
then $\pi_3$ contracts these two vertices.
This adds a red edge into $u_1$, but it is safe with respect to the conditions of \Cref{lem:core1p} by \Cref{lem:core1pmore}\,a)
applied to both $C_1$ and~$C_2$.

Let $\pi_3$ applied to $H^2$ result in the trigraph $H^3$.
We have got that every level of $H^3$ greater than $\ell+1$ contains at most one vertex from each of the sets 
$U_i^3:=V(H^3_{C_i})\setminus V(C_i)$ for $i=1,2$ and at most two vertices from $V(P_3)$.
In the level $\ell+1$ of $H^3$ we may, on the other hand, meet up to $7$ vertices of $U^3:=V(H^3_{C})\setminus V(C)$ ---
up to two from each of $U^3_1$ and $U^3_2$ and up to three from $V(P_3)$ (see the exception of $y$ in \Cref{def:doublelev}).
Let $W\subseteq V(P_3)$ be the set of the $\leq3$ vertices of $P_3$ in the level~$\ell+1$ (note, though, that $W$ may also be empty).

\smallskip
In the second stage, we define the following sequence $\pi_4$ of $H^3$.
For each $x\in W$, ordered by the increasing distance from~$u$, we contract $x$ with the vertex $z\in U^3_1$ of level $\ell+1$
such that $x$ and $z$ have the same adjacency to the sink~$u$.
If such $z$ does not exist, $x$ simply subsumes the position of $z$ for the next step.
Let $H^4$ denote the trigraph resulting from $H^3$ along $\pi_4$.
Then the sink $u$ has no red edge in $H^4_C$, and the set $U^4:=V(H^4_{C})\setminus V(C)$ has no vertices
in levels smaller or equal to $\ell$ and at most $4$ vertices in each level $j\geq\ell+1$.

Before proceeding further, we verify that every step $H'$ along the sequence $\pi_4$ satisfies the conditions of \Cref{lem:core1p}.
Regarding \eqref{it:bound6}, we have created no more than two new red neighbours to a vertex of $V(C)$ (except the sink)
in~$H'$, and this is still fine by \Cref{lem:core1pmore}\,a).
If the condition \eqref{it:added15} concerns one of the contracted vertices in $\pi_4$, then it will be satisfied by the next arguments.
We hence focus on \eqref{it:all16} for $y\in U'=V(H'_C)\setminus V(C)$.
If $y\in V(P_3)\cap V(H')$, 
then $y$ can have red neighbours only in $U^3_1\cup U^3_2$ and the vertices that stem from them by contractions of~$\pi_4$.
So, the red degree of $y$ in this case is not more than the estimated maximum ($6+6$) in $H^3$ which fulfills \eqref{it:all16}.
The same general argument referring to previous degree bounds in~$H^3$ applies if $y\in U^3_1\cap V(H')$ or $y\in U^3_2\cap V(H')$.

Finally, let $y\in V(H')\setminus V(H^3)$ (i.e., $y$ has been created by a contraction in $\pi_4$), and so $\levll{H'}y=\ell+1$.
Let $P_1$ and $P_2$ be the left and right wrapping paths of $C$, respectively.
Then $y$ has (in the levels up to~$\ell+2$) at most $5$ neighbours in $P_1$, and at most $4$ neighbours in $P_2$ 
by additionally using \Cref{lem:leftaligndouble} for the cycle~$C$.
There are $3$ more potential neighbours being the vertices of $U'\setminus V(P_3)$ at the level $\ell+1$.
In a sum, so far, up to~$12$.
Furthermore, we define $k\in\{0,1,2\}$ as the maximum such index that a vertex $x\in W\cap L_{2\ell+k}$
(cf.~\Cref{def:doublelev}) has been contracted into $y$ in~$H'$.
Suppose $k=0$ (this can happen only if $u\in L_{2\ell-1}$).
Then, among the two vertices of $U'\setminus V(P_3)$ in the level $\ell+2$, only the one in $U^3_1$ can be a neighbour of $y$,
and among the vertices of $P_3$ only those three at distances $k+2,k+3,k+4$ from $u$ in $S_1$ can be neighbours of $y$
(i.e., excluding the vertex in the upper half of level $\ell+2$), by applying \Cref{lem:leftaligndouble} to the cycle~$C_1$.
Suppose $k=1$.
Then, compared to the previous case, $y$ may have both neighbours in $U'\setminus V(P_3)$ in the level $\ell+2$,
but now only two neighbours in $P_3$ at distances $k+3,k+4$ from $u$.
Finally, for $k=2$ the difference from the previous is that $y$ can have two neighbours in $P_3$ now at distances $k+4,k+5$ from $u$.
In all three cases the bounds sum up to~$16$.

\smallskip
In the third and final stage, we define a partial contraction sequence $\pi_5$ of $H^4$, which sequentially contracts
each level of $U^4:=V(H^4_C)\setminus V(C)$ down to one vertex except near the sink, in order from the farthest level.
Formally, let $m=\max\{\levll{H^4}x:x\in U^4\}$.
For $i:=m,m-1,\ldots,\ell+2$, as long as there is more then one vertex of level $i$ in $U^4$, we contract a chosen pair of them
in the following order of preference.
We first take (if existing) the vertex in $U^4\cap U^3_2$ of the level $i$, and contract it with the vertex of $P_3$
(again, if existing) in the upper half of level~$i$.
Then we contract one of the previous or their contraction result with the vertex of $P_3$ in the lower half of level~$i$
(again, if both existing).  
Otherwise, and/or for the rest of the level~$i$, we contract in an arbitrary order.  
Finally, in the level $i=\ell+1$, we contract the pairs with the same adjacency to the sink $u$ in any order.

The concatenation of the previously defined sequences in order $\pi_2.\pi_1.\pi_3.\pi_4.\pi_5$ produces
a trigraph $H^5=H^*$ in which the face $\phi$ of $C$ is almost $1$-reduced.
What remains to show is that the conditions of \Cref{lem:core1p} hold at each step $H'$ (contracting in the level~$i$)
of the sequence~$\pi_5$.
Consider the condition \eqref{it:bound6} and $x\in V(C)$ such that~$\levll{H'}x=j$.
If $x=u$ is the sink, then we have created no red edge into it.
If $x\in V(P_1)$, then $x$ may have up to $4$ red neighbours in $U'=V(H'_C)\setminus V(C)$
in the level $\ell+1$ if $j\leq\ell+2$, and at most $1$ red neighbour in each of the levels $j-1>\ell+1$, $j>\ell+1$ and $j+1>\ell+1$
since such neighbour of $x$ may only be the one from $U^3_2$ or a vertex that stems by a contraction in $\pi_5$ from it.
This certifies the upper bound of $6$ for this~$x$.
If $x\in V(P_2)$, then $x$ may have only at most $2$ red neighbours in $U'$ in the level $\ell+1$
if $j\leq\ell+2$, which follows from the definition of $\pi_4$, or up to $3$ when the contractions of $\pi_5$ reach the level $\ell+1$.
In each of the levels $j-1>\ell+1$, $j>\ell+1$ and $j+1>\ell+1$, there is at most $1$ red neighbour of $x$ -- the one coming from $U^3_1$,
or up to $2$ red neighbours in the precise level $i\in\{j-1,j,j+1\}$ being contracted now.
These sum to at most $6$ in any case.

Consider the condition \eqref{it:all16} for $y\in U'=V(H'_C)\setminus V(C)$ such that~$\levll{H'}y=j$.
If $y\in U^4$ (not contracted yet by $\pi_5$), then by the definition of $\pi_5$, 
there are no more red neighbours of $y$ in $H'$ than what was the estimated maximum in $H^2$, which fulfills \eqref{it:all16}.
For the rest, assume $j\geq\ell+2$.
If $i<j$ (i.e., the whole level $j$ in $U'$ has already been contracted into~$y$), then $y$ has, by the previous, 
at most $6$ neighbours in $V(P_1)$, at most $5$ in $V(P_2)$ thanks to \Cref{lem:leftaligndouble}, 
and at most $1+4$ in the next and previous levels of~$U'$.
These upper estimates sum to~$16$.
The last case is that $y$ has just been contracted by $\pi_5$ in the level $j$ of~$U'$.
Then $y$ has at most $5$ neighbours in $V(P_2)$ again thanks to \Cref{lem:leftaligndouble},
and at most $4$ neighbours in $U'$ in the level $j-1$ and at most $1$ in the level $j+1$ (no more vertices exist in those level),
together at most~$10$.
It remains to count the neighbours of $y$ in the level $j$ of $U'$ and in $V(P_1)$, which we divide into cases
from the definition of $\pi_5$ as follows.

If $y$ stems from a contraction of $s\in U^4\cap U^3_2$ with $z\in V(P_3)$ in the upper half of level~$j$,
then $y$ has up to $2$ neighbours in the level $j$ of $U'$ and $4$ neighbours in the levels $j$ and $j+1$ of $V(P_1)$.
However, in $H^4$ there is no edge from $z$ to vertices in the level $j-1$ of $V(P_1)$ thanks to \Cref{lem:leftaligndouble},
and no such edges from $s$ either due to the skeleton $S_1$ in~$H^3$.
Hence, there is no edge in $H'$ from $y$ to vertices in the level $j-1$ of $V(P_1)$, and the upper bound is~$10+4+2=16$.
If $y$ stems from a (subsequent) contraction with $z'\in V(P_3)$ in the lower half of level~$j$,
then $y$ has $1$ neighbour in the level $j$ of $U'$ and up to $5$ neighbours in $V(P_1)$, this time excluding
an edge into the vertex in the lower half of level $j-1$ of $V(P_1)$, again due to \Cref{lem:leftaligndouble}.
Finally, if $y$ stems by the last contraction in the level $j$, then the case has been considered above.

The last subcase to consider is $y\in U'\setminus U^4$ in the level $\levll{H'}y=\ell+1$.
Then, with a large margin, $y$ can have $6$ neighbours in $V(P_1)$, plus $5$ in $V(P_2)$, and plus at most
$3+1=4$ in the levels $\ell+1$ and $\ell+2$ of~$U'$, giving at most~$15$.

We are left with the condition \eqref{it:added15}.
It follows directly from the previous estimates (which led to~$\leq16$) by realizing that for such $t'$ as in \eqref{it:added15}
we have no vertex in the upper half of level $\levll{H'}{t'}+1$ of $V(P_1)$, and such vertex was, on the other hand,
included as a neighbour of~$t'$ in all previous estimates.
The whole proof is now finished.
\end{proof}

\section{Proof of \Cref{thm:mapplanar}; the Map-Graph Case}
\label{sec:prooftwwmapplanar}

We start with an exact characterization of map graphs that we will use in our proof.

\begin{proposition}[Chen, Grigni, and Papadimitriou~\cite{10.1145/506147.506148}]\label{pro:mapgraphs}
A graph $M$ is a map graph, if and only if there exists a simple bipartite planar graph $G$ with a bipartition $(A,B)$
such that~$M\simeq G^{(2)}[A]$ (the subgraph of the square of $G$ induced on the set~$A$).
\end{proposition}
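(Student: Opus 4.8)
The plan is to establish the two directions of the equivalence by explicit topological constructions; the combinatorial content is light, and the real work has a ``general position'' flavour.

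First I would prove the ``if'' direction: given a simple bipartite planar graph $G$ with bipartition $(A,B)$, I would realise $G^{(2)}[A]$ as a map graph by thickening the $A$-side of a fixed plane embedding of $G$. For each $a\in A$ take a tiny closed disk $D_a$ around the point representing $a$, the $D_a$ pairwise disjoint; for each $b\in B$ fix a single point $p_b$ near the drawing of $b$; and for each edge $ab\in E(G)$ attach to $D_a$ a thin ``finger'' running inside a narrow tube around the drawn arc $ab$ and ending exactly at $p_b$. If every tube is chosen inside a fixed small $\varepsilon$-neighbourhood of its arc and disjoint from the tubes of edges not sharing an endpoint with it, and if the fingers arriving at a common $p_b$ are arranged in the rotational order of $G$ around $b$, each meeting $p_b$ only along a thin boundary wedge, then $R_a:=D_a\cup(\text{fingers of }a)$ is homeomorphic to a closed disk, the $R_a$ are internally disjoint, and $R_a\cap R_{a'}\neq\emptyset$ holds exactly when $a$ and $a'$ have a common neighbour $b\in B$ --- in which case they touch only at $p_b$. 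Since $A$ is independent and $G$ is bipartite, having a common neighbour in $B$ is the same as being at distance $2$ in $G$, so $\{R_a:a\in A\}$ realises $M=G^{(2)}[A]$.

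Next I would prove the ``only if'' direction: given a realisation of $M$ by simply connected, internally disjoint regions $R_1,\dots,R_n$, I would build the plane graph $H$ whose vertices are the points lying on the boundary of at least three regions, together with every branch point where three or more boundary arcs meet, plus one dummy vertex on any boundary arc not yet carrying a vertex; the edges of $H$ are the maximal boundary arcs between consecutive vertices. Since the interiors of the $R_i$ are pairwise disjoint, each $R_i$ is the closure of a single face of $H$, and $R_i\cap R_j\neq\emptyset$ iff $\partial R_i\cap\partial R_j\neq\emptyset$, which by the choice of $H$ holds iff the faces of $R_i$ and $R_j$ share a vertex. I then set $A=\{a_1,\dots,a_n\}$, $B=V(H)$, and join $a_i$ to $v\in V(H)$ whenever $v$ lies on the boundary of the face of $R_i$. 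This $G$ is bipartite and planar (place $a_i$ inside its face and route the edges $a_iv$ there without crossings), and two distinct $a_i,a_j$ are at distance $2$ in $G$ iff they have a common neighbour in $B$ iff $R_i\cap R_j\neq\emptyset$; hence $M\simeq G^{(2)}[A]$.

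The hard part will not be the combinatorics --- which is immediate once the picture is correct --- but legitimising the topological set-up, and this is exactly where the hypotheses ``simply connected'' and ``internally disjoint'' get used. For the ``only if'' direction one must argue, after finitely many subdivisions and perturbations, that ``two regions intersect'' really reduces to ``two faces of a plane graph share a vertex'': this means absorbing regions that meet along a whole arc into that arc's vertex endpoints, boundaries that are bare closed curves into the dummy vertices, and isolated tangential contacts by declaring such points vertices. For the ``if'' direction the delicate point is realising \emph{all} prescribed touchings at once, with the correct rotation at each $p_b$ and no unintended intersection; forcing every tube to shrink inside one global $\varepsilon$-neighbourhood of the embedding handles this uniformly. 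Both constructions are otherwise routine, so I would present them compactly and refer to the standard literature for the finer topological points.
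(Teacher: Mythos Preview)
The paper does not prove this proposition at all: it is quoted verbatim as a known characterization due to Chen, Grigni, and Papadimitriou~\cite{10.1145/506147.506148} and used as a black box. Your sketch is therefore not comparable to anything in the paper, but it is a faithful outline of the original argument from that reference --- the ``only if'' direction via the face--vertex incidence bipartite graph of the boundary arrangement, and the ``if'' direction via thickening $A$-vertices and routing fingers to $B$-points --- and is essentially correct as written, with the topological caveats you already flag.
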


The outline of this section is as follows.
We are going to prove that the squares of planar quadrangulations (which are bipartite) have small twin-width.
We will use an approach very similar to that of \Cref{sec:biplanar} in some aspects, but also significantly
different in other aspects.
On the similar side, we are going to construct a level-preserving partial contraction sequence of the underlying
plane quadrangulation, which thus preserves bipartiteness of the trigraphs resulting from this quadrangulation,
and we will apply the same recursive-decomposition approach as used in the proof of \Cref{lem:bicore}.

First we need to show that it is indeed sufficient to consider squares of planar quadrangulation instead of 
squares of bipartite planar graphs in general, which is not a priori clear.
Let the square of a graph $G$ be denoted by~$G^{(2)}$.

\begin{lemma}\label{lem:toquadrang2}
Let $G_0$ be a simple bipartite planar graph. 
Then there exists a simple $2$-connected plane quadrangulation $G$ such that $G_0^{(2)}$ is an induced subgraph of $G^{(2)}$.
\end{lemma}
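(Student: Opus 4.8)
The goal is to embed $G_0$ into a plane quadrangulation $G$ in such a way that the square relation is preserved on $V(G_0)$, i.e. two vertices of $G_0$ are at distance $\le 2$ in $G_0$ if and only if they are at distance $\le 2$ in $G$. The natural idea is: first make $G_0$ connected and then fill in every face to make it a quadrangulation, being careful that the new vertices and edges do not create any new short paths between old vertices, and that old short paths are not destroyed (they cannot be destroyed, since we only add things). So the only real content is: (a) connecting the components, (b) quadrangulating the faces, (c) checking that no two non-adjacent-in-$G_0^{(2)}$ vertices of $V(G_0)$ become adjacent in $G^{(2)}$.

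\textbf{Step 1: reduce to the connected case.} If $G_0$ is disconnected, pick a vertex on the outer face of each component and join consecutive components by a path of length $4$ (an even length keeps the graph bipartite; length $\ge 4$ guarantees the two endpoints, which were at distance $\infty$, end up at distance $\ge 3$, so no new square-edge among old vertices is introduced, and the new internal path vertices are irrelevant since we only care about the induced square on $V(G_0)$). Route these paths in the outer face so planarity is preserved. After this we may assume $G_0$ is connected (and still bipartite planar), at the cost of $G_0^{(2)}$ being an induced subgraph of the square of the new connected graph. Similarly ensure $2$-connectedness can be arranged at the very end or simply absorbed into the quadrangulation step.

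\textbf{Step 2: quadrangulate each face without shortcuts.} Fix a plane embedding. Each face is bounded by a closed walk of \emph{even} length (bipartiteness) $2k$. A face of length $2$ (a pair of parallel edges) does not occur since $G_0$ is simple, but such $2$-faces could be created by Step~1 on the outer face — handle those by subdividing one of the two paths. For a face $F$ of length $2k \ge 6$, the standard trick is to add a single new vertex $w_F$ \emph{in the interior} of $F$ and connect $w_F$ by an edge to every \emph{second} vertex of the boundary walk of $F$ (the $k$ vertices on one side of the bipartition); this splits $F$ into $k$ quadrilateral faces $(x_i, w_F, x_{i+1}, y_i)$, the graph stays bipartite, and — crucially — $w_F$ has all its neighbours on the boundary of the single face $F$, so the only new paths of length $2$ are between two boundary vertices of $F$ that lie on the same side of the bipartition and are joined through $w_F$. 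But any two such vertices were already at distance $\le 2$ in $G_0$: they both lie on the boundary walk of $F$ and are at even distance, and for $x_i, x_{i+1}$ (consecutive on that side) this distance is exactly $2$ via the boundary vertex $y_i$; for non-consecutive ones we must be slightly more careful, which is the crux below. Faces of length $4$ are already quadrilaterals, leave them.

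\textbf{The main obstacle, and how I expect to resolve it.} The danger is a long face $F$, say of length $8$ with boundary $x_1 y_1 x_2 y_2 x_3 y_3 x_4 y_4$: adding a central $w_F$ adjacent to $x_1,x_2,x_3,x_4$ creates the edge $\{x_1,x_3\}$ in the square, but $x_1$ and $x_3$ need not be at distance $\le 2$ in $G_0$. So a single central vertex is \emph{not} safe for faces of length $\ge 8$; it works only for length $6$ (where the three $x_i$ are pairwise at distance $2$ already along the boundary). The fix is the usual one: for a face of length $2k$, triangulate-then-quadrangulate it more carefully using a \emph{path} of new vertices rather than one hub — e.g. place $k-2$ new vertices forming a path ``parallel'' to one side of the boundary, so that each new vertex sees only a length-$\le 6$ window of the old boundary and hence creates square-edges only between old vertices already within distance $2$ in $G_0$ (indeed within distance $2$ along the boundary of $F$, which is a walk in $G_0$). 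Concretely: subdivide $F$ by a ``fan from a path'' so that every added vertex has at most three neighbours on the old boundary, all within a common sub-walk of length $4$. One then checks, face by face, that every newly created distance-$\le 2$ pair inside $V(G_0)$ was already a distance-$\le 2$ pair in $G_0$ (using that a sub-walk of the boundary of $F$ of length $4$ realises distance $\le 2$ in $G_0$ between its endpoints if they lie on the same side — with the mild caveat that one might instead need to further split $F$ first into hexagonal faces, each handled by a single hub). Since this is routine, I would state it as: ``By repeatedly inserting, into each face of length $\ge 6$, a new vertex adjacent to three consecutive same-side boundary vertices (reducing the face length by $2$), and finally a hub in each resulting hexagon, we obtain a plane quadrangulation $G \supseteq G_0$; a direct check of the few local configurations shows no two vertices of $V(G_0)$ become closer, so $G_0^{(2)} = G^{(2)}[V(G_0)]$.'' Finally, if $G$ is not yet $2$-connected, it has no cut vertex once it is a quadrangulation with $\ge 3$ vertices except in degenerate cases which are handled by Step~1; alternatively, add one more vertex in the outer face joined to two outer-boundary vertices and re-quadrangulate, preserving everything. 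This completes the construction and the proof.
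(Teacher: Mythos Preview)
Your identification of the main obstacle is correct, but your proposed fix does not actually resolve it. Consider your ``reduce the face length by $2$'' step: into a face with boundary walk $x_1 y_1 x_2 y_2 x_3 y_3 \ldots$ you insert a new vertex $w$ adjacent to the three consecutive same-side vertices $x_1,x_2,x_3$. This creates the length-$2$ path $x_1 w x_3$, but $x_1$ and $x_3$ are at distance~$4$ along the boundary walk (via $y_1 x_2 y_2$), and there is no reason they should be at distance $\le 2$ in~$G_0$. Thus the very operation you use to cut long faces down to hexagons can itself introduce new square-edges among old vertices. The later claim that ``every newly created distance-$\le 2$ pair \ldots\ was already a distance-$\le 2$ pair in $G_0$'' is therefore unjustified, and the ``direct check of the few local configurations'' is not carried out (nor can it be, with this construction).

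The paper sidesteps this difficulty entirely with a cleaner idea: for each non-quadrangular face with boundary walk $(v_1,\ldots,v_m)$, it inserts a new $m$-cycle $(x_1,\ldots,x_m)$ inside the face, joins each $x_i$ only to the single old vertex $v_i$, and then adds one further hub $y$ inside the new cycle adjacent to every second $x_i$. The key structural property is that \emph{no new vertex is adjacent to two old vertices}, so no length-$2$ path through a new vertex can connect two vertices of $V(G_0)$. This makes the verification that $G_0^{(2)}=G^{(2)}[V(G_0)]$ a one-line observation, with no case analysis at all. Your approach could in principle be repaired by restricting each new vertex to at most one old neighbour as well, but as written the construction is incorrect.
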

\begin{proof}
Fix a plane drawing of $G_0$ and let $\phi$ by a face which is not a quadrangle.
Then $\phi$ is bounded by a walk of an even length~$m\geq6$ which visits the vertices of $\phi$
in the cyclic order $(v_1,v_2,\ldots,v_m)$, possibly with repetition.
We embed into $\phi$ a new cycle $C$ of length $m$ on the vertex set in order $(x_1,x_2,\ldots,x_m)$
and connect $x_i$ to $v_i$ for~$i=1,\ldots,m$.
Then, inside $C$, we add another new vertex $y$ adjacent exactly to $x_2,x_4,\ldots,x_m$.
This we do for every non-quadrangular face of $G_0$ and denote by $G$ the resulting graph.

The result $G$ is by the definition a plane quadrangulation, and it is easy to see the $2$-connectivity property.
Moreover, no vertex of $V(G)\setminus V(G_0)$ is adjacent to two vertices of $V(G_0)$, and hence the square
operation on $G$ adds no new edges between the vertices of $G_0$ in comparison to $G_0^{(2)}$ itself.
\end{proof}

On the other hand, the crucial conceptual difference of the coming proof compared to the proofs 
in the three previous sections is informally explained as follows.
While we are going to construct our contraction sequence recursively from the plane quadrangulation $G$ of \Cref{lem:toquadrang2},
at the same time we have to limit the red degree in the corresponding contractions of the square graph $G^{(2)}$.
We are going to achieve the latter task indirectly -- by imposing additional conditions on the trigraphs
obtained from $G$ (and without an explicit reference to the contractions of $G^{(2)}$).
For that we will use the following:

\begin{lemma}\label{lem:redsquare}
Let $G$ be a simple graph and $\pi$ be a partial contraction sequence of $G$ resulting in a trigraph~$H_1$.
Let the same sequence $\pi$ applied to the square graph $G^{(2)}$ result in a trigraph~$H_2$.
If there is a red edge $f=\{u,w\}\in E(H_2)$, then $H_1$ contains a path of length at most $2$ from $u$ to~$v$,
but no such path with only black edge(s).
\end{lemma}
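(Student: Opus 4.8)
The plan is to first establish the standard combinatorial invariant describing which edges of a trigraph produced by contractions are black and which are red, and then to read off both halves of the statement from it. Since $\pi$ performs the same sequence of merges in $G$ and in $G^{(2)}$, at every step the current trigraph has the same vertex set, which I view as a partition of $V(G)=V(G^{(2)})$ into \emph{clusters}; I identify a vertex of $H_1$ or of $H_2$ with the (nonempty) subset of $V(G)$ it stems from, so $u$ and $w$ denote clusters $X_u\neq X_w$ in both trigraphs (distinct because trigraphs have no loops). The invariant I would prove, by induction along $\pi$ using the formula for $N^r_{G'}(x_0)$ from the definition of contraction, is: for disjoint nonempty clusters $X,Y$, the pair $\{X,Y\}$ forms a \emph{black} edge iff every pair $\{x,y\}$ with $x\in X$, $y\in Y$ is an edge of the underlying graph; a \emph{red} edge iff some such pair is an edge and some such pair is a non-edge; and a non-edge iff no such pair is an edge. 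The base case (singleton clusters of $G$, all edges black) is immediate, and for the inductive step, when $X_1,X_2$ merge into $X_0=X_1\cup X_2$ and $Z$ is another cluster, I would just check in each combination of statuses (black, red, absent) of $X_1Z$ and $X_2Z$ that the status of $X_0Z$ forced by the red-neighbourhood formula matches the one predicted by the invariant for $X_0\times Z=(X_1\times Z)\cup(X_2\times Z)$.

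With this invariant in hand for $H_1$ (underlying graph $G$) and for $H_2$ (underlying graph $G^{(2)}$), the first conclusion is short. Since $\{u,w\}$ is an edge of $H_2$, there are $x\in X_u$, $y\in X_w$ with $\mathrm{dist}_G(x,y)\le 2$. If $xy\in E(G)$, then $X_uX_w$ is already an edge of $H_1$. Otherwise I fix $t$ with $xt,ty\in E(G)$: if $t\in X_u$ the edge $ty$ witnesses an edge $X_uX_w$ of $H_1$; if $t\in X_w$ then $xt$ does; and if $t$ lies outside $X_u\cup X_w$, its cluster $X_t$ is a vertex distinct from $X_u,X_w$, and $xt,ty$ witness edges $X_uX_t$ and $X_tX_w$ of $H_1$, giving a $u$--$w$ path of length $2$. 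Thus $H_1$ always contains a $u$--$w$ path of length at most $2$. The only point needing genuine care here is this degenerate case in which the midpoint $t$ of a length-$2$ path of $G$ already belongs to $X_u$ or $X_w$.

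For the second conclusion I would argue by contradiction: suppose $H_1$ has a $u$--$w$ path of length at most $2$ all of whose edges are black. If it is the single black edge $X_uX_w$, the invariant gives $X_u\times X_w\subseteq E(G)\subseteq E(G^{(2)})$ (an edge of $G$ has distance $1\le 2$), so $X_uX_w$ is black in $H_2$, contradicting that it is red. If it is $u$--$t$--$w$ with $X_uX_t$ and $X_tX_w$ black (so $X_t\notin\{X_u,X_w\}$), then picking any $t'\in X_t$, for all $x\in X_u$ and $y\in X_w$ we get $xt',t'y\in E(G)$, hence $x\neq y$ and $\mathrm{dist}_G(x,y)\le 2$, i.e.\ $xy\in E(G^{(2)})$; thus $X_u\times X_w\subseteq E(G^{(2)})$ and again $X_uX_w$ is black in $H_2$, a contradiction. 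This rules out any all-black $u$--$w$ path of length at most $2$ and completes the proof. I do not expect a serious obstacle here: the inductive verification of the colour invariant, though it is the conceptual core, is entirely routine (and is essentially folklore for twin-width, so one could alternatively just cite it), and the rest is the cluster bookkeeping sketched above.
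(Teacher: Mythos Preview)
Your proof is correct and follows essentially the same approach as the paper: both rely on the folklore cluster characterization of black/red/non-edges (complete/mixed/empty bipartite patterns between clusters) and then read off the two conclusions. The paper's argument is slightly more compressed and actually exhibits a path containing a red edge directly, whereas you first produce an arbitrary length-$\le 2$ path and then separately rule out all-black paths; the net content is the same, and you are a bit more careful than the paper about the degenerate case where the midpoint $t$ of a length-$2$ $G$-path already lies in $X_u$ or $X_w$.
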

\begin{proof}
Assume that $u$ stems from vertices $U\subseteq V(G)$ and $w$ stems from vertices $W\subseteq V(G)$, where~$U\cap W=\emptyset$.
Since $f$ is red, up to symmetry, there exist $u_0\in U$ and $w_1,w_2\in W$ such that $\{u_0,w_1\}\in E(G^{(2)})$, 
but $\{u_0,w_2\}\not\in E(G^{(2)})$.
If $\{u_0,w_1\}\in E(G)$, then $f=\{u,w\}$ is a red edge in~$H_1$ (since $\{u_0,w_2\}\not\in E(G)$).
If there is a length-$2$ path $(u_0,t_0,w_1)$ in $G$, then $\{u_0,t_0\},\{t_0,w_1\}\in E(G)$, 
but $\{t_0,w_2\}\not\in E(G)$ since $\{u_0,w_2\}\not\in E(G^{(2)})$.
So, denoting by $t$ the vertex of $H_1$ which is or stems from~$t_1$,
the edge $\{t,w\}\in E(H_1)$ is red and we have a path $(u,t,w)$ in $H_1$ with a red edge.

On the other hand, assume that $f=\{u,w\}$ is a black edge in $H_1$, or there is a path $(u,t,w)$ in $H_1$ with both edges black.
Then every vertex of $U$ is adjacent to every vertex of $W$ in $G^{(2)}$, and this contradicts the assumption that $\{u,w\}$ is red in~$H_2$.
\end{proof}

\begin{corollary}\label{cor:redsquare}
Let $G$ be a simple plane graph, and $C\subseteq G$ be a cycle such that $(G,C)$ is a skeletal graph with the natural face assignment.
Consider a $C$-respecting partial contraction sequence $\pi$ of $G$ which results in a trigraph $H_1$.
Let the same sequence $\pi$ applied to the square~$G^{(2)}$ produce a trigraph~$H_2$.
If there is no red edge incident to a vertex of $C$ in $H_1$, then no red edge in $H_2$ crosses the cycle~$C$
-- in the natural face assignment of~$(H_1,C)$.
\end{corollary}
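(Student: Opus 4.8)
The plan is to combine \Cref{lem:redsquare} with the elementary observation that, in the skeletal trigraph $(H_1,C)$ equipped with the inherited natural face assignment, \emph{every} edge of $H_1$ either is incident to $V(C)$ or has both endpoints strictly on the same side of the cycle~$C$. First I would fix terminology: as a plane graph the cycle $C$ has exactly two faces, its inside and its outside, so the natural face assignment splits $V(H_1)\setminus V(C)=V(H_2)\setminus V(C)$ into \emph{inner} and \emph{outer} vertices, and a red edge of $H_2$ ``crosses~$C$'' precisely when it joins an inner vertex to an outer one. Since $\pi$ is $C$-respecting it is in particular $S$-aware for $S=C$ (\Cref{def:skelaware}): it never contracts a vertex of $V(C)$, so $H_1\supseteq C$ and $(H_1,C)$ is a proper skeletal trigraph, and each of its steps contracts a pair lying in bridges assigned to one common face of~$C$. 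Hence every vertex of $H_1$ outside $V(C)$ stems from vertices of $G$ that all lie on one side of $C$ in the fixed drawing witnessing $(G,C)$, and an edge of $H_1$ joining an inner to an outer vertex would stem from an edge of $G$ joining a vertex inside $C$ to one outside $C$ --- impossible, as $C$ is uncrossed. This establishes the displayed observation.

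Next, assume for contradiction a red edge $f=\{u,w\}\in E(H_2)$ crosses $C$, say $u$ inner and $w$ outer, and apply \Cref{lem:redsquare}: $H_1$ contains a path of length at most~$2$ from $u$ to $w$, and no such path is entirely black. If this path has length~$1$ then $\{u,w\}\in E(H_1)$ joins an inner vertex to an outer vertex, contradicting the observation. If it has length~$2$, say $(u,t,w)$, then from $\{u,t\}\in E(H_1)$ the observation forces $t\in V(C)$ or $t$ inner, and from $\{t,w\}\in E(H_1)$ it forces $t\in V(C)$ or $t$ outer; as $t$ cannot be both inner and outer we get $t\in V(C)$. But then both $\{u,t\}$ and $\{t,w\}$ are incident to a vertex of $C$, hence black by the hypothesis of the corollary, so $(u,t,w)$ is an all-black path of length~$2$ from $u$ to $w$ --- contradicting \Cref{lem:redsquare}. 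In either case we reach a contradiction, so no red edge of $H_2$ crosses~$C$.

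I expect the only delicate point to be the first paragraph: pinning down the meaning of ``crosses~$C$'' for edges of $H_2$ via the inherited face assignment, and checking that $C$-awareness really forces each non-skeleton vertex of $H_1$ onto a single side of $C$ and each edge of $H_1$ to respect that partition. Once that bookkeeping is in place, \Cref{lem:redsquare} does all the work: a crossing red edge of $H_2$ must be witnessed in $H_1$ either by a crossing edge, which cannot exist, or by a length-$2$ path through $V(C)$, whose two edges the hypothesis makes black, contradicting the ``no all-black witness'' half of \Cref{lem:redsquare}.
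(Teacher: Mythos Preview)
Your proof is correct and follows essentially the same approach as the paper's own proof: assume a red edge of $H_2$ crosses $C$, invoke \Cref{lem:redsquare} to get a witnessing path of length at most~$2$ in $H_1$, observe that such a path must pass through $V(C)$, and then use the hypothesis that $V(C)$ is incident to no red edge to force the path all-black, contradicting \Cref{lem:redsquare}. Your version is simply more explicit about why edges of $H_1$ cannot cross $C$ and why the middle vertex of a length-$2$ path must lie on $V(C)$; the paper compresses these steps into one-line assertions.
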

\begin{proof}
Assume a red edge $f=\{u,w\}\in E(H_2)$ such that $u$ and $v$ are assigned to distinct faces of $C$ in~$(H_1,C)$.
Then $f$ cannot be an edge of $H_1$.
So, by \Cref{lem:redsquare}, there is a path of length~$2$ from $u$ to $v$, which thus has to intersect $C$.
And since this path cannot be only black, we have got a red edge incident to a vertex of~$C$ -- a contradiction.
\end{proof}

We will use two special concepts in the formulation of our restrictions on the contracted trigraphs.
For a set $X\subseteq V(H)$ of a trigraph $H$, we say that two vertices $u,v\in V(H)$ are {\em twins with respect to~$X$}
if their neighbourhoods in $X$ are the same, i.e., $N_H(u)\cap X=N_H(v)\cap X$.
We do not care about the colour of edges in this definition, since we will use it only when there are no red edges incident to~$X$ in~$H$.
Furthermore, for a vertex $v$ of a trigraph $H$, we say that $w$ is a {\em non-black neighbour} of $v$ if $\{v,w\}\in E(H)$ is a red edge,
and that $w$ is a {\em non-black $2$-neighbour} of $v$ if the distance from $v$ to $w$ in $H$ is exactly $2$
and there is no length-$2$ path from $v$ to $w$ in $H$ with both edges black (cf.~\Cref{lem:redsquare}).

Similarly to \Cref{lem:bicore}, we now formulate the core inductive statement:
\begin{lemma}\label{lem:mapcore}
Let $G$ be a simple plane quadrangulation and $T$ a left-aligned BFS tree of~$G$ rooted at a vertex $r\in V(G)$ of the outer face.
Let $S\subseteq G$, $r\in V(S)$, and let a trigraph $H\supseteq S$ be obtained from $G$ in a level-preserving $S$-aware partial contraction sequence,
such that $(H,S,T)$ is a rooted proper skeletal trigraph.
Let $C\subseteq S$ be a $T$-wrapped facial cycle~of~$S$.

Assume that $H_C=G_C$ (the subgraph $H_C$ of $H$ bounded by $C$ has not been touched by any contraction since~$G$), 
and let $U:=V(H_C)\setminus V(C)$ be the vertices of $H$ bounded by~$C$.

Then there exists a level-preserving partial contraction sequence of $H$ which contracts only pairs of vertices that 
are in or stem from $U$ (and so is $S$-aware), ends in a trigraph $H^*$,
and the following conditions are satisfied for every trigraph $H'$ along this sequence from~$H$~to~$H^*$:
\begin{enumerate}[(i)]
\item\label{it:all38}
Every vertex of $U':=V(H'_C)\setminus V(C)$ has at most $25$ non-black neighbours in $H'$,
and at most $38$ non-black $2$-neighbours in $H'$.
\item\label{it:bound28}
No vertex of $C$ is incident to a red edge in $H'_C$, and
every vertex of $C$ has at most $25$ non-black $2$-neighbours in~$V(H'_C)$.
\item\label{it:nomorecontr}
At the end of the sequence, \emph{no} two vertices of $U^*:=V(H^*_C)\setminus V(C)$ in $H^*$ of the same level are twins with respect to~$V(C)$.
\end{enumerate}
\end{lemma}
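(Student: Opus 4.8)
The plan is to prove \Cref{lem:mapcore} by the same recursive-decomposition scheme used for \Cref{lem:bicore}: induct on $2|U|+|V(C)|$, dispose of the base case $U=\emptyset$ by the empty sequence, and otherwise split the face $\phi$ of $C$ along the facial $4$-cycle $A$ glued to the lid of $C$ into one or two sub-faces bounded by $T$-wrapped cycles $C_1$ (and possibly $C_2$), separated either directly or by a $T$-vertical path $P_3$, exactly according to the seven cases catalogued in \Cref{fig:divisionbi6}. Since the underlying plane quadrangulation, the left-aligned BFS tree $T$, and the level-preserving character of the contractions are unchanged, that case analysis and the recursive invocations on $C_1$ (and on $C_2$) would carry over essentially verbatim. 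What must change is the final reduction --- instead of forcing the face of $C$ to be $1$-reduced we only aim at \eqref{it:nomorecontr} --- and, above all, the verification of the two new quantities in \eqref{it:all38} and \eqref{it:bound28}: the numbers of non-black neighbours and of non-black $2$-neighbours.

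The engine for controlling those quantities is condition \eqref{it:bound28} combined with \Cref{cor:redsquare}. Because \eqref{it:bound28} forbids any red edge incident to a vertex of $C$ inside $H'_C$, \Cref{cor:redsquare} guarantees that in the corresponding square-trigraph no red edge ever crosses $C$; concretely, every length-$\leq 2$ path from an interior vertex $x\in U'$ to a vertex lying outside $H'_C$ must run through a vertex of $C$ along two black edges, hence is a black short path and contributes no non-black $2$-neighbour. Combined with \Cref{lem:bipartlevels} (red edges join consecutive levels, and a distance-$2$ pair differs by $0$ or $2$ in level), this confines all non-black neighbours of $x$ to $V(H'_C)$ at levels $\levll{H'}x\pm1$, and all its non-black $2$-neighbours to $V(H'_C)$ at levels $\levll{H'}x$ and $\levll{H'}x\pm2$; applied from the other side, it bounds the non-black $2$-neighbours of a vertex of $C$ by what it sees within $H'_C$. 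Both counts therefore reduce to a per-level bound on $|V(H'_C)|$ --- at most two boundary vertices per level of $C$, one on each wrapping path, plus the interior vertices per level --- and the constants $25$ and $38$ would follow from a routine tally over the levels $j-2,j,j+2$, with the usual saving on the right wrapping path of $C$ supplied by \Cref{lem:leftalign} as in \Cref{lem:bicore}.

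After the recursive sequences $\pi_1$ (and $\pi_2$) and a merging sequence $\pi_3$ --- organised as in \Cref{lem:bicore}, contracting a $P_3$-vertex with a $U_2$-vertex level by level, but now \emph{restricted so as never to create a red edge onto $V(C)$} (a pair is contracted only when its two members have identical, possibly empty, adjacency to $V(C)$; left-alignment and the level pattern of the chosen decomposition should make the skipped cases harmless) --- the face of $C$ is suitably reduced but need not yet satisfy \eqref{it:nomorecontr}. We would then append a cleanup $\pi_4$ that, sweeping levels from the farthest inward, repeatedly contracts any two interior vertices of the same level that are twins with respect to $V(C)$; such a contraction creates no red edge onto $V(C)$, so \eqref{it:bound28} --- and hence the no-crossing property of \Cref{cor:redsquare} used above --- is preserved, and the sweep halts precisely when \eqref{it:nomorecontr} holds. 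The per-level interior count it leaves is bounded by the number of distinct neighbourhoods an interior vertex can have on the few relevant vertices of $C$, which closes the arithmetic of the previous paragraph. Throughout $\pi_3$ and $\pi_4$ one rechecks \eqref{it:all38} and \eqref{it:bound28} step by step, using the above confinement and \Cref{lem:leftalign}; the linear running time follows as in \Cref{lem:core}, since the recursion sizes add up to $|U|$ and $\pi_3,\pi_4$ each cost $\ca O(|V(P_3)|)$.

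The main obstacle is expected to be the non-black $2$-neighbour bookkeeping. Unlike ordinary red degree, the relation of being at distance at most $2$ with no all-black short path is not local to a single skeleton face, and absent the no-red-edge-on-$C$ invariant it could reach arbitrarily far across the decomposition. Making \eqref{it:bound28} strong enough --- no red edge at all on $C$, rather than a bounded red degree as in \Cref{lem:bicore}\eqref{it:bound4} --- so that \Cref{cor:redsquare} cleanly severs this spillage, while keeping the merging $\pi_3$ compatible with that stronger invariant yet still productive enough to terminate with \eqref{it:nomorecontr}, is the delicate balance the proof must strike; driving the surviving per-level interior counts low enough to stay under $25$ and $38$ is then the quantitative heart of the argument.
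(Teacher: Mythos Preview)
Your high-level strategy is exactly right and matches the paper: reuse the recursive decomposition of \Cref{lem:bicore} verbatim, maintain the invariant that no red edge touches $V(C)$ by contracting only pairs that are twins with respect to $V(C)$ (so that \Cref{cor:redsquare} confines all non-black $2$-neighbours to $H'_C$), and terminate when condition \eqref{it:nomorecontr} holds, which by \Cref{lem:mapcoremore}\,(\ref{it:8reduced}) makes the face $8$-reduced.

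Where your merging step differs from the paper is in \emph{order} and in \emph{within-level structure}, and both matter for the constant~$38$. The paper runs the merging sequence $\pi_3$ in \emph{increasing} level order $i=k,k+1,\ldots,m$ (it explicitly remarks that this reversal ``works better now''), and within each stage~$i$ it first contracts twins \emph{inside} $U_1$ and inside $U_2$ separately---which brings each side down to at most $4$ vertices before any cross-$P_3$ merge---and only then contracts twins across. The reason this ordering is essential is the asymmetry supplied by \Cref{lem:leftalign}: it yields a saving of~$1$ on the count of non-black $2$-neighbours in level $j{+}2$ but not in level $j{-}2$. With the increasing order, by the time one cross-merges in level~$j$ the level $j{-}2$ is already $8$-reduced ($\leq10$), level~$j$ has been shrunk to at most $4{+}4{+}1=9$ (plus two boundary vertices), and the untouched level $j{+}2$ gets the $-1$ saving ($\leq18$), summing to~$38$.

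Your $\pi_3$ (a restricted \Cref{lem:bicore}-style merge, farthest-first) followed by a cleanup $\pi_4$ that contracts \emph{any} same-level twins in decreasing order does not have this safeguard. If a cross-side contraction happens in level~$j$ before the within-side reductions, the newly merged vertex can suddenly see both $U_1$ and $U_2$ in level~$j{-}2$ (still untouched, up to $8{+}8{+}3=19$ potential non-black $2$-neighbours with no left-align saving there) while level~$j$ itself still holds close to $17$ vertices; the total can then overshoot~$38$. So the step you flag as ``routine tally'' is precisely where the paper does real work: the specific increasing sweep and the within-$U_1$/within-$U_2$ pre-reduction in each stage are what make the arithmetic close at~$38$. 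Your proposal would go through with a larger constant, but not as written for~$38$.

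One minor point: the paper does not claim linear time for this lemma (algorithmic aspects of \Cref{thm:mapplanar} are deliberately omitted), so your final $\ca O(|V(P_3)|)$ remark is neither needed nor obviously correct.
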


We again, before diving into a proof of \Cref{lem:mapcore}, show related technical statements.

\begin{lemma}\label{lem:mapcoremore}
Assume the setup of \Cref{lem:mapcore}, and let $H^o$ denote a trigraph which is one of $H$ or $H'$ or $H^*$ of the lemma.
Let $D\subseteq S$ be a $T$-wrapped facial cycle~of~$S$ such that no vertex of $D$ is incident to a red edge in $H^o_D$.
\begin{enumerate}[a)]
\item\label{it:8reduced}
Every vertex of $U^o:=V(H^o_D)\setminus V(D)$ in $H^o$ has at most $3$ neighbours in~$V(D)$, and at most $2$ in each wrapping path of~$D$.
Consequently, if no two vertices of $U^o$ in $H^o$ of the same level are twins with respect to~$V(D)$,
then the face bounded by $D$ is $8$-reduced.
\item\label{it:mapcmsink}
If $x\in V(H^o_D)$ has distance two in $H^o$ from the sink $u^o$ of $D$, then $x$ is formed by contractions of only
such vertices of $G$ that have distance two from~$u^o$ in $G$.
\item\label{it:redn10}
If, again, no two vertices of $U^o$ in $H^o$ of the same level are twins with respect to~$V(D)$,
then every vertex of $V(D)$ has at most $10$ non-black $2$-neighbours in the set~$V(H^o_D)$.
\end{enumerate}
\end{lemma}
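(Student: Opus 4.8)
The plan is to extract everything from a handful of standing facts that hold in this setting. Because $G$ is a plane quadrangulation (hence bipartite) and the sequence is level‑preserving, \Cref{lem:bipartlevels} gives that every edge of any trigraph along the sequence joins vertices whose levels differ by exactly one, with no edges inside a level. The wrapping paths $P_1$ (left) and $P_2$ (right) of $D$ are $T$‑vertical, so each meets every level in at most one vertex and $V(P_1)\cap V(P_2)=\{u^o\}$, where $u^o$ is the sink of $D$; moreover $u^o$ is the unique vertex of $V(D)$ of least level, and no vertex of $V(G)$ strictly inside $D$ has level at most $\levll{H^o}{u^o}$ (a shortest path to $r$ from such a vertex would cross $V(D)$ at a vertex of level at most that one, forcing it to be $u^o$ and the inside portion to have length $0$). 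By \Cref{lem:leftalign}, no vertex assigned to the face of $D$ is adjacent to the vertex of $P_2$ in the level just above it. Finally, since the sequence is $S$‑aware and level‑preserving (so also $D$‑aware, $D$ being a face of $S$ and $2$‑connected), \Cref{lem:sinknored} applies to $D$: $u^o$ has no red edge into the face of $D$; and, by hypothesis of the lemma, every edge of $H^o_D$ incident to a vertex of $V(D)$ is black — and so is every edge of $H^o$ with both ends in $V(D)$, since such an edge (being a $D$‑edge or a chord of $D$) has both ends in $V(S)$, which are never contracted, hence is an edge of $G$.

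\emph{Part a).} Let $y\in U^o$ with $\levll{H^o}{y}=\ell$. Its $V(D)$‑neighbours lie in levels $\ell-1$ and $\ell+1$; $P_1$ contributes at most one vertex in each of these, and $P_2$ contributes at most the level‑$(\ell-1)$ one, the level‑$(\ell+1)$ one being excluded by \Cref{lem:leftalign}. Hence $y$ has at most $3$ neighbours in $V(D)$ and at most $2$ in each wrapping path. Moreover $N_{H^o}(y)\cap V(D)$ is contained in the at most three‑element set $\{$level‑$(\ell-1)$ vertex of $P_1$, level‑$(\ell-1)$ vertex of $P_2$, level‑$(\ell+1)$ vertex of $P_1\}$, so at most $2^3=8$ such sets occur; if no two level‑$\ell$ vertices of $U^o$ are twins with respect to $V(D)$ there are at most $8$ of them, i.e.\ the face of $D$ is $8$‑reduced.

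\emph{Part b).} Since $x\neq u^o$ (distance $2>0$) and $x\in V(H^o_D)$, the standing facts give $\levll{H^o}{x}>\levll{H^o}{u^o}$; as the graph is bipartite with unit level steps and $x$ is at distance $2$ from $u^o$, we get $\levll{H^o}{x}=\levll{H^o}{u^o}+2$. Being level‑preserving, the sequence builds $x$ from vertices $v\in V(G)$ all of level $\levll{H^o}{u^o}+2$ lying inside or on $D$. For any such $v$, applying the $T$‑parent map twice yields a vertex of level $\levll{H^o}{u^o}$ that is inside or on $D$: if $p_T(v)\in V(D)$ then, $u^o$ being the top of each of $P_1,P_2$, its $T$‑parent is $u^o$; otherwise $p_T(v)$ is strictly inside $D$, so $p_T(p_T(v))$ is inside or on $D$ and therefore equals $u^o$ by the standing fact. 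Thus $v,p_T(v),u^o$ is a length‑$2$ path in $G$, so $d_G(v,u^o)=2$, and the same holds trivially when $x\in V(D)$ (it then is a vertex of some $P_i$ two steps from $u^o$).

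\emph{Part c).} Fix $d\in V(D)$ with $\levll{H^o}{d}=\ell_d$, and let $z\in V(H^o_D)$ be a non‑black $2$‑neighbour of $d$, so $\levll{H^o}{z}\in\{\ell_d-2,\ell_d,\ell_d+2\}$. A common neighbour $w$ of $d$ and $z$ in $V(H^o_D)$ would give a length‑$2$ path $d,w,z$ both of whose edges are incident to a vertex of $V(D)$ — hence both black by the standing facts — making $z$ a black $2$‑neighbour; so $d$ and $z$ have no common neighbour in $V(H^o_D)$. Since every neighbour of a vertex of $V(D)$ that is used on a shortest $d$–$z$ path lies in $V(H^o_D)$ (edges do not cross $D\subseteq S$, and $z$, if strictly inside, has all its neighbours inside or on $D$), this already rules out $z\in V(D)$ entirely (such a $z$ at distance $2$ has some common neighbour, necessarily inside or on $D$). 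So $z\in U^o$, and its $V(D)$‑neighbourhood must avoid all $V(D)$‑neighbours of $d$ (at least the cycle‑neighbours of $d$ on $D$). Combining this with the candidate list of part a) and, when $d\in V(P_2)$, with the extra exclusions coming from \Cref{lem:leftalign} applied to $z$, a level‑by‑level check shows: for $d$ an interior vertex of $P_1$ the $V(D)$‑neighbourhood of $z$ is confined to a set of size $1$ in level $\ell_d$ and size $2$ in each of $\ell_d\pm2$, so by the no‑twins hypothesis there are at most $2+4+4=10$ such $z$; for $d\in V(P_2)$, for $d=u^o$, or for $d$ an endpoint of the lid of $D$ the corresponding families are even smaller (several candidate levels become empty because the relevant common neighbour forced by distance $2$ would lie in $V(D)$), so the total stays at most $10$. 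The step I expect to be the real work is exactly this last bookkeeping: splitting the $2$‑neighbours of $d$ by level and by the position of $d$ on $D$, and using left‑alignment both to trim $z$'s admissible $V(D)$‑neighbourhoods and to argue that a number of would‑be $2$‑neighbours are in fact at distance more than $2$ or only blackly connected to $d$.
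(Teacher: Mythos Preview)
Your approach is essentially the paper's: part a) is identical; part b) spells out via $T$-parents what the paper compresses into one sentence; and part c) uses the same key observation that a non-black $2$-neighbour $z$ cannot be adjacent to the cycle-neighbours of $d$ on $D$, followed by the same $2+4+4=10$ count for $d$ interior to $P_1$ and a left-alignment argument to handle $d\in P_2$.

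One slip to fix in part c): the claim that ``$d$ and $z$ have no common neighbour in $V(H^o_D)$'' is false as stated. If $w,z\in U^o$, the edge $wz$ is \emph{not} incident to any vertex of $V(D)$, so it may well be red, and the path $d,w,z$ need not be all black. What is true, and what you actually use, is the weaker statement that $d$ and $z$ have no common neighbour \emph{in $V(D)$}: if $w\in V(D)\cap N(d)\cap N(z)$ then both $dw$ and $wz$ are incident to $V(D)$, hence black. This weaker claim suffices both to rule out $z\in V(D)$ (take any common neighbour $w$; now $z\in V(D)$ forces $wz$ black too) and to conclude that $N_{H^o}(z)\cap V(D)$ avoids the $D$-neighbours $y_1,y_3$ of $d$, which is exactly what drives your count. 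So the counting is fine; just correct the intermediate assertion.
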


\begin{proof}
a) By \Cref{lem:bipartlevels}, a vertex $x\in U^o$ can have neighbours only in the levels $\levll{H^o}x-1$ and $\levll{H^o}x+1$,
which means at most two neighbours in each of the two wrapping paths of~$D$, and the vertex in the level $\levll{H^o}x+1$ of the
right wrapping path of~$D$ is actually excluded by \Cref{lem:leftalign}.
This shows the first part, and the second part then follows easily
since there are at most $2^3=8$ distinct neighbourhoods in the three vertices.

b) Since all vertices of $V(H^o_D)$ other than the sink~$u^o$ are of levels at least $\levll{H^o}u+1$,
the only vertices at distance two from $u^o$, say $t\in V(H^o_D)$, must satisfy $\levll{H^o}t=\levll{H^o}{u^o}+2$ by \Cref{lem:bipartlevels}.
Then every vertex $t_1\in V(G)$ which contracts into $t$ belongs also to the level $\levll{H^o}t=\levll{G}{u^o}+2$ in~$G$,
and by the levels and the face assignment in $(G,S)$, the distance from $t_1$ to $u^o$ in $G$ is two, too.

c) For $y\in V(D)$, let $M_1$, $M_2$, and $M_3$ denote in order the subsets of vertices of~$V(H^o_D)$ in the levels
$\levll{H^o}y-2$, $\levll{H^o}y$, and $\levll{H^o}y+2$.
By \Cref{lem:bipartlevels} and the assumption of no red edge incident to~$V(D)$, 
a non-black $2$-neighbour of $y$ in~$H^o_D$ may only lie in $M_1$, $M_2$, or $M_3$.
Let $y_1$ be the neighbour of $y$ on $D$ closer to the sink~$u^o$ (if $y\not=u^o$), 
and $y_3$ be the neighbour of $y$ on $D$ farther from $u^o$ (if existing on the same wrapping path of $D$).
Observe that if $x\in M_1\cup M_2\cup M_3$ is a neighbour of $y_1$ or $y_3$ in $H^o$
(hence $\{x,y_i\}$ is a black edge by the assumption, and so is $\{y_i,y\}$), 
then $x$ is not a non-black $2$-neighbour of~$y$ by the definition.

If $y$ lies on the left wrapping path of $D$, then, by the same arguments as in a), at most $4$ vertices of $M_1$
are not neighbours of $y_1$, at most $4$ vertices of $M_3$ are not neighbours of $y_3$, and
at most $2$ vertices of $M_2$ are not neighbours of either of $y_1$, $y_3$. 
This case sums to at most~$10$.
If $y$ lies on the right wrapping path of $D$, then, analogously, at most $4$ vertices of $M_3$ are not neighbours of $y_3$,
and at most $4$ of $M_2$ are not neighbours of $y_1$.
Furthermore, by \Cref{lem:leftalign} applied to each of $y$ and $y_1$, no vertex of $M_1$ is at distance two from~$y$ in~$H^o$.
This case sums to at most~$8<10$.
\end{proof}

The next claim is analogous to \Cref{lem:bicoremore}, but due to the special context here its formulation is somehow cumbersome.

\begin{lemma}\label{lem:mapcorefull}
Assume the assumptions and conclusions of \Cref{lem:mapcore}. 
Moreover, assume that in $(H,S)$ every facial cycle $D\subseteq S$, $D\not=C$, such that $H_D\not=G_D$, satisfies the following;
the cycle $D$ is $T$-wrapped, no red edge of $H_D$ is incident to a vertex of~$D$,
and {no} two vertices of $V(H_D)\setminus V(D)$ in $H$ of the same level are twins with respect to~$V(D)$.
For every step $H'$ of the sequence considered in \Cref{lem:mapcore}, let $H''$ denote the trigraph obtained from the square $G^{(2)}$
along the same partial contraction sequence as that of $G$ leading to~$H'$.
Then the following is true for every such~$H''$:
\begin{enumerate}[a)]
\item The maximum red degree of $H''$ is at most~$63$.
\item If $A\subseteq V(H')$ denotes one colour class of $H'$ (which is bipartite by \Cref{lem:bipartlevels}), then
the induced subgraph $H''[A]$ has maximum red degree at most~$38$.
\end{enumerate}
\end{lemma}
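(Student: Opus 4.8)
The plan is to use \Cref{lem:redsquare} as the sole bridge between the contraction sequence of $G$ and the ``shadow'' sequence of $G^{(2)}$: any red edge $\{v,w\}$ of the trigraph $H''$ obtained from $G^{(2)}$ is witnessed by a non\nobreakdash-black path of length at most $2$ in the corresponding trigraph $H'$ of $G$, so $w$ is either a \emph{non-black neighbour} or a \emph{non-black $2$-neighbour} of $v$ in $H'$ in the sense of the paper. Hence
\[
\deg^{\mathrm{red}}_{H''}(v)\ \le\ \#\{\text{non-black neighbours of }v\text{ in }H'\}\ +\ \#\{\text{non-black }2\text{-neighbours of }v\text{ in }H'\}.
\]
For part~b) one observes that both ends of a red edge of $H''[A]$ lie in the same colour class of the bipartite trigraph $H'$ (\Cref{lem:bipartlevels}), hence at \emph{even} distance, so only the second term contributes; this is exactly why the same estimates produce the smaller constant $38$ there. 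Thus the whole proof is a case analysis on the location of $v$ in $H'$, bookkeeping non-black (and non-black $2$-) neighbours instead of red neighbours, and otherwise parallel to \Cref{lem:coremore}/\Cref{lem:bicoremore}.

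First, if $v\in U'=V(H'_C)\setminus V(C)$, then \Cref{lem:mapcore}\eqref{it:all38} gives directly at most $25$ non-black neighbours and at most $38$ non-black $2$-neighbours, so $\deg^{\mathrm{red}}_{H''}(v)\le 25+38=63$ and, in $H''[A]$, at most $38$. Second, if $v$ is assigned to a skeleton face $\sigma\neq C$: when $\sigma$ is untouched ($H_\sigma=G_\sigma$) the interior of $\sigma$ carries no red edge and any length-$2$ path inside it is all black, so $v$ is isolated in the red graph of $H''$; when $\sigma$ is touched, contractions have not reached it ($H'_\sigma=H_\sigma$), and since no red edge meets $V(\sigma)$ (the hypothesis of the lemma, resp.~\Cref{lem:mapcore}\eqref{it:bound28}), every witnessing red edge incident to $v$ or to a neighbour of $v$ must lie strictly inside $\sigma$; therefore all non-black neighbours and non-black $2$-neighbours of $v$ lie inside $V(H_\sigma)$, and the bounds $25$ and $38$ are inherited verbatim from condition \eqref{it:all38} applied to the final trigraph of the earlier invocation of \Cref{lem:mapcore} that produced~$H_\sigma$ (which agrees with $H'$ on $V(H_\sigma)$).

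The main and most delicate case is $v\in V(S)$. Here $v$ carries \emph{no} red edge at all in $H'$ (edges with both ends in $V(S)$ are black, and by \Cref{lem:mapcore}\eqref{it:bound28} together with the hypothesis no red edge of any touched face reaches its boundary), so every red neighbour of $v$ in $H''$ is a non-black $2$-neighbour whose witnessing red edge $\{z,w\}$ has $z$ adjacent to $v$ and $z,w$ interior to a common touched face $\sigma$ incident to~$v$; consequently
\[
\deg^{\mathrm{red}}_{H''}(v)\ \le\ \sum_{\sigma\ \mathrm{touched},\ \sigma\ni v}\ \#\{\text{non-black }2\text{-neighbours of }v\text{ inside }V(H'_\sigma)\},
\]
where $C$ contributes at most $25$ by \Cref{lem:mapcore}\eqref{it:bound28} and every touched $\sigma\neq C$ contributes at most $10$ by \Cref{lem:mapcoremore}\eqref{it:redn10} (whose hypotheses hold under the assumptions of the lemma). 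To bound the number of summands I would reuse the ``sink trick'' of \Cref{lem:coremore}: with $f\in E(T)$ the edge at $v$ towards the root, every $T$-wrapped face incident to $v$ other than the at most two faces incident to $f$ has $v$ as its sink, and for a touched face with sink $v$ the sink has (by \Cref{lem:sinknored}, since the sequence is level-preserving) only black edges into the face; combined with the way \Cref{lem:mapcore} refines and then ``forgets'' its internal skeleton — so that, from the point of view of the current skeleton, only a bounded number of touched faces are incident to $v$ — this leaves $C$ plus at most two further touched faces contributing, i.e.~$\le 25+10+10<63$ and, after checking that when $v$ is the sink of $C$ its contribution is only the $O(1)$ coming from the next two levels inside $C$, $\le 38$ for $H''[A]$.

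The hard part will be precisely this last step: unlike in the ordinary-graph arguments of \Cref{lem:coremore}/\Cref{lem:bicoremore}, where a sink simply has no incident red edge and so is instantly discarded, here a sink $v$ of a touched face can still collect non-black $2$-neighbours two BFS levels further down, via a red edge created by the merging contractions of \Cref{lem:mapcore}. I expect the resolution to be to carry an extra invariant through the recursion of \Cref{lem:mapcore} — essentially ``no red edge runs between the two levels immediately below a sink, inside its face'' — or, failing a clean statement of that, to bound the number of nested touched faces that can share $v$ as a sink; propagating either form of this bookkeeping through the decomposition/merging phases is where most of the remaining work lies. Once the $v\in V(S)$ case is settled, parts a) and b) follow by the same count with the constants $63=25+38$ and $38$, respectively.
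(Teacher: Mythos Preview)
Your overall strategy---using \Cref{lem:redsquare} as the bridge and then splitting according to whether $v\in U'$, $v$ lies in another face, or $v\in V(S)$---matches the paper's proof. The $v\in U'$ case and the reduction of part~b) to non-black $2$-neighbours are exactly right.

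There are, however, two points where you diverge and the paper is simpler. For $v$ in the interior of a touched face $\sigma\neq C$, you want to inherit the bounds $25$ and $38$ from an earlier invocation of \Cref{lem:mapcore}; but the lemma's hypotheses do not say that $H_\sigma$ arose this way. The paper instead uses only what is assumed: the face is $T$-wrapped with no red edge on its boundary and no same-level twins with respect to $V(D)$, so by \Cref{lem:mapcoremore}\eqref{it:8reduced} it is $8$-reduced, and a direct count over five levels gives a bound well below~$63$ (and below~$38$ for distance-two neighbours).

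More importantly, the ``hard part'' you anticipate for the sink does not exist. \Cref{lem:mapcoremore}\eqref{it:mapcmsink} already shows that every vertex at distance two from a sink $u^o$ in $H'_D$ stems only from vertices at distance two from $u^o$ in~$G$; all such vertices are adjacent to $u^o$ in $G^{(2)}$, hence the corresponding edges in $H''$ are black. So a sink contributes \emph{no} red edge of $H''$ into its face, and you do not need any extra invariant carried through the recursion. With this in hand, the $v\in V(S)$ case finishes exactly as you sketched: at most two incident faces (those on the parental $T$-edge) can contribute, one of them possibly $C$ giving $\le 25$ by \eqref{it:bound28} and the other $\le 10$ by \Cref{lem:mapcoremore}\eqref{it:redn10}, for a total under~$38$.
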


\begin{proof} a)
By \Cref{lem:redsquare}, red neighbours of any $x\in V(H'')=V(H')$ can only be among non-black neighbours or $2$-neighbours of $x$ in~$H'$.
Hence, for $x\in U'$ the claim is given in \Cref{lem:mapcore}\eqref{it:all38} as~$25+38=63$.
If $x\in V(H')\setminus V(S)$ is not as in the previous case, then $x$ is assigned to an $S$-face bounded by $D\subseteq S$ in $(H',S)$,
and $D$ bounds the same face in $(H,S)$.
If $H_D=G_D$, then $x$ has no red edge in $H''$ by \Cref{cor:redsquare}.
If $H_D\not=G_D$, then the face of~$D$ is $8$-reduced in $(H',S)$ by \Cref{lem:mapcoremore}\eqref{it:8reduced}, and so we have 
less than $50$ potential red neighbours of $x$ in $H''$ which have to be at distance at most two from $x$ in~$H'_D$
(and less than $30$ at distance exactly two).
Finally, consider $x\in V(S)$ and an $S$-facial cycle $D$ incident to~$x$.
If $x$ is the sink of $D$, then there is no red edge between $x$ and $V(H'_D)$ in $H''$ by \Cref{lem:mapcoremore}\eqref{it:mapcmsink}.
Otherwise, there are at most two facial cycles $D$ incident to $x$ not at their sinks, and they together account
for at most $10+25<38$ non-black $2$-neighbours of $x$ in whole~$H'$ by
\Cref{lem:mapcoremore}\eqref{it:redn10} and \Cref{lem:mapcore}\eqref{it:bound28}.
Since there is no non-black neighbour of $x$ by the assumption (of no red edge incident to~$V(D)$),
the maximum red degree of $x\in V(S)$ in $H''$ is at most~$38$.

b) This follows from the previous arguments if we restrict ourselves only to non-black $2$-neighbours in $H'$
as the potential red neighbours of a vertex of $A$ in $H''[A]$.
\end{proof}

\begin{proof}[Proof of \Cref{thm:mapplanar}]
Let $G_0$ be the given planar bipartite graph, and $G$ be the plane quadrangulation from \Cref{lem:toquadrang2}.
We choose a root $r$ on the outer face of $G$ and, for some left-aligned BFS tree $T$ of $G$ rooted in $r$ which exists by \Cref{clm:existslal},
the graph $H=G$ and the outer face $C=S$ of~$G$, we apply \Cref{lem:mapcore}.

This way we get a level-preserving partial contraction sequence from $G$ to a trigraph $H^*$,
and the corresponding partial contraction sequence of the square $G^{(2)}$ is of maximum red degree at most~$63$ by \Cref{lem:mapcorefull}.
In $H^*$, one face of $C$ is empty and the other one is $8$-reduced by \Cref{lem:mapcoremore}\eqref{it:8reduced}.
We can now easily finish by successively contracting the vertices of $H^*$ in order of their decreasing levels (and arbitrarily within each one level).
The red degree in the corresponding continuation of the contraction sequence of $G^{(2)}$ in this final phase 
clearly does not exceed $5\cdot8-1+4=43<63$.
The restriction of this whole contraction sequence of $G^{(2)}$ to only $V(G_0)$ of the induced subgraph $G_0^{(2)}$ then conforms to the same bounds,
and so the twin-width of $G_0^{(2)}$ is at most~$63$.

Regarding map graph, by \Cref{pro:mapgraphs} they are each an induced subgraph of $G_0^{(2)}[A]$ of the square $G_0^{(2)}$ 
for a suitable choice of a planar bipartite graph $G_0$ and its colour class $A\subseteq V(G_0)$.
Repeating the previous arguments, we now obtain an upper bound on the read degree of $38$ by \Cref{lem:mapcorefull}.
\end{proof}

\subsection{Finishing the proof}\label{sub:finishing38}

\begin{proof}[Proof of \Cref{lem:mapcore}]
In this proof we use induction and the recursive decomposition of $H_C=G_C$ with inductive invocation of \Cref{lem:mapcore}
exactly as in the proof of \Cref{lem:bicore} before (cf.~\Cref{fig:divisionbi6}), and so we skip repeating the same details here.
The only interesting case that remains to be resolved differently here is the one of two inductively obtained
partial contraction sequences $\pi_1$ and $\pi_2$ (contracting inside the cycles $C_1$ and $C_2$, respectively, as specified in the previous proof), 
for which their concatenation already satisfies the conditions of \Cref{lem:mapcore} at their every step.

Let $H^2$ denote the trigraph resulting from $G$ in the partial contraction sequence $\pi_1.\pi_2$,
and denote by $U_0:=V(H^2_C)\setminus V(C)$, by $U_j:=V(H^2_{C_j})\setminus V(C_j)$ for~$j=1,2$, and by $\{u_3\}=V(P_3)\cap V(C)$.
Let $k$ be the minimum value of $\levll{H^2}x$ over $x\in V(P_3)\cap U_0$, and $m$ the maximum value of $\levll{H^2}y$ over $y\in U_0$.
In particular, $\levll{H^2}{u_3}=k-1$.

The sequence $\pi_3$ is constructed in stages~$i=k,k+1,\ldots,m$ in this order, where $i$ means the level of $U_0$ in which contractions happen
(notice that we use the opposite order compared to previous proofs, simply since it works better now).
Our construction has an exceptional subcase when $i=k$ and $u_3$ is the sink of~$C_2$.
In this case, all vertices of $U_2$ of the level $i=k$ have the same neighbourhood in $V(C)$ by \Cref{lem:leftalign} and the fact that
they all have to be adjacent to the sink~$u_3$ (actually, there cannot be more than two such vertices in~$U_2$).
Sequence $\pi_3$ then starts with contracting all vertices of $U_2$ of the level $i=k$ into one, and continues as in the generic case below.

In all remaining cases, that is when $i>k$ or $u_3$ is not the sink of~$C_2$,
stage $i$ of $\pi_3$ starts with contracting all pairs of vertices of $U_1$ of the level $i$ which are twins width respect to $V(C)$,
followed by contracting all pairs of $U_2$ of the level $i$ which are twins width respect to $V(C)\cup\{p_{i+1}\}$
where $p_k\in V(P_3)$ denotes the vertex of $P_3$ of the level~$k$.
Observe that after these contractions, we are left with at most $4$ vertices on each side in the level~$i$.
After that, and also in the exceptional case of the previous paragraph, we finish stage $i$ of $\pi_3$ by contracting 
all pairs of vertices that stem from $U_0$ in the level $i$ and which are twins with respect to $V(C)$.
At the end, the trigraph $H^*=H^3$ that results from $H^2$ in~$\pi_3$ satisfies the final condition \eqref{it:nomorecontr} of \Cref{lem:mapcore}.

\smallskip
We are left with proving the conditions \eqref{it:all38} and \eqref{it:bound28} of \Cref{lem:mapcore}
for every trigraph $H'$ along~$\pi_3$ in the stage (i.e., level just being contracted in~$H'$) $i\in\{k,k+1,\ldots,m\}$.

As for the condition \eqref{it:bound28}, pick~$y\in V(C)$.
Recall that non-black $2$-neighbours of $y$ in $H'_C$ belong to the set of levels $J\subseteq\{\levll{H'}y,\levll{H'}y\pm2\}$ by \Cref{lem:bipartlevels},
but the level $\levll{H'}y-2$ is excluded from $J$ if $y$ lies on the right wrapping path by \Cref{lem:leftalign}.
Furthermore, with respect to~$i$, non-black $2$-neighbours of $y$ in $H'_C$ in levels greater than $i+1$ may, by induction, 
only come from the set $U'':=U'\cap U_1$ if $y$ lies on the left wrapping path of $C$ and from $U'':=U'\cap U_2$ otherwise.
Likewise, non-black $2$-neighbours of $y$ in $H'_C$ in levels smaller than $i$ come from the set $U'=U''$ 
which already is $8$-reduced by $\pi_3$ in these levels.
However, we can say more about non-black $2$-neighbours of $y$ in a level $j\in J\setminus\{i,i+1\}$.
By analogous arguments as in the proof of \Cref{lem:mapcoremore}\eqref{it:redn10}, out of the up to $8$ vertices of $U''$
in the level~$j$, only at most $4$ are not adjacent to $y$ by a length-$2$ black path in~$H'$, and only those $4$ can
be non-black $2$-neighbours of $y$ by the definition.

In a summary, the number of possible non-black $2$-neighbours of $y$ in the (at most one) level $j\in J\cap\{i,i+1\}$ of $U'$
is at most $8+1+8=17$, and in each of the remaining levels from $J\setminus\{i,i+1\}$ it is at most $4$, summing to at most~$17+4+4=25$.

\smallskip
As for the condition \eqref{it:all38} of \Cref{lem:mapcore}, we pick~$x\in U'$ and first estimate the number of non-black neighbours of~$x$ in $H'$.
We first observe that non-black neighbours of~$x$ can only come from $U'$ by the assumption of $C$ having no incident red edge and by \Cref{cor:redsquare}.

If we are, within $\pi_3$, in stage $i<\levll{H'}x-1$ and $x\not\in V(P_3)$, 
then for each level $j\in\{\levll{H'}x\pm1\}$ (in which no contraction has happened so far), 
neighbours of $x$ in the level $j$ are only from the set $U_k\cap U'$ where $k\in\{1,2\}$ is such that $x\in U_k$, by induction.
If $x\in V(P_3)$, on the other hand, there is no non-black neighbour of~$x$ in the level $j$.
Altogether, if $i<\levll{H'}x-1$, then we have at most $8+8=16$ non-black neighbours of~$x$ in whole~$H'$.

If $i=\levll{H'}x-1$, then we use the same estimate as previously for neighbours in the level $\levll{H'}x+1$,
and count all up to $8+8+1=17$ potential neighbours of $x$ in $U'$ in the level $\levll{H'}x-1$, giving the upper bound of~$8+17=25$.
Finally, if $i\geq\levll{H'}x$, then we have got $U'$ $8$-reduced in the level $\levll{H'}x-1$, and we use the ``large'' estimate of $17$
as previously for the level $\levll{H'}x+1$, again arriving at the upper bound of~$25$.

We continue with giving an upper bound on the number of non-black $2$-neighbours of~$x\in U'$.
These can, by \Cref{cor:redsquare}, come from the levels $\levll{H'}x$ and $\levll{H'}x\pm2$ of $V(H'_C)$.
First consider stages $i\leq\levll{H'}x-2$ of $\pi_3$, that is, no contractions have happened so far in the levels $\levll{H'}x-1$ and up.
If $x\not\in V(P_3)$, then $x$ has at most $10$ non-black $2$-neighbours in each of the levels $\levll{H'}x$ and $\levll{H'}x+2$
by induction, and if $x\in V(P_3)$, then the bound is at most $8$ in each of $\levll{H'}x$ and $\levll{H'}x+2$ by
the arguments of \Cref{lem:mapcoremore}\eqref{it:redn10} and induction.
Moreover, we can say more, for $x\not\in V(P_3)$ and the level $\levll{H'}x+2$, the upper bound can be decreased to $9$ using \Cref{lem:leftalign}.
The number of non-black $2$-neighbours in the level $\levll{H'}x-2$ of $V(H'_C)$ is simply at most $8+8+3=19$.
Altogether, we have an upper bound of $10+9+19=38$, as desired.

Second, we look at the stage $i=\levll{H'}x-1$ of $\pi_3$.
Then the number of non-black $2$-neighbours of~$x$ in the level $\levll{H'}x-2$ of $V(H'_C)$ is at most $10$ since we have
already contracted $U'$ down to $\leq8$ vertices there, and for the level $\levll{H'}x+2$ the same bound of at most $9$ holds as previously.
In the level $\levll{H'}x$, we have at most $8+8+3=19$ non-black $2$-neighbours again as previously, and together~$10+9+19=38$.
Third, consider stages $i\geq\levll{H'}x+1$ of $\pi_3$.
Then we have got at most $10$ non-black $2$-neighbours in each of the levels $\levll{H'}x-2$ and $\levll{H'}x$,
and at most $19-1=18$ in the level $\levll{H'}x+2$ after applying \Cref{lem:leftalign},
which again sums to~$10+10+18=38$.

Finally, consider the stage $i=\levll{H'}x$ of $\pi_3$.
The number of non-black $2$-neighbours of~$x$ in the level $\levll{H'}x-2$ of $V(H'_C)$ is again at most $10$.
If $x\in V(P_3)$, or contractions in the level $i$ so far happened only within $U_1$ and/or $U_2$
(which includes the exceptional case of $\pi_3$ at the sink of $U_2$),
then the number of potential non-black $2$-neighbours of~$x$ in the level $\levll{H'}x+2$ is at most $10$ by induction.
So, in this subcase, we are fine with an upper bound of $18$ non-black $2$-neighbours of~$x$ in the level $\levll{H'}x$.
Otherwise, by the definition of $\pi_3$, previous steps of $\pi_3$ have already contracted the set $U'$ in the level $i$
down to at most $4+4+1=9$, or $8+1+1=10$ in the exceptional case of $\pi_3$ at the sink of $C_2$, 
and the current step of $\pi_3$ decreased this number one further.
Hence, $x$ may now have only at most $10-1+2-1=10$ other non-black $2$-neighbours in the level $\levll{H'}x$.
Since we again have at most $18$ non-black $2$-neighbours in the level $\levll{H'}x+2$ after applying \Cref{lem:leftalign},
the overall upper bound of $10+10+18=38$ holds, too.

We have verified all conditions of \Cref{lem:mapcore} for the defined sequence~$\pi_3$.
\end{proof}

\section{Concluding Remarks}

The primary contribution of our paper is a significant improvement over previously published
upper bounds on the twin-width of planar graphs, which is now just one off the best known lower bound.
There is, however, a broader new contribution of the paper consisting of a set of tools (a toolbox)
focused on providing refined upper bounds on the twin-width of not only planar graphs, but also
of other graph classes which are related to planarity in some suitable way.
We demonstrate wide usability of the provided toolbox by providing similar refined upper
bounds on the twin-width of bipartite planar graphs, of $1$-planar graphs, and of map graphs,
all of which significantly improve previous knowledge of twin-width on the listed classes, too.

On the other hand, while it would be even nicer to provide one general statement, with a unified proof,
tightly bounding the twin-width of all these classes at once, the numerous small (but important at the end)
differences in the use of our toolbox here between Sections \ref{sec:prooftwwplanar},~\ref{sec:prooftwwbiplanar},~\ref{sec:prooftww1planar},
and~\ref{sec:prooftwwmapplanar} indicate that it is hardly a possible task.
Moreover, right in the cases of planar and bipartite planar graphs, 
the paper's proof method and the used toolbox seem to be at the edge of their possibilities.

Related to the twin-width is the notion of reduced bandwidth~\cite{DBLP:journals/corr/abs-2202-11858} which, informally stating,
requires the subgraph induced by the red edges (along the sequence) to not only have bounded degrees, but also bounded bandwidth.
Strictly speaking, as our construction of the contraction sequence creates arbitrarily large ``red grids'' in some cases, it does not
directly imply any constant upper bound on the reduced bandwidth of planar graphs.
However, a simple modification of the construction (informally, delaying contractions that would create red edges to the vertices of the right wrapping path)
can easily bring a reasonable two-digit upper bound on the reduced bandwidth of planar graphs,
which can possibly be further tightened with a specialized refined argument.

To finally conclude, the problem to determine the exact maximum value of the twin-width over all planar graphs is still open, 
but our continuing research suggests that the value of~$7$ is very likely the right answer.
Likewise, the problem to determine the exact maximum value of the twin-width over bipartite planar graphs is open,
and we cannot now decide or suggest whether $6$ is the right maximum value over bipartite planar graphs,
or whether the upper bound may possibly be~$5$ (while an upper bound lower than $5$ is not likely since a bipartite construction analogous
to \cite{DBLP:journals/corr/abs-2209-11537} seems to exclude it, but we are not aware of this claim being written up as a formal statement).
In the cases of $1$-planar graphs, and especially of map graphs, the provided bounds can very likely be improved down a bit
with a more detailed and longer case analysis of their proofs.
We are not aware of published nontrivial lower bounds in the latter two cases.

\bibliography{tww}

\end{document}